\newcommand{\lra}{\longrightarrow}
\newcommand{\ra}{\rightarrow}
\newcommand{\CC}{\mathbb C}
\newcommand{\cO}{\mathcal{O}}
\newcommand{\Ext}{\mbox{Ext}}
\newcommand{\Hom}{\mbox{Hom}}
\newcommand{\Cliff}{\operatorname{Cliff}}
\newcommand{\gr}{\mbox{gr}}
\newcommand{\rk}{\mbox{rk}}
\newcommand{\Ker}{\operatorname{Ker}}
\theoremstyle{plain}
\newtheorem{theorem}{Theorem}[section]
\newtheorem{lem}[theorem]{Lemma}
\newtheorem{prop}[theorem]{Proposition}
\newtheorem{cor}[theorem]{Corollary}
\newtheorem{rem}[theorem]{Remark}
\numberwithin{equation}{section}
\begin{document}
\title[BN-theory for genus 6]{Higher rank BN-theory for curves of genus 6}

\author{H. Lange}
\author{P. E. Newstead}

\address{H. Lange\\Department Mathematik\\
              Universit\"at Erlangen-N\"urnberg\\
              Cauerstra\ss e 11\\
              D-$91058$ Erlangen\\
              Germany}
              \email{lange@mi.uni-erlangen.de}
\address{P.E. Newstead\\Department of Mathematical Sciences\\
              University of Liverpool\\
              Peach Street, Liverpool L69 7ZL, UK}
\email{newstead@liv.ac.uk}

\thanks{Both authors are members of the research group VBAC (Vector Bundles on Algebraic Curves). The second author 
would like to thank the Department Mathematik der Universit\"at 
         Erlangen-N\"urnberg for its hospitality}
\keywords{Semistable vector bundle, Brill-Noether theory, genus 6, Clifford index, gonality}
\subjclass[2010]{Primary: 14H60}
\date{\today}

\begin{abstract}
Higher rank Brill-Noether theory for genus 6 is especially interesting as, even in the general case, some unexpected phenomena arise which are absent in lower genus. Moreover, it is the first case for which there exist curves of Clifford dimension greater than 1 (smooth plane quintics). In all cases, we obtain new upper bounds for non-emptiness of Brill-Noether loci and construct many examples which approach these upper bounds more closely than those that are well known. Some of our examples of non-empty Brill-Noether loci have negative Brill-Noether numbers.
\end{abstract}
\maketitle

\section{Introduction}\label{intro}

Let $C$ be a smooth complex projective curve of genus $g$ and let $B(n,d,k)$ denote the {\it Brill-Noether locus} of stable bundles on $C$ of rank 
$n$ and degree $d$ with at least $k$ independent sections (for the formal definition, see Section \ref{back}). This locus has a natural 
structure as a subscheme of the moduli space of stable bundles on $C$ of rank $n$ and degree $d$.

In the case $n=1$, the Brill-Noether loci are classical objects. For $n>1$, 
the study began towards the end of the 1980s and the situation is much less clear; even on a general 
curve, there is a great deal that is not known. The problem is completely 
solved only for $g\le3$ (see \cite{bgn,m1,m2}), although there are strong results for hyperelliptic and bielliptic curves (see \cite{bmno} and \cite{b}) and the authors have obtained many new results for $g=4$ and $g=5$ \cite{ln1,ln2}). 

In this paper, we consider the case where $C$ has genus $6$. This is especially interesting as it is the first case in which it is known that there exist stable bundles with negative Brill-Noether number on a  general curve. One of these bundles arises from another interesting phenomenon. In considering a Brill-Noether locus $B(2,K_C,k)$ of stable bundles of rank 2 with determinant the canonical bundle $K_C$, the definition of Brill-Noether number needs to be changed to reflect the symmetry properties of these bundles. This new Brill-Noether number can be larger than the standard one, leading to the intriguing fact that the ``expected'' dimension of $B(2,K_C,k)$ is larger than the ``expected'' dimension of $B(2,2g-2,k)$. Genus $6$ is not the first case in which this arises, but it is the first for which the canonical Brill-Noether number is non-negative while the corresponding ordinary Brill-Noether number is negative.

As in our work on genus $4$ and $5$, the main results of the paper concern new upper 
bounds on $k$ for the non-emptiness of $B(n,d,k)$ and the corresponding loci $\widetilde{B}(n,d,k)$ for 
semistable bundles. A complete answer is known for $d\le2n$ in any genus (see Proposition \ref{pln2}). For genus $6$, it is therefore sufficient in view of Serre duality to restrict to the range $2n<d\le5n$. There are several different cases to be considered, namely $C$ hyperelliptic, Clifford index $\Cliff(C) =1$ and $\Cliff(C)=2$. For $\Cliff(C)=1$, there is a further divison into trigonal curves and smooth plane quintics, while, for $\Cliff(C)=2$, a special r\^ole is played by bielliptic curves. Smooth plane quintics are the first case in which $C$ does not possess a line bundle with $h^0=2$ which computes the Clifford index; we say in this case that $C$ has Clifford dimension greater than $1$.

In the case $\Cliff(C)=1$, the results are complicated and we refer to subsections \ref{sub4.2} and \ref{sub4.3} and the BN-maps in Section \ref{secBN} for details. For $\Cliff(C)=2$, including the case of a general curve $C$, things are simpler and we have the following theorem.\\

\noindent{\bf Theorem \ref{th3.18}.} \emph{Let $C$ be a curve of genus $6$ with $\Cliff(C) = 2$. If $2n < d\leq 5n$ and 
$\widetilde B(n,d,k) \neq \emptyset$, then one of the following holds,
\begin{enumerate}
\item[(i)] $2n <d \leq \frac{9}{4}n$ and $k \leq n + \frac{1}{5}(d-n)$;
\item[(ii)] $\frac{9}{4}n < d \leq \frac{7}{3}n$ and $k < d-n$;
\item[(iii)] $\frac{7}{3}n < d \leq 3n$ and $k \leq n + \frac{1}{4}(d-n)$;
\item[(iv)] $3n < d \leq 5n$ and $k \leq \frac12d$.
\end{enumerate}
}
The bounds for $3n\le d\le5n$ cannot be improved since they are known to be attainable on any bielliptic curve \cite{b}.

We also produce 
a large number of examples of stable bundles which come close to attaining the upper bounds of our main results.
Many of these are constructed using elementary transformations, the only problem here being to prove stability.
Some of these were already established in \cite{ln1,ln2}, but others are new. For several of our examples, the Brill-Noether number is negative.\\

In Section \ref{back}, we give some background and describe some known results. In Section \ref{hyper}, we consider hyperelliptic curves. Our results here are almost complete; they depend on those established in \cite{bmno} for general $g$ and improve on them for genus $6$. For the rest of the paper, we turn to non-hyperelliptic curves. In Section \ref{nonh}, following a general subsection, we obtain upper bounds in separate subsections for trigonal curves, smooth plane quintics and curves of Clifford index $2$. Section \ref{secexist} is concerned with existence results and this is followed in Section \ref{seck=n+1} with detailed results for the case $k=n+1$. In Section \ref{seclow}, we discuss bundles of ranks $2$ and $3$. Then, in Section \ref{secext}, we investigate extremal bundles, in other words those which maximise $k$ for given $n$ and $d$. Finally, in Section \ref{secBN}, we provide a graphical representation of our results.

Our methods are inspired in particular by those of \cite{bmno} and work of Mercat \cite{m1,m2} as well as our previous papers. 
Our thanks are due to the referee for a careful reading and some helpful suggestions.

\section{Background and some known results}\label{back}
 
Let $C$ be a smooth projective curve of genus $g$. Denote by $M(n,d)$ the moduli space of stable vector 
bundles of rank $n$ and degree $d$ and by $\widetilde M(n,d)$ the moduli space of 
S-equivalence classes of semistable bundles of rank $n$ and degree $d$. For any integer
$k \geq 1$, we define 
$$
B(n,d,k) := \{ E \in M(n,d) \; | \; h^0(E) \geq k \}
$$
and
$$
\widetilde B(n,d,k) := \{ [E] \in \widetilde M(n,d)  \;|\; h^0( \gr E) \geq k \},
$$
where $[E]$ denotes the S-equivalence class of $E$ and $\gr E$ is the graded object defined 
by a Jordan-H\"older filtration of $E$. The locus $B(n,d,k)$ has an {\it expected dimension}
\begin{equation}\label{eqbeta}
\beta(n,d,k):=n^2(g-1)+1-k(k-d+n(g-1)),
\end{equation}
known as the {\it Brill-Noether number}.
For any vector bundle $E$ on $C$, we write $n_E$ for the rank of $E$, $d_E$ for the degree of $E$ and $\mu(E)=\frac{d_E}{n_E}$ for the slope of $E$. 
The vector bundle $E$ is said to be {\it generated} if the evaluation map $H^0(E)\otimes {\mathcal O_C}\to E$ is surjective. We write also $h^0(E):=\dim H^0(E)$.

We recall the {\it dual span construction} (see, for example, \cite{bu} and \cite{m1}),
defined as follows. Let $L$ be a generated line bundle on $C$ with $h^0(L) \geq 2$.  Consider
the evaluation sequence
\begin{equation} \label{eq2.1}
0 \ra E_L^* \ra H^0(L) \otimes \cO_C \ra L \ra 0.
\end{equation}
Then $E_L$ is a bundle of rank $h^0(L) -1$ and degree $d_L$ with $h^0(E) \geq h^0(L)$.
 It is called the {\it dual span of} $L$ and is also denoted by $D(L)$. Although $E_L$ is not 
necessarily stable, this is frequently the case.

For any positive integer $r$, we define the {\it r-th gonality} of $C$ by
$$
d_r:= \min \{ d_L \;|\; L \; \mbox{ a line bundle on} \; C \; \mbox{with} \; h^0(L) \ge r+1 \}.
$$
The {\it Clifford index} of $C$ is defined by
$$\Cliff(C):=\min\{d_L-2(h^0(L)-1)\},$$
the minimum being taken over all line bundles $L$ with $d_L\le g-1$ and $h^0(L)\ge2$. Higher rank Clifford indices are then defined for all positive integers $n$ as follows:
$$\Cliff_n(C):=\frac1n\min\{d_E-2(h^0(E)-n)\},$$
the minimum being taken over all semistable bundles $E$ of rank $n$ with $d_E\le n(g-1)$ and $h^0(E)\ge2n$. We have always $\Cliff_n(C)\le \Cliff(C)$ \cite[Lemma 2.2]{ln} and
\begin{equation}\label{eqcliff}
\Cliff_n(C)=\Cliff(C) \mbox{ if } \Cliff(C)\le2
\end{equation}
(see \cite[Proposition 3.5]{ln}).

When $g=6$, we have $\Cliff(C)\le2$; moreover $d_5 = 10$, and $d_4 = 9$ if $C$ is non-hyperelliptic. If $\Cliff(C)=1$, then $C$ is either {\it trigonal} or a {\it smooth plane quintic}. In the first case, we denote  the unique trigonal bundle by $T$; in the second, we denote the hyperplane bundle by $H$. The gonalities $d_r$ for $r\le3$ are as follows:
\begin{equation} \label{e1}
C \; \mbox{trigonal}: \quad d_1 = 3, \, d_2= 6,\, d_3 = 7;
\end{equation}
\begin{equation} \label{e2}
C \; \mbox{smooth plane quintic}: \quad d_1 = 4, \, d_2= 5,\,  d_3 = 8;
\end{equation}
\begin{equation} \label{e3}
\Cliff(C) = 2: \quad d_1 = 4, \, d_2= 6, \, d_3 = 8.
\end{equation}
For $\Cliff(C) = 2$, this is obvious. For $C$ trigonal, see \cite[Remark 4.5(b)]{ln} and for $C$ a smooth plane quintic, see \cite[Theorem 2.1]{h}. We summarise some further useful properties for classical Brill-Noether loci on $C$ in the following proposition. Note that we call a curve $C$ {\it bielliptic} if $C$ is a double covering of an elliptic curve and $\Cliff(C)=2$. 

\begin{prop}\label{p2.0}
Let $C$ be a non-hyperelliptic curve of genus $g$. Then $B(1,4,2)\simeq B(1,6,3)\ne\emptyset$. Moreover,
\begin{enumerate}
\item[(i)] for $C$ general, $B(1,4,2)$ consists of precisely $5$ points;
\item[(ii)] if $\Cliff(C)=2$ and $C$ is not bielliptic, $B(1,4,2)$ is finite;
\item[(iii)]if $\Cliff(C)=1$ or $C$ is bielliptic, $\dim B(1,4,2)=1$.
\end{enumerate}
\end{prop}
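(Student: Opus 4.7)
The plan has three parts corresponding to the statement, with a common setup at the start.

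First, the isomorphism $B(1,4,2)\simeq B(1,6,3)$ and non-emptiness. On a genus $6$ curve, Serre duality and Riemann--Roch give $h^0(K_C\otimes L^{-1}) = h^1(L) = h^0(L)+1$ for any line bundle $L$ of degree $4$, so the involution $L\mapsto K_C\otimes L^{-1}$ on the Jacobian carries $B(1,4,2)$ bijectively onto $B(1,6,3)\subset\Pic^6(C)$. For non-emptiness, note that $\rho(6,1,4)=0$, so $B(1,4,2)\neq\emptyset$ by the existence theorem of Kempf/Kleiman--Laksov; alternatively, explicit elements are given by the families constructed in (iii) below (and by the gonality bundle in the $\Cliff(C)=2$ case, using $d_1=4$ from \eqref{e3}).

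For (i), when $C$ is general of genus $6$, the locus $B(1,4,2)$ is $0$-dimensional and reduced, and its length is given by the Castelnuovo number
\[
\frac{g!}{(g-d+r)!\,(g-d+r+1)!}\;=\;\frac{6!}{3!\,4!}\;=\;5,
\]
so it consists of exactly $5$ points.

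For (ii) and (iii), I would invoke the classical Martens--Mumford dimension bounds on $B(1,d,2)$: for a non-hyperelliptic curve of genus $g$ and $2\le d\le g-1$, one has $\dim B(1,d,2)\le d-3$, with equality possible only when $C$ is trigonal, bielliptic, or a smooth plane quintic, and in all remaining cases the sharper bound $\dim B(1,d,2)\le d-4$ holds. Specialising to $(g,d)=(6,4)$, we obtain $\dim B(1,4,2)\le 0$ under the hypothesis of (ii) (since $\Cliff(C)=2$ rules out trigonal and plane quintic curves, and non-bielliptic is assumed), proving finiteness. For (iii), the families $\{T(p):p\in C\}$ (trigonal), $\{H(-p):p\in C\}$ (smooth plane quintic, using $h^0(H(-p))\ge h^0(H)-1=2$), and $\{\pi^*M:M\in\Pic^2(E)\}$ for a bielliptic double cover $\pi\colon C\to E$ (using $h^0(M)=2$ for all $M\in\Pic^2(E)$ by Riemann--Roch on $E$ and the finiteness of $\ker\pi^*$) each produce a $1$-dimensional subfamily of $B(1,4,2)$; combined with the Martens upper bound $\dim B(1,4,2)\le 1$, this forces equality.

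The principal obstacle is correctly invoking the Martens--Mumford classification of curves for which the bound $\dim B(1,d,2)\le d-3$ is attained; once this is in hand, the remainder of the proof reduces to cohomological bookkeeping and the three explicit constructions in (iii).
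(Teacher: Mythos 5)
Your proof is correct and follows essentially the same route as the paper: Serre duality for the isomorphism $B(1,4,2)\simeq B(1,6,3)$, Castelnuovo's formula for (i), and Martens' theorem with Mumford's refinement for (ii) and (iii). The only (welcome) additions are the Kleiman--Laksov argument for non-emptiness, where the paper simply cites $d_1\le 4$, and the explicit one-dimensional families $T(p)$, $H(-p)$, $\pi^*M$ giving the lower bound in (iii), which the paper leaves implicit.
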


\begin{proof}
The first assertion is an immediate consequence of the fact that $d_1\le4$ and Serre duality.
(i) follows from Castelnuovo's formula (see \cite[Formula (1.2), page 211]{acgh}. For (ii) and (iii), see Mumford's extension of Martens' Theorem \cite[Chapter IV, Theorem 5.2]{acgh}.
\end{proof}

\begin{rem}\label{reme}
{\rm If $\Cliff C=2$, then every $L\in B(1,4,2)$ is generated since $d_1=4$. The same holds for $C$ a smooth plane quintic, in which case $B(1,4,2)=\{H(-p)|p\in C\}$, where $H$ is the hyperplane bundle, by \cite[Theorem 2.1]{h}. For $C$ a trigonal curve, things are a little more complicated and are ruled by Maroni's theory (see \cite{ms} for a detailed account). Certainly $T(p)\in B(1,4,2)$ for all $p\in C$ and also $K_C\otimes T^{*2}\in B(1,4,2)$. In fact, it is easy to see that these are the only possibilities. For a general trigonal curve of genus $6$, $K_C\otimes T^{*2}$ is generated; these curves can be represented as $(3,4)$ curves on a quadric surface or as plane sextics having a triple point and a double point. However, there exist trigonal curves for which $K_C\otimes T^{*2}=T(p)$ for some $p\in C$ and is not generated. In the latter case, there are no generated bundles in $B(1,4,2)$.
}
\end{rem} 

\begin{prop} \label{p2.1}
Let $C$ be a curve, $n$ a positive integer and $L$ a line bundle on $C$ with $d_L = d_n$ and $h^0(L)=n+1$. Then $L$ is generated and, 
\begin{enumerate}
\item[(i)] if $\frac{d_p}{p} > \frac{d_n}{n}$ for all $p < n$, then $E_L$ is stable;
\item[(ii)] if $E$ is a semistable bundle of rank $n$ with $d_E < n \frac{d_p}{p}$ for all $p \leq n$, then $h^0(E) \le n$.
\end{enumerate}
\end{prop}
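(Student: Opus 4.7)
The plan is to handle the generated-ness of $L$ first, treat (i) via a destabilizing-quotient analysis, and prove (ii) by induction on the rank using the image of the evaluation map together with the Harder--Narasimhan filtration.

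To see that $L$ is generated: if $p\in C$ were a base point, every section would vanish at $p$, so $H^0(L(-p))=H^0(L)$ would give a line bundle of degree $d_n-1$ with $h^0\geq n+1$, contradicting the minimality in the definition of $d_n$.

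For (i), I argue by contradiction. Suppose $F\subset E_L$ is a subbundle of rank $p$ with $0<p<n$ and $\mu(F)\geq d_n/n$, so $d_F\geq pd_n/n$. Set $q:=n-p$ and $Q:=E_L/F$; then $Q$ has rank $q$ and degree $d_Q=d_n-d_F\leq qd_n/n$. Dualizing the defining sequence of $E_L$ exhibits $E_L$, hence $Q$, as a quotient of the trivial bundle $H^0(L)^*\otimes\mathcal{O}_C$, so $Q$ is generated and $\det Q$ is a generated line bundle of degree at most $qd_n/n$. The key step is to show
\[
h^0(\det Q)\geq q+1;
\]
granting this, the definition of $d_q$ gives $d_q\leq d_Q\leq qd_n/n$, contradicting $d_q/q>d_n/n$. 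The inequality is obtained by viewing $Q$ as a morphism $\varphi\colon C\to\mathrm{Gr}(q,n+1)$ under which $\det Q\cong\varphi^*\mathcal{O}(1)$ via the Plücker embedding; the presence of the sub-line-bundle $L^*\subset H^0(L)^*\otimes\mathcal{O}_C$ inside the kernel of the quotient to $Q$ forces $\varphi(C)$ to span at least $q+1$ independent directions in Plücker space.

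For (ii), I prove the contrapositive by induction on $n$: assuming $h^0(E)\geq n+1$ together with $d_E<nd_p/p$ for all $p\leq n$, I derive a contradiction. The base case $n=1$ is immediate. For the inductive step, let $I\subseteq E$ be the image of $H^0(E)\otimes\mathcal{O}_C\to E$: a locally free, generated subsheaf of rank $r\leq n$ satisfying $h^0(I)=h^0(E)\geq n+1$ and $\mu(I)\leq\mu(E)$ (the latter by semistability of $E$). If $I$ is not semistable, the semistable subquotients of its Harder--Narasimhan filtration have ranks strictly less than $n$ and slopes at most $\mu(E)<d_p/p$, so the inductive hypothesis bounds the $h^0$ of each subquotient by its rank, summing to $h^0(I)\leq r\leq n$, contradicting $h^0(I)\geq n+1$. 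If $I$ is semistable of rank $r<n$, the inductive hypothesis applied directly to $I$ yields $h^0(I)\leq r<n+1$, again a contradiction. If $I$ is semistable of rank $n$, then since degree-zero semistable bundles of rank $n$ have $h^0\leq n$, we have $\mu(I)>0$ and hence $H^0(I^*)=0$; a Schubert-type codimension count provides a sufficiently general $(n+1)$-dimensional subspace $V\subset H^0(I)$ for which $V\otimes\mathcal{O}_C\to I$ is surjective, whose kernel is a line bundle $M$ of degree $-d_I$, and dualizing gives
\[
0\to I^*\to V^*\otimes\mathcal{O}_C\to M^{-1}\to 0.
\]
The vanishing $H^0(I^*)=0$ forces $h^0(M^{-1})\geq n+1$, whence $d_n\leq d_{M^{-1}}=d_I\leq d_E<d_n$, a contradiction.

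The principal obstacle is the Plücker non-degeneracy claim $h^0(\det Q)\geq q+1$ in (i): this is specific to dual-span bundles $E_L$ and uses the embedded $L^*$ essentially, going beyond the mere generated-ness of $Q$.
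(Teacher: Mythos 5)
The paper offers no argument for this proposition beyond the citation \cite[Propositions 4.9(e) and 4.11]{ln}, so a self-contained proof is a genuinely different (and more informative) route. Your proof that $L$ is base-point free is correct, and your proof of part (ii) is essentially complete and sound: the Harder--Narasimhan reduction works because every subsheaf of $I$ is a subsheaf of $E$ and hence has slope at most $\mu(E)<d_p/p$, and the rank-$n$ semistable case is correctly handled by cutting $H^0(I)$ down to a generating subspace $V$ of dimension $n+1$, dualizing the Euler-type sequence and using $h^0(I^*)=0$ to get $h^0(M^{-1})\ge n+1$, whence $d_n\le d_I\le d_E<d_n$.

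The gap is in part (i), exactly where you flag it. The reduction to the claim $h^0(\det Q)\ge q+1$ for the quotient $Q=E_L/F$ is the standard and correct skeleton (it immediately gives $d_q\le d_Q\le qd_n/n$, contradicting $d_q/q>d_n/n$), but the justification offered --- that the copy of $L^*$ inside the kernel of $H^0(L)^*\otimes\mathcal{O}_C\to Q$ forces the image of $C$ to span $q$ independent directions in Pl\"ucker space --- is not an argument, and it points at the wrong mechanism. The claim is \emph{not} special to dual-span bundles: it is the general fact that a generated bundle $Q$ of rank $q$ with $h^0(Q^*)=0$ has $h^0(\det Q)\ge q+1$. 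The needed vanishing $h^0(Q^*)=0$ holds here because $Q^*\subset E_L^*$ and $h^0(E_L^*)=0$ from the defining sequence \eqref{eq2.1}; without it the claim fails (e.g.\ $Q=\mathcal{O}_C\oplus N$ with $N$ generated and $h^0(N)=2$), so generatedness plus the mere presence of $L^*$ in the kernel cannot suffice in the way you suggest. The proof of the general fact is precisely the argument you already carry out in the rank-$n$ case of part (ii): first cut the generating space down to a subspace $V$ of dimension $q+1$ which still generates $Q$ (a general hyperplane of a generating space of dimension $\ge q+2$ still generates, by your own codimension count), then dualize $0\to(\det Q)^{-1}\to V\otimes\mathcal{O}_C\to Q\to0$ to get $0\to Q^*\to V^*\otimes\mathcal{O}_C\to\det Q\to0$ and conclude $h^0(\det Q)\ge q+1-h^0(Q^*)=q+1$. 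Transplanting that paragraph, with $h^0(Q^*)=0$ in place of $h^0(I^*)=0$, closes the gap and completes your proof of (i).
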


\begin{proof}
This follows from \cite[Propositions 4.9(e) and 4.11]{ln}.
\end{proof}

The following lemma is a restatement of \cite[Lemma 3.9]{pr} (see also \cite[Lemma 4.8]{ln}).

\begin{lem} \label{l2.2}
Let $C$ be a curve and $E$ a vector bundle of rank $n$ on $C$ with $h^0(E) \ge n+s$ for some $s \geq 1$. Suppose that $E$ has no proper subbundle $N$ with $h^0(N) > n_N$. Then $d_E
 \geq d_{ns}$. 
\end{lem}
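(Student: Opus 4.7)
The plan is to prove the stronger statement $h^0(\det E) \geq ns + 1$; since $\det E$ is a line bundle of degree $d_E$, the definition of $d_{ns}$ then yields $d_E = d_{\det E} \geq d_{ns}$ at once.

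To this end, fix an $(n+s)$-dimensional subspace $V \subseteq H^0(E)$ and consider the wedge-product linear map
\[
\phi \colon \Lambda^n V \longrightarrow H^0(\det E), \qquad v_1 \wedge \cdots \wedge v_n \longmapsto \bigl(p \mapsto v_1(p)\wedge\cdots\wedge v_n(p)\bigr).
\]
The crux is to show that $\phi$ does not vanish on any nonzero decomposable element. Suppose for contradiction that $w_1,\ldots,w_n \in V$ are linearly independent but $\phi(w_1\wedge\cdots\wedge w_n)=0$. Then at every $p \in C$ the values $w_i(p) \in E_p$ are linearly dependent, so the $w_i$ generate a subsheaf of $E$ of generic rank strictly less than $n$. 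Its saturation is then a proper subbundle $N \subsetneq E$ of rank less than $n$ containing the $n$ linearly independent sections $w_1,\ldots,w_n$, giving $h^0(N) \ge n > n_N$ and contradicting the hypothesis.

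The conclusion now follows by a dimension count in $\PP(\Lambda^n V)$. The non-vanishing property just established means that $\PP(\ker\phi)$ is a linear subspace of $\PP(\Lambda^n V)$ disjoint from the Pl\"ucker image of the Grassmannian $Gr(n,V)$, which has dimension $n(n+s-n) = ns$. Since any linear subspace of $\PP^m$ disjoint from a closed subvariety $Y$ has dimension at most $m-1-\dim Y$, we obtain
\[
\dim \ker\phi \;\le\; \binom{n+s}{n}-ns-1, \qquad \text{so} \qquad \dim \IM(\phi) \ge ns+1,
\]
giving $h^0(\det E) \ge ns+1$. The delicate step is the non-vanishing of $\phi$ on decomposables; this is precisely where the hypothesis ``no proper subbundle $N$ with $h^0(N) > n_N$'' is used, while the subsequent Pl\"ucker dimension count is entirely routine.
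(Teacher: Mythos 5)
Your proof is correct, and it is essentially the classical argument behind this lemma: the paper itself gives no proof but cites Paranjape--Ramanan \cite[Lemma 3.9]{pr}, whose proof is precisely this wedge map $\Lambda^n V \to H^0(\det E)$, the use of the hypothesis to show $\ker\phi$ meets no nonzero decomposable element, and the secant-type dimension count against the Pl\"ucker embedding of $Gr(n,V)$. All steps check out, including the reduction $d_E = d_{\det E} \ge d_{ns}$ via $h^0(\det E)\ge ns+1$.
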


In investigating the non-emptiness of $B(n,d,k)$ 
and $\widetilde B(n,d,k)$, it is sufficient by Serre duality and Riemann-Roch to consider the case $d\le n(g-1)$. 
For $g\le3$, a complete solution is known. 
For $g=4$ and $g=5$, partial results were obtained in \cite{ln1} and \cite{ln2}. In this paper, we investigate the case $g=6$. For future reference, we note some facts here. The first is \cite[Proposition 2.1]{ln2}.

\begin{prop}\label{pln2} Let $C$ be a curve of genus $g \ge 3$ and suppose $k\ge1$.
\begin{enumerate}
\item[(i)] If $0<d<2n$, then $\widetilde B(n,d,k)\ne\emptyset$ if and only if $k\leq n+\frac1g(d-n)$. Moreover
$B(n,d,k)\ne\emptyset$ under the same conditions except when $(n,d,k)=(n,n,n)$ with $n\ge2$.
\item[(ii)] If $C$ is non-hyperelliptic and $d=2n$, then $\widetilde B(n,d,k)\ne\emptyset$ if and only if $k\le\frac{g}{g-1}n$.
\item[(iii)] If $C$ is non-hyperelliptic and $d=2n$, then $B(n,d,k)\ne\emptyset$ if and only if $k\le\frac{g+1}{g}n$ or $(n,d,k)=(g-1,2g-2,g)$. Moreover $B(g-1,2g-2,g)=\{D(K_C)\}$.
\end{enumerate}
\end{prop}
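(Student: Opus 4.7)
The plan is to prove each part by separating the derivation of the upper bound on $k$ (necessity) from the construction of bundles attaining it (sufficiency). Throughout, the key reduction is from semistable to stable via the Jordan-Hölder decomposition of $\gr E$: rank, degree, and $h^0$ are all additive on polystable summands, and all stable constituents share the common slope $\mu = d/n$.

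For necessity in (i), I would establish the Clifford-type inequality $h^0(F) \le n_F + (d_F - n_F)/g$ for each stable factor $F$ of $\gr E$. Under the slope hypothesis $\mu(F) < 2$, this follows by an induction on rank combining Lemma \ref{l2.2} (applied either to $F$ itself or, when one exists, to a maximal subbundle $N$ with $h^0(N) > n_N$ and recursing on $N$ and $F/N$), the classical Clifford bound $d_L \ge 2(h^0(L)-1)$ for special line bundles, and the semistability of $F$. Summing the inequality over the Jordan-Hölder constituents of $\gr E$ then yields the bound in (i). For (ii) and (iii), the same inductive method carried out at $\mu = 2$ yields $h^0(F) \le \frac{g}{g-1} n_F$ for each stable factor, giving (ii); the stricter bound in (iii) comes from the observation that stability of $E$ itself (not merely $\gr E$) forbids $\cO_C$ from appearing as a Jordan-Hölder factor, except in one very specific configuration.

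For sufficiency in (i) and (ii), I would produce polystable examples as direct sums $L \oplus F$, where $L$ is a line bundle of suitable degree carrying many sections and $F$ is a generic semistable bundle of matching slope, and stable examples as generic non-trivial extensions of such summands with the extension class chosen generically so that no subbundle simultaneously destabilizes $E$ and accounts for enough sections. The exceptional non-existence in (i) at $(n,d,k) = (n,n,n)$ with $n \ge 2$ reflects the fact that a stable bundle of rank $n$ and degree $n$ with $n$ independent sections would, upon evaluation, split off $\cO_C$ as a summand and so fail to be stable. In (iii), the extremal case $(g-1, 2g-2, g)$ is realised by the dual span $D(K_C)$, which has rank $g-1$, degree $2g-2$, and $h^0 \ge g$ directly from the defining exact sequence.

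The main obstacle is the uniqueness assertion $B(g-1, 2g-2, g) = \{D(K_C)\}$. The plan is to show first that any stable $E$ of rank $g-1$, degree $2g-2$, with $h^0(E) \ge g$ is generated by its global sections; this uses Lemma \ref{l2.2} to rule out proper section-rich subsheaves, as any such subsheaf would contradict the slope of $E$. The evaluation sequence then takes the form $0 \to L^* \to H^0(E) \otimes \cO_C \to E \to 0$ with $L$ a line bundle of degree $2g-2$ and $h^0(L) \ge g$, so that $E$ is the dual span $D(L)$. Combining this with the cohomology of the dual sequence and Serre duality on $C$ forces $L \cong K_C$, identifying $E$ with $D(K_C)$.
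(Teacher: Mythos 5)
A preliminary remark on the comparison you were asked to make: the paper offers no proof of Proposition \ref{pln2} at all. It is quoted from \cite[Proposition 2.1]{ln2}, which in turn is a compilation of the main theorems of \cite{bgn}, \cite{m1} and \cite{m2}. You are therefore attempting to reprove in outline results whose published proofs occupy the better part of three papers, and although your overall architecture (reduce $\widetilde B$ to $B$ via $\gr E$, bound each stable factor, construct extremal examples by extensions and dual spans) matches the literature, several of the steps you assert do not go through as described.

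The most serious gap is the necessity direction of (i) in the range $0<d<n$. There the bound $k\le n+\frac1g(d-n)$ is \emph{strictly smaller} than $n$, and the tools you invoke cannot produce it: Lemma \ref{l2.2} only applies when there is an excess $h^0(F)>n_F$, and Clifford plus semistability cannot distinguish $h^0=n-1$ from $h^0=n-2$ for a bundle of slope less than $1$. The mechanism that actually yields the factor $\frac1g$ in \cite{bgn} is the exact sequence $0\to\cO_C^k\to E\to F\to 0$ coming from the evaluation map, together with the observation that if the $k$ extension classes in $\Ext^1(F,\cO_C)$ are linearly dependent then $\cO_C$ occurs as a quotient of $E$, contradicting (semi)stability since $\mu(E)>0$; hence $k\le\dim\Ext^1(F,\cO_C)=d+(n-k)(g-1)$, which is precisely the stated inequality. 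Nothing in your sketch plays this role, and your explanation of the exceptional case $(n,n,n)$ is a garbled version of the same point: $\cO_C$ does not split off as a direct summand, rather a trivial sub-line bundle fails to be saturated, or equivalently $\cO_C$ becomes a quotient. Three further problems: (a) your recursion on $N$ and $F/N$ is not closed, since $\mu(F/N)>\mu(F)$ and the quotient can leave the range $\mu<2$ in which your inductive hypothesis lives; (b) part (iii) is Mercat's theorem \cite{m2} that a \emph{stable} bundle of slope $2$ on a non-hyperelliptic curve satisfies $h^0\le\frac{g+1}{g}n$ unless it is $D(K_C)$, and your explanation --- that stability forbids $\cO_C$ as a Jordan-H\"older factor --- is vacuous, since a stable bundle of slope $2$ is its own Jordan-H\"older factor and $\cO_C$ has the wrong slope to appear in any case; (c) on the existence side, $k>n$ is possible when $n<d<2n$ and $n>g$, and such bundles are not obtained by generic extensions of the kind you describe (this case is due to \cite{m1}). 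What you do have essentially right is the uniqueness statement in (iii): once $E\in B(g-1,2g-2,g)$ is known to be generated --- which follows by applying the bound of part (i) to the subsheaf generated by the sections, rather than from Lemma \ref{l2.2} --- the dual of the evaluation sequence exhibits $E$ as $D(L)$ for a line bundle $L$ of degree $2g-2$ with $h^0(L)\ge g$, and Riemann-Roch forces $L\simeq K_C$.
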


\begin{cor} \label{c2.2}
Let $C$ be a non-hyperelliptic curve of genus $g$ and $s$ an integer, $s \ge 1$.
If $sn < d \le (s+1)n$ and $k \leq n+ \frac{1}{g}(d - sn)$ or $(n,d,k) = (g-1,(s+1)(g-1),g)$, then 
$$
B(n,d,k) \neq \emptyset.
$$
\end{cor}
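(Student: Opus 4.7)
The plan is to reduce the claim to Proposition \ref{pln2} by tensoring with a line bundle of degree $s-1$. Set $d' := d-(s-1)n$, so that the hypothesis $sn<d\le(s+1)n$ translates into $n<d'\le 2n$, precisely the range covered by parts (i) and (iii) of Proposition \ref{pln2}. Moreover, the inequality on $k$ transforms as
$$k\le n+\tfrac{1}{g}(d-sn)=n+\tfrac{1}{g}(d'-n),$$
which is exactly the condition appearing in Proposition \ref{pln2}(i) when $d'<2n$ and, in the case $d'=2n$, becomes $k\le\tfrac{g+1}{g}n$, matching Proposition \ref{pln2}(iii).

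First I would invoke Proposition \ref{pln2} to produce a bundle $E'\in B(n,d',k)$. For $d'<2n$, part (i) applies; the excluded triple $(n,n,n)$ cannot occur since $d'>n$. For $d'=2n$ (equivalently $d=(s+1)n$), part (iii) gives non-emptiness whenever $k\le\tfrac{g+1}{g}n$, while the special case $(n,d,k)=(g-1,(s+1)(g-1),g)$ corresponds to taking $E'=D(K_C)\in B(g-1,2g-2,g)$, which is also covered by part (iii).

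Next, choose an effective line bundle $M$ of degree $s-1$ on $C$: take $M=\mathcal{O}_C$ if $s=1$ and $M=\mathcal{O}_C(p_1+\cdots+p_{s-1})$ for any points $p_i\in C$ if $s\ge 2$. Set $E:=E'\otimes M$. Then $E$ is stable of rank $n$ and degree $d'+(s-1)n=d$, since tensoring with a line bundle preserves stability. A non-zero section $\sigma\in H^0(M)$ induces an injection $E'\hookrightarrow E'\otimes M=E$ (non-zero because $C$ is integral), giving an exact sequence
$$0\lra E'\lra E\lra E|_Z\lra 0,$$
where $Z$ is the zero divisor of $\sigma$. Passing to global sections yields $h^0(E)\ge h^0(E')\ge k$, so $E\in B(n,d,k)$.

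The only substantive thing to check is that no exceptional case of Proposition \ref{pln2} obstructs the argument, and the verification above shows that the hypotheses of the corollary align exactly with what Proposition \ref{pln2} provides, with the exceptional triple being matched by the dual span $D(K_C)$.
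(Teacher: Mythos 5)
Your proof is correct and follows exactly the paper's argument: reduce to $d'=d-(s-1)n$ via Proposition \ref{pln2}(i) and (iii), then tensor with an effective line bundle of degree $s-1$. The extra details you supply (ruling out the exceptional triple $(n,n,n)$ since $d'>n$, and the injection $E'\hookrightarrow E'\otimes M$ induced by a section of $M$) are exactly the routine verifications the paper leaves implicit.
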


\begin{proof}
Under the hypotheses on $d$ and $k$ in the statement, $B(n,d-(s-1)n,k)$ is non-empty by Proposition \ref{pln2}(i) and (iii). Tensoring by an effective line bundle of degree $s-1$ gives the result.
\end{proof}

\begin{cor} \label{c2.5}
Let $C$ be a non-hyperelliptic curve of genus $g$ and $F$ a generated vector bundle with $n_F \le g-2$ and $h^0(F^*) =0$. Then
$$
d_F > 2n_F.
$$
\end{cor}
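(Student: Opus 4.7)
The plan is to argue by contradiction: assume $d_F \le 2n_F$ and construct a special line bundle on $C$ that violates Clifford's theorem.

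Two preliminary observations are useful. The surjection $\mathcal{O}_C^{h^0(F)}\twoheadrightarrow F$ coming from generatedness cannot be an isomorphism, since that would give $h^0(F^*)=h^0(\mathcal{O}_C^{n_F})=n_F>0$; hence $h^0(F)\ge n_F+1$. A similar argument, producing a nonzero section of $\det F$ by wedging $n_F$ generating sections, shows $d_F\ge 1$ (otherwise $\det F=\mathcal{O}_C$ and one recovers $F\cong\mathcal{O}_C^{n_F}$, again contradicting $h^0(F^*)=0$).

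The heart of the argument is a ``dual span'' construction applied to $F$ itself. I would choose a general $(n_F+1)$-dimensional subspace $V\subset H^0(F)$. A standard Schubert calculus argument on $\mathrm{Gr}(n_F+1,H^0(F))$ shows that, for each fixed $x\in C$, the locus of $V$ failing to surject onto $F_x$ has codimension at least $2$; since $\dim C=1$, a generic $V$ generates $F$ at every point (when $h^0(F)=n_F+1$, take $V=H^0(F)$). This produces
$$0\to M\to V\otimes \mathcal{O}_C\to F\to 0$$
with $M$ a line bundle of degree $-d_F$. Dualizing and using $h^0(F^*)=0$ yields $h^0(M^*)\ge \dim V=n_F+1$.

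Finally, I apply Clifford's theorem to $M^*$. Because $n_F\le g-2$, we have $0<d_{M^*}=d_F\le 2n_F\le 2g-4<2g-2$, so Riemann--Roch confirms $M^*$ is special. Clifford's inequality gives $h^0(M^*)\le 1+d_{M^*}/2\le n_F+1$, forcing equality throughout. But equality in Clifford for a line bundle of degree strictly between $0$ and $2g-2$ occurs only on hyperelliptic curves, contradicting the hypothesis on $C$.

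The main technical obstacle is producing the generating $(n_F+1)$-dimensional subspace $V$; this is where the hypothesis $h^0(F^*)=0$ plays its decisive role, since without it one could not even guarantee $h^0(F)\ge n_F+1$ and hence the Grassmannian being considered would be empty.
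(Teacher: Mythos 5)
Your proof is correct, and it takes a genuinely different route from the paper's. The paper argues by induction on $n_F$: if $F$ is semistable the claim is immediate from Proposition \ref{pln2}(i),(ii), and otherwise a maximal-slope subbundle $G$ satisfies $d_G>\frac{n_G d_F}{n_F}$ while the quotient $F/G$ inherits the hypotheses, so the inductive bound $d_{F/G}>2n_{F/G}$ closes the argument. You instead avoid induction and any higher-rank input: generating $F$ by a general $(n_F+1)$-dimensional space of sections, dualizing the resulting sequence $0\to M\to V\otimes\cO_C\to F\to 0$ and using $h^0(F^*)=0$ to get a line bundle $M^*$ of degree $d_F$ with $h^0(M^*)\ge n_F+1$, then invoking classical Clifford together with its equality case. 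All the steps check out: the Schubert codimension count for choosing $V$ is the standard one, $h^0(F)\ge n_F+1$ and $d_F\ge1$ are correctly justified from $h^0(F^*)=0$, and the bound $d_F\le 2n_F\le 2g-4$ guarantees both that $M^*$ is special and that equality in Clifford forces $C$ hyperelliptic. What your approach buys is self-containedness (only rank-one Clifford is needed, whereas Proposition \ref{pln2} rests on the higher-rank results of Brambila-Paz--Grzegorczyk--Newstead and Mercat) and in fact a stronger intermediate conclusion, namely $d_F\ge d_{n_F}$, the $n_F$-th gonality --- this is essentially the Paranjape--Ramanan mechanism behind Lemma \ref{l2.2}. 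What the paper's route buys is brevity given the machinery it has already assembled, and a formulation of the inductive step that is reused almost verbatim in Corollary \ref{c2.6}.
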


\begin{proof}
The proof is by induction on the rank. If $n_F = 1$, then $d_F > 2$, since $C$ is non-hyperelliptic. Suppose $n_F \ge 2$ and the result holds for rank $< n_F$. 
If $F$ is semistable, the assertion holds by Proposition \ref{pln2}(i) and (ii). Otherwise, $F$ possesses a proper subbundle $G$ with $d_G > \frac{n_G d_F}{n_F}$.
Moreover, $F/G$ satisfies the hypotheses of the corollary. So by the inductive hypothesis, $d_{F/G} > 2 n_{F/G}$. Hence
$$
d_F > \frac{n_G d_F}{n_F} + 2 n_{F/G}
$$
which implies the assertion.
\end{proof}

\begin{cor}  \label{c2.6}
Let $C$ be a non-hyperelliptic curve of genus $g$. Let $F$ be a vector bundle with $n_F \le g-2$ such that $F$ and every subbundle have slope $\le 2$. Then
$$
h^0(F) \le n_F.
$$
\end{cor}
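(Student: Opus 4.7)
The plan is to argue by induction on $n_F$, using Lemma \ref{l2.2} to convert the assumption $h^0(F)>n_F$ into a lower bound on $d_F$ that contradicts $d_F\le 2n_F$. The base case $n_F=1$ is immediate: for non-hyperelliptic $C$ we have $d_1\ge 3$, so a line bundle of degree $\le 2$ has $h^0\le 1$.

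For the inductive step, assume the result for ranks smaller than $n_F$ and suppose for contradiction that $h^0(F)=n_F+s$ with $s\ge 1$. Every proper subbundle $N\subset F$ inherits the hypotheses of the corollary ($n_N<n_F\le g-2$, $\mu(N)\le 2$, and any subbundle of $N$ is a subbundle of $F$, hence of slope $\le 2$), so by induction $h^0(N)\le n_N$. Thus $F$ has no proper subbundle with $h^0(N)>n_N$, and Lemma \ref{l2.2} gives $d_F\ge d_{n_F s}$. It then suffices to prove $d_{n_F s}>2n_F$, for then $d_F\le 2n_F<d_{n_F s}\le d_F$ is the required contradiction.

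To bound $d_{n_F s}$ from below, I would pick any line bundle $L$ with $h^0(L)\ge n_F s+1$ and split into cases. If $L$ is non-special, Riemann--Roch gives $d_L=h^0(L)+g-1\ge n_F s+g>2n_F$, using $s\ge 1$ and $n_F<g$. If $L$ is special, Clifford's theorem gives $d_L\ge 2(h^0(L)-1)\ge 2n_F s$, which exceeds $2n_F$ as soon as $s\ge 2$. The main obstacle is the borderline case $s=1$ with $L$ special, where Clifford only yields $d_L\ge 2n_F$; here one must invoke the equality conditions, which for non-hyperelliptic $C$ force $L\in\{\mathcal{O}_C,K_C\}$. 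Since $h^0(\mathcal{O}_C)=1<n_F+1$ and $h^0(K_C)=g>n_F+1$ (using $n_F\le g-2$), neither realises the Clifford bound, and so $d_{n_F}>2n_F$ strictly, completing the contradiction.
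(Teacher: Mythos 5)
Your proof is correct, but it follows a genuinely different route from the paper's. The paper splits into two cases: if $F$ is semistable, the bound $h^0(F)\le n_F$ is read off from Proposition \ref{pln2}(i) and (ii) (using $n_F\le g-2$ to see that $n_F+\frac1g(d_F-n_F)<n_F+1$, resp.\ $\frac{g}{g-1}n_F<n_F+1$); if not, it takes a subbundle $G$ of maximal slope, checks that $G$ and $F/G$ both inherit the hypotheses, and concludes by induction on the rank. You instead run a single induction in which the inductive hypothesis guarantees that $F$ has no proper subbundle $N$ with $h^0(N)>n_N$, so that Lemma \ref{l2.2} applies and forces $d_F\ge d_{n_Fs}$; you then bound the gonality $d_{n_Fs}$ from below by $2n_F+1$ using Riemann--Roch in the non-special case and Clifford's theorem (with its equality analysis, where $n_F\le g-2$ rules out $K_C$ and non-hyperellipticity rules out everything else) in the special case. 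Both arguments are sound. The paper's version is shorter on the page because it outsources the semistable case to the already-quoted Proposition \ref{pln2}, at the cost of a small unstated verification that subbundles of $F/G$ also have slope $\le 2$ (this follows from the maximality of $\mu(G)$). Your version is self-contained modulo Lemma \ref{l2.2} and classical curve theory, avoids the higher-rank input of Proposition \ref{pln2} entirely, and yields the slightly stronger quantitative conclusion $d_F\ge d_{n_Fs}>2n_F$ whenever $h^0(F)=n_F+s$ with $s\ge1$.
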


\begin{proof}
If $F$ is semistable, this follows from Proposition \ref{pln2}(i) and (ii). Otherwise, let $G$ be a subbundle of maximal slope. Then both $G$ and $F/G$ satisfy the hypotheses 
of the corollary. The result follows by induction.
\end{proof}

Our next result is the case $g=6$ of  \cite[Proposition 2.4]{ln2}. (Part (i) is contained in \cite{re}.)

\begin{prop} \label{p2.2}
Let $C$ be a non-hyperelliptic curve of genus $6$ and $E$ a semistable bundle on $C$ of rank $n$ and degree $d$.
\begin{enumerate}
 \item[(i)] If $1 \leq \mu(E) \leq 9$, then $h^0(E) \leq \frac{1}{2} (d+n)$.
 \item[(ii)] If $\mu(E) \geq 3$, then $h^0(E) \leq d-n$.
\end{enumerate}
\end{prop}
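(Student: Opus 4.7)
The plan is to prove part (i) first and derive part (ii) from it. For part (i), the first reduction uses Serre duality: by Riemann--Roch, the inequality $h^0(E) \leq \frac{d+n}{2}$ is equivalent to $h^0(E^* \otimes K_C) \leq \frac{(10n-d)+n}{2}$ for the semistable bundle $E^* \otimes K_C$ of slope $10 - \mu(E)$. Since the range $\mu(E) \in [1, 9]$ is symmetric under $\mu \mapsto 10 - \mu$, we may assume $\mu(E) \leq g - 1 = 5$. When $h^0(E) \geq 2n$, the bundle $E$ satisfies the hypotheses in the definition of $\Cliff_n(C)$; combining \eqref{eqcliff} with $\Cliff(C) \geq 1$ (valid since $C$ is non-hyperelliptic of genus $6$), we obtain $d - 2(h^0(E) - n) \geq n\Cliff_n(C) \geq n$, i.e.\ $h^0(E) \leq \frac{d+n}{2}$.

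If instead $h^0(E) \leq 2n - 1$, the analysis splits by $\mu(E)$. For $\mu(E) \leq 2$, Propositions \ref{pln2}(i) and (ii) yield the stronger bounds $h^0(E) \leq n + \frac{d-n}{6}$ or $h^0(E) \leq \frac{6n}{5}$, both easily checked to be at most $\frac{d+n}{2}$ in the relevant range. For $2 < \mu(E) \leq 5$, the trivial bound $2n - 1 \leq \frac{d+n}{2}$ already suffices whenever $d \geq 3n - 2$, leaving only the finitely many integer pairs $(n, d)$ with $2n < d \leq 3n - 3$ (the first being $(4, 9)$). In each such case I would invoke Lemma \ref{l2.2}: writing $h^0(E) = n + s$ with $s$ large enough to violate the target bound, either $d \geq d_{ns}$ or $E$ has a proper subbundle $N$ with $h^0(N) > n_N$. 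The first alternative fails because the inequality $h^0(E) > \frac{d+n}{2}$ forces $s$ to grow linearly in $n$, which then makes $d_{ns}$ grow faster than our $d < 3n$. The second alternative is ruled out by applying Proposition \ref{p2.1}(ii) to the Harder--Narasimhan pieces of $N$, using the gonality data in \eqref{e1}--\eqref{e3} and the constraint $\mu(N) \leq \mu(E) < 3$.

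Part (ii) is then easy. For $3 \leq \mu(E) \leq 9$, the inequality $\frac{d+n}{2} \leq d - n$ reduces to $d \geq 3n$, so part (i) implies part (ii). For $\mu(E) > 9$, the bundle $E^* \otimes K_C$ is semistable of slope $< 1$, so $h^0(E^* \otimes K_C)$ either vanishes (when the slope is negative), is at most $n$ (when the slope is $0$), or is bounded by Proposition \ref{pln2}(i) (when the slope is in $(0, 1)$); in every case, translating via Riemann--Roch gives $h^0(E) \leq d - n$.

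The main obstacle is the intermediate-slope, low-section subcase of part (i), i.e.\ $2 < \mu(E) < 3$ with $h^0(E) < 2n$, where neither the higher-rank Clifford bound (which requires $h^0(E) \geq 2n$) nor Proposition \ref{pln2} (which requires $\mu(E) \leq 2$) applies directly. The delicate step is the exhaustion of possible destabilising subbundles with too many sections, which must be carried out using Proposition \ref{p2.1}(ii), the explicit genus-$6$ gonality data, and an induction on rank via the Harder--Narasimhan filtration.
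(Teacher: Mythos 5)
The paper offers no proof of this proposition: it is quoted as the case $g=6$ of \cite[Proposition 2.4]{ln2}, with part (i) attributed to Re \cite{re}, whose argument rests on multiplication of sections rather than gonality estimates. Your proposal therefore has to stand on its own, and it does not: there is a genuine gap in precisely the subcase you flag as delicate, namely $2<\mu(E)<3$ with $h^0(E)<2n$ and $2n<d\le 3n-3$ (equivalently, after your Serre-duality reduction, $7<\mu(E)<8$). In that subcase Lemma \ref{l2.2} does force the existence of a proper subbundle $N$ with $h^0(N)>n_N$ (your estimate ruling out $d\ge d_{ns}$ is fine), but your claim that this alternative ``is ruled out by applying Proposition \ref{p2.1}(ii) to the Harder--Narasimhan pieces of $N$'' is not tenable. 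Proposition \ref{p2.1}(ii) requires $\mu<\frac{d_p}{p}$ for \emph{every} $p$ up to the rank, and these ratios decrease: $\frac{d_3}{3}\le\frac{8}{3}$, $\frac{d_4}{4}=\frac{9}{4}$, and $\frac{d_p}{p}\le 2$ for all $p\ge 5$. So the hypothesis simply fails for a semistable Harder--Narasimhan piece of slope in $(2,3)$ once its rank is $3$ or more, and the data \eqref{e1}--\eqref{e3} cannot rescue it.

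Worse, the conclusion you need --- that every semistable bundle of slope less than $3$ has $h^0$ at most its rank --- is false: $E_L$ with $L=K_C(-p)$ is stable of rank $4$, degree $9$, slope $\frac94\in(2,3)$ and $h^0=5$ (Lemma \ref{l3.4}), and $D(K_C)$ is stable of rank $5$, degree $10$ and $h^0=6$. Indeed Proposition \ref{p3.5}(iii) shows that subbundles of the first kind genuinely occur inside semistable bundles of slope just above $\frac94$, so the second alternative of Lemma \ref{l2.2} produces no contradiction and the hard subcase remains unproved; closing it requires a different mechanism (Re's multiplication-of-sections argument, or an induction that bounds $h^0(N)+h^0(E/N)$ rather than trying to force $h^0(N)\le n_N$). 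The rest of your outline is sound --- the duality reduction, the Clifford-index step when $h^0(E)\ge 2n$, the low-slope case via Proposition \ref{pln2}, and the deduction of (ii) from (i) together with the slope $>9$ analysis --- but note that (ii) inherits the gap for $7<\mu<8$ since you derive it from (i).
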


The following is \cite[Lemma 2.5]{ln2}.

\begin{lem} \label{l2.3}
Let $C$ be a curve. Suppose that $N$ is a generated line bundle on $C$ with $h^0(N) = 2$. Then, for any bundle $E$,
$$
h^0(N \otimes E) \geq 2h^0(E) - h^0(N^* \otimes E).
$$
In particular, if $E$ is either semistable with $\mu(E) < d_N$ or stable of rank $>1$ with $\mu(E) \leq d_N$, then
$$
h^0(N \otimes E) \geq 2h^0(E).
$$
\end{lem}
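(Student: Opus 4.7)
The plan is to start from the dual span (evaluation) sequence \eqref{eq2.1} for the generated line bundle $N$, namely
$$
0 \ra N^* \ra H^0(N) \otimes \cO_C \ra N \ra 0,
$$
and tensor with $E$. Since $E$ is locally free this preserves exactness, producing
$$
0 \ra N^* \otimes E \ra H^0(N) \otimes E \ra N \otimes E \ra 0.
$$
The associated long exact sequence in cohomology begins with
$$
0 \ra H^0(N^* \otimes E) \ra H^0(N) \otimes H^0(E) \ra H^0(N \otimes E),
$$
and the middle term has dimension $2 h^0(E)$ by the hypothesis $h^0(N) = 2$. Comparing dimensions along this three-term exact sequence immediately yields the first inequality $h^0(N \otimes E) \ge 2 h^0(E) - h^0(N^* \otimes E)$.

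For the ``in particular'' clause, it then suffices to verify that $h^0(N^* \otimes E) = 0$ under either set of hypotheses. The bundle $N^* \otimes E$ has slope $\mu(E) - d_N$ and inherits (semi)stability from $E$. If $E$ is semistable with $\mu(E) < d_N$, this slope is strictly negative, so $N^* \otimes E$ has no nonzero section. If $E$ is stable of rank $>1$ with $\mu(E) \le d_N$, then $N^* \otimes E$ is stable of rank $>1$ with slope $\le 0$; a nonzero section would yield an injection $\cO_C \hra N^* \otimes E$ whose saturation is a rank-one subbundle of degree $\ge 0$, contradicting the stability inequality.

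The one mildly delicate step is the boundary case $\mu(E) = d_N$ in the stable setting: a crude slope comparison no longer gives strict negativity, and one genuinely needs the saturation argument to exclude sections of a stable bundle of slope exactly $0$ and rank $>1$. Everything else in the proof is a direct consequence of the long exact sequence, with no further input beyond the rank one identity $H^0(H^0(N)\otimes E) = H^0(N)\otimes H^0(E)$.
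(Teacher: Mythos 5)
Your argument is correct and is the standard one: tensor the evaluation sequence $0 \ra N^* \ra H^0(N)\otimes\cO_C \ra N \ra 0$ with $E$, take the long exact cohomology sequence to get the dimension count, and kill $H^0(N^*\otimes E)$ by a slope/saturation argument under either (semi)stability hypothesis, with the saturation step genuinely needed in the boundary case $\mu(E)=d_N$. The paper gives no proof here, simply citing \cite[Lemma 2.5]{ln2}, and your proof matches that standard argument, so there is nothing to add.
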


\begin{prop} \label{prop2.4}
Let $C$ be a trigonal curve of genus $6$ and $3n < d < 5n$.  
If $k \leq 2\left\lfloor n + \frac16(d -4n)\right\rfloor$ and $(n,d,k)\ne(n,4n,2n)$ or $(n,4n,2n-1)$, then 
$$
B(n,d,k) \neq \emptyset.
$$
\end{prop}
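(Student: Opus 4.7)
The strategy is to reduce the assertion to Proposition \ref{pln2}(i) by twisting with the trigonal line bundle $T$, exploiting the fact that, when the slope of $E$ is below $d_T = 3$, the pairing with $T$ doubles the number of sections by Lemma \ref{l2.2}\,(sic: Lemma \ref{l2.3}).

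Set $d' := d - 3n$, so that the hypothesis $3n < d < 5n$ becomes $0 < d' < 2n$; note that $d' = n$ happens exactly when $d = 4n$. Write $m := \lfloor n + \tfrac{1}{6}(d-4n)\rfloor = \lfloor n + \tfrac{1}{6}(d'-n)\rfloor$. Define
$$
m' := \begin{cases} m & \text{if } d \ne 4n, \\ n-1 & \text{if } d = 4n. \end{cases}
$$
Then in all cases $m' \le n + \tfrac{1}{6}(d'-n)$ and, crucially, $(n, d', m') \ne (n, n, n)$ (we dodged the bad case by choosing $m' = n-1$ rather than $n$ when $d = 4n$). Thus Proposition \ref{pln2}(i) gives a stable bundle $E \in B(n, d', m')$.

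Now consider $F := T \otimes E$. Tensoring by a line bundle preserves stability, so $F$ is stable of rank $n$ and degree $d' + 3n = d$. Since $\mu(E) = d'/n < 2 < 3 = d_T$, Lemma \ref{l2.3} applies (in either its semistable or stable form) and yields
$$
h^0(F) = h^0(T \otimes E) \ge 2 h^0(E) \ge 2 m'.
$$
Hence $F \in B(n, d, 2m')$, and using $B(n, d, k) \supseteq B(n, d, k+1)$ one concludes $B(n, d, k) \ne \emptyset$ for every $k \le 2m'$. For $d \ne 4n$ this is $k \le 2m$, the full range claimed; for $d = 4n$ this is $k \le 2n-2$, which is exactly the range $k \le 2m$ with the exceptions $k = 2n-1$ and $k = 2n$ excluded.

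The only real obstacle is the bookkeeping around $d = 4n$: one has to notice that the naive choice $m' = m$ would demand $B(n,n,n) \ne \emptyset$, which fails for $n \ge 2$ by Proposition \ref{pln2}(i), and this is precisely what forces the two excluded triples in the statement. Everything else is routine: the slope bound needed in Lemma \ref{l2.3} is comfortably satisfied since $\mu(E) < 2$, and stability of $T \otimes E$ is automatic.
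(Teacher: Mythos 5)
Your proof is correct and is essentially the argument the paper intends: the paper proves this by citation to \cite[Proposition 2.6]{ln2}, but its own proof of the companion statement Proposition \ref{p2.11} is exactly your argument (apply Proposition \ref{pln2}(i) to $B(n,d-3n,k')$, then tensor with the degree-$3$ pencil via Lemma \ref{l2.3}). Your explicit handling of the excluded triple $(n,n,n)$ at $d=4n$, which is what forces the two exceptions in the statement, is the right bookkeeping.
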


This is the case $g=6$ of \cite[Proposition 2.6]{ln2}.
We have the following similar result, valid for any curve of genus $6$ which admits a generated line bundle of degree $4$ (see Remark \ref{reme}).

\begin{prop} \label{p2.11}
Let $C$ be a curve of genus $6$ which admits a generated line bundle of degree $4$ and $4n < d \le 5n$. If $k \le  2\left\lfloor n + \frac16(d -5n)\right\rfloor$ and $(n,d,k) \neq (n,5n,2n)$ or $(n,5n,2n-1)$, then
$$
B(n,d,k) \neq \emptyset.
$$
\end{prop}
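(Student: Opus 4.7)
The plan is to mirror the proof of Proposition \ref{prop2.4} (the trigonal case), with the generated bundle $T$ replaced by a generated line bundle $N$ of degree $4$. Observe first that, as discussed in Remark \ref{reme}, the strict Clifford inequality forces $h^0(N) = 2$ for every generated degree-$4$ line bundle on a non-hyperelliptic curve of genus $6$; this is exactly the setting in which the hypothesis of the proposition is non-vacuous. Fix such an $N$.

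Set $k_0 := \lfloor n + \frac{1}{6}(d - 5n) \rfloor$ and $k' := \lceil k/2 \rceil$, so $k' \le k_0$, and set $d_0 := d - 4n$. The next step is to apply Proposition \ref{pln2}(i) to the triple $(n, d_0, k')$. Since $4n < d \le 5n$, we have $0 < d_0 \le n < 2n$, and
$$k' \le k_0 \le n + \tfrac{1}{6}(d - 5n) = n + \tfrac{1}{g}(d_0 - n),$$
so $B(n, d_0, k')$ is non-empty \emph{unless} we land in the exception $(n, d_0, k') = (n, n, n)$ with $n \ge 2$. This exception forces $d = 5n$ and $k' = n$, which is precisely the case $k \in \{2n-1, 2n\}$; both are ruled out by hypothesis. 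Pick any stable $E_0 \in B(n, d_0, k')$.

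Now set $E := N \otimes E_0$. This is stable of rank $n$ and degree $4n + d_0 = d$, since tensoring by a line bundle preserves stability. Moreover $\mu(E_0) = d_0/n \le 1 < 4 = d_N$, so for $n \ge 2$ we are in the ``stable of rank $>1$ with $\mu(E_0) < d_N$'' case of Lemma \ref{l2.2}'s partner Lemma \ref{l2.3}, and for $n = 1$ in the semistable case; either way the lemma yields
$$h^0(E) \ge 2 h^0(E_0) \ge 2 k' \ge k,$$
so $E \in B(n, d, k)$ as required.

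The only delicate point is matching the exceptional cases: the single exception $(n, n, n)$ in Proposition \ref{pln2}(i) translates into the two excluded triples $(n, 5n, 2n)$ and $(n, 5n, 2n-1)$ because $\lceil k/2 \rceil = n$ holds both for $k = 2n$ and for $k = 2n-1$. This is precisely why both triples are removed from the statement; once this bookkeeping is done the proof reduces to a single tensoring trick, and there is no substantive further obstacle.
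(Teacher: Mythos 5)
Your proof is correct and follows essentially the same route as the paper: apply Proposition \ref{pln2}(i) to $B(n,d-4n,k')$ with $k'=\lceil k/2\rceil$, note that the exceptional triple $(n,n,n)$ accounts exactly for the excluded cases $(n,5n,2n)$ and $(n,5n,2n-1)$, and then tensor by the generated degree-$4$ line bundle via Lemma \ref{l2.3}. The paper's proof is just a terser version of the same argument, so there is nothing further to add.
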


\begin{proof}
By Proposition \ref{pln2}, $B(n,d-4n, k') \neq \emptyset$ for $k' \leq n + \frac16(d-5n)$, except when $d=5n$ and $k' =n$ with $n \ge 2$. 
Now take $N$ in Lemma \ref{l2.3} to be a generated line bundle of degree $4$. The result follows.
\end{proof}

For curves of Clifford index $2$,  \cite[Theorem 2.1]{m} (see also \cite[Proposition 2.7]{ln2}) gives the following stronger version of Proposition \ref{p2.2}(i).

\begin{prop} \label{p1.6}
Let $C$ be a curve of genus $6$ with $\Cliff(C)=2$ and $E$ a semistable bundle on $C$ of rank $n$ and slope $\mu = \frac{d}{n}$. 
\begin{enumerate}
\item[(i)]
If $3 \leq \mu \leq 7$, then
$$
h^0(E) \leq \frac{d}{2}.
$$
\item[(ii)] 
If $1 \leq \mu \leq 3$, then 
$$
h^0(E) \leq n+\frac14(d -n).
$$
\end{enumerate}
\end{prop}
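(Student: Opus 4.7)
The plan is to derive both bounds from the higher rank Clifford index together with Serre duality. By \eqref{eqcliff} we have $\Cliff_n(C)=\Cliff(C)=2$ for every $n\ge 1$; unwinding the definition of $\Cliff_n(C)$, this says that every semistable bundle $F$ of rank $n$ with $\mu(F)\le g-1=5$ and $h^0(F)\ge 2n$ satisfies
\[
d_F - 2(h^0(F)-n) \ge 2n,
\quad\text{equivalently,}\quad
h^0(F) \le d_F/2. \qquad(\ast)
\]

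For part (i) in the range $3\le \mu(E)\le 5$, I would apply $(\ast)$ directly whenever $h^0(E)\ge 2n$. If $h^0(E)<2n$, then $h^0(E)\le 2n-1$, which is at most $d/2$ as soon as $\mu\ge 4$; in the stubborn subrange $3\le\mu<4$ with $h^0(E)$ close to $2n$, one has to look more carefully at a maximally destabilising filtration of $E$ and sum the Clifford-type estimate on each graded piece. For $5<\mu(E)\le 7$ I would reduce to the previous range by dualising: $F:=K_C\otimes E^*$ is semistable of slope $10-\mu\in[3,5)$, so the previous step gives $h^0(F)\le(10n-d)/2$, and Riemann--Roch $h^0(E)=h^0(F)+d-5n$ yields $h^0(E)\le d/2$.

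For part (ii) in the range $1\le\mu(E)\le 3$, the factor $1/4$ in the bound $h^0(E)\le n+(d-n)/4$ reflects $1/d_1=1/4$ (by \eqref{e3}). My plan is to apply Lemma \ref{l2.2} with $s=h^0(E)-n$: provided $E$ has no proper subbundle $N$ with $h^0(N)>n_N$, one gets $d_E\ge d_{ns}$, and combining this with the gonality values $d_1=4,\, d_2=6,\, d_3=8,\, d_4=9,\, d_5=10$ for curves of genus 6 with $\Cliff(C)=2$ gives the claimed inequality after a short arithmetic check. If instead $E$ admits such a subbundle $N$, split $E$ along it and argue by induction on the rank, using that both $N$ and $E/N$ again fall under the hypothesis of the proposition (possibly after a further refinement of the filtration).

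The main obstacle is part (ii), together with the leftover subcase of part (i) in which $h^0(E)<2n$ and $\mu\in[3,4)$: in both places the raw Clifford inequality $(\ast)$ is just slightly too weak, and one has to iterate it along the Jordan--Hölder filtration and track the gonality sequence carefully. This bookkeeping is the content of Mercat's \cite[Theorem 2.1]{m}, which is what I would ultimately quote for the clean packaging of the argument.
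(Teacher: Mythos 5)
The paper offers no proof of this proposition at all — it is quoted verbatim from Mercat \cite[Theorem 2.1]{m} (see also \cite[Proposition 2.7]{ln2}) — so your proposal, which ultimately defers to exactly that reference for "the clean packaging", matches the paper's treatment. Be aware, though, that the steps you flag as delicate really are gaps rather than routine bookkeeping: a semistable bundle has no non-trivial destabilising filtration to exploit, and in part (ii) the quotient $E/N$ may have slope greater than $3$, so combining the part (ii) bound for $N$ with the part (i) bound for $E/N$ gives $h^0(E)\le \frac{3n_N}{4}-\frac{d_N}{4}+\frac{d}{2}$, which exceeds $n+\frac14(d-n)$ precisely when $\mu(E/N)>3$ — so the citation is carrying the full weight of the argument, just as it does in the paper.
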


When $C$ is bielliptic, there is a 
partial converse to this proposition (see \cite[Theorem 5.3 and Proposition 5.4]{b} and \cite[Theorem 
3.1]{m}).

\begin{prop} \label{p1.7}
Let $C$ be a bielliptic curve and $n,d$ and $k$ positive integers.
\begin{enumerate}
\item[(i)] If $k \leq \frac{d}{2}$, then there exists a semistable bundle $E$ of rank $n$ and degree $d$ with $h^0(E) \geq k$.
\item[(ii)] If $k < \frac{d}{2}$, then there exists a stable bundle of rank $n$ and degree $d$ with 
$h^0(E) \geq k$.
\end{enumerate}
\end{prop}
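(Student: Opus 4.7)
The strategy is to use the bielliptic double cover $\pi : C \to X$, with $X$ an elliptic curve, transferring the problem to $X$ where vector bundles are fully classified by Atiyah. On $X$, for any positive integers $n$ and $d'$, one can produce a semistable bundle $F$ of rank $n$ and degree $d'$ with $h^0(F) = d'$; a concrete recipe is $F = \bigoplus_i F_i$, where each summand is an indecomposable semistable bundle of the common slope $d'/n$, so that $h^0(F_i) = \deg F_i$ by Atiyah's classification.

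For part (i), pullback by the finite morphism $\pi$ preserves semistability in characteristic zero, so $\pi^* F$ is a semistable bundle on $C$ of rank $n$, degree $2d'$, with $h^0(\pi^* F) \geq h^0(F) = d'$. If $d = 2d'$ is even, this gives the desired bundle with $h^0 \geq d/2 \geq k$. If $d = 2d'+1$ is odd, tensor $\pi^* F$ with $\cO_C(p)$ for any point $p \in C$; the result is still semistable of degree $d$ with $h^0 \geq d'$, and since $k$ is integral and $k \leq d/2 = d' + \tfrac12$, we have $k \leq d'$ as required.

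For part (ii), the strict inequality $k < d/2$ provides the slack needed to achieve actual stability. The difficulty is that $\pi^* F$ is typically only strictly semistable: if $\sigma$ denotes the bielliptic involution, then the canonical isomorphism $\sigma^* \pi^* F \cong \pi^* F$ forces a $\sigma$-invariant Jordan--H\"older filtration. The approach I would take is to perform a general elementary transformation $0 \to V \to \pi^* F \to \cO_p \to 0$ at a general point $p \in C$ with a general one-dimensional quotient of the fibre: this decreases the degree by $1$ and drops $h^0$ by at most $1$, while breaking the $\sigma$-invariance. A standard argument shows that for generic data the resulting $V$ is stable; combined if necessary with a twist by $\cO_C(q)$ to restore the degree to $d$, one obtains a stable bundle of rank $n$, degree $d$, and $h^0 \geq k$ (the drop of one section is absorbed by the strict inequality $k < d/2$).

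The main obstacle is the stability argument in (ii): pullbacks from the elliptic quotient are never stable in the presence of $\sigma$-symmetry, so one must work harder, either by elementary transformations as above (essentially the route in \cite{b}) or by a moduli-theoretic dimension count showing that $\widetilde B(n,d,k)$ is too large to lie entirely in the strictly semistable locus when the inequality is strict. In either approach, the passage from $k \leq d/2$ to $k < d/2$ is exactly the one extra dimension of freedom that suffices to guarantee a stable representative.
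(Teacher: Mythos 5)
First, note that the paper does not prove Proposition \ref{p1.7} at all: it is quoted from \cite[Theorem 5.3 and Proposition 5.4]{b} and \cite[Theorem 3.1]{m}, so there is no internal proof to compare against. Your overall strategy --- pull back Atiyah bundles from the elliptic quotient and then modify on $C$ --- is the natural one and is essentially the route taken in those references, but as written it contains a step that fails outright and leaves the genuinely hard point unproved.

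The step that fails: tensoring a rank-$n$ bundle by $\cO_C(p)$ raises the degree by $n$, not by $1$. You use this twice, once in (i) to pass from degree $2d'$ to the odd degree $2d'+1$ and once in (ii) to ``restore the degree to $d$''; for $n>1$ both steps land on the wrong degree. The only rank-preserving way to change the degree by $1$ is an elementary transformation, and that collides with your second gap: an elementary transformation of a strictly semistable bundle need not be semistable, let alone stable. For instance, if $L$ is a line bundle of degree $e$, every extension $0\ra L\oplus L\ra E\ra\CC_p\ra0$ contains a copy of $L(p)$ (saturate the diagonal embedding $s\mapsto(\lambda s,\mu s)$ for the unique $(\lambda:\mu)$ matching the extension datum at $p$), and $e+1>\mu(E)=e+\tfrac12$, so $E$ is never semistable. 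Hence ``a standard argument shows that for generic data the resulting $V$ is stable'' is not available off the shelf: one must first arrange that the stable Jordan--H\"older factors of $\pi^*F$ are pairwise non-isomorphic and then control all subsheaves of maximal slope, which is exactly the kind of work done in Proposition \ref{p2.5} for sums of distinct line bundles and in Lemma \ref{l3.12} and Propositions \ref{p3.13} and \ref{p5.15} for higher-rank sums. You do flag this as the main obstacle, but it is the entire content of the proposition, not a footnote. Finally, once the degree is adjusted only by elementary transformations, the bookkeeping in (ii) changes: for $d$ even you need two of them (e.g.\ starting from $F$ of degree $\tfrac{d}{2}+1$ on the elliptic curve), each costing at most one section, and the strict inequality $k<\tfrac{d}{2}$, i.e.\ $k\le\tfrac{d}{2}-1$, is then exactly the slack required --- your version budgets for the loss of only one section.
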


A common method of construction is that of elementary transformations. We have in particular (see \cite[Th\'eor\`eme A.5]{m3})
\begin{prop} \label{p2.5}
 Let $C$ be a curve of genus $g\ge2$ and $L_1, \dots, L_n$ line bundles of degree $d$ on $C$ with $L_i \not \simeq L_j$ for $i \neq j$ and let $t > 0$. Then
 \begin{enumerate}
  \item[(i)] there exist stable bundles $E$ fitting into an exact sequence
  $$
  0 \ra L_1 \oplus \cdots \oplus L_n \ra E \ra \tau \ra 0
  $$
  where $\tau$ is a torsion sheaf of length $t$;
  \item[(ii)] there exist stable bundles $E$ fitting into an exact sequence
  $$
  0 \ra E \ra L_1 \oplus \cdots \oplus L_n \ra \tau \ra 0
  $$
  with $\tau$ as above.
 \end{enumerate}
\end{prop}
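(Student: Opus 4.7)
The plan is to construct $E$ as a generic elementary transformation of $F := L_1 \oplus \cdots \oplus L_n$, exploiting that $F$ is polystable with pairwise non-isomorphic summands all of slope $d$, and to deduce stability from a genericity argument.

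For part (i), extensions $0 \to F \to E \to \tau \to 0$ are classified by $\Ext^1(\tau, F)$, and a general element corresponds to a locally free sheaf $E$ of rank $n$ and degree $nd + t$. I would proceed by induction on $t$. In the base case $t = 1$, with $\tau = k_p$, the extension class lies in $F_p$. Suppose $E' \subsetneq E$ were a destabilising subbundle of rank $r < n$, so $\deg E' \ge rd + 1$. Setting $F_1 := E' \cap F$, this is a rank-$r$ subsheaf of $F$, and by the semistability of $F$ its saturation $\overline{F_1}$ in $F$ has $\deg \overline{F_1} \le rd$. Since $E'/F_1$ embeds in $E/F = k_p$, the degree inequality forces $E'/F_1 \cong k_p$, $F_1 = \overline{F_1}$ and $\deg F_1 = rd$. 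The polystability of $F$ together with the pairwise non-isomorphism of the $L_i$ then forces $F_1 = \bigoplus_{i \in I} L_i$ for some proper subset $I \subsetneq \{1, \dots, n\}$. Existence of such $E'$ is equivalent to the induced extension $0 \to F/F_1 \to E/F_1 \to k_p \to 0$ being split, which is equivalent in turn to the image of $\xi$ in $(F/F_1)_p$ vanishing, i.e.\ to $\xi \in (F_1)_p$. Choosing $\xi$ outside the finite union of these proper subspaces of $F_p$ makes $E$ stable.

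For general $t$ I would iterate: if $E_j$ is a stable bundle of slope $d + j/n$ obtained at the previous stage, a further general single-point elementary transformation produces a stable $E_{j+1}$. This last step is the classical fact that a generic one-point elementary transformation of a stable bundle of rank $\geq 2$ is again stable, proved by an analogous dimension count that now uses the strict slope inequality on proper subbundles of $E_j$ instead of the polystable decomposition of $F$.

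Part (ii) follows from (i) by duality. Dualising $0 \to E \to F \to \tau \to 0$ yields the exact sequence
\[
0 \to F^\vee \to E^\vee \to \Ext^1(\tau, \cO_C) \to 0,
\]
in which $F^\vee = L_1^\vee \oplus \cdots \oplus L_n^\vee$ is again a direct sum of pairwise non-isomorphic line bundles of the same degree and $\Ext^1(\tau, \cO_C)$ is a torsion sheaf of length $t$. Applying (i) to this sequence shows that $E^\vee$ is stable, whence so is $E$. The main obstacle is the base case of part (i): once the polystable structure of $F$ has been used to pin down the admissible subsheaves $F_1$, the genericity argument is routine, but without that structural input the classification of potentially destabilising configurations becomes much more delicate.
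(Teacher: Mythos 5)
First, note that the paper itself gives no proof of Proposition \ref{p2.5}: it is quoted verbatim from Mercat \cite[Th\'eor\`eme A.5]{m3}. So the comparison here is really between your argument and what would be needed for a self-contained proof.

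Your treatment of the base case $t=1$ is correct and complete, and it is the standard argument: the slope of a destabilising $E'$ forces $\deg E'=rd+1$, hence $F_1=E'\cap F$ is a saturated subsheaf of $F$ of slope exactly $d$, and polystability with pairwise non-isomorphic summands pins $F_1$ down to one of the finitely many coordinate subsheaves $\bigoplus_{i\in I}L_i$; a class $\xi\in F_p$ avoiding the finitely many proper subspaces $(F_1)_p$ then does the job (and, since $\xi\neq 0$, also guarantees that $E$ is locally free). The deduction of (ii) from (i) by dualising is also fine.

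The gap is in the inductive step for $t\ge 2$. You invoke ``the classical fact that a generic one-point elementary transformation of a stable bundle of rank $\ge 2$ is again stable'' and assert that it is ``proved by an analogous dimension count''. It is not analogous, and this is precisely where the content of the general-$t$ statement lies. In the base case the dangerous subsheaves form a \emph{finite} set because $F$ is polystable with pairwise non-isomorphic constituents; for an arbitrary stable bundle $E_j$ the dangerous subbundles are those $F_1\subset E_j$ of rank $r$ and degree $e$ with $e\ge r(\deg E_j+1)/n-1$ (i.e.\ of nearly maximal degree), and these can a priori form a positive-dimensional family. If that family had dimension $\ge n-r$, the union $\bigcup_{F_1}(F_1)_q$ could fill the whole fibre $(E_j)_q$ and \emph{no} choice of $\xi$ would work. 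So one must prove a genuine dimension bound, namely that the relevant locus in the Quot scheme of quotients of $E_j$ has dimension at most $n-r-1$; this is where the hypothesis $g\ge 2$ enters (e.g.\ via estimates on varieties of maximal subbundles in the style of Lange--Narasimhan, or the Quot-scheme computations in Mercat's appendix), and it is not supplied by ``the strict slope inequality on proper subbundles of $E_j$'' alone. As written, your proof of the case $t\ge2$ therefore reduces the proposition to an uncited statement whose proof requires a new idea beyond what you have set up; either cite that statement properly (it is essentially the result of \cite{m3} that the paper is quoting) or carry out the Quot-scheme dimension count. The rest of the argument stands.
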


\begin{prop}   \label{p2.16}
Let $C$ be a curve of genus $6$. Then $B(n,d,k) \neq \emptyset$ for $4n < d < 5n$ and $k \le d-3n$. 
\end{prop}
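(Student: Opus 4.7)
The plan is to invoke Serre duality to reduce the statement to a claim at higher slope. On a genus $6$ curve, $B(n,d,k)\simeq B(n,10n-d,k-d+5n)$; writing $d=4n+t$ with $t\in(0,n)$ (so $n\ge 2$) and $k=d-3n=n+t$, this transforms the target into $B(n,6n-t,2n)$. Since $B(n,d,k)$ is monotone decreasing in $k$, it is enough to produce a single stable bundle of rank $n$, degree $6n-t$, with at least $2n$ sections. The general strategy is to realise such a bundle as $N\otimes F$, where $F$ is a stable bundle with $h^0(F)\ge n$ and $N$ is a generated line bundle with $h^0(N)\ge 2$ chosen so that $N\otimes F$ has degree $6n-t$.

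For $C$ non-hyperelliptic and non-trigonal (a smooth plane quintic or a curve with $\Cliff(C)=2$), Remark \ref{reme} supplies a generated line bundle $L\in B(1,4,2)$. I would then take $F\in B(n,2n-t,n)$, whose existence follows from Prop \ref{pln2}(i) (since $n<2n-t<2n$, $n\le n+(n-t)/g$, and $(n,2n-t,n)\ne(n,n,n)$ because $t>0$). As $F$ is stable of rank $n\ge 2$ with $\mu(F)=2-t/n<2\le 4=d_L$, Lemma \ref{l2.3} yields $h^0(L\otimes F)\ge 2h^0(F)\ge 2n$, and $L\otimes F$ is stable of rank $n$ and degree $6n-t$. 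For $C$ trigonal, the same argument works with $N=T$ (generated of degree $3$ with $h^0(T)=2$) and $F\in B(n,3n-t,n)$, whose existence follows from Cor \ref{c2.2} with $s=2$; here $\mu(F)=3-t/n<3=d_T$.

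The hyperelliptic case is the principal obstacle, since every line bundle of degree $4$ with $h^0=2$ on $C$ has the form $H(p+q)$ and carries base points at $p$ and $q$, so no generated $L$ of the above type exists. My plan is instead to use $N=H^2$, which is generated of degree $4$ with $h^0(H^2)=3$. Pulling back the dual span sequence on $\PP^1$ via $\pi\colon C\to\PP^1$ produces
\[0\to H^{-1}\oplus H^{-1}\to H^0(H^2)\otimes\cO_C\to H^2\to 0.\]
Taking $F\in B(n,2n-t,n)$ as above and tensoring, the cohomology sequence gives $h^0(H^2\otimes F)\ge 3h^0(F)-2h^0(H^{-1}\otimes F)$. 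The bundle $H^{-1}\otimes F$ is stable of negative degree $-t$ and has no sections: any nonzero one would give an injection $H\hookrightarrow F$, contradicting stability since $\deg H=2>\mu(F)$. Hence $h^0(H^2\otimes F)\ge 3n\ge 2n$, and $H^2\otimes F\in B(n,6n-t,2n)$. In each of the three cases, Serre duality then returns $B(n,d,d-3n)\ne\emptyset$, giving the claim for every $k\le d-3n$.
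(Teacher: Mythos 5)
Your argument is correct, but it takes a genuinely different route from the paper. The paper's proof is a one-step elementary transformation: choose $n$ pairwise non-isomorphic line bundles $L_1,\dots,L_n$ of degree $5$ with $h^0(L_i)=2$ (these exist on every curve of genus $6$ since $\beta(1,5,2)=2>0$), and apply Proposition \ref{p2.5}(ii) to obtain a stable subsheaf $E\subset L_1\oplus\cdots\oplus L_n$ of colength $5n-d$; then $h^0(E)\ge 2n-(5n-d)=d-3n$. This is uniform over all genus-$6$ curves, hyperelliptic ones included, and needs neither Serre duality nor any case division. You instead dualize to $B(n,6n-t,2n)$ and produce the required bundle as $N\otimes F$, with $F$ stable carrying $n$ sections (supplied by Proposition \ref{pln2}(i) or Corollary \ref{c2.2}) and $N$ a suitable generated pencil, invoking Lemma \ref{l2.3} --- or, for hyperelliptic $C$, the pulled-back Euler sequence for $H^2$, which is essentially equivalent to the double application of Lemma \ref{l2.3} used in Section \ref{hyper}. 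The three-way case split this forces on you is correctly handled: in particular you rightly avoid relying on $B(1,4,2)$ for trigonal curves, where a generated member need not exist (Remark \ref{reme}), by using $T$ instead. What your route buys is a marginally stronger conclusion in the hyperelliptic case ($3n$ sections rather than the $2n$ needed); what it costs is the detour through Serre duality and the case analysis that the paper's direct construction avoids.
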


\begin{proof}
Take $L_1, \dots, L_n$ to be pairwise non-isomorphic line bundles of degree 5 with $h^0(L_i) = 2$. Such bundles exist for all $n$ on any curve of genus 6. The result follows from Proposition
\ref{p2.5}(ii).
\end{proof}

\begin{prop} \label{p2.17}
Let $C$ be a
curve of genus $6$ such that either $\Cliff(C) = 1$ or $C$ is bielliptic. Then $B(n,d,k) \neq \emptyset$ in the following cases:
\begin{enumerate}
\item[(i)] $3n < d <4n$ and $k \le d -2n$;
\item[(ii)] $4n<d<5n$ and $k\le2n$.
\end{enumerate}
\end{prop}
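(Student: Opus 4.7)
The plan is to deduce both parts uniformly from Proposition \ref{p2.5}, applied to a one-parameter family of degree-$4$ line bundles with $h^0\ge 2$. The hypothesis that either $\Cliff(C) = 1$ or $C$ is bielliptic is precisely what guarantees, via Proposition \ref{p2.0}(iii), that $\dim B(1,4,2) = 1$; consequently we may choose $n$ pairwise non-isomorphic line bundles $L_1,\ldots,L_n$ of degree $4$ with $h^0(L_i) \ge 2$ for each $i$. Note that we do not need the $L_i$ to be generated, which is fortunate since this can fail on special trigonal curves (cf.\ Remark \ref{reme}).

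For part (i), I would set $t := 4n - d$, so that $0 < t < n$ in view of $3n < d < 4n$, and apply Proposition \ref{p2.5}(ii) to the $L_i$ and this value of $t$. This produces a stable bundle $E$ of rank $n$ and degree $4n - t = d$ fitting into
$$
0 \lra E \lra L_1 \oplus \cdots \oplus L_n \lra \tau \lra 0,
$$
with $\tau$ a torsion sheaf of length $t$. Since $h^0(\tau) = t$ and $h^0(L_1 \oplus \cdots \oplus L_n) \ge 2n$, the long exact sequence in cohomology gives
$$
h^0(E) \;\ge\; 2n - t \;=\; d - 2n \;\ge\; k,
$$
so $E \in B(n,d,k)$, as required.

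For part (ii), the construction is dual: set $t := d - 4n \in (0,n)$ and invoke Proposition \ref{p2.5}(i) to obtain a stable bundle $E$ of rank $n$ and degree $4n + t = d$ fitting into
$$
0 \lra L_1 \oplus \cdots \oplus L_n \lra E \lra \tau \lra 0;
$$
the inclusion of $L_1 \oplus \cdots \oplus L_n$ into $E$ induces an injection on global sections, so $h^0(E) \ge \sum_{i=1}^{n} h^0(L_i) \ge 2n \ge k$. There is essentially no obstacle to overcome: Proposition \ref{p2.5} supplies the stability for free, Proposition \ref{p2.0}(iii) supplies the required positive-dimensional family of line bundles, and the cohomological estimate reduces in each case to keeping track of the length of $\tau$.
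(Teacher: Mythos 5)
Your proof is correct and follows essentially the same route as the paper: the authors likewise take $n$ pairwise non-isomorphic degree-$4$ line bundles with $h^0\ge 2$ (available by Proposition \ref{p2.0}) and apply Proposition \ref{p2.5}(ii) for part (i) and Proposition \ref{p2.5}(i) for part (ii), with the same section count. Your explicit bookkeeping of the torsion length and the remark that generatedness of the $L_i$ is not needed are accurate but add nothing beyond the paper's argument.
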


\begin{proof}
For (i), the proof is the same as for Proposition \ref{p2.16} with $L_1, \dots , L_n$ pairwise non-isomorphic 
line bundles of degree 4 with $h^0(L_i) \ge 2$. These exist by Proposition \ref{p2.0}. (ii) is proved similarly using Proposition \ref{p2.5}(i).
\end{proof}

Another  useful way of constructing stable or semistable bundles with a specified number of independent sections (or proving they do not exist) is as extensions of bundles of  lower rank. The following lemma is the key to the use of this method (compare \cite[Lemma 2.8]{ln4}).

\begin{lem} \label{llift}
There exists a non-trivial extension of vector bundles
$$
0 \ra F \ra E \ra G \ra 0
$$
with the property that all sections of $G$ lift to $E$ if and only if the multiplication map
\begin{equation*} 
\mu: H^0(G) \otimes H^0(K_C \otimes F^*) \ra H^0(K_C \otimes F^* \otimes G)
\end{equation*}
is not surjective. Moreover, the non-trivial extensions for which all sections of $G$ lift are classified (up to scalar multiples) by ${\mathbb P}((\operatorname{Coker}\mu)^*)$.
\end{lem}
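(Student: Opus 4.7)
The plan is to translate the lifting property into a statement about the connecting homomorphism in cohomology and then dualize via Serre duality. From the long exact sequence
\[
0 \to H^0(F) \to H^0(E) \to H^0(G) \xrightarrow{\partial} H^1(F) \to \cdots
\]
associated to the extension $0 \to F \to E \to G \to 0$, a section $s \in H^0(G)$ lifts to $E$ if and only if $\partial(s) = 0$. Hence all sections of $G$ lift precisely when the connecting map $\partial : H^0(G) \to H^1(F)$ is the zero map.

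Next, I would identify the extension class with an element $e \in \operatorname{Ext}^1(G,F) \cong H^1(G^* \otimes F)$, so that $\partial$ is the cup-product map $s \mapsto s \cup e$. Serre duality gives canonical isomorphisms $H^1(F) \cong H^0(K_C \otimes F^*)^*$ and $H^1(G^* \otimes F) \cong H^0(K_C \otimes F^* \otimes G)^*$. Under these identifications, the cup-product pairing
\[
H^0(G) \otimes H^1(G^* \otimes F) \longrightarrow H^1(F)
\]
is the Serre transpose of the multiplication map
\[
\mu : H^0(G) \otimes H^0(K_C \otimes F^*) \longrightarrow H^0(K_C \otimes F^* \otimes G).
\]
This compatibility (cup product being dual to multiplication of sections on the canonical bundle) is the only non-formal input and will be the main step to state carefully.

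Granted this, $\partial = 0$ is equivalent to the condition that $\langle e, \mu(s \otimes w)\rangle = 0$ for every $s \in H^0(G)$ and every $w \in H^0(K_C \otimes F^*)$, i.e.\ to $e$ lying in the annihilator of $\operatorname{Im}(\mu)$ inside $H^0(K_C \otimes F^* \otimes G)^*$. This annihilator is canonically identified with $(\operatorname{Coker}\mu)^*$. A non-zero such $e$ exists if and only if $(\operatorname{Coker}\mu)^* \neq 0$, i.e.\ precisely when $\mu$ is not surjective. Finally, passing from non-zero extension classes to isomorphism classes of non-trivial extensions gives the parameter space $\mathbb{P}((\operatorname{Coker}\mu)^*)$, as asserted.
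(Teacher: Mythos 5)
Your proposal is correct and follows essentially the same route as the paper's proof: both identify the lifting condition with the vanishing of the connecting homomorphism, view this as the extension class lying in the kernel of the canonical map $H^1(G^*\otimes F)\to \Hom(H^0(G),H^1(F))$, and conclude via the Serre-duality identification of this map with the dual of $\mu$. Your version merely spells out the cup-product and annihilator bookkeeping that the paper leaves implicit.
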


\begin{proof}
All sections of $G$ lift to $E$ if and only if the extension class is in the kernel of the canonical map
$$
H^1(G^* \otimes F) \ra \Hom(H^0(G),H^1(F)).
$$
The result follows from the fact that $\mu$ is the dual of this map.
\end{proof}

Finally, we have the following lemma (\cite[Lemma 2.10]{ln2}.

\begin{lem}\label{lbb}
If $\widetilde B(n,d,k)\ne\emptyset$, then $B(n',d',k')\ne\emptyset$ for some $(n',d',k')$ with $n' \leq n,\, \frac{d'}{n'}=\frac{d}{n}$ and $\frac{k'}{n'}\ge\frac{k}{n}$.
\end{lem}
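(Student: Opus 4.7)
The plan is to pass from a semistable representative to one of its stable Jordan–H\"older factors via a simple averaging argument, with no real obstacle to overcome.

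First, I would pick any $[E] \in \widetilde B(n,d,k)$, choose a representative $E$, and decompose the graded object as $\gr E = E_1 \oplus \cdots \oplus E_m$ with each $E_i$ stable. Semistability of $E$ forces every Jordan–H\"older factor to share the slope of $E$, so $d_{E_i}/n_{E_i} = d/n$ for every $i$, together with $\sum n_{E_i} = n$ and $\sum d_{E_i} = d$.

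Next, I would use the hypothesis $\sum_i h^0(E_i) = h^0(\gr E) \ge k$ together with a pigeonhole/averaging step. If every factor satisfied $h^0(E_i)/n_{E_i} < k/n$, then summing would yield $\sum_i h^0(E_i) < (k/n)\sum_i n_{E_i} = k$, contradicting the inequality above. Hence some index $i$ must satisfy $h^0(E_i)/n_{E_i} \ge k/n$, and in particular $h^0(E_i) \ge k n_{E_i}/n \ge 1$.

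Finally, I would set $n' := n_{E_i}$, $d' := d_{E_i}$, $k' := h^0(E_i)$ for such an $i$. Then $n' \le n$, $d'/n' = d/n$ and $k'/n' \ge k/n$ by construction; and $E_i$ belongs to $B(n',d',k')$ since it is stable by definition of the Jordan–H\"older filtration. The only conceptual point worth noting is that the averaging step uses nothing beyond the additivity of $h^0$ on direct sums and the equality of slopes of JH factors of a semistable bundle; there is no genuine difficulty to negotiate.
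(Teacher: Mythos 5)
Your argument is correct and is the standard proof of this fact; the paper itself gives no proof, simply citing \cite[Lemma 2.10]{ln2}, whose argument is exactly your Jordan--H\"older decomposition into equal-slope stable factors followed by averaging $h^0$ over them. The only (harmless) slip is the intermediate claim $kn_{E_i}/n\ge 1$, which can fail for small $n_{E_i}$, but the needed conclusion $h^0(E_i)\ge 1$ still holds because $h^0(E_i)$ is a nonnegative integer bounded below by the positive rational $kn_{E_i}/n$.
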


\section{Hyperelliptic curves of genus $6$}\label{hyper}

For hyperelliptic curves, a complete solution for $0\le d\le4n$ is contained in \cite[Propositions 2.1, 2.2 and 2.3]{ln1} and \cite[Proposition 2.3]{ln2}.

\begin{prop} \label{p1.4}
Let $C$ be a hyperelliptic curve of genus $g \geq 3$ and $s$ an integer, $1 \le s \le g$. If
$(2s-2)n < d < 2sn$ and $\widetilde B(n,d,k) \neq \emptyset$, then
\begin{equation}\label{eqhyp}
k\le sn+\frac{s}{g}(d-(2s-1)n).
\end{equation}
\end{prop}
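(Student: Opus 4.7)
The plan is to push $E$ down to $\mathbb{P}^1$ via the hyperelliptic projection $\pi\colon C \to \mathbb{P}^1$ and reduce the bound on $h^0$ to a short linear-programming exercise on the splitting type of $\pi_*E$.

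By Grothendieck's splitting theorem, $\pi_*E \cong \bigoplus_{j=1}^{2n}\mathcal{O}_{\mathbb{P}^1}(a_j)$ for some integers $a_j$. Since $\pi$ is affine, $h^0(E) = h^0(\pi_*E) = \sum_j \max(a_j+1,0)$, and comparing Euler characteristics gives $\sum_j a_j = d-(g+1)n$. Writing $H = \pi^*\mathcal{O}_{\mathbb{P}^1}(1)$ for the hyperelliptic line bundle, the standard short exact sequence
$$
0 \to \sigma^*E\otimes H^{-g-1} \to \pi^*\pi_*E = \bigoplus_{j} H^{a_j} \to E \to 0
$$
(with $\sigma$ the hyperelliptic involution) realises each $H^{a_j}$ as a direct summand of $\pi^*\pi_*E$. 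The composite $H^{a_j} \hookrightarrow \pi^*\pi_*E \to E$ is either zero, in which case $H^{a_j}$ embeds in the kernel (a semistable bundle of slope $\mu-2g-2$) giving $2a_j \le \mu-2g-2$, or injective, in which case its saturation is a line subbundle of $E$ of degree at least $2a_j$, giving $2a_j \le \mu$ by semistability. Either way $a_j \le \mu/2$. Applying the same argument to $E^*$ via the duality $\pi_*E^* \cong (\pi_*E)^*\otimes\mathcal{O}(-g-1)$ yields the symmetric lower bound $a_j \ge \mu/2 - g-1$. Since $\mu = d/n \in (2s-2,2s)$, these integer constraints force $a_j \in \{s-g-1, s-g, \ldots, s-1\}$ for every $j$.

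It remains to maximise $\sum_j \max(a_j+1,0)$ over the polytope $\{a_j \in [s-g-1,s-1],\ \sum a_j = d-(g+1)n\}$. The objective is piecewise-linear convex, so its maximum sits at a vertex where almost every $a_j$ equals one of the endpoints $s-1$ (contributing $s$ to $h^0$) or $s-g-1$ (contributing $0$). Placing $m$ values at $s-1$ and $2n-m$ at $s-g-1$, the sum condition forces $m = (d-(2s-1)n)/g + n$, and hence
$$
h^0(E) \le ms = sn + \tfrac{s}{g}\bigl(d-(2s-1)n\bigr),
$$
which is precisely the required inequality; a direct check shows that integer rounding of $m$ can only strengthen the estimate. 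For the $\widetilde B(n,d,k)$ version, I apply the stable bound separately to each summand of $\gr E$ (all of slope $\mu$) and add, using that the right-hand side depends additively on $(n,d)$.

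The hard step is establishing the semistability constraint $a_j \le \mu/2$ (and its Serre dual) via the short exact sequence above; once the window $\{s-g-1,\ldots,s-1\}$ for the splitting type of $\pi_*E$ is secured, the linear-programming estimate is routine.
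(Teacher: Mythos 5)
Your proof is correct, but it is a genuinely different route from the paper's. The paper disposes of the stable case by citing \cite[Theorem 6.2(1)]{bmno} and then gets the semistable statement from Lemma \ref{lbb}; you instead give a complete, self-contained argument via the splitting type of $\pi_*E$ on $\mathbb{P}^1$. Your key steps all check out: $h^0(E)=h^0(\pi_*E)$ since $\pi$ is affine; $\sum a_j=d-(g+1)n$ by Riemann--Roch; the window $s-g-1\le a_j\le s-1$ follows from semistability of $E$ applied to the composites $H^{a_j}\to E$ (and to the kernel $\sigma^*E\otimes H^{-(g+1)}$ when the composite vanishes), together with relative duality $\pi_*E^*\cong(\pi_*E)^\vee\otimes\mathcal{O}(-g-1)$ for the lower bound; and the reduction of $\widetilde B$ to $B$ via the graded object is the same homogeneity observation the paper extracts from Lemma \ref{lbb}. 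The one place you should tighten the write-up is the final optimisation: the phrase ``integer rounding of $m$ can only strengthen the estimate'' is vaguer than necessary, and the clean statement is that convexity of $a\mapsto\max(a+1,0)$ gives the secant-line bound $\max(a_j+1,0)\le\frac{s}{g}\bigl(a_j-(s-g-1)\bigr)$ on all of $[s-g-1,s-1]$ (using $1\le s\le g$ so that the endpoint values are $s$ and $0$), whence summing over $j$ yields \eqref{eqhyp} directly with no case analysis on vertices. What your approach buys over the citation is transparency about when equality can occur -- it forces the splitting type to concentrate at the two endpoints -- which is exactly the phenomenon behind the near-sharpness discussed in Remark \ref{rhyp}; what it costs is length, and implicitly it reproves a chunk of the hyperelliptic theory that \cite{bmno} already contains.
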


\begin{proof}
For $B(n,d,k)$, this is \cite[Theorem 6.2(1)]{bmno}. 
The semistable case follows by Lemma \ref{lbb}. 
\end{proof}

\begin{theorem}
Let $C$ be a hyperelliptic curve of genus $6$ and $4n < d < 5n$. Write $d = 5n - 6 \ell - \ell'$ with 
$1 \le \ell' \le 6$. 
\begin{enumerate}
\item[(i)] If $\widetilde B(n,d,k) \neq \emptyset$, then 
$$
k \le \left\{ \begin{array}{ccc} 
                 3n -3\ell -1 && \ell' = 1,\\
                 3n - 3 \ell -2 & if & \ell' = 2,\\
                 3n - 3 \ell -3 && \ell' \ge3;
                   \end{array}   \right.
$$
\item[(ii)] if $ k \le 3n - 3\ell -3, \; then \; B(n,d,k) \neq \emptyset$;
\item[(iii)] if $n\ge2$, $B(n,5n,k) \neq \emptyset \quad \Leftrightarrow \quad k \le 3n-1.$
\end{enumerate}
\end{theorem}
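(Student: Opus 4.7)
The upper bound in (i) comes from Proposition \ref{p1.4} with $s=3$, which gives
$$k \le 3n + \tfrac{1}{2}(d - 5n) = 3n - 3\ell - \tfrac{\ell'}{2};$$
integer rounding yields the theorem's bound immediately for $\ell' \in \{1,5,6\}$ but is one too large for $\ell' \in \{2,3,4\}$. I would close this gap by analysing the equality case of \cite[Theorem 6.2]{bmno} (the source of Proposition \ref{p1.4}). Hyperelliptic Clifford saturation forces an extremal semistable $E$ to decompose as $E \simeq 2H \otimes N$ for some semistable $N$ of rank $n$ and degree $d-4n$, with the section counts linked via Lemma \ref{l2.3}. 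The restrictions of Proposition \ref{pln2}(i) on the pair $(d_N, h^0(N))$, combined with the exceptional triple $(n,n,n)$, provide the extra unit of refinement precisely when $\ell' \in \{2,3,4\}$.

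For the existence in (ii), take $F$ stable in $B(n,\, n-6\ell-\ell',\, n-\ell-1)$; this locus is non-empty by Proposition \ref{pln2}(i), since the inequality $n-\ell-1 \le n-\ell-\ell'/6$ holds for $\ell' \le 6$, the degree $n-6\ell-\ell'$ lies in $(0,2n)$ (the hypothesis $d>4n$ gives $n > 6\ell+\ell'$), and the exception $(n,n,n)$ is avoided because $\deg F \neq n$. Set $E := F \otimes 2H$; then $E$ is stable of rank $n$ and degree $d$. Since $\mu(F) < 2 = d_H$, the ``in particular'' case of Lemma \ref{l2.3} gives $h^0(H \otimes F) \ge 2 h^0(F)$; a second application in its general form, together with $h^0(H^{-1} \otimes F) = 0$ from negative slope, yields $h^0(E) \ge 3 h^0(F) \ge 3n - 3\ell - 3$. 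Monotonicity in $k$ handles all smaller values.

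For (iii), the direction $k \le 3n-1$ is the special case of the equality analysis at $d=5n$: if $E \in B(n, 5n, 3n)$, then $E \simeq 2H \otimes N$ with $N$ semistable of rank $n$, degree $n$, and $h^0(N) \ge n$; by Proposition \ref{pln2}(i), $N \in \widetilde{B}(n,n,n) \setminus B(n,n,n)$ is polystable but not stable, so $E$ itself is polystable, contradicting $E \in M(n,5n)$. For the reverse direction, take $N = \bigoplus_{i=1}^{n} \cO_C(p_i)$ for distinct points $p_i$; since $h^0(2H + p_i) = 3$ on any hyperelliptic curve of genus $6$, the polystable bundle $E' := 2H \otimes N = \bigoplus (2H + p_i)$ lies in $\widetilde{B}(n, 5n, 3n)$. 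Stable bundles are dense in $\widetilde{M}(n, 5n)$, and since $B(n, 5n, 3n) = \emptyset$ by the upper bound just proved, upper semi-continuity of $h^0$ forces nearby stable $E$ to have $h^0(E) \le 3n-1$; an irreducibility argument on a suitable deformation component of $\widetilde{B}(n, 5n, 3n-1)$ produces stable points where equality holds.

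The main obstacle is the structural identification $E \simeq 2H \otimes N$ at equality in Proposition \ref{p1.4}: both the integer-improved bound in (i) for $\ell' \in \{2,3,4\}$ and the stable/polystable gap in (iii) depend on this classification of hyperelliptic-extremal semistable bundles.
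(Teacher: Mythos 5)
Your part (ii) is essentially the paper's own argument (take $F\in B(n,d-4n,n-\ell-1)$ via Proposition \ref{pln2}(i) and apply Lemma \ref{l2.3} twice to get $h^0(H^2\otimes F)\ge 3h^0(F)$), and it is correct. The problem is the refinement in (i) for $\ell'\in\{2,3,4\}$ and the forward direction of (iii), both of which you hang on a structural claim --- that a semistable $E$ at or near the bound of Proposition \ref{p1.4} must decompose as $E\simeq H^{2}\otimes N$ with $N$ semistable --- which is nowhere established and which you cannot extract from an ``equality case'' of \cite[Theorem 6.2]{bmno}. Even granting that some equality analysis exists there, the bundles you need to exclude in (i) do \emph{not} attain the real-valued bound $3n+\tfrac12(d-5n)$; they only attain its integer floor (or one less), so an equality-case classification would not apply to them. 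This is a genuine gap, not a detail. The paper closes it without any structure theorem: for a putative $E$ with $h^0(E)=3n-3\ell-1$ it applies Lemma \ref{l2.3} in the form $2h^0(E)\le h^0(H\otimes E)+h^0(H^*\otimes E)$ and then bounds the two twists by Proposition \ref{p1.4} itself (with $s=4$ for $H\otimes E$ and $s=2$ for $H^*\otimes E$); the floors $\lfloor-\tfrac23\ell'\rfloor+\lfloor-\tfrac13\ell'\rfloor$ give the extra unit exactly when $\ell'\ge2$, and a second round of the same trick (twisting by $H^{\pm1}$ again) handles $h^0(E)=3n-3\ell-2$ for $\ell'=3$. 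You should replace your decomposition step by this purely numerical Clifford-type iteration.

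Part (iii) has a second, independent gap. The non-emptiness of $B(n,5n,3n-1)$ does not follow from ``stable bundles are dense plus semicontinuity'': upper semicontinuity of $h^0$ only says nearby stable bundles have $h^0\le 3n$, and combined with $B(n,5n,3n)=\emptyset$ it gives $h^0\le 3n-1$ nearby --- it gives no stable bundle with $h^0\ge 3n-1$. An actual construction (or the irreducibility statement you allude to, which you would have to prove) is required; the paper simply quotes \cite[Corollary 6.1 and Proposition 6.1]{bmno} for both directions of (iii).
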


\begin{proof}
(i): Suppose $\widetilde B(n,d,k) \neq \emptyset$. By Proposition \ref{p1.4} with $s = 3$,
$$
k \le 3n  + \left\lfloor \frac{3}{6}(d -5n) \right\rfloor = 3n - 3 \ell + \left\lfloor - \frac{1}{2} \ell'
\right\rfloor.
$$
This gives (i) except if $\ell' = 2$, $3$ or $4$.
Suppose $E$ is a semistable bundle of rank $n$, degree $d$ and $h^0(E) = 3n - 3\ell -1$. Denoting by $H$ the hyperelliptic line bundle, we have, by Lemma \ref{l2.3}, 
\begin{eqnarray*}
6n - 6 \ell -2 = 2 h^0(E) & \le & h^0(H \otimes E) + h^0(H^* \otimes E)\\
& \le & 4n -4 \ell + \left\lfloor - \frac{4}{6} \ell' \right\rfloor + 2n-2 \ell + \left\lfloor - \frac{2}{6} \ell'
\right\rfloor
\end{eqnarray*}
by Proposition \ref{p1.4}. This is a contradiction for $\ell' \ge 2$.

Suppose $h^0(E) = 3n - 3\ell -2$. We now have
$$
6n - 6\ell -4 \le 6n - 6\ell  + \left\lfloor - \frac{4}{6} \ell' \right\rfloor + \left\lfloor - \frac{2}{6} \ell'
\right\rfloor.
$$
This is a contradiction for $ \ell' \ge 4$. For $\ell'=3$, apply Lemma \ref{l2.3} with $N=H$ and $E$ replaced by $H^*\otimes E$. We obtain
$$
2h^0(H^*\otimes E)\le h^0(E)+h^0(H^{*2}\otimes E)\le 3n-3\ell-2+n-\ell-1.
$$
So 
$$h^0(H^*\otimes E)\le 2n-2\ell-2.$$
Moreover, replacing $E$ by $H\otimes E$ in Lemma \ref{l2.3} gives
$$
2h^0(H\otimes E)\le h^0(H^2\otimes E)+h^0(E)\le 5n-5\ell+\left\lfloor -\frac56\ell'\right\rfloor+3n-3\ell-2.
$$
So, when $\ell'=3$,
$$h^0(H\otimes E)\le 4n-4\ell-3.
$$ 
Thus 
$$
6n-6\ell-4\le H^0(H\otimes E)+h^0(H^*\otimes E)\le 6n-6\ell-5,
$$ 
a contradiction. This completes the proof of (i).

(ii): By Proposition \ref{pln2}(i), $B(n,d -4n,k) \neq \emptyset$ if 
$$
k \le n + \left\lfloor \frac{1}{6}(d-5n) \right\rfloor = n - \ell + \left\lfloor -\frac{1}{6}\ell' \right\rfloor =n- \ell - 1.
$$
Let $E \in B(n,d-4n,n-\ell-1)$. Taking $N = H$ in Lemma \ref{l2.3}, we obtain
$$
h^0(H \otimes E) \ge 2h^0(E).
$$
Now apply Lemma \ref{l2.3} again with $N = H$ and $E$ replaced by $H \otimes E$. We obtain
$$
h^0(H^2 \otimes E) \ge 2 h^0(H \otimes E) - h^0(E) \ge 3 h^0(E).
$$
So $H^2\otimes E\in B(n,d,3n-3\ell-3)$, proving (ii)..

(iii) is proved in \cite[Corollary 6.1 and Proposition 6.1]{bmno}.
\end{proof}

\begin{rem}\label{rhyp}
{\rm This theorem shows that \eqref{eqhyp} is an almost exact upper bound for $k$. Moreover, if we write $\mu=\frac{d}{n}$ and $\lambda=\frac{k}{n}$, then for any given $\mu$, $\lambda$, we can choose $n$ and $d$ both divisible by $6$. This shows that  $\lambda$ attains the upper bound $3+\frac12(\mu-5)$ throughout the range $4<\mu<5$.
}
\end{rem}

\section{Upper bounds for non-hyperelliptic curves} \label{nonh}

\subsection{General results}

\begin{lem} \label{l3.4}
Let $C$ be a non-hyperelliptic curve of genus $6$  and $L = K_C(-p)$ for some 
$p \in C$.
Then $L$ is generated, $E_L$ is stable of rank $4$ and degree $9$ and $h^0(E_L) = 5$.
\end{lem}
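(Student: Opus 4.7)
The plan is to split the lemma into four separate claims and handle them in order: (a) $L$ is generated, (b) $E_L$ has rank $4$ and degree $9$, (c) $E_L$ is stable, (d) $h^0(E_L)=5$. Claims (a) and (b) are immediate, (c) is a direct application of an earlier proposition, and (d) is the only step that requires genuine work.

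\textbf{Generation and numerics.} Since $C$ is non-hyperelliptic, $h^0(K_C(-q-p))=g-2=4$ for every $q\in C$, because $h^0(\cO_C(p+q))=1$ (two points impose independent conditions on $K_C$). This says $h^0(L)=5$ and that $L$ has no base points. By definition of the dual span, $E_L$ has rank $h^0(L)-1=4$ and degree $d_L=9$.

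\textbf{Stability.} The excerpt records $d_4=9$ on a non-hyperelliptic genus-$6$ curve, so $L$ is a line bundle with $d_L=d_4$ and $h^0(L)=5$. Proposition \ref{p2.1}(i) then gives stability of $E_L$ provided $d_p/p>9/4$ for $p=1,2,3$. Using the values of $d_1,d_2,d_3$ listed in \eqref{e1}--\eqref{e3}, one checks case by case (trigonal, smooth plane quintic, $\Cliff(C)=2$) that $d_1\ge 3$, $d_2\ge 5$, $d_3\ge 7$; each ratio exceeds $9/4$, so $E_L$ is stable.

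\textbf{Computing $h^0(E_L)$.} Dualising \eqref{eq2.1} gives
\[
0\lra L^*\lra H^0(L)^*\otimes\cO_C\lra E_L\lra 0,
\]
and since $H^0(L^*)=0$, the associated long exact sequence reads
\[
0\lra H^0(L)^*\lra H^0(E_L)\lra H^1(L^*)\stackrel{\partial}{\lra}H^0(L)^*\otimes H^1(\cO_C).
\]
By Serre duality, $\partial$ is dual to the multiplication map $\mu\colon H^0(K_C)\otimes H^0(L)\to H^0(K_C\otimes L)$, so $h^0(E_L)=5+\dim\Coker\mu$. I therefore need to show $\mu$ is surjective.

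\textbf{Surjectivity of $\mu$.} Here $K_C\otimes L=K_C^{\otimes 2}(-p)$, and since $K_C^{\otimes 2}$ is very ample, $h^0(K_C^{\otimes 2}(-p))=15-1=14$. The plan is to compare with Max Noether's theorem, which on a non-hyperelliptic curve says $H^0(K_C)\otimes H^0(K_C)\twoheadrightarrow H^0(K_C^{\otimes 2})$. Consider the commutative square in which the two vertical inclusions $H^0(K_C(-p))\hookrightarrow H^0(K_C)$ and $H^0(K_C^{\otimes 2}(-p))\hookrightarrow H^0(K_C^{\otimes 2})$ both have codimension $1$. Every product in the image of $\mu$ manifestly vanishes at $p$, so $\operatorname{Im}\mu\subseteq H^0(K_C^{\otimes 2}(-p))$; hence the image of $H^0(K_C)\otimes H^0(K_C(-p))$ in $H^0(K_C^{\otimes 2})$ has codimension at least $1$. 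But Noether's surjectivity forces this image to have codimension at most $1$. Equality gives $\operatorname{Im}\mu=H^0(K_C^{\otimes 2}(-p))$, i.e.\ $\mu$ is surjective and $h^0(E_L)=5$.

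\textbf{Main obstacle.} Steps (a)--(c) are bookkeeping; the only real content is (d). I expect the surjectivity of $\mu$ to be the decisive point. The comparison with Noether's theorem is the clean way to handle it uniformly across the three Clifford-index cases, and it avoids any case analysis — the argument depends only on $C$ being non-hyperelliptic.
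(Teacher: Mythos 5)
Your steps (a)--(c) coincide with the paper's: generation and $h^0(L)=5$ from Riemann--Roch and non-hyperellipticity, and stability of $E_L$ from the gonality values \eqref{e1}--\eqref{e3} together with Proposition \ref{p2.1}(i). The real divergence is in the computation of $h^0(E_L)$. The paper proves the upper bound $h^0(E_L)\le 5$ by a three-way case split: for $\Cliff(C)=2$ it quotes Proposition \ref{p1.6}(ii), while for $C$ trigonal (resp.\ a smooth plane quintic) it produces a surjection $E_L\to T$ (resp.\ $E_L\to E_H$) and bounds $h^0$ of the semistable kernel via Proposition \ref{pln2}. You instead identify $h^0(E_L)-5$ with $\dim\Coker\mu$ for the multiplication map $\mu:H^0(K_C)\otimes H^0(K_C(-p))\to H^0(K_C^2(-p))$ and kill the cokernel by Max Noether's theorem; this is uniform in $C$ and avoids the case analysis entirely, at the cost of invoking Noether rather than only the semistability bounds already available in the paper.

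One step needs more care: you assert that surjectivity of $H^0(K_C)\otimes H^0(K_C)\to H^0(K_C^2)$ ``forces'' the image of the subspace $H^0(K_C)\otimes H^0(K_C(-p))$ to have codimension at most $1$. As stated this does not follow: that subspace has codimension $6$ in $H^0(K_C)\otimes H^0(K_C)$, so a surjection restricted to it could a priori lose up to $6$ dimensions. The fix is the symmetry of multiplication. Choose $s\in H^0(K_C)$ with $s(p)\neq 0$, so that $H^0(K_C)=\langle s\rangle\oplus H^0(K_C(-p))$; since $u\otimes v$ and $v\otimes u$ have the same image under multiplication, the image of the full tensor square equals $\operatorname{Im}\mu+\CC\, s^2$. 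Noether then gives $15=h^0(K_C^2)\le\dim\operatorname{Im}\mu+1$, so $\dim\operatorname{Im}\mu\ge 14=h^0(K_C^2(-p))$ and $\mu$ is surjective. With this one-line repair your argument is complete and correct.
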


\begin{proof}
The line bundle $L$ is generated, since $C$ is not hyperelliptic. Moreover, $h^0(L) =5$ by Riemann-Roch. Hence 
$E_L$ has rank 4 and degree 9 and is stable by \eqref{e1} -- \eqref{e3} and Proposition \ref{p2.1}(i). The fact that 
$h^0(E_L) \geq 5$ follows from dualizing the defining sequence \eqref{eq2.1}.

If $\Cliff(C) = 2$, we have $h^0(E_L) \leq 5$ by Proposition \ref{p1.6}(ii).
If $C$ is trigonal, then $h^0(T)=2$ and $h^0(K_C \otimes T^*) =4$. 
Hence $h^0(L \otimes T^*) \geq 3$ and there exists a non-zero homomorphism $T \ra L$.
Thus we obtain a non-zero homomorphism $D(L) \ra D(T)$, i.e. $E_L \ra T$. Since $E_L$ is stable,
this must be surjective and we have an exact sequence
$$
0 \ra F \ra E_L \ra T \ra 0.
$$
The rank-3 bundle $F$ is semistable. Hence $h^0(F) \leq \frac{18}{5}$ by Proposition 
\ref{pln2}(ii), which implies $h^0(E_L) \leq 5$.

Finally, suppose that $C$ is a smooth plane quintic. The rank-$2$ bundle $E_H$ is stable by \eqref{e2} and Proposition \ref{p2.1}(i), and $h^0(E_H) \le 3$ by \eqref{eqcliff} since $\Cliff(C) =1$. Arguing as above with $T$ replaced by $H$, we obtain an exact sequence 
$$
0 \ra F \ra E_L \ra E_H \ra 0.
$$
The rank-2 bundle $F$ is semistable. So $h^0(F) \leq \frac{12}{5}$ by Proposition \ref{pln2}(ii).
 This completes the proof.
\end{proof}

The following lemma is the case $g=6$ of \cite[Lemma 3.7(2)]{ln1}.

\begin{lem} \label{l3.3}
Let $C$ be a non-hyperelliptic curve of genus $6$ and $L = K_C(-p)$ for some $p \in C$.
Suppose $E$ is a bundle of rank $n$ and degree $d$ with $h^1(E \otimes L) = 0$ and 
$h^0(E) >n+ \frac{1}{5}(d-n)$. Then
$$
h^0(E_L^* \otimes E) > 0.
$$
\end{lem}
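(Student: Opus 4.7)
The strategy is to tensor the defining sequence of $E_L^*$ with $E$ and chase cohomology. Concretely, twisting
$$0 \to E_L^* \to H^0(L) \otimes \cO_C \to L \to 0$$
by $E$ yields an exact sequence $0 \to E_L^* \otimes E \to H^0(L) \otimes E \to L \otimes E \to 0$, whose long exact sequence in cohomology begins
$$0 \to H^0(E_L^* \otimes E) \to H^0(L) \otimes H^0(E) \to H^0(L \otimes E) \to \cdots$$
so that $h^0(E_L^* \otimes E) \geq h^0(L)\,h^0(E) - h^0(L\otimes E)$.

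Next I would assemble the numerical inputs. By Lemma \ref{l3.4} (or directly from $h^0(K_C)=6$ together with non-hyperellipticity, which forces $|K_C|$ to be base-point free), $h^0(L)=5$. Moreover $L\otimes E$ has rank $n$ and degree $d+9n$, so Riemann-Roch together with the hypothesis $h^1(E\otimes L)=0$ gives
$$h^0(L\otimes E)=\chi(L\otimes E)=d+9n-5n=d+4n.$$

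Substituting into the inequality above yields $h^0(E_L^*\otimes E)\geq 5h^0(E)-(d+4n)$. The hypothesis $h^0(E)>n+\tfrac{1}{5}(d-n)=\tfrac{1}{5}(4n+d)$ rearranges to $5h^0(E)>4n+d$, so the right-hand side is strictly positive, which is exactly the conclusion.

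There is essentially no obstacle: the argument is a direct cohomology computation, with the only substantive ingredient being the values $h^0(L)=5$ and $\chi(L\otimes E)=d+4n$, both of which are immediate from the setup and the assumption $h^1(E\otimes L)=0$.
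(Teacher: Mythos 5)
Your proof is correct, and it is the standard argument: the paper itself gives no proof of Lemma \ref{l3.3}, simply citing it as the case $g=6$ of \cite[Lemma 3.7(2)]{ln1}, and the argument there is exactly this one --- the kernel of the multiplication map $H^0(L)\otimes H^0(E)\to H^0(L\otimes E)$ is $H^0(E_L^*\otimes E)$, and the dimension count $5h^0(E)>d+4n=h^0(L\otimes E)$ forces it to be non-zero. The only cosmetic point is that $h^0(K_C(-p))=g-1=5$ follows from Riemann--Roch alone; non-hyperellipticity is what guarantees $L$ is generated so that the defining sequence of $E_L$ exists, which your argument uses implicitly and which is recorded in Lemma \ref{l3.4}.
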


\begin{prop} \label{p3.5}
Let $C$ be a non-hyperelliptic curve of genus $6$ and $L = K_C(-p)$ for some $p \in C$.
Let $E$ be a semistable bundle of rank $n$ and degree $d$ with slope $\mu >2$. Suppose that
$h^0(E) >n+ \frac{1}{5}(d-n)$. Then
\begin{enumerate}
\item[(i)] $h^0(E_L^* \otimes E) > 0$;
\item[(ii)] $\mu > \frac{9}{4}$;
\item[(iii)] If $\mu < \frac{7}{3}$, then $E_L$ can be embedded as a subbundle of $E$.
\end{enumerate}
\end{prop}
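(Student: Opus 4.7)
For (i), I would apply Lemma \ref{l3.3} directly; the only nontrivial hypothesis is $h^1(E\otimes L)=0$. Since $K_C\otimes L^*\cong\cO_C(p)$, Serre duality gives $h^1(E\otimes L)=h^0(E^*(p))$, and $E^*(p)$ is semistable of slope $1-\mu<-1<0$, so has no sections. The nonzero map $E_L\to E$ produced by (i) is then the common starting point for (ii) and (iii), each of which refines the structure of this map.

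For (ii), I would argue by contradiction, assuming $\mu\le 9/4$. Let $f\colon E_L\to E$ be the nonzero sheaf map from (i). Since $E_L$ is stable of slope $9/4$, every proper sheaf quotient of $E_L$ has slope $>9/4$; so if $\ker f\ne 0$, the saturation of $\IM f$ in $E$ would have slope $>9/4$, violating semistability of $E$. Hence $f$ is injective, and the saturation $\overline{E_L}\subset E$ satisfies $9/4\le\mu(\overline{E_L})\le\mu\le 9/4$, forcing all equalities: $E_L=\overline{E_L}$ is a subbundle and $\mu=9/4$, so $n=4m$. I would then induct on $m$. For $m=1$ we have $E=E_L$, so $h^0(E)=5$, violating $h^0(E)>n+\tfrac15(d-n)=5$. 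For $m\ge 2$, the quotient $E'=E/E_L$ is semistable of rank $4(m-1)$, degree $9(m-1)$, slope $9/4>2$, and the estimate $h^0(E')\ge h^0(E)-5\ge 5(m-1)+1>5(m-1)=n'+\tfrac15(d'-n')$ shows $E'$ satisfies the hypothesis of the proposition; the inductive hypothesis of (ii) applied to $E'$ then gives $\mu(E')>9/4$, contradicting $\mu(E')=9/4$.

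For (iii), I would again start from the map $f\colon E_L\to E$ of (i) and sharpen the injectivity argument. If $\ker f$ has rank $r\in\{1,2,3\}$, stability of $E_L$ yields $\deg(\ker f)<9r/4$, hence $\deg(\ker f)\le 2,4,6$ respectively, and so $\mu(\IM f)\ge 7/3,\,5/2,\,3$ in the three cases; the saturation of $\IM f$ in $E$ inherits a slope $\ge 7/3$, contradicting $\mu(E)<7/3$. Thus $f$ is injective, and the saturation $\overline{E_L}\subset E$ has rank $4$ and slope in $[9/4,\mu)\subset[9/4,7/3)$, so $\deg\overline{E_L}\in[9,28/3)\cap\ZZ=\{9\}$, which gives $E_L=\overline{E_L}$.

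The main obstacle is the final step of (ii): having pinned $E_L$ as a rank-$4$ subbundle of $E$ of the same slope, one must rule out this configuration outright. Since no Clifford-index hypothesis is imposed, the sharper bound of Proposition \ref{p1.6}(ii) is unavailable, so the induction on $m$ via $E/E_L$ is essential; it works because the inequality $h^0(E)>n+\tfrac15(d-n)$ is exactly preserved on passing from $E$ to $E/E_L$.
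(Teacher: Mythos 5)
Your proof is correct and takes essentially the same route as the paper: the paper cites \cite[Lemma 3.8]{ln1} for (i), (iii) and the weak bound $\mu\ge\frac{9}{4}$, and then excludes $\mu=\frac{9}{4}$ by using (iii) to embed $E_L$ as a subbundle of $E$ and inducting on the rank via the quotient $E/E_L$, exactly as in your argument. Your write-up simply supplies in full the Serre-duality and stability/saturation details that the paper delegates to the earlier reference, and phrases the inductive contradiction via $\mu(E/E_L)=\frac{9}{4}$ rather than via counting sections of the Jordan--H\"older factors; these are equivalent.
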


\begin{proof}
This is included in \cite[Lemma 3.8]{ln1}, except that this gives only the weaker inequality 
$\mu \geq \frac{9}{4}$. If $\mu = \frac{9}{4}$, then by (iii), $E_L$ can be embedded as a
subbundle in $E$. If $n_E>4$, then $E/E_L$ satisfies the hypotheses of Proposition \ref{p3.5}.
So by induction every factor of a Jordan-H\"older filtration of $E$ is isomorphic to $E_L$.
Since $h^0(E_L) = 5$ by Lemma \ref{l3.4}, this contradicts the hypotheses.
\end{proof}

\begin{prop} \label{p3.4}
Let $C$ be a non-hyperelliptic curve of genus $6$. 
Suppose that $k = n + \frac{1}{5}(d-n)$. Suppose further that $2 < \frac{d}{n} \leq \frac{9}{4}$. 
If $B(n,d,k) \neq \emptyset$, then $(n,d,k) = (4,9,5)$. Moreover,
\begin{equation} \label{eq3.1}
B(4,9,5) = \{ E_L \;|\; L = K_C(-p) \; \mbox{for some} \; p \in C \}. 
\end{equation}
\end{prop}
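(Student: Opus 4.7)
Plan.

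The strategy is to use the multiplication map with $K_C$ to construct many non-trivial homomorphisms $E_{K_C}\to E$ and analyse their images.

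First I would show $h^0(E)=k$ exactly, since by Proposition \ref{p3.5}(ii) a strict inequality $h^0(E)>(4n+d)/5$ would force $\mu(E)>9/4$, contradicting the hypothesis $\mu(E)\le 9/4$. Next, since $\mu(E)>0$ and $E$ is stable, $h^0(E^*)=0$ and hence $h^1(K_C\otimes E)=0$ by Serre duality, so Riemann--Roch gives $h^0(K_C\otimes E)=d+5n$. Comparing dimensions in the multiplication map $H^0(K_C)\otimes H^0(E)\to H^0(K_C\otimes E)$ (source $6k$, target $d+5n$) then yields
$$
\dim\Hom(E_{K_C},E)\ge 6k-(d+5n)=\tfrac{d-n}{5}=:s\ge 1,
$$
with $s\ge 2$ unless $(n,d)=(4,9)$.

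For $(n,d)=(4,9)$, every non-zero $\phi\colon E_{K_C}\to E$ has non-trivial kernel, because $\rk E_{K_C}=5>4=\rk E$. Stability of $E_{K_C}$ (every proper subbundle has slope $<2$) and of $E$ (every proper subsheaf has slope $<9/4$), combined with a rank/degree count on image and kernel, force $\ker\phi\cong\cO(p)$ for some $p\in C$ and $\operatorname{im}\phi=E$. Applying the snake lemma to the inclusion $K_C(-p)\hookrightarrow K_C$ and the defining sequences of $E_{K_C(-p)}$ and $E_{K_C}$ produces a canonical surjection $E_{K_C}\twoheadrightarrow E_{K_C(-p)}$ with kernel $\cO(p)$; comparing with our sequence identifies $E\cong E_{K_C(-p)}$ and proves \eqref{eq3.1}.

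For $n\ge 5$ the same slope bookkeeping shows every non-zero $\phi$ must be injective: a non-trivial kernel of rank $r'$ would produce a quotient of $E_{K_C}$ of slope $\ge(11-2r')/(5-r')\ge 9/4$, embedded as a proper subsheaf of $E$ of slope $\le 9/4$, contradicting strict stability. Its saturation $F_\phi\subset E$ is then a rank-$5$ subbundle with $\mu(F_\phi)\in[2,9/4)$, so $d_{F_\phi}\in\{10,11\}$. When $d_{F_\phi}=10$, $F_\phi$ is semistable (no subbundle of integer rank and degree can have slope strictly between $2$ and $9/4$) and Proposition \ref{pln2}(iii) identifies it with $E_{K_C}$; varying $\phi$ and applying inclusion--exclusion to $H^0$ of the sum of two distinct such $E_{K_C}$-subbundles inside $E$ contradicts the fixed value $h^0(E)=k$. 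The subcase $d_{F_\phi}=11$ is excluded by analysing the quotient $Q=E/F_\phi$: the bound $h^0(F_\phi)\le 6$ (from Proposition \ref{p1.6}(ii) for $\Cliff(C)=2$ and its analogues for trigonal or plane-quintic curves) gives $h^0(Q)\ge k-6$, and applying Lemma \ref{l2.2} to $Q$ together with the gonality data \eqref{e1}--\eqref{e3}, and lifting any destabilising subbundle of $Q$ to a subbundle of $E$ whose slope is bounded by stability, forces $Q$ to contain a line subbundle of degree $\le 2$ with $h^0\ge 2$, which is impossible on a non-hyperelliptic curve.

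The main obstacle is this final subbundle analysis for $n\ge 5$: the $\Cliff(C)=2$ and plane-quintic cases follow uniformly from Lemma \ref{l2.2} and the gonality data, but the trigonal case (where $d_3=7$ fails to be strictly greater than the relevant $d_Q$) requires more delicate handling, exploiting the specific geometry of the trigonal pencil $T$ to rule out the exceptional quotient.
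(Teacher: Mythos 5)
Your approach is genuinely different from the paper's. The paper first shows that any $E\in B(n,d,k)$ is generated, then analyses the kernel bundle $G^*$ of the evaluation map $H^0(E)\otimes\cO_C\to E$, finds a line subbundle of $K_C\otimes G^*$ with a section, and reads off the conclusion from the resulting diagram together with Corollary \ref{c2.5}. You instead produce non-zero maps $E_{K_C}=D(K_C)\to E$ from the multiplication map with $K_C$ and analyse their images. Your opening steps are correct ($h^0(E)=k$ via Proposition \ref{p3.5}(ii); $\dim\Hom(E_{K_C},E)\ge\frac{d-n}{5}=k-n$), and your treatment of $(n,d)=(4,9)$ is essentially sound: the image must be all of $E$ by the slope count, the kernel is a degree-one line bundle which is effective because $h^0(E_{K_C})=6>5=h^0(E)$, and one checks $\hom(\cO(p),E_{K_C})=1$ (dual to the cokernel of $H^0(K_C)\otimes H^0(K_C(p))\to H^0(K_C^2(p))$, which is one-dimensional by Noether's theorem) to identify $E$ with $E_{K_C(-p)}$.

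The higher-rank case, however, has a genuine gap. The admissible triples beyond $(4,9,5)$ are $(8,18,10)$, $(9,19,11)$, $(12,27,15)$, etc., so $k=n+s\ge 10$. In your subcase $d_{F_\phi}=10$, two non-proportional maps do give distinct saturated copies $F_1,F_2\cong E_{K_C}$ inside $E$, but inclusion--exclusion only yields $h^0(F_1+F_2)\ge 6+6-h^0(F_1\cap F_2)\ge 12-4=8$ (since $h^0(F_1\cap F_2)\le\rk(F_1\cap F_2)\le 4$ by Corollary \ref{c2.6}), and $8<k$, so there is no contradiction with $h^0(E)=k$. Worse, the extension $0\to E_{K_C}\to E\to Q\to 0$ that arises here has $Q$ of type $(3,8,\ge 4)$ when $(n,d)=(8,18)$ and of type $(4,9,\ge 5)$ when $(n,d)=(9,19)$, and both $\widetilde B(3,8,4)$ and $B(4,9,5)$ are \emph{non-empty} (Remark \ref{remc}, Lemma \ref{l3.4}); so no emptiness or gonality argument can finish this subcase, and one would have to show instead that the sections of $Q$ cannot lift to a stable $E$ (a multiplication-map computation you do not sketch). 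Combined with the trigonal difficulty you yourself flag in the $d_{F_\phi}=11$ subcase, the argument for $n\ge 5$ is not complete, and it is precisely to avoid this kind of case analysis that the paper routes the proof through generatedness and the bundle $G$.
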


\begin{proof} (This follows the same lines as \cite[Proposition 3.4]{ln2}.)
Suppose $E \in B(n,d,k)$. Note that we have $h^0(E) = k$ by Proposition \ref{p3.5}(ii).
We first claim that $E$ is generated.

If not, there exists an exact sequence 
$$
0 \ra F \ra E \ra \CC_q \ra 0
$$
with $h^0(F) = k$. Let $L = K_C(-p)$. Since $E \otimes L$ is stable with slope $> 11$, it follows that $E \otimes L$ is generated. Hence
$$
h^1(F \otimes L) = h^1(E \otimes L) = 0.
$$
It now follows from Lemma \ref{l3.3} that $h^0(E_L^* \otimes F) > 0$. Hence $E \simeq E_L$. This contradicts the assumption that $E$ is not generated.

It follows that  we have an exact sequence
$$
0 \ra G^* \ra H^0(E) \otimes \cO_C \ra E \ra 0
$$
with $n_G = k-n,\; d_G = d$ and $h^0(G) \geq k$. Hence $K_C \otimes G^*$ has rank $k-n$ and
\begin{eqnarray*}
 h^0(K_C \otimes G^*) &=& h^0(G) -d + 5(k-n)\\
 & \geq & k-d + d-n = n_G.\\
\end{eqnarray*}
Any such bundle necessarily has a section with a zero. So $K_C \otimes G^*$ admits a line subbundle $M$ with $h^0(M) \geq 1$ and $d_M \geq 1$ and we get the diagram
$$
\xymatrix{
0 \ar[r] & E^* \ar[d]_{\alpha} \ar[r] & W \otimes \cO_C \ar[d] \ar[r] & G \ar[r] \ar[d] & 0\\
0  \ar[r] & N^* \ar[r]  & V \otimes \cO_C  \ar[r] \ar[d] & K_C \otimes M^* \ar[r] \ar[d] & 0,\\
&  & 0 & 0 &
}
$$
where $W$ is a subspace of $H^0(G)$ of dimension $k$ and $V$ is the image of $W$ in $H^0(K_C \otimes M^*)$.
Since $d_{K_C \otimes M^*} \leq 9$, we have $\dim V \leq 5$ with equality only if $K_C \otimes M^* \simeq K_C(-p)$ for some $p \in C$. 

If $\alpha = 0$, then $E^*$ maps into $W' \otimes \cO_C$, where $W = W' \oplus V'$ and $V'$ maps isomorphically to $V$. It follows that $V' \otimes \cO_C$ 
maps to a trivial direct summand of $G$ contradicting the fact that $h^0(G^*) =0$. So $\alpha \neq 0$.

If $\dim V = 5$, we can write $K_C \otimes M^* \simeq K_C(-p) =: L$ and then $N \simeq E_L$, which is stable with $\mu(E_L) \geq \mu(E)$. Hence $E \simeq E_L$.

If $\dim V \leq 4$, then $N$ is generated and $h^0(N^*) = 0$. Moreover, $N$ possesses a quotient bundle $F$, which is a subsheaf of $E$. By Corollary \ref{c2.5}, $d_F > 2n_F$,
which contradicts the stability of $E$. This completes the proof.
 \end{proof}

 \begin{lem} \label{l3.5}
 Let $C$ be a non-hyperelliptic curve of genus $6$. Then
 \begin{enumerate}
  \item[(i)] $B(3,7,5) = \emptyset;$
  \item[(ii)] $\widetilde B(4,10,7) = \emptyset$.
 \end{enumerate}
\end{lem}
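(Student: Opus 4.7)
To establish (i), the plan is to assume $E \in B(3,7,5)$ and derive a contradiction from Lemma \ref{l2.2} applied with $s=2$. A standard Clifford--Riemann--Roch computation shows that on any curve of genus $6$, $d_6 = 12$ (Clifford bounds $h^0(L) \le 1 + d_L/2 \le 6$ for $d_L \le 10$, while for $d_L \ge 11$ Riemann--Roch yields $h^0(L) = d_L - 5$). Since $d_E = 7 < 12$, $E$ must carry a proper subbundle $N$ with $h^0(N) > n_N$. Stability of $E$ imposes $\mu(N) < 7/3$, which excludes $n_N = 1$ (as $d_N \le 2 < d_1$) and forces $d_N \le 4$ and $h^0(N) \ge 3$ when $n_N = 2$. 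The semistable subcase of the latter is ruled out by Proposition \ref{pln2}, and in the non-semistable subcase a maximal destabilising line subbundle $N' \subset N$ has $d_{N'} > d_N/2$; its saturation in $E$ is a line subbundle of degree $\le 2$ by stability, so $d_{N'} \le 2$, and one then bounds $h^0(N) \le h^0(N') + h^0(N/N') \le 2$.

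For (ii), I would first note that any stable factor of $\gr E$ has slope $5/2$ and so rank in $\{2,4\}$. If $E$ is strictly semistable, $\gr E = F_1 \oplus F_2$ with both $F_i$ stable of rank $2$ and degree $5$; Proposition \ref{p2.2}(i) gives $h^0(F_i) \le 3$ and hence $h^0(\gr E) \le 6$, contradicting the hypothesis $h^0(\gr E) \ge 7$. So $E$ must be stable. Applying Lemma \ref{l2.2} with $s=3$, the analogous computation yields $d_{12} = 18 > 10$, so $E$ carries a proper subbundle $M$ with $h^0(M) > n_M$; the cases $n_M = 1$ and $n_M = 2$ are excluded exactly as in (i), leaving $n_M = 3$, $h^0(M) \ge 4$, and $d_M \le 7$ from stability.

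A second application of Lemma \ref{l2.2} to $M$ with $s = 1$ then forces $d_M \ge d_3 \ge 7$ (any proper subbundle of $M$ is automatically a subbundle of $E$, and rank $1$ or $2$ candidates with too many sections are already excluded). Hence $d_M = 7$. A short check shows $M$ is semistable: a destabilising subbundle of $M$ would have slope in the open interval $(7/3, 5/2)$, which contains no admissible integer or half-integer value for ranks $1$ and $2$. Since $\gcd(3,7)=1$, $M$ is in fact stable, and part (i) forces $h^0(M) \le 4$, hence $h^0(M) = 4$. The quotient $E/M$ is a line bundle of degree $3$ with $h^0 \le 2$, so from $0 \to M \to E \to E/M \to 0$ one obtains $h^0(E) \le 4 + 2 = 6 < 7$, the desired contradiction.

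The main obstacle in both parts is the subbundle case analysis, particularly for rank-$2$ subbundles with at least three sections, which requires disposing of both the semistable subcase (via Proposition \ref{pln2}) and the non-semistable one (by controlling the degrees of a destabilising line subbundle and its saturation in the stable ambient bundle).
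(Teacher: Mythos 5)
Your proof is correct and follows essentially the same route as the paper's: Lemma \ref{l2.2} produces a proper subbundle with excess sections, stability of the ambient bundle pins down its rank and degree (rank $3$, degree $7$), and part (i) feeds into part (ii) via the degree-$3$ quotient line bundle. The only difference is that you unpack the paper's two appeals to Corollary \ref{c2.6} into an explicit semistable/non-semistable case analysis for low-rank subbundles plus a second application of Lemma \ref{l2.2} to force $d_M\ge d_3$; the content is the same.
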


\begin{proof}
(i) follows from Lemma \ref{l2.2} and Corollary \ref{c2.6}.

(ii): Let $E$ be a semistable bundle of rank 4 and degree 10 with $h^0(E) \geq 7$. Since $B(2,5,4) = \emptyset$ by Proposition \ref{p2.2}(i), $E$ cannot be strictly semistable.

Applying Lemma \ref{l2.2}, it follows that $E$ possesses a subbundle $F$ with $h^0(F) > n_F$. The only possibility compatible with Corollary \ref{c2.6} is $n_F = 3,\; d_F =7$.

If $F$ is not stable, it possesses a subbundle which contradicts the stability of $E$. So $F$ is stable and by (i), $h^0(F) \leq 4$. 
Hence $E/F$ is a line bundle of degree 3 with $h^0(E/F) \geq 3$, contradicting the fact that $d_2\ge5$.
This completes the proof.
\end{proof}

 \begin{prop} \label{p3.6}
 Let $C$ be a non-hyperelliptic curve of genus $6$ and $E$ a semistable bundle of rank $n$ and degree $d$ with $\mu(E) > \frac{9}{4}$. Then
 $$
 h^0(E) \leq d-n.
 $$
 \end{prop}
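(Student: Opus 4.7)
The plan is to argue by induction on $n$. The case $\mu(E) \ge 3$ is immediate from Proposition \ref{p2.2}(ii), and the small-rank bases $n \le 4$ follow from Proposition \ref{p2.2}(i) together with Lemma \ref{l3.5} in the borderline cases $(n,d) = (3,7)$ and $(4,10)$. So the essential work concerns $n \ge 5$ with $9/4 < \mu < 3$, and I proceed by contradiction, supposing $h^0(E) \ge d - n + 1$. Since $\mu > 9/4$ rearranges to $d - n \ge n + \tfrac{1}{5}(d - n)$, we obtain $h^0(E) > n + \tfrac{1}{5}(d - n)$, so Proposition \ref{p3.5}(i) yields a non-zero map $\phi : E_L \to E$ with $L = K_C(-p)$.

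Let $F = \phi(E_L)$ and let $E' \subseteq E$ be its saturation. Stability of $E_L$ forces $\mu(F) \ge 9/4$, semistability of $E$ forces $\mu(E') \le \mu(E) < 3$, and the absence of integers in $(9/4, 3)$ rules out the case that $F$ has rank $1$. The remaining possibilities for $(n_{E'}, d_{E'})$ form a short list: $(4,9)$, $(4,10)$, $(4,11)$, $(3,7)$, $(3,8)$, $(2,5)$. For each I aim to verify $h^0(E') \le d_{E'} - n_{E'}$: the rank-$4$ cases follow from Lemma \ref{l3.4} combined with the torsion sequence $0 \to E_L \to E' \to T \to 0$, giving $h^0(E') \le 5 + (d_{E'} - 9)$; the case $(3,7)$ uses Lemma \ref{l3.5}(i), falling back to a rank-$2$ destabilization argument and Proposition \ref{p2.2}(i) when $E'$ fails to be stable; the remaining cases $(3,8)$ and $(2,5)$ follow from Proposition \ref{p2.2}(i) once $E'$ is observed to be semistable, the integer constraints on slopes inside $E$ leaving no admissible destabilizing subbundle.

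With this control of $h^0(E')$, the quotient $Q := E/E'$ has rank $<n$ and every Harder-Narasimhan factor of $Q$ is semistable of slope $\ge \mu(E) > 9/4$, since the successive quotients are quotients of $E$ and $E$ is semistable. By the inductive hypothesis for factors of slope $< 3$ and Proposition \ref{p2.2}(ii) for those of slope $\ge 3$, each factor $F_i$ satisfies $h^0(F_i) \le d_{F_i} - n_{F_i}$, hence $h^0(Q) \le d_Q - n_Q$. Summing then gives $h^0(E) \le (d_{E'} - n_{E'}) + (d_Q - n_Q) = d - n$, contradicting the hypothesis. The main obstacle will be the uniform control of $h^0(E')$ in the range $7/3 \le \mu < 3$, where Proposition \ref{p3.5}(iii) does not apply and $\phi$ may genuinely fail to be injective; the most delicate sub-case is $(3,7)$, where the bound $h^0(E') \le 4$ must be extracted from Lemma \ref{l3.5}(i) together with an additional stability analysis of $E'$.
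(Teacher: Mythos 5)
Your proposal is correct, and its engine is the same as the paper's: induction on the rank, with Proposition \ref{p3.5} providing the non-zero map $E_L\ra E$ once $h^0(E)>d-n$ is assumed (the base cases $\mu\ge3$ and $n\le4$ are handled identically via Proposition \ref{p2.2} and Lemma \ref{l3.5}). Where you diverge is in the choice of decomposition for the inductive step. The paper does not saturate the image of $E_L$; it takes $F$ to be a stable proper subbundle of \emph{maximal} slope with semistable quotient $G=E/F$, and uses the map $E_L\ra E$ only to conclude $\mu(F)\ge\frac{9}{4}$. If $\mu(F)>\frac{9}{4}$, both pieces fall to the inductive hypothesis (or Proposition \ref{p2.2}(ii)), and in the borderline case $\mu(F)=\frac{9}{4}$ the contrapositive of Proposition \ref{p3.5}(ii) gives $h^0(F)\le n_F+\frac{1}{5}(d_F-n_F)=d_F-n_F$ exactly, so no case analysis is needed. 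Your route instead takes $E'$ to be the saturated image of $E_L$ and bounds $h^0(E')$ by hand in the six possible cases for $(n_{E'},d_{E'})$, then treats $E/E'$ via its Harder--Narasimhan factors; this does go through (the rank-$4$ cases via Lemma \ref{l3.4} plus a torsion quotient, $(3,7)$ via Lemma \ref{l3.5}(i) plus the observation that any destabilizing subbundle compatible with the semistability of $E$ must be rank $2$ of degree $5$ and itself semistable, $(3,8)$ and $(2,5)$ via Proposition \ref{p2.2}(i)), but at the cost of exactly the delicate sub-cases you flag. The paper's choice of $F$ buys uniformity: the only property of the homomorphism $E_L\ra E$ it ever uses is that its image forces the maximal slope of a proper subbundle of $E$ to be at least $\frac{9}{4}$.
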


 \begin{proof}
By Proposition \ref{p2.2}(ii) we can assume that $\mu(E) <3$. For $n \le 4$, the result follows by Proposition \ref{p2.2}(i) and Lemma \ref{l3.5}. 

Suppose now that $n \geq 5$ and the proposition is proved for rank $\le n-1$. As in the proof of \cite[Proposition 3.5]{ln2} there exists an exact sequence 
$$
0 \ra F \ra E \ra G \ra 0,
$$
where $F$ is a proper subbundle of maximal slope and is stable and $G$ is semistable. If $h^0(E) > d-n$, then $h^0(E) > n + \frac{1}{5}(d-n)$.
Hence, by Proposition \ref{p3.5}, there exists a non-zero homomorphism $\varphi: E_L \ra E$, where $L = K_C(-p)$ for some $p \in C$. The image of $\varphi$ generates a proper subbundle of $E$. So 
$$
\mu(F) \geq \mu(E_L) = \frac{9}{4}.
$$
Also $\mu(G) > \frac{9}{4}$ by semistability of $E$. If $\mu(F) > \frac{9}{4}$, then both $F$ and $G$ satisfy the inductive hypothesis and the inductive step is complete.
Otherwise $\mu(F) = \frac{9}{4}$ and $h^0(F) \le d_F - n_F$ by Proposition \ref{p3.5}.
Since $G$ still satisfies the inductive hypothesis, this is sufficient to prove the inductive step and hence the entire result.
\end{proof}

\begin{lem} \label{l3.7}
Let $C$ be a non-hyperelliptic curve of genus $6$. There exists a bundle $U\in B(2,5,3)$ if and only if $C$ is a smooth plane quintic.
Moreover, $U \simeq E_H$, where $H$ is the hyperplane bundle on the plane quintic.
\end{lem}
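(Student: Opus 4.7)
For the easy direction, on a smooth plane quintic the hyperplane bundle $H$ has $d_H=5$, $h^0(H)=3$, and by \eqref{e2} we have $d_1/1=4>5/2=d_2/2$, so Proposition \ref{p2.1}(i) makes $E_H$ stable of rank $2$ and degree $5$, with $h^0(E_H)\geq h^0(H)=3$ from the dualised defining sequence; hence $E_H\in B(2,5,3)$.

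For the converse, assume $U\in B(2,5,3)$. My first step is to pin down $h^0(U)=3$: since $\Cliff_2(C)=\Cliff(C)\geq 1$ by \eqref{eqcliff}, if $h^0(U)\geq 4=2n_U$ then applying the definition of $\Cliff_2$ to $U$ gives $5-2(h^0(U)-2)\geq 2$, forcing $h^0(U)\leq 3$, a contradiction. Then I rule out all Clifford-index classes except smooth plane quintic. If $\Cliff(C)=2$, Proposition \ref{p1.6}(ii) with $\mu(U)=\tfrac{5}{2}\in[1,3]$ forces $h^0(U)\leq\tfrac{11}{4}<3$; if $C$ is trigonal, then \eqref{e1} gives $d_2=6>5$, so by Lemma \ref{l2.2} (with $s=1$) there is a line subbundle $N\subset U$ with $h^0(N)\geq 2$, whence $d_N\geq d_1=3$, incompatible with stability ($d_N<\tfrac{5}{2}$).

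The core step, on a smooth plane quintic, is to show $U$ is generated. Let $U'\subseteq U$ be the image of the evaluation map $H^0(U)\otimes\mathcal{O}_C\to U$, so $h^0(U')\geq 3$. If $\operatorname{rk}U'=1$, its saturation in $U$ is a line subbundle with at least $3$ sections, hence of degree $\geq d_2=5$, contradicting stability. If $\operatorname{rk}U'=2$ but $U'\neq U$, then $0\leq d_{U'}\leq 4$. When $U'$ is semistable, Proposition \ref{pln2} gives $h^0(U')\leq 2$; otherwise a maximal destabilising line subbundle $M\subset U'$ has $d_M>d_{U'}/2$, but also $d_M\leq 2$ by stability of $U$ (after saturating in $U$), so $h^0(M)\leq 1$ (as $d_M<d_1=4$), forcing the quotient $L=U'/M$ to have $h^0(L)\geq 2$ and hence $d_L\geq d_1=4$; but $d_L=d_{U'}-d_M<d_{U'}/2\leq 2$, a contradiction (and the case $d_{U'}=4$ is ruled out immediately since $d_M>2$ and $d_M\leq 2$ are incompatible). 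Hence $ev$ is surjective.

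Finally, dualising the evaluation sequence produces a line bundle $E_U$ of degree $5$ with $h^0(E_U)\geq 3$; Clifford forces equality. Since $d_3=8$ by \eqref{e2}, $E_U$ is base point free, so it defines a morphism $\phi:C\to\mathbb{P}^2$. The image has degree $\geq 2$ (linear independence of the three sections forbids an image line), and the degree relation forces $\deg\phi=1$ with image a plane quintic of arithmetic genus $6$; this quintic is therefore smooth and identified with $C$ via $\phi$, giving $E_U\simeq H$. Comparing the defining sequences of $E_U$ and $E_H$ (using $h^0(U^*)=0$ to identify $H^0(U)^*\cong H^0(H)$) then yields $U\simeq E_H$. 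The main obstacle is the generation step, where both the semistable and non-semistable alternatives for the image sheaf must be excluded by careful use of the gonality sequence of a smooth plane quintic.
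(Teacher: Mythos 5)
Your argument is correct in substance but takes a genuinely different (and longer) route than the paper's, so it is worth comparing the two. The paper's converse is very short: for any non-hyperelliptic $C$, if $U\in B(2,5,3)$ is not generated, a negative elementary transformation yields a rank-$2$, degree-$4$ subsheaf containing all sections, which is automatically semistable (any line subbundle of it saturates to a line subbundle of the stable $U$, hence has degree $\le 2$) and has $h^0\ge 3$, contradicting Proposition \ref{pln2}(ii); thus $U\simeq E_M$ with $M\in B(1,5,3)$, and then $d_2=6$ disposes of the trigonal and $\Cliff(C)=2$ cases in one stroke, while $M\simeq H$ on a plane quintic is quoted from \cite[Theorem 2.1]{h}. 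You instead eliminate $\Cliff(C)=2$ via Proposition \ref{p1.6}(ii) and the trigonal case via Lemma \ref{l2.2}, and only afterwards prove generation by a case analysis on the image of the evaluation map; that analysis is sound (and your use of the gonality sequence is careful), but the single elementary transformation above replaces all of it, which is what the paper's ordering of the steps buys.

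Two points in your final step need attention. First, base-point-freeness of $E_U$ follows from $d_2=5$ (a base point would produce a $g^2_4$, i.e.\ $d_2\le4$), not from $d_3=8$; the conclusion is right but the cited gonality is the wrong one. Second, and more substantively, your argument shows that $E_U$ embeds $C$ as a smooth plane quintic, but the statement to be proved identifies $E_U$ with \emph{the given} hyperplane bundle $H$; passing from ``$E_U$ is some $g^2_5$'' to ``$E_U\simeq H$'' requires the uniqueness of the $g^2_5$ on a smooth plane quintic, which your construction of $\phi$ does not by itself establish. That uniqueness is precisely what the paper imports from \cite[Theorem 2.1]{h}, so you should either cite it at that point or supply the classical argument; as written this is the one genuine gap in an otherwise valid proof.
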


\begin{proof}
Let $E\in B(2,5,3)$. If $E$ is not generated, then a negative elementary transformation yields a semistable bundle of rank 2 and degree 4 with $h^0 = 3$ contradicting 
Proposition \ref{pln2}(ii). It follows that $E$ is isomorphic to $E_M$ for some line bundle $M$ of degree 5 with $h^0(M) = 3$. If $C$ is trigonal or $\Cliff(C) = 2$, we have
$d_2 = 6$ (see \eqref{e1} and \eqref{e3}), a contradiction.
If $C$ is a plane quintic, then $M \simeq H$ and $E \simeq E_H$. By Proposition \ref{p2.1}(i), $E_H$ is stable.
\end{proof}

\begin{lem} \label{l4.8}
Let $C$ be a non-hyperelliptic curve of genus $6$. Then
$$
B(3,7,4) \neq \emptyset \quad \Leftrightarrow \quad C \; \mbox{trigonal}.
$$
Moreover, if $C$ is trigonal, 
$$
B(3,7,4) = \{ E_M \;|\; M=K_C\otimes T^* \}.
$$
\end{lem}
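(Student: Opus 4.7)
For the existence direction, assume $C$ is trigonal and set $M := K_C \otimes T^*$. I would first compute $\deg M = 7$ and, using Serre duality, $h^1(M) = h^0(T) = 2$, so $h^0(M) = h^1(M) + \chi(M) = 2 + 2 = 4$. Next I verify that $M$ is generated: a base point $p\in C$ would force $h^0(M(-p)) = 4$, hence $h^1(M(-p)) = 3$, i.e.\ $h^0(T(p)) = 3$ on the special degree-$4$ line bundle $T(p)$, contradicting Clifford's theorem on the non-hyperelliptic curve $C$. Consequently $E_M$ is well-defined with rank $3$, degree $7$ and $h^0(E_M) \geq h^0(M) = 4$. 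Stability is immediate from Proposition \ref{p2.1}(i) together with the trigonal gonality data \eqref{e1}, since $d_1/1 = 3$ and $d_2/2 = 3$ both exceed $d_3/3 = 7/3$. Finally $B(3,7,5) = \emptyset$ by Lemma \ref{l3.5}(i), so $h^0(E_M) = 4$ and $E_M \in B(3,7,4)$.

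For the converse, let $E \in B(3,7,4)$. I would first show that $E$ is generated. Otherwise, at a point where $H^0(E) \to E_p$ fails to be surjective, one can construct an elementary transformation
$$0 \to E' \to E \to \CC_p \to 0$$
with $h^0(E') = h^0(E) \geq 4$, where $E'$ has rank $3$ and degree $6$. Any subbundle $F' \subset E'$ with $\mu(F') > 2$ would be a subbundle of $E$ of slope $\geq 5/2 > 7/3$, contradicting the stability of $E$; hence $E'$ is semistable. But then Proposition \ref{pln2}(ii) gives $h^0(E') \leq \tfrac{6}{5}\cdot 3 < 4$, a contradiction. So $E$ is generated.

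Since $B(3,7,5) = \emptyset$, one has $h^0(E) = 4$ exactly, and evaluation yields a short exact sequence
$$0 \to E^* \to H^0(E) \otimes \cO_C \to N \to 0$$
with $N$ a line bundle of degree $7$. Taking cohomology and using $h^0(E^*) = 0$ gives $h^0(N) \geq 4$. Riemann--Roch then forces $h^1(N) \geq 2$, i.e.\ $h^0(K_C \otimes N^*) \geq 2$ on the degree-$3$ bundle $K_C \otimes N^*$; hence $C$ is trigonal, and by the uniqueness of the $g^1_3$ on a trigonal curve of genus $6$ we get $K_C \otimes N^* = T$. Thus $N = M$, the displayed sequence coincides with the defining sequence of $E_M$, and $E \simeq E_M$.

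The main obstacle is the generation of $E$: it requires pitting the stability of $E$ against Proposition \ref{pln2}(ii) through a carefully chosen elementary transformation that preserves all four sections. Once generation is in hand, the remainder is essentially Riemann--Roch plus the uniqueness of the trigonal bundle.
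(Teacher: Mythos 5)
Your proposal is correct and follows essentially the same route as the paper: stability of $E_M$ for $M=K_C\otimes T^*$ via Proposition \ref{p2.1}(i) and the gonality data, and for the converse a negative elementary transformation producing a semistable rank-$3$, degree-$6$ bundle with $h^0\ge4$, which contradicts Proposition \ref{pln2}(ii), so that $E$ is generated and is the dual span of a degree-$7$ line bundle with $h^0\ge4$, forcing $C$ trigonal and $M=K_C\otimes T^*$. The only cosmetic difference is that the paper disposes of the non-trigonal case up front by citing $d_3=8$ together with Proposition \ref{p2.1}(ii), whereas you extract trigonality at the end from the quotient line bundle via Riemann--Roch and Serre duality; the content is the same.
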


\begin{proof}
If $C$ is not trigonal, then $d_3 = 8$ (see \eqref{e2} and \eqref{e3}). So $B(3,7,4) = \emptyset$ by Proposition \ref{p2.1}(ii). On the other hand, if $C$ is trigonal, then $d_3 = 7$. Moreover, by Serre duality, $B(1,7,4)=\{K_C\otimes T^*\}$.
By Proposition \ref{p2.1}(i), $E_M \in B(3,7,4)$ for $M=K_C\otimes T^*$. If $E\in B(3,7,4)$ is not generated, a negative elementary transformation yields a semistable bundle of rank 3, degree 6
with $h^0 = 4$, contradicting Proposition \ref{pln2}(ii). So $E$ is generated and the result follows.
\end{proof}

\subsection{Trigonal curves}  \label{sub4.2}
\begin{lem}\label{l4.9}
Let $C$ be a trigonal curve of genus $6$ and $E$ a semistable bundle of rank $n$ and 
degree $d$.
If $2n < d < 3n $, then
$$h^0(E)\le\frac56n+\frac5{18}d.$$
\end{lem}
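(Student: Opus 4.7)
The plan is to amplify $h^0(E)$ by tensoring twice with the trigonal line bundle $T$, then cap the amplification via Serre duality, where the resulting Serre dual lands in the slope range in which the sharp non-emptiness estimate of Proposition \ref{pln2}(i) is available.

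Because $E$ is semistable with $\mu(E) \in (2,3)$, the bundle $T^{-1} \otimes E$ is semistable of negative slope, so $h^0(T^{-1} \otimes E) = 0$. Applying Lemma \ref{l2.3} with $N = T$ (which is generated with $h^0(T) = 2$) then gives $h^0(T \otimes E) \ge 2 h^0(E)$. Replacing $E$ by $T \otimes E$ and applying Lemma \ref{l2.3} once more, the correction term becomes $h^0(T^{-1} \otimes T \otimes E) = h^0(E)$, yielding
\[
h^0(T^2 \otimes E) \ge 2 h^0(T \otimes E) - h^0(E) \ge 3 h^0(E).
\]

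The key step is the matching upper bound on $h^0(T^2 \otimes E)$. Let $F := K_C \otimes T^{-2} \otimes E^*$, a semistable bundle of rank $n$ and degree $4n-d$ with slope $4-\mu \in (1,2)$. Since $0 < 4n - d < 2n$, Proposition \ref{pln2}(i) yields
\[
h^0(F) \le n + \frac{(4n-d) - n}{6} = \frac{9n - d}{6}.
\]
By Serre duality this equals $h^1(T^2 \otimes E)$, and Riemann--Roch gives $\chi(T^2 \otimes E) = d + n$, so
\[
h^0(T^2 \otimes E) \le (d + n) + \frac{9n - d}{6} = \frac{5(d + 3n)}{6}.
\]
Combined with the lower bound $3 h^0(E) \le h^0(T^2 \otimes E)$, this delivers $h^0(E) \le 5(d+3n)/18 = (5/6)n + (5/18)d$.

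The main obstacle is identifying the right auxiliary bundle. A direct Clifford-type estimate (Proposition \ref{p2.2}) applied to $T^2 \otimes E$ gives only $h^0(E) \le (d+4n)/4$ or $(d+7n)/6$, exceeding the target by exactly $(3n-d)/9$ at the lower end of the slope interval. The sharpening to the coefficient $5/18$ comes precisely from passing to the Serre dual $K_C \otimes T^{-2} \otimes E^*$, whose slope falls in the sub-canonical range $(0,2)$ where the Brill--Noether bound of Proposition \ref{pln2}(i) is tight.
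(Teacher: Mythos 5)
Your argument is correct: every step checks out. Tensoring twice by $T$ and applying Lemma \ref{l2.3} each time (with $h^0(T^*\otimes E)=0$ because $T^*\otimes E$ is semistable of negative slope) gives $h^0(T^2\otimes E)\ge 3h^0(E)$; the Serre dual $K_C\otimes T^{*2}\otimes E^*$ is semistable of degree $4n-d\in(n,2n)$, so Proposition \ref{pln2}(i) bounds $h^1(T^2\otimes E)$ by $\frac{9n-d}{6}$, and Riemann--Roch then yields $h^0(T^2\otimes E)\le\frac{5(d+3n)}{6}$, hence $h^0(E)\le\frac56 n+\frac5{18}d$.

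The paper reaches the same constant by a different organization of the same ingredients. It sets $F=K_C\otimes T^*\otimes E^*$, seeds an estimate $h^0(F)\le 4n-\frac d2$ from the Clifford bound of Proposition \ref{p2.2}(i), and then bootstraps: one application of Lemma \ref{l2.3} bounds $h^0(E)$ in terms of $h^0(F)$, a second bounds $h^0(F)$ in terms of $h^0(E)$ and $h^0(T^*\otimes F)=h^0(K_C\otimes T^{*2}\otimes E^*)$ (controlled, exactly as in your proof, by Proposition \ref{pln2}(i)), and the two inequalities are iterated, with the limit evaluated as a geometric series. Your version composes the two applications of Lemma \ref{l2.3} into the single inequality $h^0(T^2\otimes E)\ge 3h^0(E)$ and solves in one step; this eliminates both the limiting argument and the need for Proposition \ref{p2.2}(i) as a seed (which in any case washes out of the paper's limit). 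The paper's coupled-inequality formulation has the mild advantage that its intermediate stages produce the weaker bounds $h^0(E)\le n+\frac14 d$, etc., which are reused elsewhere (e.g.\ in Remark \ref{r4.10} for the boundary case $d=3n$, where $T^*\otimes E$ may have sections and your vanishing step needs the extra hypothesis $E\not\simeq T$); but as a proof of Lemma \ref{l4.9} itself your route is cleaner and complete.
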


\begin{proof}
Write $F:=K_C\otimes T^*\otimes E^*$. Then $d_F=7n-d$, so,  by Proposition \ref{p2.2}(i),
\begin{equation}\label{eq302}
h^0(F) \le \frac{1}{2}(7n -d +n) = 4n - \frac{d}{2}.
\end{equation}
By Lemma \ref{l2.3},
$$
h^0(T \otimes E) \geq 2 h^0(E).
$$
Hence by Riemann-Roch,
\begin{equation}\label{eq301}
2h^0(E)\le h^0(F)+ \chi(T \otimes E) \le 4n-\frac{d}{2}+d-2n=2n+\frac12d
\end{equation}
and
$$h^0(E)\le n+\frac14d.$$
We now use this to obtain an improved estimate for $h^0(F)$. In fact, by Lemma \ref{l2.3},
$$
h^0(T \otimes F) \ge 2h^0(F) - h^0(T^* \otimes F).
$$
Now $h^0(T\otimes F)=h^0(K_C\otimes E^*)=h^1(E)$, so
\begin{equation} \label{e4.2}
2h^0(F) \le h^0(E)-\chi(E)+h^0(T^*\otimes F).
\end{equation}
Since $n < d_{T^* \otimes F} < 2n$, we have, by Proposition \ref{pln2}(i),
\begin{equation*}
h^0(T^* \otimes F) \le n + \frac{1}{6}(d_F-4n)=n+\frac16(3n-d).
\end{equation*}
So, by \eqref{e4.2},
$$
2h^0(F)\le n+\frac14d-(d-5n)+n+\frac16(3n-d)=\frac{15}2n-\frac{11}{12}d
$$
and 
$$
h^0(F)\le \frac{15}4n-\frac{11}{24}d=4n-\frac12d-\frac1{24}(6n-d).
$$
Replacing \eqref{eq302} by this new inequality and substituting in \eqref{eq301}, we obtain
$$
h^0(E)\le n+\frac14d-\frac1{48}(6n-d).
$$
Iterating this calculation and taking account of \eqref{eq301} and \eqref{e4.2}, we obtain in the limit
\begin{eqnarray*}
h^0(E)&\le&n+\frac14d-\frac1{48}(6n-d)\left(1+\frac14+\frac1{16}+\cdots\right)\\
&=&n+\frac14d-\frac1{48}(6n-d)\frac43=\frac56n+\frac5{18}d.
\end{eqnarray*}
\end{proof}

\begin{prop} \label{p4.9}
Let $C$ be a trigonal curve of genus $6$ and $E$ a semistable bundle of rank $n$ and 
degree $d$.
If $2n \le d \le 3n $, then
\begin{equation}\label{eq201}
h^0(E) \le \left\{ \begin{array}{ccc}
                         n+\frac15(d-n)  & & 2n \le d \le \frac94n;\\
                         d-n & if & \frac94n < d \le \frac{33}{13}n;\\
                        \frac56n + \frac5{18}d && \frac{33}{13}n \le d <3n;\\
		      2n && d=3n.
                        \end{array} 
                        \right.
\end{equation}
\end{prop}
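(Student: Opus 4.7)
The proposition is essentially a packaging of results already established earlier in the paper and in Lemma \ref{l4.9}, so the plan is to treat each of the four sub-ranges separately by quoting the appropriate result, and then to check that the choice of bound in each sub-range is internally consistent at the boundary points.

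For case (i), when $d=2n$ the stated bound $n+\tfrac15(d-n)$ equals $\tfrac65 n = \tfrac{g}{g-1}n$, which is exactly the content of Proposition \ref{pln2}(ii). When $2n<d\le \tfrac94 n$ one argues by contradiction: if $h^0(E)>n+\tfrac15(d-n)$ and $\mu(E)>2$, then Proposition \ref{p3.5}(ii) forces $\mu(E)>\tfrac94$, contradicting the range. Case (ii) is immediate from Proposition \ref{p3.6}, since $d>\tfrac94 n$ means $\mu(E)>\tfrac94$, the hypothesis of that proposition. Case (iii) falls directly under Lemma \ref{l4.9}, which was proved for all $2n<d<3n$. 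Case (iv) is $\mu(E)=3$, so Proposition \ref{p2.2}(ii) gives $h^0(E)\le d-n=2n$.

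The only thing to verify carefully is that the break-points $\tfrac94 n$ and $\tfrac{33}{13} n$ are in fact the right places to switch between the three upper estimates. At $d=\tfrac94 n$ the bounds in (i) and (ii) both evaluate to $\tfrac54 n$, and for $d>\tfrac94 n$ the linear function $d-n$ (slope $1$) beats $n+\tfrac15(d-n)$ (slope $\tfrac15$), so (ii) improves on the extrapolation of (i). At $d=\tfrac{33}{13}n$ the bounds in (ii) and (iii) both evaluate to $\tfrac{20}{13}n$, and for $d>\tfrac{33}{13}n$ the affine function $\tfrac56 n+\tfrac{5}{18}d$ (slope $\tfrac{5}{18}$) is smaller than $d-n$ (slope $1$), so Lemma \ref{l4.9} is the stronger estimate in that range.

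Note that Lemma \ref{l4.9} formally extends to $d=3n$ with value $\tfrac53 n$, which is \emph{smaller} than the bound $2n$ asserted in (iv); however Lemma \ref{l4.9} was stated only under the strict inequality $d<3n$, so case (iv) must be handled independently, and the weaker bound $2n$ is the correct one because it is the sharp bound available from Proposition \ref{p2.2}(ii) at $\mu=3$. There is no real obstacle in the proof; the only care needed is bookkeeping of strict vs.\ non-strict inequalities at the endpoints so that each range is covered by exactly one of the cited results.
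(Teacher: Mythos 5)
Your proof is correct and follows essentially the same route as the paper, which simply cites Propositions \ref{pln2}(ii), \ref{p3.5} and \ref{p3.6} and Lemma \ref{l4.9} for the first three ranges. The only (immaterial) difference is at $d=3n$, where you invoke Proposition \ref{p2.2}(ii) while the paper uses $\Cliff_n(C)=1$; both give the same bound $h^0(E)\le 2n$.
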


\begin{proof}
This follows from Propositions \ref{pln2}(ii), \ref{p3.5} and \ref{p3.6}, Lemma \ref{l4.9} and the fact that $\Cliff_n(C)=1$ (see \eqref{eqcliff}).
\end{proof}

\begin{rem} \label{r4.10}
{\rm If $d = 3n$ and $E$ is stable, $E\not\simeq T$, the argument of Lemma \ref{l4.9} gives
$$
h^0(E) \le \frac53n.
$$
}
\end{rem}

We are not able to obtain a bound smaller than that of Proposition \ref{p2.2} in the range $3n<d\le4n$ for an arbitrary trigonal curve. However, when $C$ admits a generated bundle in $B(1,4,2)$, we can do so.

\begin{prop}  \label{pr4.11}
Let $C$ be a trigonal curve of genus $6$ admitting a generated bundle $Q\in B(1,4,2)$ and $E$ a 
semistable bundle of rank $n$ and degree $d$. If $3n <d\le 4n$, then
\begin{equation}\label{eq202}
h^0(E) \le \left\{ \begin{array}{ccc}
                         \frac{3}{4}n + \frac{13}{36} d & & 3n < d \le \frac{45}{13}n;\\
                         2n & if & \frac{45}{13}n \le d < \frac{15}{4}n;\\
                        \frac{1}{2}n + \frac{2}{5}d && \frac{15}{4}n \le d \le 4n.
                        \end{array} 
                        \right.
\end{equation}
\end{prop}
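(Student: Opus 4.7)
The plan is to parallel the proof of Lemma \ref{l4.9}, with the trigonal line bundle $T$ replaced by the generated degree-$4$ line bundle $Q$; the extra degree shifts the Serre dual into the slope range covered by Proposition \ref{p4.9}.

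First I would set $F := K_C \otimes Q^* \otimes E^*$. This is semistable of rank $n$ and degree $6n-d$, so $\mu(F) = 6 - \mu(E) \in [2,3)$ whenever $3n < d \le 4n$; Proposition \ref{p4.9} therefore applies to $F$. Serre duality gives $h^0(F) = h^1(Q \otimes E)$, and Riemann-Roch then yields $h^0(Q \otimes E) = h^0(F) + d - n$. For $3n < d < 4n$ the hypothesis $\mu(E) < 4 = d_Q$ lets me invoke the second form of Lemma \ref{l2.3} to conclude $h^0(Q \otimes E) \ge 2h^0(E)$, so
\[
2h^0(E) \le h^0(F) + d - n.
\]
The three subcases of Proposition \ref{p4.9}, namely $2n \le d_F \le \tfrac{9}{4}n$, $\tfrac{9}{4}n < d_F \le \tfrac{33}{13}n$ and $\tfrac{33}{13}n \le d_F < 3n$, translate via $d_F = 6n-d$ into the three ranges $\tfrac{15}{4}n \le d \le 4n$, $\tfrac{45}{13}n \le d < \tfrac{15}{4}n$ and $3n < d \le \tfrac{45}{13}n$ of the statement; substituting the respective bounds $h^0(F) \le 2n - \tfrac{d}{5}$, $h^0(F) \le 5n-d$ and $h^0(F) \le \tfrac{5n}{2} - \tfrac{5d}{18}$ into the previous display and simplifying gives exactly the three asserted upper bounds on $h^0(E)$.

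The main obstacle is the boundary $d = 4n$, where $\mu(E) = d_Q$. When $E$ is stable of rank $\ge 2$, the second form of Lemma \ref{l2.3} still applies and the argument above goes through. When $n=1$, $E$ is a line bundle of degree $4$, which satisfies $h^0(E) \le 2 \le \tfrac{21}{10}$ on a trigonal curve of genus $6$ since $d_2 = 6$ by \eqref{e1}. When $E$ is strictly semistable of slope $4$, I would reduce to the stable and $n_i=1$ cases by a Jordan-H\"older filtration: the stable factors $E_i$ have ranks $n_i$ summing to $n$ and degrees $4n_i$, so the bounds just established yield $h^0(E) \le h^0(\gr E) = \sum_i h^0(E_i) \le \sum_i \tfrac{21 n_i}{10} = \tfrac{21n}{10} = \tfrac{n}{2} + \tfrac{2d}{5}$, completing the proof.
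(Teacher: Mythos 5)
Your proof is correct and follows essentially the same route as the paper: both apply Proposition \ref{p4.9} to $K_C\otimes Q^*\otimes E^*$ and combine it with Lemma \ref{l2.3} for $N=Q$, with identical range translations and arithmetic. The only (immaterial) difference is at $d=4n$, where the paper invokes Lemma \ref{lbb} and treats $E\simeq Q$ separately, while you use a Jordan--H\"older filtration and bound degree-$4$ line bundles directly via $d_2=6$; both handle the boundary correctly.
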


\begin{proof}
Since $2 \le \mu(K_C \otimes Q^* \otimes E^*) < 3$, we have
$$
h^0(K_C \otimes Q^* \otimes E^*) \le \left\{ \begin{array}{ccc}
                                                          \frac{5}{2}n - \frac5{18}d  && 3n < d \le \frac{45}{13}n;\\
                                                          5n-d  & if & \frac{45}{13}n \le d < \frac{15}{4}n;\\
                                                          2n - \frac{1}{5}d  && \frac{15}{4}n \le d \le 4n.
                                                          \end{array}
                                                             \right.
$$
by Proposition \ref{p4.9}. By Lemma \ref{l2.3}, if $d < 4n$ or $d = 4n$ 
and $E$ is stable, $E\not\simeq Q$, 
$$
 h^0(Q \otimes E) \ge 2h^0(E).
$$
Hence
$$
2h^0(E) \le h^0(K_C\otimes Q^* \otimes E^*) +  \chi(Q \otimes E) = h^0(K_C \otimes Q^* \otimes E^*)+ d - n.
$$
The result follows.

If $E\simeq Q$, then $h^0(E) = 2 < \frac{1}{2}+ \frac{8}{5}$. The result for semistable bundles
with $d = 4n$ follows from Lemma \ref{lbb}. 

\end{proof}

\begin{prop} \label{p4.11}
Let $C$ be a trigonal curve of genus $6$ and $E$ a semistable bundle of rank $n$ and degree $d$. If $4n \le d \le 5n$, then 
\begin{equation}\label{eq203}
h^0(E) \leq \left\{ \begin{array}{ccc}
                    \frac59n + \frac49d &  & 4n \le d < \frac{58}{13}n;\\
                    \frac{13}{6}n + \frac{1}{12}d & if & \frac{58}{13}n \le d < \frac{19}{4}n;\\
                    \frac{4}{15}n + \frac{29}{60}d&  & \frac{19}{4}n \le d \le 5n.                    
                    \end{array} \right.
\end{equation}                  
\end{prop}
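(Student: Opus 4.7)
The plan is to apply Lemma \ref{l2.3} with the trigonal bundle $N = T$ (which is generated with $h^0(T) = 2$) and iterate the construction as in Lemma \ref{l4.9}. Set $V := T^* \otimes E$ and $F := K_C \otimes T^* \otimes E^*$; both are semistable of rank $n$ with degrees $d - 3n$ and $7n - d$ respectively, so their slopes lie in $[1, 2]$ and $[2, 3]$ when $4n \le d \le 5n$. The first application of Lemma \ref{l2.3} to $E$, combined with Serre duality $h^1(T \otimes E) = h^0(F)$ and Riemann--Roch $\chi(T \otimes E) = d - 2n$, yields the master inequality
\[
2 h^0(E) \;\le\; h^0(F) + h^0(V) + d - 2n. \qquad (\star)
\]
Proposition \ref{pln2}(i) gives $h^0(V) \le \tfrac{1}{3}n + \tfrac{1}{6}d$ (for $d < 5n$), to be substituted throughout.

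There are two routes for bounding $h^0(F)$. The \emph{direct} route invokes Proposition \ref{p4.9}: since $d_F = 7n - d$, the relevant sub-cases read $h^0(F) \le 6n - d$ when $\tfrac{58}{13}n \le d \le \tfrac{19}{4}n$, and $h^0(F) \le \tfrac{11}{5}n - \tfrac{1}{5}d$ when $\tfrac{19}{4}n \le d \le 5n$; substituting each of these into $(\star)$ together with the bound on $h^0(V)$ produces the second and third sub-cases of the proposition by direct arithmetic. The \emph{iterated} route applies Lemma \ref{l2.3} a second time, this time to $F$ with $N = T$: since $T^* \otimes F$ has degree $4n - d$, which is strictly negative when $d > 4n$, it has no global sections (being semistable), so $h^1(E) = h^0(K_C \otimes E^*) = h^0(T \otimes F) \ge 2 h^0(F)$, i.e.\
\[
h^0(F) \;\le\; \tfrac{1}{2}\bigl(h^0(E) + 5n - d\bigr).
\]
Plugging this into $(\star)$ along with the bound on $h^0(V)$ produces the self-referential inequality $\tfrac{3}{2} h^0(E) \le \tfrac{1}{2}(n + d) + \tfrac{1}{3}n + \tfrac{1}{6}d$, hence $h^0(E) \le \tfrac{5}{9}n + \tfrac{4}{9}d$: the first sub-case. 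The three pieces paste together because an elementary calculation shows the iterated bound beats the direct one (with the third sub-case of Proposition \ref{p4.9}) exactly when $d \le \tfrac{58}{13}n$, and Proposition \ref{p4.9} itself switches sub-cases at $d = \tfrac{19}{4}n$; both crossovers are continuous.

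The main technical obstacle is the behaviour at the two endpoint values $d = 4n$ and $d = 5n$. At $d = 4n$ the iterated route requires $h^0(T^* \otimes F) = 0$, which could fail a priori since $T^* \otimes F$ has degree zero; however, a non-zero homomorphism $E \to K_C \otimes T^{*2}$ would yield either a surjection (whose kernel of rank $n-1$ would have slope $4 = \mu(E)$, destabilising for $E$ stable) or a non-surjective map (whose saturated kernel would have slope strictly greater than $\mu(E)$, again a destabiliser), so no such homomorphism exists for $E$ stable, and the strictly semistable case reduces to the stable one via Jordan--H\"older factors. At $d = 5n$ the slope of $V$ is exactly $2$, so Proposition \ref{pln2}(i) must be replaced by Proposition \ref{pln2}(iii) applied to the stable JH factors of $V$ (the single exception $D(K_C)$, occurring only when $n = 5$, is handled separately); this still delivers the claimed bound at the endpoint.
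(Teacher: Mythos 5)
Your proof is correct and is in essence the paper's proof: the master inequality $(\star)$ is exactly the paper's \eqref{e4.2a}, the bound on $h^0(T^*\otimes E)$ is \eqref{eq101}, and the second and third sub-cases are obtained, as in the paper, by feeding the relevant cases of Proposition \ref{p4.9} (applied to $F=K_C\otimes T^*\otimes E^*$) into $(\star)$; the treatment of $d=5n$ via Proposition \ref{pln2}(ii),(iii) and the exception $T^*\otimes E\simeq D(K_C)$ also matches the paper. The one place you genuinely deviate is the first sub-case: the paper simply quotes the third line of \eqref{eq201} for $h^0(F)$ (which encodes the limit of the infinite iteration in Lemma \ref{l4.9}), whereas you apply Lemma \ref{l2.3} once more to $F$, use $h^0(T^*\otimes F)=0$ to get $2h^0(F)\le h^1(E)=h^0(E)+5n-d$, and solve the resulting self-referential inequality in closed form. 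This is a clean shortcut that lands on the same constant $\frac59n+\frac49d$ and avoids re-invoking the iterated bound; its only cost is that at $d=4n$ you must justify $h^0(K_C\otimes T^{*2}\otimes E^*)=0$, and your destabilisation argument silently assumes $n\ge2$ --- for $n=1$ the stable bundle $E\simeq K_C\otimes T^{*2}$ does admit such a homomorphism, so you need the same explicit check the paper makes ($h^0=2<\frac73$) before passing to Jordan--H\"older factors. With that one-line patch the argument is complete.
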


\begin{proof}
Suppose first that $4n<d<5n$. Since $2<\mu(K_C\otimes T^*\otimes E^*)<3$, we have, by Proposition \ref{p4.9},
\begin{equation}\label{eq100}
h^0(K_C \otimes T^* \otimes E^*) \le \left\{ \begin{array}{ccc}
                                              \frac{25}9n - \frac5{18}d && 4n < d < \frac{58}{13}n;\\
                                              6n-d & if & \frac{58}{13}n \le d < \frac{19}{4}n;\\
                                              \frac{11}{5}n - \frac{1}{5}d && \frac{19}{4}n \le d < 5n.
                                             \end{array}
                                            \right.
\end{equation}
By Lemma \ref{l2.3},
$$
h^0(T \otimes E) \ge 2h^0(E) - h^0(T^* \otimes E).
$$
Hence 
\begin{equation} \label{e4.2a}
2h^0(E) \le h^0(K_C \otimes T^* \otimes E^*)+\chi(T\otimes E)+h^0(T^*\otimes E).
\end{equation}
We have $n < d_{T^* \otimes E} < 2n$. So by Proposition \ref{pln2}(i),
\begin{equation}\label{eq101}
h^0(T^* \otimes E) \le n + \frac{1}{6}(d-4n).
\end{equation}
Introducing the inequalities for $h^0(K_C \otimes T^* \otimes E^*)$ and $h^0(T^* \otimes E)$ into \eqref{e4.2a} completes the proof for $4n<d<5n$.

If $d=4n$ and $E$ is stable,  then $K_C\otimes T^*\otimes E^*$ is a stable bundle of slope $3$, so,  by Remark \ref{r4.10}, $h^0(K_C\otimes T^*\otimes E^*)\le\frac53n$ unless $E\simeq K_C\otimes T^{*2}$; moreover \eqref{eq101} remains valid. If $E\simeq   K_C\otimes T^{*2}$, then $h^0(E)=2<\frac{21}9$. The result for semistable bundles with $d=4n$ follows from Lemma \ref{lbb}.

Finally, if $d=5n$, then $h^0(K_C\otimes T^*\otimes E^*)\le \frac65n$ by Proposition \ref{pln2}(ii); moreover, if $E$ is stable, then, by Proposition \ref{pln2}(iii),  \eqref{eq101} holds unless $T^*\otimes E\simeq D(K_C)$. In this case, $h^0(T^*\otimes E)=6$ and $h^0(K_C\otimes T^*\otimes E^*)\le6$ by Proposition \ref{pln2}(ii). It follows from \eqref{e4.2a} that $2h^0(E)\le27$, so $h^0(E)\le13<\frac{161}{12}$. The result for $d=5n$ is completed using again Lemma \ref{lbb}.
\end{proof}

We summarise the results of this subsection in the following theorem.

\begin{theorem}\label{t201}
Let $C$ be a trigonal curve of genus $6$. If $2n<d\le5n$ and $\widetilde{B}(n,d,k)\ne\emptyset$, then one of the following holds,
\begin{enumerate}
\item[(i)] $2n<d\le3n$ and $k$ satisfies the inequality of \eqref{eq201};
\item[(ii)] $3n<d<4n$ and $k\le\frac12(d+n)$;
\item[(iii)] $4n\le d\le5n$ and $k$ satisfies the inequality of \eqref{eq203}.
\end{enumerate}
If in addition $C$ admits a generated bundle in $B(1,4,2)$, then {\rm (ii)} can be replaced by
\begin{enumerate}
\item[(ii)$'$] $3n<d\le4n$ and $k$ satisfies the inequality of \eqref{eq202}.
\end{enumerate}
\end{theorem}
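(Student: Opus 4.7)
The plan is to observe that this theorem is essentially an assembly of the bounds already established in Propositions \ref{p4.9}, \ref{p2.2}(i), \ref{p4.11}, and \ref{pr4.11}, which are all phrased for semistable bundles. The only preliminary point is to pass from the $\widetilde B$-nonemptiness statement to a statement about a semistable bundle. If $[E]\in\widetilde B(n,d,k)$, then $h^0(\gr E)\ge k$; the associated graded object $\gr E$ is polystable of rank $n$ and degree $d$, hence semistable, so the bounds in the earlier propositions apply to $\gr E$ and give the required inequalities on $k$.

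With that out of the way, the three cases are handled by routing each into the appropriate proposition. For (i), the range $2n<d\le 3n$ is precisely the range covered by Proposition \ref{p4.9}, and the four-part upper bound is exactly \eqref{eq201}. For (ii), the range $3n<d<4n$ gives a slope $\mu(\gr E)\in(3,4)\subset[1,9]$, so Proposition \ref{p2.2}(i) applies and yields $h^0(\gr E)\le\tfrac12(d+n)$. For (iii), the range $4n\le d\le 5n$ is the range of Proposition \ref{p4.11}, whose three-part bound is \eqref{eq203}.

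Finally, for the sharpening (ii)$'$ under the extra hypothesis that $C$ admits a generated line bundle $Q\in B(1,4,2)$, Proposition \ref{pr4.11} already supplies the three-part bound \eqref{eq202} throughout the range $3n<d\le 4n$, which both refines the slope-$(3,4)$ estimate from (ii) and extends it to the endpoint $d=4n$.

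There is no genuine obstacle in this proof: all the analytic work --- the iterated application of Lemma \ref{l2.3} with the trigonal bundle $T$, the use of Serre-duality companions $K_C\otimes T^*\otimes E^*$, and the case distinctions forced by the stable Brill--Noether loci $B(4,9,5)$, $B(3,7,4)$ and $D(K_C)$ --- has been carried out in the preceding lemmas and propositions. The theorem is merely the consolidated statement of those bounds, phrased uniformly in terms of $\widetilde B(n,d,k)$.
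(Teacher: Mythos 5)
Your proposal is correct and matches the paper's own proof, which simply cites Propositions \ref{p2.2}(i), \ref{p4.9}, \ref{pr4.11} and \ref{p4.11}; your additional remark about passing to the (semistable) graded object $\gr E$ to handle the $\widetilde B$ formulation is the right, if routine, justification.
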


\begin{proof} This follows from Propositions \ref{p2.2}(i), \ref{p4.9}, \ref{pr4.11} and \ref{p4.11}.
\end{proof}
 
\subsection{Smooth plane quintics}\label{sub4.3}

\begin{prop} \label{prop4.12}
Let $C$ be a smooth plane quintic and $E$ a semistable bundle of rank $n$ and degree $d$. If $2n \le d \le3n$, then
\begin{equation}\label{eq204}
h^0(E) \le \left\{ \begin{array}{ccc}
                         n+\frac15(d-n)  & & 2n \le d \le \frac94n;\\
                         d-n & if & \frac94n < d \le \frac73n;\\
                         \frac43n && \frac73n < d <\frac52n;\\
		      d-n && \frac52n\le d\le3n.
                        \end{array} 
                        \right.
\end{equation}
\end{prop}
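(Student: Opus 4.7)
The bounds in (i), (ii) and (iv) all fall out of results already available for arbitrary non-hyperelliptic curves of genus $6$. For (i) at the boundary $d = 2n$, Proposition \ref{pln2}(ii) gives $h^0(E) \le \tfrac{6}{5}n = n + \tfrac{1}{5}(d-n)$; for $2n < d \le \tfrac{9}{4}n$ (so $\mu(E) > 2$), the contrapositive of Proposition \ref{p3.5}(ii) forbids $h^0(E) > n + \tfrac{1}{5}(d-n)$ because $\mu(E) \le \tfrac{9}{4}$. For (ii) and (iv) one has $\mu(E) > \tfrac{9}{4}$, so Proposition \ref{p3.6} gives $h^0(E) \le d-n$ directly. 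So the only case needing plane-quintic-specific input is (iii).

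For (iii) the key ingredient is the bundle $E_H$, which by (the proof of) Lemma \ref{l3.4} is stable of rank $2$ and degree $5$ with $h^0(E_H) = 3$, together with the identity $K_C \cong H^{\otimes 2}$ coming from the plane model. My first step is to show that
\[
\Hom(E_H,E) = 0
\]
for any semistable $E$ of slope $\mu(E) < \tfrac{5}{2}$. If $\varphi : E_H \to E$ were non-zero, then by stability of $E_H$ either $\varphi$ is injective as a map of sheaves, in which case the saturation of the image is a rank-$2$ subbundle of $E$ of slope $\ge \tfrac{5}{2}$, or its image is a line bundle of degree $\ge 3$ (since the kernel is a line subbundle of $E_H$ of degree $< \tfrac{5}{2}$), whose saturation in $E$ is a line subbundle of slope $\ge 3$. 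Either outcome violates $\mu(E) < \tfrac{5}{2}$.

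Next I would tensor the defining sequence
\[
0 \to E_H^* \to H^0(H) \otimes \cO_C \to H \to 0
\]
with $E$ and pass to cohomology. The vanishing $H^0(E_H^* \otimes E) = 0$ yields an injection $H^0(H) \otimes H^0(E) \hookrightarrow H^0(H \otimes E)$, and since $\dim H^0(H) = 3$ this gives $3\,h^0(E) \le h^0(H \otimes E)$. Riemann--Roch together with Serre duality and $K_C = H^{\otimes 2}$ yields
\[
h^0(H \otimes E) = \chi(H \otimes E) + h^0(H \otimes E^*) = d + h^0(H \otimes E^*).
\]
The twist $H \otimes E^*$ is semistable of rank $n$ and slope $5 - \mu(E) \in (\tfrac{5}{2},\tfrac{8}{3})$, which falls squarely in the range of case (iv); invoking (iv) --- already proved independently via Proposition \ref{p3.6} --- gives $h^0(H \otimes E^*) \le (5n-d) - n = 4n-d$. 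Hence $h^0(H \otimes E) \le 4n$ and $h^0(E) \le \tfrac{4}{3} n$, as required.

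The main obstacle is conceptual rather than technical: one must recognise that the triple $(E_H, H, K_C = H^{\otimes 2})$ is the correct lever for converting a case-(iv) bound on $H \otimes E^*$ into a case-(iii) bound on $E$, the factor of $3$ coming precisely from $h^0(H)$. Once this is seen, the remaining work --- the vanishing $\Hom(E_H,E)=0$ and the verification that $H \otimes E^*$ indeed sits in case (iv) --- is routine slope arithmetic.
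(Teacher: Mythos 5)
Your proposal is correct and follows essentially the same route as the paper: cases (i), (ii), (iv) are dispatched by Propositions \ref{pln2}(ii), \ref{p3.5} and \ref{p3.6}, and case (iii) uses exactly the same tensoring of the evaluation sequence of $H$ by $E$, the vanishing $h^0(E_H^*\otimes E)=0$, and the bound $h^0(K_C\otimes H^*\otimes E^*)\le 4n-d$ from Proposition \ref{p3.6}. The only difference is cosmetic: you spell out the slope argument for $\Hom(E_H,E)=0$, which the paper merely asserts.
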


\begin{proof}
Except for the range $\frac73n<d<\frac52n$, this follows from Propositions \ref{pln2}(ii), \ref{p3.5} and \ref{p3.6}. Suppose therefore that $\frac73n<d<\frac52n$. Then certainly, since $d_{K_C\otimes H^*\otimes E^*}=5n-d$, we have by Proposition \ref{p3.6},
$$
h^0(K_C \otimes H^* \otimes E^*) \le 5n-d-n = 4n-d.
$$
Consider the sequence
$$
0 \ra E_H^* \ra H^0(H) \otimes \cO_C \ra H \ra 0.
$$
Tensoring by $E$, taking global sections and noting that $h^0(E_H^* \otimes E) = 0$, we obtain
\begin{eqnarray*}
3 h^0(E)  \le h^0(H \otimes E)& = & h^0(K_C\otimes H^* \otimes E^*) + \chi(H \otimes E)\\
& \le & 4n-d+d =4n.
\end{eqnarray*}
This completes the proof.
\end{proof}

\begin{rem}
{\rm
If $d = \frac{5}{2}n$ and $E$ is stable, we still have $h^0(E_H^* \otimes E) = 0$, unless $E \simeq E_H$. So $E_H$ is the only stable bundle 
of slope $\frac{5}{2}$ for which $h^0 > \frac{4}{3}n$.
}
\end{rem}

\begin{prop}  \label{pr4.12}
Let $C$ be a smooth plane quintic and $E$ a semistable bundle of rank $n$ and degree $d$.
If $3n \le d \le 4n$, then 
\begin{equation}\label{eq205}
h^0(E) \le \left\{ \begin{array}{ccc}
                   2n && 3n \le d \leq \frac{7}{2}n;\\
                   \frac{1}{6}n + \frac{1}{2}d & if & \frac{7}{2}n < d \leq \frac{11}{3}n;\\
                   2n &  & \frac{11}{3}n \le d \le \frac{15}{4}n;\\
                   \frac{1}{2}n + \frac{2}{5}d && \frac{15}{4}n \le d \le 4n.
                   \end{array}
                   \right.
\end{equation}
\end{prop}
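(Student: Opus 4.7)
\noindent\textbf{Proof proposal for Proposition \ref{pr4.12}.} The plan is to imitate the argument of Proposition \ref{pr4.11} for the trigonal case, using in place of the trigonal bundle the line bundle $Q=H(-p)$ for some $p\in C$. By Remark \ref{reme}, $Q$ is a generated element of $B(1,4,2)$, so $d_Q=4$ and $h^0(Q)=2$. The key identity comes from Serre duality plus Riemann--Roch applied to $Q\otimes E$: since $\deg(Q\otimes E)=4n+d$ and $g-1=5$, we obtain
\[
h^0(Q\otimes E)\;=\;h^0(K_C\otimes Q^*\otimes E^*)+d-n.
\]
On the other hand, Lemma \ref{l2.3} applied with $N=Q$ gives $h^0(Q\otimes E)\ge 2h^0(E)$ whenever $\mu(E)<4$, or when $E$ is stable of rank $>1$ with $\mu(E)\le 4$. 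Combining the two yields
\begin{equation}\label{eqplan}
2h^0(E)\;\le\;h^0(K_C\otimes Q^*\otimes E^*)+d-n.
\end{equation}

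Next I would exploit the fact that the twisted dual $F:=K_C\otimes Q^*\otimes E^*$ is a semistable bundle of rank $n$ and degree $6n-d$, so $\mu(F)=6-d/n\in[2,3]$ throughout the range $3n\le d\le 4n$. Feeding $F$ into the smooth-plane-quintic bound of Proposition \ref{prop4.12}, the four sub-cases of \eqref{eq204} translate into four sub-ranges for $d$:
\begin{itemize}
\item $3n\le d\le \tfrac72n$: then $\mu(F)\in[\tfrac52,3]$, so $h^0(F)\le d_F-n=5n-d$;
\item $\tfrac72n<d<\tfrac{11}{3}n$: then $\mu(F)\in(\tfrac73,\tfrac52)$, so $h^0(F)\le\tfrac43n$;
\item $\tfrac{11}{3}n\le d<\tfrac{15}{4}n$: then $\mu(F)\in(\tfrac94,\tfrac73]$, so $h^0(F)\le 5n-d$;
\item $\tfrac{15}{4}n\le d\le 4n$: then $\mu(F)\in[2,\tfrac94]$, so $h^0(F)\le 2n-\tfrac15 d$.
\end{itemize}
Substituting each of these bounds into \eqref{eqplan} produces exactly the four inequalities in \eqref{eq205}, a direct numerical check.

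The remaining loose ends are the boundary case $d=4n$, where Lemma \ref{l2.3} requires $E$ to be stable of rank $>1$, and the semistable-but-not-stable case at $d=4n$. For $n\ge 2$ and $E$ stable, the lemma applies; for strictly semistable $E$ with $d=4n$, I would apply Lemma \ref{lbb} to descend to a stable bundle of the same slope but strictly smaller rank, where the bound is already known. For $n=1$, the bound $h^0(E)\le 2$ follows directly from $d_2=5$ on a smooth plane quintic (see \eqref{e2}), and this is well under $\tfrac12+\tfrac85$. One should also verify that Proposition \ref{prop4.12} is applicable to $F$ at the boundary $\mu(F)=3$ (i.e.\ $d=3n$), which is covered by the last line of \eqref{eq204}.

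I expect the only real obstacle to be bookkeeping: making sure that the four slope sub-ranges for $F$ line up correctly with the four sub-ranges for $d$ in \eqref{eq205}, and that the boundary values between adjacent sub-ranges give consistent (equal) bounds, which serves as a sanity check that the case splitting is right. No new geometric idea beyond what was used in the trigonal case should be needed; the proof is essentially a translation of the method of Proposition \ref{pr4.11} with $T$ replaced by $Q=H(-p)$.
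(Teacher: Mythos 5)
Your proposal is correct and follows essentially the same route as the paper: the paper likewise takes a generated $Q\in B(1,4,2)$, applies Lemma \ref{l2.3} together with Riemann--Roch to get $2h^0(E)\le h^0(K_C\otimes Q^*\otimes E^*)+d-n$, bounds $h^0(K_C\otimes Q^*\otimes E^*)$ via Proposition \ref{prop4.12} over the same four sub-ranges, and disposes of the $d=4n$ boundary exactly as you do (direct check for $E\simeq Q$ and Lemma \ref{lbb} for the strictly semistable case). Your sub-range bookkeeping and the resulting four inequalities all check out.
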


\begin{proof}
Let $Q\in B(1,4,2)$. The bundle $Q$ is necessarily generated (see Remark \ref{reme}). Since $2 \le \mu(K_C \otimes Q^* \otimes E^*) \le 3$, we have by Proposition \ref{prop4.12},
$$
h^0(K_C \otimes Q^* \otimes E^*) \leq \left\{ \begin{array}{ccc} 
                                              5n-d && 3n \le d \le \frac{7}{2}n;\\
                                              \frac{4}{3}n & if & \frac{7}{2}n < d \le \frac{11}{3}n;\\
                                              5n-d &  & \frac{11}{3}n \le d \le \frac{15}{4}n;\\
                                              2n-\frac{1}{5}d && \frac{15}{4}n \le d \le 4n.
                                              \end{array}
                                              \right.
$$
 The rest of the proof is identical to the proof of Proposition \ref{pr4.11}.
\end{proof}

\begin{prop} \label{p4.13}
Let $C$ be a smooth plane quintic and $E$ a semistable bundle of rank $n$ and degree $d$. If
$4n \le d\le 5n$, then
\begin{equation}\label{eq206}
h^0(E) \le \left\{ \begin{array}{ccc}
                   \frac{2}{3}d - \frac{1}{3}n & & 4n \le d < \frac{9}{2}n;\\
                   \frac{8}{3}n & if & \frac{9}{2}n \le d < 5n;\\
                   3n&&d=5n.
                   \end{array}
                   \right.
\end{equation}
\end{prop}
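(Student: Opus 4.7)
\emph{Strategy.} I would mimic the proof of Proposition~\ref{p4.11}, replacing the trigonal bundle $T$ by a generated line bundle $Q\in B(1,4,2)$. Such a $Q$ exists on every smooth plane quintic, for example $Q=H(-p)$ for any $p\in C$ (see Remark~\ref{reme}). Since $\mu(E)=d/n\ge 4=d_Q$ throughout the range $4n\le d\le 5n$, only the weak form of Lemma~\ref{l2.3} is directly available, giving $2h^0(E)\le h^0(Q\otimes E)+h^0(Q^*\otimes E)$, and Riemann--Roch rewrites $h^0(Q\otimes E)=h^0(K_C\otimes Q^*\otimes E^*)+(d-n)$.

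For parts (i) and (iii), the bundle $K_C\otimes Q^*\otimes E^*$ is semistable of rank $n$ and slope $6-d/n\in[1,2]$, so Propositions~\ref{pln2}(i)--(ii) bound its sections; the bundle $Q^*\otimes E$ has slope in $[0,1]$, strictly less than $d_Q$, and the strong form of Lemma~\ref{l2.3} applied to $Q\otimes(Q^*\otimes E)=E$ yields the sharper estimate $h^0(Q^*\otimes E)\le\tfrac{1}{2} h^0(E)$. Running the same argument in parallel on the Serre dual $K_C\otimes E^*$ (a semistable bundle of slope $10-d/n\in[5,6]$), invoking Proposition~\ref{p2.2}(i) to control the twist $Q\otimes K_C\otimes E^*$ at the boundary slope~$9$, and then eliminating $h^0(K_C\otimes E^*)=h^0(E)+5n-d$ via Serre duality should produce, after a routine combination, the target inequality $h^0(E)\le\tfrac{1}{3}(2d-n)$. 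The endpoint $d=5n$ specialises to part~(iii), and the bound $3n$ is attained by $H^{\oplus n}$.

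Part~(ii), the horizontal plateau $h^0(E)\le\tfrac{8}{3}n$ for $\tfrac{9}{2}n\le d<5n$, is strictly stronger than the interpolation provided by~(i) once $d>\tfrac{9}{2}n$, and is the main obstacle. My plan is to argue by contradiction via the dual span construction: assuming $h^0(E)>\tfrac{8}{3}n$, the bundle $D(E^*)$ is generated of rank $h^0(E)-n>\tfrac{5n}{3}$ and degree $d<5n$, with at least $h^0(E)$ sections and slope $d/(h^0(E)-n)<3$, placing its Harder--Narasimhan factors in the slope ranges handled by Proposition~\ref{prop4.12}(iii)--(iv). Any HN-factor of slope exactly $5/2$ attaining the Clifford-type bound of Proposition~\ref{prop4.12}(iv) is forced to be $E_H$ by the classification $B(2,5,3)=\{E_H\}$ of Lemma~\ref{l3.7}. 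A careful section-count across the HN-factors, combined with the gonalities listed in~\eqref{e2}, should force $H$ itself to appear as a quotient line bundle of $E$, and hence $d\ge 5n$---the desired contradiction. The hardest technical point will be handling this HN-bookkeeping when $D(E^*)$ fails to be semistable.
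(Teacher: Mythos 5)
There is a genuine gap, and it is quantitative rather than cosmetic. Your engine for part (i) is Lemma \ref{l2.3} applied with a degree-$4$ line bundle $Q$ with $h^0(Q)=2$. Writing out your inequalities: the weak form gives $2h^0(E)\le h^0(Q\otimes E)+h^0(Q^*\otimes E)$, Riemann--Roch gives $h^0(Q\otimes E)=h^0(K_C\otimes Q^*\otimes E^*)+d-n$, and Proposition \ref{pln2}(i) applied to the slope-$(6-\tfrac dn)$ bundle $K_C\otimes Q^*\otimes E^*$ together with $h^0(Q^*\otimes E)\le n+\tfrac16(d-5n)$ yields $h^0(E)\le\tfrac12(d+n)$; your alternative estimate $h^0(Q^*\otimes E)\le\tfrac12h^0(E)$ gives the even weaker $h^0(E)\le\tfrac59(d+n)$. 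But $\tfrac12(d+n)$ is just the Clifford bound, and it is strictly larger than the target $\tfrac23d-\tfrac13n$ throughout $4n\le d<5n$. Moreover, the ``parallel'' run on $K_C\otimes E^*$ is the \emph{same} inequality in disguise: by Riemann--Roch, bounding $h^0(Q\otimes K_C\otimes E^*)$ is equivalent to bounding $h^0(Q^*\otimes E)$, so eliminating $h^0(K_C\otimes E^*)=h^0(E)+5n-d$ returns you to the inequality you started with and no combination produces a coefficient $3$ on $h^0(E)$. A pencil simply cannot give better than a factor $2$.

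The idea you are missing is to twist by the rank-$2$ dual span bundle $E_H$ instead of by a line bundle. Tensoring $0\ra H^*\ra H^0(E_H)\otimes\cO_C\ra E_H\ra 0$ with $E$ and using $h^0(H^*\otimes E)=0$ for $d<5n$ (the slope is negative) gives $h^0(E_H\otimes E)\ge 3h^0(E)$ --- this is where the crucial factor $3$ comes from, since $h^0(E_H)=3$. Then $3h^0(E)\le h^0(K_C\otimes E_H^*\otimes E^*)+\chi(E_H\otimes E)$, and $K_C\otimes E_H^*\otimes E^*$ has slope $\tfrac{15}{2}-\tfrac dn\in[\tfrac52,\tfrac72]$, exactly the range controlled by Propositions \ref{prop4.12} and \ref{pr4.12}. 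The two branches of that control give parts (i) and (ii) simultaneously, so your separate plan for the plateau (ii) via Harder--Narasimhan bookkeeping on $D(E^*)$ is unnecessary; as written it is also far from a proof ($E$ need not be generated, and the claimed forcing of $H$ as a quotient is not carried out). Part (iii) at $d=5n$ does follow directly from $\Cliff_n(C)=1$, as you suggest.
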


\begin{proof} 
Since $\frac52 \le \mu(K_C \otimes E_H^* \otimes E^*) \le \frac72$, we have by Propositions \ref{prop4.12} and \ref{pr4.12},
$$
h^0(K_C \otimes E_H^* \otimes E^*) \le \left\{ \begin{array}{ccc}
                                                4n & if & 4n \le d <\frac{9}{2}n;\\
                                                13n -2d & if & \frac{9}{2}n \le d \le5n.
                                               \end{array}
                                               \right.
$$ 
Now consider the sequence
$$
0 \ra H^* \ra H^0(E_H) \otimes \cO_C \ra E_H \ra 0.
$$
Tensoring by $E$, taking global sections and noting that $h^0(H^* \otimes E) = 0$ if $d<5n$, we obtain
$$
h^0(E_H \otimes E) \geq 3h^0(E).
$$
So 
\begin{eqnarray*}
3h^0(E) & \le & h^1(E_H \otimes E) + \chi(E_H \otimes E) \\
& \le & \left\{ \begin{array}{ccc}
               4n + 2d - 5n & if & 4n \le d < \frac{9}{2}n;\\
               13n - 2d + 2d - 5n & if & \frac{9}{2}n \le d < 5n
               \end{array}
               \right. \\
& = & \left\{ \begin{array}{ccc} 
              2d - n & if & 4n \le d < \frac{9}{2}n; \\
              8n & if & \frac{9}{2}n \le d < 5n.
              \end{array}
              \right.
\end{eqnarray*}
This completes the proof for $d<5n$. If $d=5n$, since $\Cliff_n(C)=1$ (see \eqref{eqcliff}), it follows from the definition of $\Cliff_n(C)$ that $h^0(E)\le3n$.
\end{proof}

\begin{rem} \label{rem4.19}
{\rm
The same proof shows that, if $E$ is stable with $d = 5n$ and $E  \not \simeq H$, then 
$h^0(E) \leq \frac{8}{3}n$.
}
\end{rem}

We summarise the results of this subsection in the following theorem.

\begin{theorem}\label{t202}
Let $C$ be a smooth plane quintic. If $2n<d\le5n$ and $\widetilde{B}(n,d,k)\ne\emptyset$, then one of the following holds,
\begin{enumerate}
\item[(i)] $2n<d\le3n$ and $k$ satisfies the inequality of \eqref{eq204};
\item[(ii)] $3n<d\le4n$ and $k$ satisfies the inequality of \eqref{eq205};
\item[(iii)] $4n< d\le5n$ and $k$ satisfies the inequality of \eqref{eq206}.
\end{enumerate}
\end{theorem}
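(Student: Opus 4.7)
The plan is simply to combine the three preceding propositions, each of which treats one slope range. The key observation is that $\widetilde B(n,d,k)\neq\emptyset$ means there is some $[E]\in\widetilde M(n,d)$ with $h^0(\gr E)\ge k$; the graded object $\gr E$ is polystable, in particular semistable, of rank $n$ and degree $d$. Hence any upper bound on $h^0(F)$ valid for all semistable $F$ of rank $n$ and degree $d$ automatically bounds $k$.

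First I would dispose of the range $2n<d\le 3n$ by direct appeal to Proposition~\ref{prop4.12}: the four subcases in \eqref{eq204} give exactly the statement of (i). Next, for $3n<d\le 4n$, Proposition~\ref{pr4.12} supplies the four subcases in \eqref{eq205}, yielding (ii); here one checks that the endpoint $d=3n$ is already covered by the previous range, and that the bound at $d=4n$ is consistent with both \eqref{eq205} and \eqref{eq206}. Finally, for $4n<d\le 5n$, Proposition~\ref{p4.13} gives the bounds in \eqref{eq206}, including the separate value at $d=5n$, completing (iii).

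There is no real obstacle: all work has been done in the three previous propositions, and the theorem is purely a packaging statement, in exact analogy with the way Theorem~\ref{t201} is deduced from Propositions~\ref{p2.2}(i), \ref{p4.9}, \ref{pr4.11} and \ref{p4.11}. The only point requiring a moment's thought is passing from stability to semistability via Lemma~\ref{lbb}, but this is already incorporated into the statements of Propositions~\ref{prop4.12}, \ref{pr4.12} and \ref{p4.13}, so no additional argument is needed.
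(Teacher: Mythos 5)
Your proposal is correct and matches the paper's proof exactly: the paper deduces Theorem \ref{t202} in one line from Propositions \ref{prop4.12}, \ref{pr4.12} and \ref{p4.13}, which are already stated for arbitrary semistable bundles and hence apply to $\gr E$. Your additional remarks about the graded object and Lemma \ref{lbb} are accurate but not needed beyond what those propositions already provide.
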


\begin{proof} This follows from Propositions \ref{prop4.12}, \ref{pr4.12} and \ref{p4.13}.
\end{proof}

\subsection{Curves of Clifford index $2$}

\begin{prop} \label{p3.7}
Let $C$ be a curve of genus $6$ with $\Cliff(C) = 2$ and $E$ a semistable bundle of rank $n$ and degree $d$ with $\mu(E)>\frac{9}{4}$. Then 
$$
h^0(E) < d-n.
$$
\end{prop}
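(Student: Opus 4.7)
The plan is to assume, toward a contradiction, that $h^0(E)=d-n$, which is the extremal value permitted by Proposition \ref{p3.6}, and to derive a contradiction case by case in $\mu=\mu(E)$.

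If $\mu\ge 3$, Proposition \ref{p1.6}(i) gives $h^0(E)\le d/2<d-n$ since $d>2n$; for $\mu>7$ one first applies the same bound (or Proposition \ref{pln2}(i)) to the Serre dual $K_C\otimes E^*$, whose slope lies in $[0,3)$, to bound $h^1(E)$ and hence $h^0(E)$. If $7/3<\mu<3$, Proposition \ref{p1.6}(ii) gives $h^0(E)\le n+(d-n)/4$, and $d>7n/3$ forces the right-hand side strictly below $d-n$. The remaining window is $9/4<\mu\le 7/3$, which I treat by induction on the rank $n$.

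The base cases are direct: for $n=1$ the Clifford estimate $h^0(L)\le d_L/2$ suffices; for $n=2$ the only relevant datum is $d=5$, ruled out by Lemma \ref{l3.7}; for $n=3$ the only relevant datum is $d=7$, ruled out by Lemma \ref{l4.8} since $\Cliff(C)=2$ means $C$ is not trigonal. In the inductive step with $n\ge 5$, if $E$ is strictly semistable then its graded object has stable Jordan--H\"older factors of rank $<n$ and the same slope $\mu>9/4$, so by the inductive hypothesis $h^0(F_i)<d_{F_i}-n_{F_i}$; summing over the graded object yields $h^0(E)\le h^0(\operatorname{gr} E)<d-n$, a contradiction. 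Suppose now $E$ is stable. Since $h^0(E)=d-n>n+(d-n)/5$ (equivalent to $\mu>9/4$), Proposition \ref{p3.5}(i) yields a nonzero $\varphi\colon E_L\to E$ with $L=K_C(-p)$. Using stability of $E_L$ and of $E$, the slope bound $\mu(E)\le 7/3$, the integrality of degrees, and Lemma \ref{l4.8} together with the Clifford bound on small-degree line bundles, a short case analysis on the rank of $\operatorname{im}(\varphi)$ forces $\varphi$ to be injective; the saturation $\bar F\subset E$ then has rank $4$ and degree $9$, hence $\bar F\cong E_{L'}$ with $L'=K_C(-q)$ by Proposition \ref{p3.4}.

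Set $G:=E/\bar F$: then $n_G=n-4\ge 1$, $d_G=d-9$, $\mu(G)=\mu(E)+4(\mu(E)-9/4)/(n-4)>9/4$, and $h^0(G)\ge h^0(E)-h^0(\bar F)=d_G-n_G$. The crux is to show $G$ is semistable. If $G'\subset G$ were a destabiliser and $H\subset E$ its preimage, then $\bar F\subset H\subsetneq E$ and stability of $E$ would force
\[
n_{G'}\bigl(\mu(G')-\mu(E)\bigr)<4\bigl(\mu(E)-9/4\bigr).
\]
Combined with $\mu(G')>\mu(G)$, this confines $d_{G'}$ to a real interval of length $4(\mu(E)-9/4)(n-4-n_{G'})/(n-4)\le 4(\mu(E)-9/4)\le 4(7/3-9/4)=1/3$, which contains no integer. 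Hence $G$ is forced to be semistable, and the inductive hypothesis applied to $G$ (of rank $<n$ and slope $>9/4$) gives $h^0(G)<d_G-n_G$, contradicting our lower bound. The main obstacle is this final step: handling the narrow window $9/4<\mu\le 7/3$ uniformly, including the boundary $\mu=7/3$ where Proposition \ref{p3.5}(iii) does not literally produce a subbundle, and verifying arithmetically that no destabiliser of $G$ can exist.
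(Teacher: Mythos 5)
Your reduction to the window $\frac94<\mu\le\frac73$ and your base cases are sound and match the paper, which disposes of ranks $\le4$ via Proposition \ref{p1.6} and Lemma \ref{l4.8} and then inducts on the rank exactly as in the proof of Proposition \ref{p3.6}. The gap is in your inductive step, at precisely the point you flag as the main obstacle: the semistability of $G=E/\bar F$. You confine $d_{G'}$ to an open interval of length at most $\frac13$ and conclude that it contains no integer, but an open interval of length less than $\frac13$ can perfectly well contain an integer; whether it does depends on where its endpoints lie. Concretely, take $n=17$, $d=39$ (so $\mu=\frac{39}{17}\in(\frac94,\frac73)$) and $n_{G'}=3$, $d_{G'}=7$: then $\mu(G')=\frac73>\frac{30}{13}=\mu(G)$, so such a $G'$ would destabilise $G$, while its preimage $H$ in $E$ has $\mu(H)=\frac{16}{7}<\frac{39}{17}=\mu(E)$, so stability of $E$ is not contradicted. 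Your interval is $\left(\frac{90}{13},\frac{120}{17}\right)=(6.92\ldots,\,7.05\ldots)$, which contains $7$. So the arithmetic does not force $G$ to be semistable, and without semistability you cannot apply the inductive hypothesis to $G$.

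The paper avoids this issue by running the induction on a different exact sequence: it takes $F\subset E$ to be a proper subbundle of maximal slope, chosen so that $F$ is stable and $G=E/F$ is semistable by construction, and uses the nonzero map $E_L\to E$ supplied by Proposition \ref{p3.5} only to conclude that $\mu(F)\ge\frac94$ (the image of $E_L$ generates a subbundle of slope at least $\frac94$). Then $h^0(F)\le d_F-n_F$ --- strictly by induction if $\mu(F)>\frac94$, and via Proposition \ref{p3.5} if $\mu(F)=\frac94$ --- while $h^0(G)<d_G-n_G$ by induction since $\mu(G)\ge\mu(E)>\frac94$; the sum gives the strict bound. If you want to keep your structure, you must either replace the quotient by $E_L$ with a quotient by a maximal-slope subbundle, or iterate through a Harder--Narasimhan filtration of $G$; the latter produces pieces whose slopes you no longer control from below, so the paper's choice is the natural fix.
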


\begin{proof}
In view of Proposition \ref{p1.6} and Lemma \ref{l4.8}, there are no semistable bundles $E$ of rank $n_E\le4$ with $ \mu(E)>\frac{9}{4}$ and $h^0(E) \geq d_E-n_E$. 

The proof proceeds exactly as for that of Proposition \ref{p3.6}. Note that the assumption $h^0(E)\ge d-n$ is sufficient to give $h^0(E)>n+\frac15(d-n)$, so that Proposition \ref{p3.5} applies where required.
\end{proof}

\begin{theorem} \label{th3.18}
 Let $C$ be a curve of genus $6$ with $\Cliff(C) = 2$. If $2n < d\leq 5n$ and 
$\widetilde B(n,d,k) \neq \emptyset$, then one of the following holds,
\begin{enumerate}
\item[(i)] $2n <d \leq \frac{9}{4}n$ and $k \leq n + \frac{1}{5}(d-n)$;
\item[(ii)] $\frac{9}{4}n < d \leq \frac{7}{3}n$ and $k < d-n$;
\item[(iii)] $\frac{7}{3}n < d \leq 3n$ and $k \leq n + \frac{1}{4}(d-n)$;
\item[(iv)] $3n < d \leq 5n$ and $k \leq \frac12d$.
\end{enumerate}
\end{theorem}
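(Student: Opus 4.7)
The plan is to observe that the four bounds asserted by Theorem \ref{th3.18} already appear in preceding propositions, so the proof is essentially a packaging step that matches each slope range to the appropriate estimate. Given $[E]\in\widetilde B(n,d,k)$, let $F=\gr E$; then $F$ is polystable (hence semistable) of rank $n$ and degree $d$ with $h^0(F)\ge k$. All the cited bounds below are stated for arbitrary semistable bundles and so apply directly to $F$ (equivalently, since each bound is linear in $(n,d)$, one can apply them stable-summand by stable-summand, each summand having slope $\mu=d/n$).

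For (i), $2<\mu=d/n\le 9/4$. The contrapositive of Proposition \ref{p3.5}(ii) states that a semistable bundle with $\mu>2$ and $\mu\le 9/4$ must satisfy $h^0\le n+\tfrac15(d-n)$, giving $k\le n+\tfrac15(d-n)$. For (ii), $9/4<\mu\le 7/3$, and Proposition \ref{p3.7} yields the strict inequality $h^0(F)<d-n$, hence $k<d-n$. For (iii), $\mu\in(7/3,3]\subset[1,3]$, and Mercat's bound, Proposition \ref{p1.6}(ii), gives $h^0(F)\le n+\tfrac14(d-n)$. For (iv), $\mu\in(3,5]\subset[3,7]$, and Proposition \ref{p1.6}(i) gives $h^0(F)\le \tfrac12 d$. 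These four cases exhaust the range $2n<d\le 5n$, and one checks at the boundary values $d=9n/4,\,7n/3,\,3n$ that the slope hypotheses of the cited propositions are satisfied, so the ranges glue consistently (for instance, at $d=3n$ both Proposition \ref{p1.6}(i) and (ii) give the common value $\tfrac12 d=n+\tfrac14(d-n)=\tfrac32 n$).

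The main obstacle is not located in this theorem itself but in the two propositions that identify $9/4$ as the critical slope: Proposition \ref{p3.5}, which produces it via Lemma \ref{l3.3} and the stable dual-span bundle $E_L$ associated to $L=K_C(-p)$, and Proposition \ref{p3.7}, whose inductive proof is modelled on that of Proposition \ref{p3.6} and rests on the low-rank base cases furnished by Proposition \ref{p1.6} and Lemma \ref{l4.8}. Once those are in place, the present step is bookkeeping: match the range, check the slope hypothesis, read off the bound.
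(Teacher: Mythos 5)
Your proposal is correct and follows exactly the paper's own proof: the authors likewise deduce (i) from Proposition \ref{p3.5}, (ii) from Proposition \ref{p3.7}, and (iii) and (iv) from Proposition \ref{p1.6}. Your additional remarks on applying the bounds to $\gr E$ and on the consistency of the bounds at the boundary slopes $\frac94$, $\frac73$, $3$ are accurate but not needed beyond what the paper records.
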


\begin{proof}
(i) follows from Proposition \ref{p3.5}, (ii) from Proposition \ref{p3.7} and (iii) and (iv) from Proposition 
\ref{p1.6}.
\end{proof}

\begin{rem}
{\rm
For $C$ bielliptic, the bound (iv) is sharp by Proposition \ref{p1.7}.
}
\end{rem}

\section{Existence results for non-hyperelliptic curves}\label{secexist}

\begin{prop}\label{p3.11}
Let $C$ be a non-hyperelliptic curve of genus $6$. Then $B(n,d,k) \neq \emptyset$ in the following cases:
\begin{enumerate}
 \item[(i)] $(n,d,k)= (5r+s, 10r+2s+1, 6r+s) \; \mbox{for} \; 1 \leq r \leq 5, \; s \geq 0$;
 \item[(ii)] $(n,d,k) = (5r+s,10r+2s+2, 6r+s)$ for $1 \leq r \leq 5, \; s \geq 5r+1$;
 \item[(iii)] $(n,d,k) = (6r+s, 12r+2s+1,7r+s)$ for $r \geq 1, \; s \geq 0$.
\end{enumerate}
\end{prop}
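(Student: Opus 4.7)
The plan is to realise each of the desired triples $(n,d,k)$ as a general positive elementary transformation of a carefully chosen polystable bundle of slope~$2$. By Proposition~\ref{pln2}(iii), $D(K_C)\in B(5,10,6)$, so $D(K_C)$ is stable of slope~$2$ with $h^0=6$; moreover, on any curve of genus~$6$ one has an abundant supply of pairwise non-isomorphic line bundles $N_1,N_2,\ldots$ of degree~$2$ with $h^0(N_i)=1$ (they correspond to effective divisors of degree~$2$, which sweep out a surface in $\Pic^2(C)$).

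In each case set
\[
F:=D(K_C)^{\oplus r}\oplus N_1\oplus\cdots\oplus N_t,
\]
where $t=s$ for case~(i), $t=r+s$ for case~(iii), and $t=s$ for case~(ii). Then $F$ is polystable of slope~$2$, with $\rk F=5r+t$, $\deg F=10r+2t$ and $h^0(F)=6r+t$. Now take a general positive elementary transformation
\[
0\lra F\lra E\lra\tau\lra 0,
\]
with $\tau$ a torsion sheaf of length $\ell=1$ in cases~(i) and~(iii), and $\ell=2$ in case~(ii). Sections of $F$ lift, so $h^0(E)\ge h^0(F)$, and the rank and degree of $E$ match the required triples.

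The core work is showing that $E$ is stable. Any saturated subbundle $E'\subset E$ meets $F$ in a subbundle $F':=E'\cap F$, with $\tau':=E'/F'$ a subsheaf of $\tau$ of length $\ell'\le\ell$. Since $F$ is polystable of slope~$2$, $\mu(E')\le 2+\ell'/\rk F'$, and this exceeds $\mu(E)=2+\ell/(5r+t)$ only for $F'$ of slope close to $2$ and of small rank relative to $\ell'$. The slope-$2$ subbundles of $F$ are parametrised by pairs $(V,S)$, with $V\subset\Hom(D(K_C),D(K_C)^{\oplus r})\cong\CC^r$ a linear subspace and $S\subset\{1,\dots,t\}$ a subset (using that $\Hom(D(K_C),N_i)=\Hom(N_i,D(K_C))=0$ for generic $N_i$). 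A codimension count on the extension class $\xi\in\Ext^1(\tau,F)\cong\bigoplus_j F_{q_j}$ then shows that the bad locus --- where $\xi\in\bigoplus_j F'_{q_j}$ for some destabilising $F'$ --- is a proper subvariety precisely under the stated numerical hypotheses; the restriction $r\le 5$ in cases~(i) and~(ii), and the further restriction $s\ge 5r+1$ in case~(ii), are exactly what force the relevant codimensions to be positive.

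The hard part will be this stability analysis. Beyond the slope-$2$ destabilisers, one must also exclude proper subbundles $F'$ of slope strictly less than~$2$ combined with the full torsion (relevant only in case~(ii) where $\ell=2$): for these, a subtler parameter count exploiting the stability of $D(K_C)$ and the genericity of the $N_i$ is required, in the spirit of Mercat's proof of Proposition~\ref{p2.5} \cite{m3}.
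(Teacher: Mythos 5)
The paper itself disposes of this proposition in one line, by citing the general constructions of \cite[(3.3), Proposition 3.6 and Example 3.9]{ln1}, so your from-scratch elementary-transformation argument is necessarily a more explicit route. For parts (i) and (ii) your choice of $F=D(K_C)^{\oplus r}\oplus N_1\oplus\cdots\oplus N_s$ is the right one and the mechanism you sketch is correct: the dangerous subbundles are exactly the slope-$2$ subbundles $V\otimes D(K_C)\oplus\bigoplus_{i\in S}N_i$, and the union over hyperplanes $V\subset\CC^r$ of the fibres $V\otimes D(K_C)_q$ is the locus of $r\times 5$ matrices of rank $<r$ inside $\CC^r\otimes D(K_C)_q$, which is a proper subvariety precisely because $r\le 5=\rk\,D(K_C)$. (You should still carry out the count for (ii) honestly --- you assert, rather than verify, that $s\ge 5r+1$ is what makes the length-$2$ case work --- but the strategy there is sound.)

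Part (iii) is where the proposal genuinely fails. You take $F=D(K_C)^{\oplus r}\oplus N_1\oplus\cdots\oplus N_{r+s}$ with no upper bound on $r$, and for $r\ge 6$ \emph{every} positive elementary transformation $0\to F\to E\to\CC_q\to 0$ is unstable: the component of the extension class $e\in F_q$ lying in $\CC^r\otimes D(K_C)_q\cong\CC^r\otimes\CC^5$ is an $r\times5$ matrix whose column span in $\CC^r$ has dimension at most $5<r$, hence is contained in some hyperplane $V$; then $e$ lies in the fibre at $q$ of the proper slope-$2$ subbundle $F'=V\otimes D(K_C)\oplus\bigoplus_iN_i$, which therefore lifts to a subsheaf $E'\subset E$ with $\mu(E')=2+\frac{1}{6r+s-5}>2+\frac{1}{6r+s}=\mu(E)$. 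So the very codimension count that imposes $r\le5$ in (i) and (ii) kills your construction of (iii) for $r\ge6$, whereas (iii) is asserted for all $r\ge1$. The repair is to abandon $D(K_C)^{\oplus r}$ in case (iii): by Proposition \ref{pln2}(iii) there is a \emph{stable} bundle $G\in B(6r,12r,7r)$ for every $r$ (since $7r=\frac{7}{6}\cdot 6r$), and taking $F=G\oplus N_1\oplus\cdots\oplus N_s$ leaves only finitely many maximal proper slope-$2$ subbundles (namely $\bigoplus_iN_i$ and $G\oplus\bigoplus_{i\ne i_0}N_i$), each of whose fibres at $q$ has positive codimension; a general length-$1$ elementary transformation then yields a stable $E\in B(6r+s,12r+2s+1,7r+s)$ for all $r\ge1$ and $s\ge0$.
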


\begin{proof}
These are special cases of \cite[(3.3), Proposition 3.6 and Example 3.9]{ln1}. 
\end{proof}

\begin{prop} \label{p3.10}
 Let $C$ be a non-hyperelliptic curve of genus $6$. Then $B(2,6,4) = \emptyset$. Moreover,
 \begin{enumerate}
 \item[(i)] if $\Cliff(C) = 2$, then
 $$
 B(2,6,3) = \{ E_M \;|\; M \in B(1,6,3) \}\neq \emptyset;
 $$
\item[(ii)] if $C$ is trigonal, then $B(2,6,3)=\emptyset$;
 \item[(iii)] If $C$ is a smooth plane quintic with hyperplane bundle $H$, then $E \in B(2,6,3)$ if and only if there exists an exact sequence
 $$
 0 \ra E_H \ra E \ra \CC_p \ra 0 \quad \mbox{for some} \; p \in C.
 $$
 \end{enumerate}
\end{prop}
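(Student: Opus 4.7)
The strategy is to first reduce to $h^0(E)=3$ and then split on whether $E$ is globally generated. By Proposition \ref{p2.2}(i), $h^0(E)\le 4$ for any $E\in\widetilde B(2,6,k)$; a putative member of $B(2,6,4)$ would satisfy $h^0(E)=n+s$ with $s=2$, so Lemma \ref{l2.2} would demand either $d_E\ge d_4=9$ (impossible since $d_E=6$) or a line subbundle $N$ with $h^0(N)\ge 2$, forcing $d_N\ge d_1\ge 3$ and contradicting stability (as $\mu(E)=3$). Thus $B(2,6,4)=\emptyset$ and every $E\in B(2,6,3)$ has $h^0(E)=3$.

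If such an $E$ is globally generated, dualising the evaluation sequence $0\to M^{-1}\to\cO_C^3\to E\to 0$ (with $M:=\det E$ of degree $6$) identifies $M$ as a generated line bundle with $h^0(M)\ge 3$; Clifford's theorem forces $h^0(M)=3$ on the non-hyperelliptic $C$, so $M\in B(1,6,3)$ and $E=E_M=D(M)$. If $E$ is not generated, pick a point $p$ where the evaluation fails and let $F:=\ker(E\twoheadrightarrow\CC_p)$; then $F$ has rank $2$, degree $5$, $h^0(F)=3$, and every line subsheaf of $F$ has degree $<3\le 5/2$, so $F$ is stable. Lemma \ref{l3.7} then forces $C$ to be a smooth plane quintic with $F\simeq E_H$.

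For (i), Serre duality identifies $B(1,6,3)$ with $B(1,4,2)\ne\emptyset$ (Proposition \ref{p2.0}); Remark \ref{reme} says every $L\in B(1,4,2)$ is generated, and since $d_2=6$ forces $h^0(L(p))=2$ for all $p$, Riemann-Roch and Serre duality give $h^0(M(-p))=h^0(L(p))=2=h^0(M)-1$ for $M=K_C\otimes L^{-1}$, so $M$ is itself generated. Proposition \ref{p2.1}(i) then yields stability of $E_M$ since $d_1/1=4>3=d_2/2$, while Lemma \ref{l3.7} excludes the non-generated branch. For (ii), the base-point-free pencil trick with $T$ gives $2h^0(M)-h^0(M\otimes T^{-1})\le h^0(M\otimes T)$ for any $M\in B(1,6,3)$; since $M\otimes T$ has degree $9$ and $h^0\le 5$ by Clifford, we must have $h^0(M\otimes T^{-1})\ge 1$, i.e.\ $T\hookrightarrow M$. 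A snake-lemma chase on the induced morphism between the dual-span sequences of $T$ and $M$ then produces $0\to L^*\to E_M\to T\to 0$ with $L^*$ a line subbundle of degree $3=\mu(E_M)$, making $E_M$ strictly semistable; the non-generated branch is again empty. For (iii), Serre duality and Remark \ref{reme} give $B(1,6,3)=\{H(p)\mid p\in C\}$; since $h^0(H(p))=h^0(H)=3$ by Riemann-Roch, $p$ is a base point, so no $M\in B(1,6,3)$ is generated. Hence every $E\in B(2,6,3)$ fits in $0\to E_H\to E\to\CC_p\to 0$; conversely, any such $E$ has $h^0(E)\ge 3$ and is stable, because a destabilising degree-$3$ subbundle would saturate to a degree-$2$ subbundle $N'\subset E_H$ whose quotient $E_H/N'$ is a generated line bundle of degree $3$ (being a quotient of $\cO_C^3$ via the dual-span sequence of $H$), forcing a $g^1_3$ absent on $C$ (where $d_1=4$).

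The main obstacle is the trigonal case (ii), where the base-point-free pencil trick must be combined with a careful snake-lemma diagram chase on the dual-span sequences to exhibit the destabilising degree-$3$ subbundle of $E_M$; the converse in (iii) requires the parallel observation that $E_H$ admits no degree-$2$ subbundle on a smooth plane quintic.
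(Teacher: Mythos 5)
Your proposal is correct and follows the paper's overall strategy (reduce to $h^0(E)=3$, split on whether $E$ is generated, identify generated $E$ with a dual span $E_M$, and send non-generated $E$ to $B(2,5,3)$ via a negative elementary transformation and Lemma \ref{l3.7}), but several sub-arguments are genuinely different and worth noting. For $B(2,6,4)=\emptyset$ the paper observes that such a bundle would force $\Cliff(C)=1$ and then quotes two external results (one for trigonal curves, one for plane quintics), whereas you argue directly from Lemma \ref{l2.2}: since $d_4=9>6$, a stable $E$ with $h^0\ge4$ must carry a line subbundle with $h^0\ge2$, hence of degree $\ge d_1\ge3=\mu(E)$, contradicting stability. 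This is self-contained and arguably preferable. In (ii) you reach the non-zero map $E_M\to T$ by first producing $T\hookrightarrow M$ with the base-point-free pencil trick and then using functoriality of the dual span, while the paper tensors the evaluation sequence of $M$ by $T$ and counts sections directly; the two are equivalent (and indeed the paper uses your route in Lemma \ref{l3.4}). In (iii) you supply the stability verification for the converse direction, which the paper leaves implicit: a degree-$3$ line subbundle $N\subset E$ either lies in $E_H$ (impossible, as $3>\frac52$) or meets it in a saturated degree-$2$ subbundle whose quotient would be a generated line bundle of degree $3$, contradicting $d_1=4$; you should state the first alternative explicitly, but it is immediate. Two trivial slips: the inequality ``degree $<3\le 5/2$'' in your second paragraph should read ``degree $\le 2<\frac52$'', and in (i) your detour through Serre duality and $B(1,4,2)$ to show that every $M\in B(1,6,3)$ is generated can be shortened to the paper's one-line observation that a base point would produce a degree-$5$ pencil of dimension $3$, contradicting $d_2=6$.
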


\begin{proof}
If $B(2,6,4) \neq \emptyset$, then certainly $\Cliff_2(C)=\Cliff(C) =1$. We obtain a contradiction by \cite[Corollary 4.8]{ln3} if $C$ is trigonal and by 
\cite[Proposition 3.1]{ln4} if $C$ is a smooth plane quintic. 

(i): Since $d_2 = 6$ (see \eqref{e3}), every $M \in B(1,6,3)$ is generated. Hence $E_M$ exists and belongs to $B(2,6,3)$ by Proposition \ref{p2.1}(i).

Now suppose $E \in B(2,6,3)$. If $E$ is not generated, a negative elementary transformation yields a stable bundle of rank 2, degree 5 with $h^0= 3$, contradicting Lemma \ref{l3.7}. So $E$ 
is generated and there exists an exact sequence
$$
0 \ra M^* \ra H^0(E) \otimes \cO_C \ra E \ra 0.
$$
So $M \in B(1,6,3)$ and $E \simeq E_M$. All such $E_M$ are stable by Proposition \ref{p2.1}(i).

(ii): Suppose $E\in B(2,6,3)$. Since $d_2=6$ (see \eqref{e1}), it follows as in (i) that $E\simeq E_M$ for some $M\in B(1,6,3)$. However, in this case, Proposition \ref{p2.1}(i) does not apply. We shall show in fact that $E_M$ cannot be stable. For this, we consider the exact sequence
$$
0\ra E_M^*\ra H^0(M)\otimes {\mathcal O}_C\ra M\ra0.
$$
Tensoring by $T$ and taking global sections, we obtain an inequality
$$
h^0(T\otimes E_M^*)\ge 3 h^0(T)-h^0(T\otimes M).
$$
Since $h^0(T)=2$ and $h^0(T\otimes M)\le5$, it follows that there exists a non-zero homomorphism $E_M\to T$. This contradicts the stability of $E_M$.

(iii): First note that, by Hartshorne's version of Noether's theorem \cite[Theorem 2.1]{h}, there are no generated line bundles in $B(1,6,3)$. It follows that there
are no generated bundles in $B(2,6,3)$. Then the assertion follows from Lemma \ref{l3.7}.
\end{proof}

\begin{cor} \label{c4.11}
Let $C$ be a non-hyperelliptic curve of genus $6$. Then
$$
B(2r,6r-1,3r-1) \neq \emptyset \quad \mbox{for} \quad 1 \leq r \leq 5.
$$
If either $\Cliff(C) =1$ or $C$ is bielliptic, the same holds for every positive integer $r$.
\end{cor}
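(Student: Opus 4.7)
My approach is to realise $E\in B(2r,6r-1,3r-1)$ as a general negative elementary transformation at a single point of a direct sum $F=E_1\oplus\cdots\oplus E_r$ of pairwise non-isomorphic stable bundles in $B(2,6,3)$. Such an $F$ has rank $2r$, degree $6r$, and $h^0(F)\ge 3r$; since each $E_i$ is generated (being the dual span of a generated line bundle under Proposition \ref{p3.10}(i), or via Proposition \ref{p3.10}(iii)), so is $F$, and any single negative elementary transformation $0\to E\to F\to\CC_p\to 0$ drops the number of sections by at most $1$, yielding $h^0(E)\ge 3r-1$ of the required numerical type. The critical step is then to guarantee that $E$ is stable.

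For the first part of the statement, Proposition \ref{p3.10} supplies the building blocks. If $\Cliff(C)=2$, then by Proposition \ref{p3.10}(i), $B(2,6,3)\cong B(1,6,3)$, and Serre duality identifies $B(1,6,3)$ with $B(1,4,2)$, which by Proposition \ref{p2.0}(i)--(ii) has cardinality at least $5$ on any non-hyperelliptic curve (exactly $5$ for $C$ general). If $C$ is a smooth plane quintic, Proposition \ref{p3.10}(iii) produces an infinite family. Hence for any $1\le r\le 5$ one can pick $r$ pairwise distinct $E_i$'s. For the second part, when $C$ is bielliptic one has $\dim B(1,4,2)=1$ by Proposition \ref{p2.0}(iii), and when $C$ is a smooth plane quintic the above family is infinite, so the same construction yields the conclusion for every $r\ge 1$.

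The stability of $E$ follows from a rank-$2$ analogue of Proposition \ref{p2.5}(ii): for pairwise non-isomorphic stable bundles $E_1,\dots,E_r$ of a common slope, a generic negative elementary transformation of $\bigoplus E_i$ is stable, since any destabilising subbundle would have to arise from a subsum of the Jordan--H\"older summands, and such configurations are excluded by a dimension count against a generic choice of quotient $F\to\CC_p$. The main obstacle is the remaining case $\Cliff(C)=1$ with $C$ trigonal, where $B(2,6,3)=\emptyset$ by Proposition \ref{p3.10}(ii) so the direct-sum construction is unavailable. Here I would treat $r=1$ directly via Corollary \ref{c2.2} (giving $B(2,5,2)\ne\emptyset$), and for $r\ge 2$ replace the stable $E_M$ by the strictly semistable bundles $E_M$ with $M\in B(1,6,3)$ (non-empty and $1$-dimensional by Proposition \ref{p2.0}(iii)). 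The Jordan--H\"older constituents of each such $E_M$ are $T$ together with a line bundle of degree $3$; by choosing $r$ such $E_{M_i}$'s with pairwise distinct non-trigonal constituents, the direct sum is polystable with enough inequivalent stable factors for the same genericity argument to force stability of the elementary transform. Verifying this distinctness and the resulting stability is the genuine technical point for the trigonal case.
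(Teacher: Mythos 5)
Your overall strategy---an elementary transformation at a point of a direct sum of $r$ pairwise non-isomorphic bundles in $B(2,6,3)$, with stability forced by choosing the map to $\CC_p$ non-zero on every slope-$3$ subbundle---is exactly the paper's, and your genericity argument is sound when the summands are stable and pairwise non-isomorphic. But there are two genuine gaps. First, for $\Cliff(C)=2$ you assert that $B(1,4,2)$ has at least $5$ elements on \emph{every} such curve, citing Proposition \ref{p2.0}(i)--(ii); that proposition gives exactly $5$ points only for the \emph{general} curve and mere finiteness otherwise, and the locus can have fewer than $5$ distinct points on special curves (the paper itself contemplates curves with $\Cliff(C)=2$ and fewer than $3$ bundles in $B(1,4,2)$ in Remark \ref{rbi}). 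So for $2\le r\le 5$ your construction may lack enough building blocks on a special curve. The paper's fix is to run the construction on the general curve and then deduce $B(2r,6r-1,3r-1)\ne\emptyset$ for all non-hyperelliptic curves by semicontinuity; you need this step.

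Second, the trigonal case is where the real work lies, and your sketch would not survive as written. You propose to use the strictly semistable bundles $E_M$, $M\in B(1,6,3)$, and describe their direct sum as ``polystable''. If that were literally true, i.e.\ if each $E_{M_i}$ split as $N_i\oplus T$ with $N_i=M_i\otimes T^*$, then $T^{\oplus r}$ would be a direct summand of $\bigoplus_i E_{M_i}$ and, for $r\ge2$, \emph{every} single-point elementary transformation would still contain a copy of $T$ of slope $3$ (one linear condition on $(\lambda_1:\cdots:\lambda_r)$ always has a non-zero solution), destroying stability. So you must prove the opposite: that $\Hom(T,E_{M_i})=0$ (non-splitness) and $\Hom(E_{M_i},E_{M_j})=0$ for $i\ne j$, so that the slope-$3$ subbundles of the direct sum are finite in number; this is precisely the content you defer. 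The paper handles it by a more controlled construction: for each $q$ it builds, via the multiplication map $H^0(T)\otimes H^0(K_C\otimes L^*)\to H^0(T\otimes K_C\otimes L^*)$ with $L=K_C\otimes T^{*2}(-q)$ and Lemma \ref{llift}, the unique non-trivial extension $0\to L\to E\to T\to0$ in which all sections of $T$ lift (so $h^0(E)\ge3$), and verifies exactly the Hom-vanishing statements above. Without an argument of this kind your trigonal case, and hence both parts of the corollary, remains incomplete.
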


\begin{proof}
Suppose first that $E_1, \dots, E_r$ are pairwise non-isomorphic bundles in $B(2,6,3)$. Let $E$ be an elementary transformation
\begin{equation}\label{eqelem}
0 \ra E \ra E_{1} \oplus \cdots \oplus E_{r} \ra \CC_p \ra 0
\end{equation}
for some $p \in C$ such that the homomorphisms $E_{i} \ra \CC_p$ are all non-zero. Since the partial direct sums of the $E_{i}$ are the only subbundles of 
$E_{1} \oplus \cdots \oplus E_{r}$ of slope 3, it follows that every subbundle of $E$ has slope $< 3$. Hence $E \in B(2r,6r-1,3r-1)$.

For $C$ general, there exist exactly 5 line bundles in $B(1,6,3)$ by Proposition \ref{p2.0}(i). Hence by Proposition \ref{p3.10}(i), there
exist 5 pairwise non-isomorphic bundles $E_i \in B(2,6,3)$. This proves the first part of the corollary for a general curve and hence for any curve by semicontinuity.

If $C$ is a smooth plane quintic, then $B(2,6,3)$ is infinite by Proposition \ref{p3.10}(iii) and the second part of the corollary follows. If $C$ is bielliptic, then $B(1,6,3)$ is infinite by Proposition \ref{p2.0}(iii).
Moreover, since $d_2=6$ by \eqref{e3}, all bundles $M \in B(1,6,3)$ are generated and the corresponding bundles $E_M$ are stable by Proposition \ref{p2.1}(i).

For $C$ trigonal, $B(2,6,3)=\emptyset$. In this case, we consider extensions
\begin{equation}\label{eqtrig}
0\ra L\ra E\ra T\ra0,
\end{equation}
with $L$ a line bundle with $d_L=3$, $h^0(L)=1$. In fact, since $h^0(K_C\otimes T^{*2})=2$, we can take $L=K_C\otimes T^{*2}(-q)$ with $q\in C$. When $K_C\otimes T^{*2}\simeq T(p)$ (see Remark \ref{reme}), we take $q\ne p$.
We then consider the multiplication map
$$ \mu:H^0(T)\otimes H^0(K_C\otimes L^*)\lra H^0(T\otimes K_C\otimes L^*).$$
We have $h^0(T\otimes K_C\otimes L^*)=5$ by Riemann-Roch, since $L\not\simeq T$. Moreover $h^0(T)=2$ and $h^0(K_C\otimes L^*)=3$ by Serre duality and Riemann-Roch. Since $T$ is generated, we have an exact sequence
$$0\ra T^*\ra H^0(T)\otimes\mathcal{O}_C\ra T\ra0.$$
Tensoring by $K_C\otimes L^*$ and taking sections, we see that $\operatorname{Ker}\mu=H^0(T^*\otimes K_C\otimes L^*)$, so
$$\dim\operatorname{Ker}\mu=h^0(T^*\otimes K_C\otimes L^*)=h^0(T(q))=2.$$
It follows from Lemma \ref{llift} that there is a unique non-trivial extension (up to scalar multiples) \eqref{eqtrig} for which all sections of $T$ lift. Note further that $E$ is semistable, that $L$ is the unique line subbundle of $E$ of degree $3$ and there is no non-zero homomorphism $E\to L$. Moreover, if $E_1, \ldots, E_r$ are  bundles constructed in this way using pairwise  non-isomorphic line bundles $L_1, \ldots, L_r$, then there are no non-zero homomorphisms  $E_i\to E_j$ for $i\ne j$. It follows that $E_1\oplus \ldots \oplus  E_r$ admits finitely many subbundles of slope $3$, namely direct sums of some $L_{i_k}$ and some $E_{j_\ell}$ with $i_k\ne j_\ell$ for all $k,\ell$. If we now take \eqref{eqelem} to be an extension such that the restriction of the homomorphism $E_1\oplus\ldots\oplus E_r\to{\mathbb C}_p$ to each of these subbundles is non-zero, it follows that $E$ is stable and we are done.
\end{proof}

\begin{cor} \label{c4.12}
Let $C$ be a non-hyperelliptic curve of genus $6$. Then
$$
B(2r,6r+1,3r) \neq \emptyset \quad \mbox{for} \quad 1 \leq r \leq 5.
$$
If either $\Cliff(C) =1$ or $C$ is bielliptic, the same holds for every positive integer $r$.
\end{cor}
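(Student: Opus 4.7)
The plan is to adapt the proof of Corollary~\ref{c4.11}, replacing the negative elementary transformation by a positive one. Starting from the same bundles $E_1,\dots,E_r$ used there (either pairwise non-isomorphic stable bundles in $B(2,6,3)$, or, in the trigonal case, the semistable extensions $0\to L_i\to E_i\to T\to 0$ with pairwise non-isomorphic line bundles $L_i$), I would consider
$$
0 \ra E_1\oplus\cdots\oplus E_r \ra E \ra \CC_p \ra 0
$$
for a point $p\in C$ and a choice of extension class $v\in(E_1\oplus\cdots\oplus E_r)_p$. Then $E$ has rank $2r$ and degree $6r+1$, and since sections of $E_1\oplus\cdots\oplus E_r$ lift through the inclusion, $h^0(E)\ge 3r$. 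So it suffices to choose $v$ so that $E$ is stable.

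The stability analysis is straightforward but is the crux of the argument. Let $F\subset E$ be a proper subbundle of rank $s<2r$ and put $F':=F\cap(E_1\oplus\cdots\oplus E_r)$. Either $F=F'$, in which case $\mu(F)\le 3<\mu(E)=3+\tfrac{1}{2r}$, or the induced map $F\to\CC_p$ is surjective and $\deg F=\deg F'+1\le 3s+1$, so $\mu(F)\le 3+\tfrac1s$; since $s<2r$, this exceeds $\mu(E)$, and equality $\deg F=3s+1$ is forced to come from $F'$ being a slope-$3$ subbundle of $E_1\oplus\cdots\oplus E_r$ whose fiber at $p$ contains $v$ (the unique lift across $p$ of the cokernel must lie in $F'$ locally). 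Hence, to ensure stability, it suffices to choose $v$ outside the union of the fibers at $p$ of all proper nonzero slope-$3$ subbundles of $E_1\oplus\cdots\oplus E_r$. These subbundles are finite in number: when the $E_i$ are stable and pairwise non-isomorphic, a standard $\Hom$ argument identifies them with the partial direct sums $\bigoplus_{i\in I}E_i$; in the trigonal case, the proof of Corollary~\ref{c4.11} already lists them as the direct sums of some $L_{i_k}$'s and some $E_{j_\ell}$'s with $i_k\ne j_\ell$ for all $k,\ell$. Their fibers at $p$ therefore form a finite union of proper subspaces of $(E_1\oplus\cdots\oplus E_r)_p$, which a generic $v$ avoids.

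To obtain the two parts of the statement: for $1\le r\le 5$ on a general curve, Propositions~\ref{p2.0}(i) and \ref{p3.10}(i) provide exactly five pairwise non-isomorphic stable bundles in $B(2,6,3)$, so the construction works; the result then extends to every non-hyperelliptic curve by semicontinuity. For $\Cliff(C)=1$ or $C$ bielliptic with arbitrary $r$: when $C$ is a smooth plane quintic, $B(2,6,3)$ is infinite by Proposition~\ref{p3.10}(iii); when $C$ is bielliptic, $B(1,6,3)$ is infinite by Proposition~\ref{p2.0}(iii), and the associated dual span bundles give infinitely many stable elements of $B(2,6,3)$ via Proposition~\ref{p3.10}(i); and the trigonal case is handled by the extension construction of Corollary~\ref{c4.11}, which produces arbitrarily many inequivalent $E_i$'s. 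The trigonal case is the main obstacle, since $B(2,6,3)$ is empty there and stability must be deduced directly for the positive elementary transformation of semistable (not stable) summands, relying on the explicit finiteness count for slope-$3$ subbundles that the proof of Corollary~\ref{c4.11} already provides.
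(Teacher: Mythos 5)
Your proposal is correct and follows essentially the same route as the paper: the paper's proof of Corollary \ref{c4.12} likewise takes the positive elementary transformation $0 \ra E_1\oplus\cdots\oplus E_r\ra E\ra \CC_p\ra 0$ with the same bundles $E_i$ as in Corollary \ref{c4.11} and asserts that the general such extension is stable, concluding "as in the previous proof." Your write-up merely makes explicit the stability argument (avoiding the fibers at $p$ of the finitely many slope-$3$ subbundles) that the paper leaves implicit.
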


\begin{proof}
We consider extensions 
$$
0 \ra E_{1} \oplus \cdots \oplus E_{r} \ra E \ra \CC_p \ra 0
$$
where the $E_{i}$ are as in the previous proof. The general such extension gives a stable bundle $E$. Hence $B(2r,6r+1,3r) \neq \emptyset$ and the result follows as in the previous proof.
\end{proof}

\begin{rem}
{\rm
Note that, for the bundles with $r=4, 5$ in Corollary \ref{c4.12}, the Brill-Noether number is negative.
}
\end{rem}

\begin{prop} \label{p4.18}
Let $C$ be a curve of genus $6$. Then
$$
\widetilde B(2,10,5) \neq \emptyset \quad \mbox{and} \quad B(3,10,5) \neq \emptyset.
$$
Moreover, 
\begin{enumerate}
\item[(i)] if $\Cliff(C) = 2$ or $C$ is trigonal, then
$$
B(2,10,5) = \widetilde B(2,10,5) \neq \emptyset;
$$
\item[(ii)] if $C$ is a smooth plane quintic, then 
$B(2,10,5)=\emptyset$.
\end{enumerate}
\end{prop}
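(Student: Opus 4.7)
My plan is to break Proposition \ref{p4.18} into four sub-claims: the non-emptiness of $\widetilde B(2,10,5)$, the stable strengthening in case (i), the vanishing in case (ii), and the non-emptiness of $B(3,10,5)$, handled in this order.

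I would first dispose of $\widetilde B(2,10,5)\neq\emptyset$ and (i) together, splitting on the type of $C$. On a smooth plane quintic, $H\oplus L$ with $L\in B(1,5,2)$ (nonempty since $d_1\leq 4$) is strictly semistable with $h^0(\gr)\geq h^0(H)+h^0(L)=3+2=5$. When $\Cliff(C)=2$ or $C$ is trigonal, \eqref{e1} and \eqref{e3} give $d_2=6$, so no line bundle of degree $5$ has $h^0\geq 3$; hence no strictly semistable bundle can lie in $\widetilde B(2,10,5)$. This already proves $\widetilde B(2,10,5)=B(2,10,5)$ in (i) and reduces the problem to exhibiting a single stable $E$, which I would build as the middle term of an extension
\begin{equation*}
0\to K_C\otimes M^{-1}\to E\to M\to 0
\end{equation*}
with $M=T^{\otimes 2}$ in the trigonal case (where $h^0(M)=3$ via Remark \ref{reme}) and $M\in B(1,6,3)$ in the Clifford-$2$ case (nonempty by Proposition \ref{p2.0}). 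Here $\det E=K_C$ and the sub has slope $4<\mu(E)=5$, so $E$ is automatically semistable. By Lemma \ref{llift}, non-trivial extensions in which all sections of $M$ lift form $\mathbb{P}((\operatorname{coker}\mu)^*)$ for $\mu\colon H^0(M)\otimes H^0(M)\to H^0(M^{\otimes 2})$; since the image of $\mu$ factors through $S^2H^0(M)$ of dimension $6<h^0(M^{\otimes 2})=7$ by Riemann--Roch, the cokernel is non-zero and such extensions exist, giving $h^0(E)\geq h^0(K_CM^{-1})+h^0(M)=5$. Stability is secured by choosing the extension class outside the $1$-dimensional locus in $\mathbb{P}(\Ext^1(M,K_CM^{-1}))$ parametrising the lifts of the only possible destabilizing line subbundles of degree $5$, namely $M(-p)$ for $p\in C$.

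For (ii), suppose $E\in B(2,10,5)$ on a plane quintic. Stability yields $h^0(H^{-1}\otimes E)=0$ (otherwise $H$ embeds as a saturated subbundle of degree $5$, contradicting stability), and tensoring the defining sequence of $E_H$ by $E$ then gives $3h^0(E)\leq h^0(E_H\otimes E)$; combined with Serre duality and Proposition \ref{prop4.12} applied to $K_C\otimes E_H^*\otimes E^*$ (rank $4$, degree $10$, slope $5/2$), one obtains $h^0(E)\leq 5$. To rule out equality I would examine a maximal line subbundle $L\subset E$, which by stability has $d_L\leq 4$; a case analysis on $d_L\in\{2,3,4\}$ using the Clifford bounds for $h^0(L)$ and $h^0(L')$ with $L'=E/L$ shows that $h^0(E)=5$ forces $L=H(-p)$, $L'=H(q)$ and all three sections of $L'$ to lift. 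Since $H$ is base-point-free, Riemann--Roch gives $h^0(H(q))=h^0(H)=3$, so $H^0(H(q))=H^0(H)$ as subspaces of rational sections and the three sections generate a subsheaf $F\subseteq E$. If $\rk F=1$, then $F\to L'$ has image $H\subset H(q)$, forcing $F=H$, a degree-$5$ subbundle contradicting stability; if $\rk F=2$, then $E$ is globally generated by three sections, hence a rank-$2$ quotient of $\cO_C^3$, and I would use Lemma \ref{l3.7} (which says $B(2,5,3)=\{E_H\}$ on a plane quintic) together with an elementary-transformation argument to still extract $H$ as a destabilizing line subbundle, giving the contradiction.

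For $B(3,10,5)\neq\emptyset$, I would construct $E$ as an extension $0\to F\to E\to L\to 0$ with $F\in B(2,6,3)$ and $L\in B(1,4,2)$, so that $\mu(F)=3<10/3<4=\mu(L)$ and $h^0(F)+h^0(L)=3+2=5$. On Clifford-$2$ curves and plane quintics $B(2,6,3)\neq\emptyset$ by Proposition \ref{p3.10}; on trigonal curves, where $B(2,6,3)=\emptyset$, I would substitute $F$ by the strictly semistable bundle $E_M$ for $M\in B(1,6,3)$ (a self-extension of $T$ by the argument in the proof of Proposition \ref{p3.10}(ii)) and run the same construction. A generic extension class gives a stable $E$ with $h^0(E)\geq 5$: the slope inequalities $\mu(F)<\mu(E)<\mu(L)$ rule out any rank-$1$ or rank-$2$ destabilizing subbundle apart from a splitting off of $L$, which a dimension count on $\Ext^1(L,F)$ shows is avoidable. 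The main obstacle of the whole proof will be the $\rk F=2$ sub-case in (ii): showing that a rank-$2$ bundle globally generated by three sections on a plane quintic must nonetheless admit $H$ as a line subbundle requires disentangling the dual-span description of $E$ coming from $H^0(H)\subset H^0(E)$, and it is here that the plane-quintic-specific uniqueness of $E_H$ in $B(2,5,3)$ supplied by Lemma \ref{l3.7} is crucial.
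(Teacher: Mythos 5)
Your reduction of (i) to exhibiting a single stable bundle (via $d_2=6$, so no strictly semistable bundle of type $(2,10,5)$ exists) is fine and matches the paper, and your strictly semistable example $H\oplus L$ on a plane quintic is a valid witness for $\widetilde B(2,10,5)\neq\emptyset$ there. But the paper gets all the existence statements by citing \cite[Propositions 4.1 and 4.4]{n} for the general curve and then using semicontinuity (together with $\gcd(3,10)=1$, so that $\widetilde B(3,10,5)=B(3,10,5)$), whereas you attempt direct constructions, and these have genuine gaps. In your construction for (i) with $\Cliff(C)=2$, the space $\PP((\Coker\mu)^*)$ of non-trivial extensions of $M$ by $K_C\otimes M^{-1}$ with all sections lifting is a \emph{single point}: by the evaluation sequence of $M$, $\Ker\mu\simeq H^0(E_M^*\otimes M)=H^0(E_M)$, which is $3$-dimensional by Proposition \ref{p3.10}, so $\dim\operatorname{Im}\mu=6$ and $\dim\Coker\mu=1$. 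There is therefore no ``generic choice outside the $1$-dimensional bad locus'' to be made; you must check that this one class is not killed by restriction to $M(-p)$ for any $p$, equivalently that $\operatorname{Im}(S^2H^0(M))\neq H^0(M^2(-p))$ — true because $M$ is base-point free, but this argument is absent and your stated justification is vacuous. In (ii), you yourself flag the $\operatorname{rk}F=2$ case as unresolved; the paper avoids it entirely by counting: the lifting extensions of $H(q)$ by $H(-p)$ form a single point of $\PP(\Ext^1)$, and the (visibly non-stable) extension containing $H$ already has the lifting property, so it is the only one.

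The most serious failure is your construction of $B(3,10,5)$. For a non-split extension $0\to F\to E\to L\to 0$ with $F=E_M\in B(2,6,3)$ and $L\in B(1,4,2)$ to satisfy $h^0(E)\geq 5$, \emph{both} sections of $L$ must lift, which by Lemma \ref{llift} requires non-surjectivity of $H^0(L)\otimes H^0(K_C\otimes F^*)\to H^0(K_C\otimes F^*\otimes L)$, a map from a $14$-dimensional to a $12$-dimensional space. By the base-point-free pencil trick its kernel is $H^0(K_C\otimes L^{-1}\otimes E_M^*)$, of dimension $h^0(E_M\otimes L)-4$, so non-surjectivity forces $h^0(E_M\otimes L)=7$, i.e.\ $E_M\otimes L$ attains the Clifford bound of Proposition \ref{p1.6}(i), i.e.\ $K_C\otimes L^{-1}\otimes E_M^*\in B(2,6,3)=\{E_{M'}\}$. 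This is a closed, generically false condition on the pair $(L,M)$, so a ``generic extension class'' yields $h^0(E)\leq 4$ and your construction produces nothing. Since $\beta(3,10,5)=-4$, one should expect exactly this kind of obstruction: the non-emptiness of $B(3,10,5)$ is one of the delicate points of the paper, and the appeal to \cite{n} plus semicontinuity is doing real work that a naive extension construction cannot replace.
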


\begin{proof}
For $C$ general, both $B(2,10,5)$ and $B(3,10,5)$ are non-empty by \cite[Propositions 4.1 and 4.4]{n}. It follows that $\widetilde B(2,10,5) \neq \emptyset$ and 
$B(3,10,5) \neq \emptyset$ for any $C$ by semicontinuity. 

If $\Cliff(C) = 2$ or $C$ is trigonal, then $B(1,5,3) = \emptyset$. So $\widetilde B(2,10,5) = B(2,10,5)$. This completes the proof of (i).

If $C$ is a smooth plane quintic and $E\in B(2,10,5)$, then it follows from Lemma \ref{l2.2} that there is an exact sequence
\begin{equation}\label{eq2105}
0\lra M\lra E\lra N\lra0,
\end{equation}
with $M\in B(1,4,2)$ and $N\in B(1,6,3)$; so $M\simeq H(-p)$ and $N\simeq H(q)$ for some $p, q\in C$. Moreover, all sections of $N$ lift to $E$.  Now consider the multiplication map
$$\mu:H^0(N)\otimes H^0(K_C\otimes M^*)\lra H^0(N\otimes K_C\otimes M^*).$$
Since $K_C\otimes M^*\simeq H^2\otimes M^*\simeq H(p)$ and both $H^0(H(p))$ and $H^0(H(q))$ are isomorphic to $H^0(H)$, it follows from the surjectivity of the map $H^0(H)\otimes H^0(H)\to H^0(H^2)$ that $\operatorname{Im}\mu$ maps onto the subspace $H^0(H^2)$ of $H^0(H(p)\otimes H(q))$. This subspace has codimension $1$, so (up to scalar multiples) there exists by Lemma \ref{llift} a unique non-trivial extension \eqref{eq2105} for which all sections of $N$ lift. On the other hand, there exists an extension \eqref{eq2105} such that the pullback by a non-zero homomorphism $H\to H(q)$ splits. For this extension, since $H^0(H)$ maps isomorphically to $H^0(N)$ and $H\subset E$, it follows that all sections of $N$ lift. So this must be the unique non-trivial extension with this property. In particular, $E$ is not stable, a contradiction. This completes the proof of (ii). 
\end{proof}

\begin{rem}
{\rm
The proof of \cite[Proposition 4.4]{n} shows that $B(2,10,5)$ is isomorphic to $B(3,10,5)$ for any curve of genus 6 with $\Cliff(C) = 2$. For $C$ general, the BN-loci consist of a single point.}
\end{rem}

\begin{rem}
{\rm 
The Brill-Noether number of the bundles of Proposition \ref{p4.18} is negative.
}
\end{rem}

\begin{prop} \label{p4.20}
Let $C$ be a non-hyperelliptic curve of genus $6$, $n$ and $s$ positive integers, $k\le2n$ . Then
$$
B(n,4n+s,k) \neq \emptyset
$$
in the following cases.
\begin{enumerate}
\item[(i)] $C$ general, $n\le 5$;
\item[(ii)] $\Cliff(C)=2$, $n\le5$, $\gcd(n,s)=1$;
\item[(iii)] $\Cliff(C) = 1$ or $C$ bielliptic, any $n$.
\end{enumerate} 
\end{prop}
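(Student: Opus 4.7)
The plan is to construct, in each case, a stable bundle $E$ of rank $n$ and degree $4n+s$ with $h^0(E)\ge 2n$; since $B(n,d,k)\supseteq B(n,d,k')$ whenever $k\le k'$, this suffices for all $k\le 2n$.

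The basic construction is to take $n$ pairwise non-isomorphic line bundles $L_1,\ldots,L_n\in B(1,4,2)$ and apply Proposition~\ref{p2.5}(i) with torsion length $t=s$, producing a stable $E$ in
\[
0 \to L_1\oplus\cdots\oplus L_n \to E \to \tau \to 0,
\]
so that $h^0(E)\ge \sum h^0(L_i)\ge 2n$ from the cohomology sequence. For part~(i), Proposition~\ref{p2.0}(i) supplies exactly $5$ points in $B(1,4,2)$, enough for $n\le 5$; for part~(iii), Proposition~\ref{p2.0}(iii) supplies a one-parameter family, accommodating any $n$. This dispatches (i) and (iii), as well as those instances of (ii) in which $B(1,4,2)$ happens to contain $n$ distinct isomorphism classes.

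For the remaining instances of~(ii) the set $B(1,4,2)$ has fewer than $n$ distinct elements (its scheme-theoretic length is $5$ by Castelnuovo, but the scheme may be non-reduced on special curves with $\Cliff(C)=2$), and Proposition~\ref{p2.5}(i) is not directly applicable. Here the coprimality $\gcd(n,4n+s)=\gcd(n,s)=1$ is the key leverage: it forces any semistable bundle of this rank and degree to be stable, since a proper Jordan--H\"older factor would have slope $4+s/n$ with denominator $n$, impossible for a factor of rank $r<n$ by coprimality. It therefore suffices to exhibit a semistable $E$ with $h^0(E)\ge 2n$. I would split into two ranges: for $s\ge 3n$, Riemann--Roch alone gives $h^0(E)\ge s-n\ge 2n$ for any stable $E$, so $M(n,4n+s)\ne\emptyset$ suffices; for $s<3n$, I would start from known non-empty loci such as $\widetilde B(2,10,5)\ne\emptyset$ (Proposition~\ref{p4.18}), Corollaries~\ref{c4.11} and~\ref{c4.12}, and Proposition~\ref{p3.11}, and reach $B(n,4n+s,2n)$ by iterated positive and negative elementary transformations, invoking coprimality to upgrade semistability to stability at the final step.

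The main obstacle is controlling stability through the iterated transformations in~(ii). A positive elementary transformation can increase the maximal slope of a line subbundle by up to one, so after several transformations the destabilising bound may fail; negative transformations behave better for stability but can drop $h^0$ by one. The plan is a case analysis for each $n\le 5$: pick a starting bundle in a nearby Brill--Noether locus with margin to spare in both $h^0$ and maximal-subbundle slope, track the slopes of line (and higher-rank) subbundles across the sequence of transformations using the analysis of sections $F\cap L^{\oplus n}$ versus images in $\tau$, and apply the $\gcd$ hypothesis at the final step. The case $(n,s)=(2,1)$, for which $\widetilde B(2,10,5)\ne\emptyset$ combined with a general negative elementary transformation immediately yields $B(2,9,4)\ne\emptyset$, is a prototype of the intended argument.
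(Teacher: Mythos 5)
Your treatment of (i) and (iii) is exactly the paper's argument: pick $n$ pairwise non-isomorphic $L_i\in B(1,4,2)$ (five of them on a general curve by Proposition \ref{p2.0}(i), a one-dimensional family when $\Cliff(C)=1$ or $C$ is bielliptic by Proposition \ref{p2.0}(iii)), apply Proposition \ref{p2.5}(i) with $t=s$, and use $h^0(E)\ge\sum h^0(L_i)\ge 2n$ together with $B(n,d,k)\supset B(n,d,2n)$ for $k\le 2n$. You have also correctly diagnosed why (ii) needs a separate argument (too few distinct elements of $B(1,4,2)$ on special curves of Clifford index $2$) and correctly identified the r\^ole of $\gcd(n,s)=1$, namely that $\gcd(n,4n+s)=1$ forces semistable $=$ stable.

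However, your proposed proof of (ii) in the range $s<3n$ has a genuine gap, and it is precisely here that you miss the one idea the paper uses: \emph{semicontinuity}. Since (i) gives $B(n,4n+s,k)\ne\emptyset$ on the general curve, the properness of the relative moduli space of semistable bundles in a family of curves, together with upper semicontinuity of $h^0$, shows that $\widetilde B(n,4n+s,k)\ne\emptyset$ on \emph{every} curve of genus $6$; the coprimality hypothesis then gives $\widetilde B=B$, and (ii) is done in one line. (This is the same move the paper makes in Corollary \ref{c4.11}, Proposition \ref{p5.8}, Proposition \ref{p5.9}, etc.) Your substitute --- reaching $B(n,4n+s,2n)$ from loci such as $\widetilde B(2,10,5)$, $B(2r,6r\pm 1,\ldots)$ or the loci of Proposition \ref{p3.11} by iterated elementary transformations --- is not a proof and cannot work as described: elementary transformations preserve rank, so none of the cited starting loci can reach, say, $(5,21,10)$ or $(3,13,6)$; and even starting from the correct rank, a chain of positive elementary transformations raising the degree by $s$ gives no control forcing $h^0$ to reach $2n$, while the stability bookkeeping you defer (``the main obstacle'') is exactly the unsolved part. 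The case $s\ge 3n$ via Riemann--Roch is fine but covers only a fraction of the coprime pairs with $n\le 5$, so the bulk of (ii) remains unproved in your write-up.
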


\begin{proof}
Let $L_1,\ldots,L_n$ be distinct line bundles in $B(1,4,2)$. (i) and (iii) follow from Propositions \ref{p2.0} and \ref{p2.5}(i), combined with the fact that $B(n,d,k-1)\supset B(n,d,k)$. (ii) now follows from (i) by semicontinuity.
\end{proof}

\begin{rem}
{\rm
For $n = 3,4,5$, the Brill-Noether number of the bundles in $B(n,4n+1,2n)$ is negative. For $n=5$, the same holds for $B(n,4n+2,2n)$.
}
\end{rem}

\begin{prop}\label{p4.21}
Let $C$ be a non-hyperelliptic curve of genus $6$, $n$ and $s$ positive integers and $k\le 2n-s$. Then
$$
B(n,4n-s,k) \neq \emptyset
$$
in the following cases.
\begin{enumerate}
\item[(i)] $C$ general, $n\le 5$;
\item[(ii)] $\Cliff(C)=2$, $n\le5$, $\gcd(n,s)=1$;
\item[(iii)] $\Cliff(C) = 1$ or $C$ bielliptic, any $n$.
\end{enumerate} 
\end{prop}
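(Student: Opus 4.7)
My plan is to mirror the proof of Proposition \ref{p4.20}, but to use negative elementary transformations (Proposition \ref{p2.5}(ii)) in place of positive ones. First I would choose pairwise non-isomorphic line bundles $L_1, \dots, L_n \in B(1,4,2)$; this is feasible in each case by Proposition \ref{p2.0}: on a general curve of genus $6$ there are exactly $5$ such bundles, which is enough for case (i) since $n \le 5$, and in case (iii) the locus $B(1,4,2)$ is one-dimensional, so infinitely many choices are available.

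Then Proposition \ref{p2.5}(ii) will produce a stable bundle $E$ of rank $n$ and degree $4n-s$ fitting into an exact sequence
\begin{equation*}
0 \lra E \lra L_1 \oplus \cdots \oplus L_n \lra \tau \lra 0,
\end{equation*}
with $\tau$ a torsion sheaf of length $s$. Passing to cohomology and using $h^0(\bigoplus_i L_i) = 2n$ together with $h^0(\tau) = s$, I obtain
\begin{equation*}
h^0(E) \geq 2n - s \geq k,
\end{equation*}
so $E \in B(n,4n-s,k)$. This directly establishes (i) and (iii).

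For case (ii), I will appeal to semicontinuity from (i). The hypothesis $\gcd(n,s) = 1$ forces $\gcd(n,4n-s) = 1$, so every semistable bundle of rank $n$ and degree $4n-s$ is automatically stable; equivalently, $B(n,4n-s,k) = \widetilde B(n,4n-s,k)$, and stability becomes an open condition in families. Specialising from a general curve (to which (i) applies) to a curve of Clifford index $2$, semicontinuity of $h^0$ together with the gcd condition will yield a stable bundle with $h^0 \ge k$ on $C$. The only delicate point in the argument is ensuring that stability, rather than merely semistability, survives specialisation, and this is precisely the role of the gcd hypothesis in the statement.
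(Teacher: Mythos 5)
Your argument is correct and is essentially the paper's own proof: the paper likewise takes $n$ distinct line bundles in $B(1,4,2)$ (available by Proposition \ref{p2.0}), applies Proposition \ref{p2.5}(ii) to obtain a stable $E$ with $h^0(E)\ge 2n-s\ge k$ for parts (i) and (iii), and deduces (ii) from (i) by semicontinuity using the coprimality hypothesis. No gaps.
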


\begin{proof}
The proof is analogous to the previous proof, using Proposition \ref{p2.5}(ii).
\end{proof}

\begin{rem} \label{r4.19}
{\rm
For $n = 4$, $5$, the Brill-Noether number of the bundles in $B(n,4n-1,2n-1)$ is negative.
}
\end{rem}

\begin{lem} \label{l3.12}
Let $C$ be a curve of genus $6$ with $\Cliff(C) = 2$. Let $L_i = K_C(-p_i)$ for $1 \leq i \leq r$, where $p_1, \cdots ,p_r$ are distinct
points of $C$. Then every proper subbundle $F$ of $E_{L_1} \oplus \cdots \oplus E_{L_r}$, which is not a partial direct sum of factors 
of $E_{L_1} \oplus \cdots \oplus E_{L_r}$, has
$$
d_F \leq \frac{9}{4}n_F -1.
$$
\end{lem}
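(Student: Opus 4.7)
The plan is to proceed by induction on $r$. Before starting the induction, I would record the basic structural facts. By Lemma \ref{l3.4}, each $E_{L_i}$ is stable of rank $4$, degree $9$, and slope $9/4$. Taking determinants in the defining sequence
$$
0\to L_i^{-1}\to H^0(L_i)^{*}\otimes\mathcal{O}_C\to E_{L_i}\to 0
$$
gives $\det E_{L_i}\simeq L_i=K_C(-p_i)$, so the $E_{L_i}$ are pairwise non-isomorphic (the $p_i$ being distinct). Consequently $E:=E_{L_1}\oplus\cdots\oplus E_{L_r}$ is polystable of slope $9/4$ with Krull--Schmidt factors exactly the $E_{L_i}$. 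The crucial preliminary observation is that any subbundle $G\subset E$ with $\mu(G)=9/4$ must be a partial direct sum of the $E_{L_j}$: such a $G$ is semistable of slope $9/4$, hence a direct summand of $E$ in the semisimple category of semistable sheaves of that slope, and by Krull--Schmidt uniqueness it must be a sum of some of the $E_{L_j}$.

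For the inductive step, let $F\subsetneq E$ be a subbundle that is not a partial direct sum, and set $K:=F\cap\bigoplus_{i<r}E_{L_i}$ (a subbundle of $\bigoplus_{i<r}E_{L_i}$, since its cokernel in $F$ injects into the torsion-free $E_{L_r}$) and $\pi:=F/K\subset E_{L_r}$ (also saturated, because $E/F$ is locally free). I then split into three cases. If $\pi=0$, then $F=K\subsetneq\bigoplus_{i<r}E_{L_i}$ is still not a partial direct sum, and the inductive hypothesis applies directly. If $\pi=E_{L_r}$, I further split on whether $K$ is a partial direct sum: if it is, then $\mu(F)=9/4$ and the preliminary observation forces $F$ to be a partial direct sum---contradicting the hypothesis; if it is not, the $-1$ slack on $K$ supplied by the induction transfers to $F$ via $d_F=d_K+9\le\tfrac{9}{4}(n_K+4)-1$. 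Finally, if $\pi$ is a proper nonzero subbundle of $E_{L_r}$, the base case $r=1$ yields $d_\pi\le\tfrac{9}{4}n_\pi-1$, and combining with either $d_K\le\tfrac{9}{4}n_K$ (when $K$ is a partial direct sum) or $d_K\le\tfrac{9}{4}n_K-1$ (by induction) gives $d_F\le\tfrac{9}{4}n_F-1$ in both situations.

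The main obstacle is the base case $r=1$: for a proper subbundle $F\subsetneq E_{L_1}$, stability of $E_{L_1}$ only gives $d_F\le\lfloor 9n_F/4\rfloor$, and one must rule out the single extremal value $d_F=2,4,6$ for $n_F=1,2,3$ respectively. For $n_F=3$, I would apply $\mathcal{H}om(-,M)$ to the defining sequence to express $\Hom(E_{L_1},M)=\ker\bigl(H^0(M)^{\oplus 5}\to H^0(L_1\otimes M)\bigr)$ and show this kernel vanishes for every line bundle $M$ of degree $3$: indeed $h^0(M)\le 1$ because $d_1=4$ on a curve with $\Cliff(C)=2$, and multiplication by a nonzero section of $M$ is injective on $H^0(L_1)$. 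For $n_F=1$ and $n_F=2$, the strategy is to exploit the long exact sequence of $0\to F\to E_{L_1}\to E_{L_1}/F\to 0$ to derive $h^0(E_{L_1}/F)\ge 5-h^0(F)$, bound $h^0(F)$ using Proposition \ref{p1.6}, and conclude that $E_{L_1}/F$ would lie in $B(3,7,4)$ or $B(2,5,3)$, both of which are empty under our hypotheses by Lemma \ref{l4.8} and Lemma \ref{l3.7}. I expect the rank-$1$ subcase to be the trickiest, because when $d_F=2$ the rank-$3$ quotient $E_{L_1}/F$ need not be semistable, so one has to run through the possible maximal destabilizing subbundles of a rank-$3$, degree-$7$ bundle with $\ge 4$ sections and rule each out by pulling back to $E_{L_1}$ and exploiting stability to derive a contradiction.
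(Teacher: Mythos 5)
Your proposal is correct and follows essentially the same route as the paper: induction on $r$, with the base case reduced to excluding $d_F=2n_F$ via the emptiness of $B(2,5,3)$ and $B(3,7,4)$ (your worry about the rank-$1$ subcase is unfounded --- the rank-$3$, degree-$7$ quotient is automatically stable, since any quotient of it is a quotient of $E_{L_1}$ and hence has slope $>\frac94$, which for integral ranks and degrees forces slope $>\frac73$), and the inductive step carried out by projecting onto one factor; indeed your explicit treatment of the case where $K$ is a partial direct sum and $F\twoheadrightarrow E_{L_r}$, via the observation that slope-$\frac94$ subbundles of the polystable bundle must be partial direct sums, fills in a point the paper leaves implicit. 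The only slip is the parenthetical claim that $F/K$ is saturated in $E_{L_r}$ (it need not be, as $F+\bigoplus_{i<r}E_{L_i}$ can fail to be saturated in $E$), but this is harmless since passing to the saturation only increases the degree and the one extra case ($F/K$ of full rank but degree at most $8$) gives the required $-1$ immediately.
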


\begin{proof}
The proof is by induction on $r$. If $r=1$, then $1 \leq n_F \leq 3$ and we require to show that $d_F < 2n_F$.  

By stability of $E_{L_1}$, $d_F \leq 2n_F$. If $d_F = 2n_F$, it is easy to see that $F$ is semistable and $E_{L_1}/F$ is stable. In view of Lemmas \ref{l3.7} and \ref{l4.8},
it follows that $h^0(E_{L_1}/F) \leq n_{E_{L_1}/F}$. Since also $h^0(F) \leq n_F$ by Proposition \ref{pln2}, this gives a contradiction.

Now suppose $r \geq 2$ and the lemma is proved for $r-1$ factors. Consider the projection $\pi: F \ra E_{L_1}$. We can assume without loss of generality that $\pi \neq 0$. 
If $\rk \; \pi = 4$, then by induction
$$
d_F \leq 9 + \frac{9}{4}(n_F -4) -1 = \frac{9}{4}n_F -1.
$$
If $\rk \; \pi = s < 4$, then 
$$
d_F \leq \frac{9}{4}s - 1 + \frac{9}{4}(n_F -s) = \frac{9}{4}n_F -1.
$$
This completes the proof.
\end{proof}

\begin{prop} \label{p3.13}
 Let $C$ be a curve of genus $6$ with $\Cliff(C) = 2$. Suppose $r \geq 1, \; p \in C$ and $L_1, \dots,L_r$ are as in Lemma \ref{l3.12}. Let
 $$
 0 \ra E_{L_1} \oplus \cdots \oplus E_{L_r} \ra E \ra \CC_p \ra 0
 $$
 be an extension classified by $(e_1, \dots,e_r)$, where the $e_i \in \Ext(\CC_p,E_{L_i})$ are all 
non-zero. Then $E$ is stable. Hence
 $$
 B(4r,9r+1,5r) \neq \emptyset.
 $$
\end{prop}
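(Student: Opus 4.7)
Write $G := E_{L_1}\oplus\cdots\oplus E_{L_r}$, so that $E$ is obtained from $G$ by a positive elementary transformation at $p$. Note that $\mu(E)=\frac{9r+1}{4r}=\frac{9}{4}+\frac{1}{4r}>\frac{9}{4}$ and that, since $h^0(E_{L_i})=5$ by Lemma \ref{l3.4}, the long exact sequence immediately gives $h^0(E)\ge h^0(G)=5r$. Thus the only real content is to show that $E$ is stable.

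Let $F\subsetneq E$ be any proper subbundle. Set $F':=F\cap G$. Since $F$ is saturated in $E$ and $E/G=\CC_p$, the quotient $F/F'$ is either $0$ or $\CC_p$, and in the latter case $F'$ is automatically saturated in $G$ (so $F'$ is a subbundle of $G$). The plan is to bound $\mu(F)$ in two cases.

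\emph{Case (a): $F=F'\subset G$.} If $F'$ is a partial direct sum of the $E_{L_i}$, then $\mu(F)=\frac{9}{4}<\mu(E)$; otherwise, Lemma \ref{l3.12} gives $d_F\le\frac{9}{4}n_F-1$ and again $\mu(F)<\mu(E)$.

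\emph{Case (b): $F\twoheadrightarrow \CC_p$.} Then $n_F=n_{F'}$ and $d_F=d_{F'}+1$. The sub-extension $0\to F'\to F\to\CC_p\to 0$ is pulled back from the given extension via $F'\hookrightarrow G$, so the extension class $e=(e_1,\dots,e_r)\in\Ext^1(\CC_p,G)\cong G|_p$ must lie in $F'|_p\subset G|_p$. If $F'$ were a proper partial direct sum, say $F'=\bigoplus_{i\in I}E_{L_i}$ with $I\subsetneq\{1,\dots,r\}$, then $F'|_p$ is the partial sum of the factors $E_{L_i}|_p$, and lying in this subspace forces $e_j=0$ for every $j\notin I$, contradicting the hypothesis that all $e_i$ are nonzero. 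Hence either $F'=G$ --- in which case $F=E$, not proper --- or $F'$ is not a partial direct sum, and then Lemma \ref{l3.12} yields $d_F=d_{F'}+1\le\frac{9}{4}n_F$, so $\mu(F)\le\frac{9}{4}<\mu(E)$.

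In both cases $\mu(F)<\mu(E)$, proving that $E$ is stable. Combined with $h^0(E)\ge 5r$, this gives $E\in B(4r,9r+1,5r)$, so $B(4r,9r+1,5r)\ne\emptyset$. The step requiring most care is Case (b): identifying $\Ext^1(\CC_p,G)$ with $G|_p$ and using the nonvanishing of every component $e_i$ to eliminate proper partial direct sums as possible intersections $F\cap G$.
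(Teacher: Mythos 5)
Your proof is correct and follows essentially the same route as the paper, which simply asserts that Lemma \ref{l3.12} forces $d_F\le\frac94 n_F$ for every proper subbundle $F$ of $E$ and concludes stability from $\mu(E)>\frac94$. In fact your Case (b) — identifying $\Ext^1(\CC_p,G)$ with $G|_p$ and using the non-vanishing of every $e_i$ to rule out $F\cap G$ being a proper partial direct sum with $F\twoheadrightarrow\CC_p$ — makes explicit the one point the paper's one-line argument leaves implicit, namely where the hypothesis on the $e_i$ is actually used.
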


\begin{proof}
It follows from Lemma \ref{l3.12} that any proper subbundle $F$ of $E$ has $d_F \leq 
\frac{9}{4}n_F$. Hence $E$ is stable. 
\end{proof}

\begin{prop}  \label{p5.15}
Let $C$ be a curve of genus $6$ with $\Cliff(C) = 2$. Suppose $r \geq 1, \; p \in C$ and 
$L_1, \dots,L_r$ are as in Lemma \ref{l3.12}. Let
$$
0 \ra E \ra E_{L_1} \oplus \cdots \oplus E_{L_r} \ra \CC_p \ra 0
$$
be an elementary transformation such that all maps $E_{L_i} \ra \CC_p$ are non-zero. Then $E$ is stable. 
Hence
$$
B(4r,9r-1,5r-1) \neq \emptyset..
$$                    
\end{prop}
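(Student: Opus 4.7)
The plan is to mimic the approach of Proposition \ref{p3.13}, but to work a little harder because here $E$ has slope $\mu(E)=\frac{9r-1}{4r}<\frac{9}{4}$, so Lemma \ref{l3.12} alone does not immediately give stability; one must keep track of the torsion gained when passing from a subbundle of $E$ to its saturation in $V:=E_{L_1}\oplus\cdots\oplus E_{L_r}$.

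Let $F\subset E$ be a proper subbundle of rank $m<4r$, and let $\widetilde F$ denote its saturation in $V$, of the same rank $m$. From the snake lemma applied to $0\to E\to V\to\CC_p\to0$, together with the fact that $E/F$ is torsion-free, the torsion part of $V/F$ injects into $\CC_p$. Therefore $d_{\widetilde F}-d_F\in\{0,1\}$, with the value $1$ occurring precisely when $\widetilde F\not\subset E$.

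Now split into two cases according to Lemma \ref{l3.12}. \emph{Case 1:} $\widetilde F$ is a partial direct sum $E_{L_{i_1}}\oplus\cdots\oplus E_{L_{i_s}}$. Since $F$ is a proper subbundle, $m=4s<4r$, so $s<r$. If $\widetilde F=F$, then $\widetilde F\subset E$ would force each composite $E_{L_{i_j}}\hookrightarrow V\to\CC_p$ to vanish, contradicting the hypothesis that every map $E_{L_i}\to\CC_p$ is non-zero. Hence $d_F=d_{\widetilde F}-1=9s-1$, and
\[
\mu(F)=\frac{9s-1}{4s}<\frac{9r-1}{4r}=\mu(E),
\]
the inequality being equivalent to $s<r$. \emph{Case 2:} $\widetilde F$ is not a partial direct sum of factors. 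Then Lemma \ref{l3.12} gives $d_{\widetilde F}\le\frac{9}{4}m-1$, hence $d_F\le\frac{9}{4}m-1$, and so
\[
\mu(F)\le\frac{9}{4}-\frac{1}{m}<\frac{9}{4}-\frac{1}{4r}=\mu(E)
\]
since $m<4r$. In either case $\mu(F)<\mu(E)$, proving that $E$ is stable.

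For the non-emptiness claim, Lemma \ref{l3.4} gives $h^0(E_{L_i})=5$, so $h^0(V)=5r$. Taking global sections of the defining sequence yields $h^0(E)\ge h^0(V)-h^0(\CC_p)=5r-1$, whence $E\in B(4r,9r-1,5r-1)$. The only mildly delicate step is the Case 1 bookkeeping: one must recognise that the saturation strictly increases the degree precisely because of the non-vanishing hypothesis on the elementary transformation, which is exactly what forces the strict inequality $\mu(F)<\mu(E)$ in the borderline range.
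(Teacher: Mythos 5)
Your proof is correct and follows essentially the same route as the paper: the paper's (very terse) argument is precisely that Lemma \ref{l3.12} forces every proper subbundle $F$ of $E$ to satisfy $d_F \le \frac{9}{4}n_F - 1$, whence $\mu(F)\le \frac94-\frac{1}{n_F}<\frac94-\frac{1}{4r}=\mu(E)$. Your two-case analysis via the saturation in $E_{L_1}\oplus\cdots\oplus E_{L_r}$ (with the non-vanishing hypothesis killing the partial-direct-sum case) is exactly the bookkeeping the paper leaves implicit.
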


\begin{proof}
It follows from Lemma \ref{l3.12} that any proper subbundle $F$ of $E$ has 
$d_F \le \frac{9}{4}n_F -1$. So $E$ is stable.
\end{proof}

\section{$k=n+1$}\label{seck=n+1}

In this section, we investigate the Brill-Noether loci for $k=n+1$. We determine non-emptiness of $\widetilde{B}(n,d,n+1)$ in all cases and also that of $B(n,d,k)$ except in a small number of cases (at most $6$ for any value of $n$); the results are complete for $n=2$, $n=3$ and $n=5$.

\begin{prop} \label{p5.6}
Let $C$ be a non-hyperelliptic curve of genus $6$. Then
\begin{equation}\label{eq2d3}
\beta(2,d,3)\ge0\Leftrightarrow d\ge6.
\end{equation} 
Moreover,
\begin{enumerate}
\item[(i)] if $\Cliff(C) = 2$,
$$
\widetilde B(2,d,3) \neq \emptyset \; \Leftrightarrow \; B(2,d,3) \neq \emptyset \; \Leftrightarrow \; d \ge 6;
$$
\item[(ii)] if $C$ is trigonal,
$$
\widetilde{B}(2,d,3)\ne\emptyset\:\Leftrightarrow\;d\ge6,\quad B(2,d,3)\ne\emptyset\;\Leftrightarrow\;d\ge7;
$$
\item[(iii)] if $C$ is a smooth plane quintic,
$$
\widetilde B(2,d,3) \neq \emptyset \; \Leftrightarrow \; B(2,d,3) \neq \emptyset \; \Leftrightarrow \; 
 d \ge 5.
$$
\end{enumerate}
\end{prop}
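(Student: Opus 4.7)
The Brill-Noether number $\beta(2,d,3) = 4\cdot 5 + 1 - 3(13-d) = 3d - 18$ computed from \eqref{eqbeta} gives \eqref{eq2d3} immediately, and the analysis naturally splits at $d=6$. I would organise the remaining proof by degree.

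\emph{Emptiness.} Proposition \ref{pln2}(i) forces $k \le 2+(d-2)/6 < 3$ for $d\le 3$, and \ref{pln2}(ii) yields $k\le 12/5<3$ at $d=4$, so $\widetilde B(2,d,3)=\emptyset$ in every case for $d\le 4$. At $d=5$ the coprimality $\gcd(2,5)=1$ collapses $\widetilde B(2,5,3)$ onto $B(2,5,3)$, which Lemma \ref{l3.7} pins down as empty unless $C$ is a smooth plane quintic (with unique element $E_H$). For trigonal $C$ at $d=6$, Proposition \ref{p3.10}(ii) gives $B(2,6,3)=\emptyset$, while $\widetilde B(2,6,3)$ is nonempty via the polystable $[T\oplus T]$ with $h^0=4$.

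\emph{Non-emptiness at the boundary.} Proposition \ref{p3.10}(i) provides $B(2,6,3)\ne\emptyset$ when $\Cliff(C)=2$; Lemma \ref{l3.7} and Proposition \ref{p3.10}(iii) provide $B(2,5,3)$ and $B(2,6,3)$ on a smooth plane quintic; and Proposition \ref{p2.17}(i) with $k=d-2n=3$ gives $B(2,7,3)\ne\emptyset$ for $C$ trigonal. To reach all larger degrees I would iterate two operations: tensoring with $\mathcal O(q)$ (preserving stability, raising degree by $2$, and not decreasing $h^0$ because $H^0(E)\hookrightarrow H^0(E(q))$), and positive elementary transformations $0\to E\to E'\to\mathbb C_p\to 0$ (raising degree by $1$, preserving $h^0\ge 3$). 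The combination reaches every $d\ge d_0$ from each respective base $d_0$.

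\emph{Main obstacle.} The delicate point is stability of positive elementary transformations. Any line subbundle $L'\subset E'$ satisfies $d_{L'}\le s_1(E)+1$, where $s_1(E)<d/2$ by stability of $E$; for even $d$ this already forces $d_{L'}<(d+1)/2$, so stability of $E'$ is automatic. For odd $d$ the extremal case $d_{L'}=(d+1)/2$ can occur, precisely when a maximal line subbundle $M\subset E$ of degree $(d-1)/2$ has fibre at $p$ containing the extension class $e\in \Ext^1(\mathbb C_p,E)\cong E|_p$; since maximal line subbundles of a stable rank-$2$ bundle form a low-dimensional family, a generic choice of $p$ and $e$ avoids the finitely many resulting lines in $E|_p$. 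In the trigonal case, where this issue appears already at the step $B(2,7,3)\rightsquigarrow B(2,8,3)$, I would instead invoke Proposition \ref{p2.5}(i) uniformly: take $L_1=T$ and $L_2=\mathcal O(D)$ for a general effective degree-$3$ divisor $D$ (so $L_2\not\simeq T$ and $h^0(L_2)=1$), with torsion length $t=d-6\ge 1$, producing stable $E\in B(2,d,3)$ with $h^0(E)\ge h^0(T)+h^0(L_2)=3$ for every $d\ge 7$ at once.
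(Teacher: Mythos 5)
Your proposal is correct and follows essentially the same route as the paper: the same computation of $\beta$, the same appeal to Proposition \ref{pln2} for $d\le4$ and to Lemma \ref{l3.7} and Proposition \ref{p3.10} for $d=5,6$, the same Proposition \ref{p2.5}(i) construction on $T\oplus L$ (with $d_L=3$, $h^0(L)=1$) for trigonal curves, and propagation to higher degree by tensoring with effective line bundles together with a positive elementary transformation from $d=6$ to $d=7$, which is exactly what the paper's Corollary \ref{c4.12} does for $r=1$. The only shaky point is your genericity claim for elementary transformations starting from \emph{odd} degree (maximal line subbundles of a stable rank-$2$ bundle need not be finite in number, so ``finitely many resulting lines in $E|_p$'' is not justified as stated), but this case is never actually used in your argument, since the one transformation you need starts from even degree, where stability is automatic, and the trigonal case is handled by Proposition \ref{p2.5}(i).
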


\begin{proof}
\eqref{eq2d3} follows from \eqref{eqbeta}. Moreover, 
$\widetilde B(2,d,3) = \emptyset$ for $d \le 4$ by Proposition \ref{pln2}. To show that $B(2,7,3)$ and $B(2,8,3)$ are non-empty for $C$ trigonal, let $L$ be a line bundle of degree $3$ with $h^0(L)=1$ and apply Proposition \ref{p2.5}(i) to $L\oplus T$. The rest follows from Lemma 
\ref{l3.7}, Proposition \ref{p3.10} and Corollary \ref{c4.12} and tensoring by an effective line bundle.
\end{proof}

\begin{prop} \label{p5.8}
Let $C$ be a non-hyperelliptic curve of genus $6$. Then
\begin{equation}\label{eq3d4}
\beta(3,d,4)\ge0\Leftrightarrow d\ge8.
\end{equation} Moreover,
\begin{enumerate}
\item[(i)] if $C$ is either a smooth plane quintic or $\Cliff(C) =2$,
$$
\widetilde B(3,d,4) \neq \emptyset \; \Leftrightarrow \;  B(3,d,4) \neq \emptyset \; \Leftrightarrow \; d \geq 8;
$$
\item[(ii)] if $C$ is trigonal,
$$
\widetilde B(3,d,4) \neq \emptyset \; \Leftrightarrow B(3,d,4) \neq \emptyset \; \Leftrightarrow \; d \ge 7.
$$
\end{enumerate}
\end{prop}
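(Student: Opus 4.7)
The Brill--Noether number is $\beta(3,d,4) = 9\cdot 5 + 1 - 4(4-d+15) = 4d - 30$, which is non-negative precisely when $d\ge 8$ (as an integer), establishing \eqref{eq3d4}.

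For the emptiness assertions at $d\le 6$, I would invoke Proposition \ref{pln2}(i) for $d<2n=6$ (forcing $k\le 3+(d-3)/6 \le 3$) and Proposition \ref{pln2}(ii) for $d=6$ (forcing $k\le \tfrac{6}{5}\cdot 3 < 4$). For the non-trigonal case at $d=7$, Lemma \ref{l4.8} gives $B(3,7,4)=\emptyset$; to pass to semistability, observe that any strictly semistable bundle of rank $3$ and degree $7$ would have Jordan--H\"older factors of slope $7/3$, which is incompatible with integer degree at rank $1$ or $2$. Hence $\widetilde B(3,7,4)=\emptyset$.

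For the existence direction, the base cases are handled separately. Trigonal $C$ at $d=7$ is immediate from Lemma \ref{l4.8}. For $\Cliff(C)=2$ at $d=8$, I would take any line bundle $L$ with $d_L=d_3=8$ and $h^0(L)=4$ (which exists by \eqref{e3}); since $d_1/1=4$ and $d_2/2=3$ both exceed $8/3$, Proposition \ref{p2.1}(i) delivers a stable $E_L \in B(3,8,4)$.

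The subtle case is the smooth plane quintic at $d=8$, where $d_2/2 = 5/2 < 8/3$ and Proposition \ref{p2.1}(i) no longer applies. Here I would construct $E$ as a non-trivial extension
\begin{equation*}
 0 \lra E_H \lra E \lra L \lra 0
\end{equation*}
with $L$ a generic line bundle of degree $3$ (so $h^0(L)=1$), chosen so that the section of $L$ lifts. By Lemma \ref{llift}, such extensions exist precisely when
\begin{equation*}
 \mu \colon H^0(L) \otimes H^0(K_C \otimes E_H^*) \lra H^0(L \otimes K_C \otimes E_H^*)
\end{equation*}
fails to be surjective. A Riemann--Roch computation, using $h^0(E_H)=3$ from Lemma \ref{l3.7} and $h^0(L^*\otimes E_H)=0$ for generic $L$ (by stability and negative slope of $L^*\otimes E_H$), gives source dimension $1\cdot 8 = 8$ and target dimension $11$, hence $\mu$ cannot be surjective. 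For such $E$, one has $h^0(E)\ge h^0(E_H)+h^0(L)=4$. Stability follows from a slope analysis: any line subbundle $N \subset E$ either lies in $E_H$ (giving $d_N \le 2$ by stability of $E_H$) or injects as a subsheaf of the line bundle $L$ (giving $d_N \le 3$, with equality forcing a splitting of the sequence); any rank-$2$ subbundle $F'\ne E_H$ decomposes via its intersection with $E_H$, yielding $d_{F'}\le 2+3 = 5 < 16/3$.

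To cover degrees beyond each base case, I would iteratively apply positive elementary transformations $0 \to E \to E' \to \CC_p \to 0$ at generic points $p$: each raises the degree by $1$, preserves $h^0 \ge 4$, and for generic choices preserves stability. This yields $B(3,d,4)\ne\emptyset$ for all $d\ge 7$ when $C$ is trigonal and for all $d\ge 8$ otherwise. I expect the main obstacle to be verifying stability of $E$ in the plane quintic extension, since the dual-span construction fails there and a careful subbundle analysis is needed to see that a generic non-trivial extension in $\mathbb{P}((\Coker\mu)^*)$ lies in the stable locus.
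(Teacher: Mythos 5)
Your $\beta$ computation, the emptiness arguments for $d\le 7$, and the base-case constructions are all correct, and two of them are genuinely different from the paper's route: realising $B(3,8,4)$ as $\{E_L\}$ for $\Cliff(C)=2$ via Proposition \ref{p2.1}(i) is exactly the content of Remark \ref{remc}, and your extension $0\ra E_H\ra E\ra L\ra 0$ for the plane quintic at $d=8$ (with the subbundle analysis you sketch) is a valid direct construction where the paper instead invokes \cite{bbn} on the general curve together with semicontinuity and $\gcd(3,8)=1$.

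The gap is the inductive step. The claim that a positive elementary transformation $0\ra E\ra E'\ra \CC_p\ra 0$ of a stable bundle is again stable ``for generic choices'' is not a general fact, and it is precisely the delicate point of this proposition. The extensions are parametrised by $\PP(E_p)\simeq\PP^2$; a subbundle $F\subset E$ of rank $r$ with $d_F+1\ge \frac{r}{3}(d+1)$ saturates to a destabilising subbundle of $E'$ exactly when the extension class lies in the linear subspace $\PP(F_p)$, of codimension $3-r$. If such $F$ (necessarily of the maximal degree permitted by stability of $E$) move in a family of dimension $\ge 3-r$, these subspaces can cover all of $\PP(E_p)$ and \emph{every} elementary transformation at $p$ is strictly semistable. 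That this danger is real here is shown by the paper's own treatment of the trigonal case at $d=9$: the natural candidates $E_N$ are always strictly semistable, and stability of the elementary transformation $0\ra E\ra G(p)\ra\CC_p\ra 0$ is obtained only after identifying the possible destabilising quotients in diagram \eqref{e6.4} and bounding the bad locus via $h^0(G^*\otimes T)\le 2$. Your argument needs an analogous subbundle analysis and dimension count at each step, and as written it asserts rather than proves the key stability statement. Note also that the burden can be reduced drastically: for $3\nmid d$ every semistable bundle is stable, so semicontinuity from the general curve (via \cite{bbn}) already gives $B(3,d,4)\ne\emptyset$, and tensoring by effective line bundles reduces the remaining degrees to the single case $d=9$ — which is where all the work should be concentrated, and where your plane-quintic and $\Cliff(C)=2$ constructions would still need to be supplemented (the trigonal case of Lemma \ref{l4.8} does not survive tensoring, since $9$ is not congruent to $7$ modulo $3$).
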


\begin{proof} \eqref{eq3d4} follows from \eqref{eqbeta}. Moreover,
for any $C$, we have $\widetilde B(3,d,4) = \emptyset$ for $d \le 6$ by Proposition \ref{pln2}, also for $d\le7$ if $C$ is a smooth plane quintic or $\Cliff(C)=2$ by Lemma \ref{l4.8}. 

For general $C$, (i) now follows from \cite[Theorem 7.2]{bbn}. By semicontinuity, this implies the
equivalence 
$$
\widetilde B(3,d,4) \neq \emptyset \; \Leftrightarrow \;  d \geq 8
$$
in (i) and (again using Lemma \ref{l4.8}) a similar equivalence for $d\ge7$ in (ii). 

To complete the proof, we need to prove that $B(3,d,4)\ne\emptyset$ when $d$ is divisible by $3$, $d\ge9$. For this, it is sufficient to prove that $B(3,9,4)\ne\emptyset$.

Observe first that, when $C$ is a smooth plane quintic or $\Cliff(C)=2$, the general line bundle $N$ of degree $9$ has $h^0(N)=4$ (by Riemann-Roch) and is generated. For the latter, note that, if $N$ is not generated, then it is an elementary transformation of a line bundle in $B(1,8,4)$, which is isomorphic by Serre duality to $B(1,2,1)$ and therefore has dimension $2$. Our object now is to prove that, for the general such $N$, $E_N$ is stable. In fact, any quotient bundle $G$ of $E_N$ is generated and has $H^0(G^*)=0$. If $G$ is a line bundle, this implies that $d_G\ge4$ since $C$ is not trigonal; so $G$ does not contradict the stability of $E_N$. It remains therefore to consider the possibility that $G\in B(2,d,3)$ with $d_G\le6$. 

If $\Cliff(C)=2$, then $G=E_M$ for some $M\in B(1,6,3)$ by Propositions \ref{p3.10}(i) and \ref{p5.6}(i). The surjective homomorphism $E_N\to G$ gives rise to a non-zero homomorphism $M\to N$, so $N\simeq M\otimes L$ with $L\in B(1,3,1)$. Since $\dim B(1,6,3)\le1$ by Proposition \ref{p2.0}, the general $N$ does not have this property. On the other hand, if $C$ is a smooth plane quintic, we must have $G\simeq E_H$ by Proposition \ref{p3.10}(iii) and Lemma \ref{l3.7}. We then obtain $N\simeq H \otimes L$ with $L\in B(1,4,1)$, again contradicting the generality of $N$.

Now suppose $C$ is trigonal. In this case the proof that $E_N$ is stable is not valid and in fact, 
$E_N$ is always strictly semistable. 

Instead, let $G$ be the unique element of $B(3,7,4)$ (see Lemma    \ref{l4.8}).  We have $G$ generated and $h^0(G^*) =0$. Now consider elementary transformations 
$$
0 \ra E \ra G(p) \ra \CC_p \ra 0.
$$
Here $n_E =3, \; d_E = 9$ and $ h^0(E) \ge 4$, since $G \subset E$. It is clear that $E$ is semistable. 
If $E$ is strictly semistable, then using the stability of $G$ we see that there must be a diagram
\begin{equation} \label{e6.4}
\xymatrix{
0 \ar[r] & E \ar[d] \ar[r] & G(p) \ar[d] \ar[r] & \CC_p \ar[r] \ar@{=}[d] & 0\\
0  \ar[r] & F' \ar[r] \ar[d] & F(p)  \ar[r]  \ar[d] & \CC_p \ar[r] & 0,\\
& 0 & 0 &&
}
\end{equation}
with $n_{F'}\le2$ and $\mu(F') = 3$. 

If $n_{F'} =2$, then $F$ is a quotient of $G$ of rank $2$ and degree $5$. It follows that $F$ is generated, so $h^0(F)\ge3$. Since also $F$ is stable, Lemma \ref{l3.7} gives a contradiction.

The only alternative is that $n_{F'} = 1$. Then $F$ is a line bundle of degree $3$. Since $F$ is generated, It follows that $F \simeq T$. For any given homomorphism 
$G \ra T$ there is a one-dimensional space of homomorphisms $G(p) \ra \CC_p$ fitting into such a 
diagram. On the other hand, $G^* \otimes T$ is stable of slope $\frac{1}{3}$, So $h^0(G^* \otimes 
T) \le 2$ by Proposition \ref{pln2}. So up to scalar multiples there is at most a one-dimensional set of 
homomorphisms $G(p) \ra T(p)$. It follows that the general homomorphism $G(p) \ra \CC_p$ cannot 
fit into a diagram \eqref{e6.4}.  
This completes the proof of the proposition.
\end{proof}

\begin{rem}\label{remc}
{\rm When $\Cliff(C)=2$, we can obtain an explicit description of $B(3,8,4)$, namely
$$
B(3,8,4) = \{ E_M \;|\; M \in B(1,8,4) \}.
$$
The fact that $\{E_M \;|\; M \in B(1,8,4)\} \subset B(3,8,4)$ follows from \eqref{e3} and 
Proposition \ref{p2.1}(i).
Now suppose $E \in B(3,8,4)$. If $E$ is not generated, a negative elementary 
transformation yields a bundle $F$ with $n_F = 3,\;  d_F = 7$ and $h^0(F) =4$. By Lemma \ref{l4.8},
$F$ cannot be stable. The only possibility compatible with the stability of $E$ is that $F$ has a rank-2 stable subbundle $G$ of degree $5$. By Proposition \ref{p1.6}(ii), $h^0(G) \leq 2$, while $h^0(F/G) \leq 1$.
 This gives a contradiction. So $E$ is generated and hence 
$E = E_M$ for some $M \in B(1,8,4)$.}
\end{rem}

\begin{prop} \label{p5.9}
Let $C$ be a non-hyperelliptic curve of genus $6$. Then
\begin{equation}\label{eq4d5}
\widetilde B(4,d,5) \neq \emptyset \; \Leftrightarrow \; d \ge 9\Leftrightarrow\beta(4,d,5)\ge0.
\end{equation}
Moreover
\begin{enumerate}
\item[(i)] for general $C$, $B(4,d,5)\ne\emptyset$ if $d\ge9$;
\item[(ii)] for all $C$, $B(4,d,5)\ne\emptyset$ if $d\ge9$, $d\ne10,12,16$;
\item[(iii)] if $\Cliff(C)=2$ or $C$ is trigonal, $B(4,10,5)\ne\emptyset$;
\item[(iv)] if $C$ is trigonal, $B(4,16,5)\ne\emptyset$.
\end{enumerate} 
\end{prop}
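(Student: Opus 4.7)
The proof naturally decomposes into the two equivalences at the head and the four existence claims. The Brill--Noether number equivalence is immediate from \eqref{eqbeta}: one computes $\beta(4,d,5)=5d-44$, so $\beta\ge 0$ iff $d\ge 9$. For the implication $\widetilde B(4,d,5)\ne\emptyset\Rightarrow d\ge 9$, I would replace $E$ by $\gr E$ (a polystable bundle with the same rank, degree, and at least $k$ sections) and apply Lemma \ref{l2.2} with $s=1$. If $\gr E$ has no proper subbundle $N$ with $h^0(N)>n_N$, then $d_E\ge d_4=9$. Otherwise such a subbundle exists with $n_N\in\{1,2,3\}$, and combining semistability ($d_N/n_N\le d/4$) with the gonality values in \eqref{e1}--\eqref{e3} forces $d\ge 10$ in each of the three subcases.

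The existence backbone is $E_{K_C(-p)}\in B(4,9,5)$ from Lemma \ref{l3.4} and $B(4,11,5)\ne\emptyset$ from Corollary \ref{c4.11} with $r=2$, both valid on every non-hyperelliptic $C$. Tensoring each by an effective line bundle of degree $e\ge 1$ preserves stability and adds $4e$ to the degree, producing $B(4,9+4e,5)\ne\emptyset$ and $B(4,11+4e,5)\ne\emptyset$ and thereby covering every odd $d\ge 9$. For the remaining even degrees in (ii), namely $d=14$ and $d\ge 18$, I would apply Proposition \ref{p2.5}(ii) to four pairwise non-isomorphic elements of $B(1,4,2)$ (available by Proposition \ref{p2.0} and Remark \ref{reme}) with $t=2$ to obtain $B(4,14,6)\subset B(4,14,5)$, and then handle larger even $d$ either by tensoring a bundle in $B(4,14,5)$ with effective line bundles or by appealing to Proposition \ref{p2.11} in the range $16<d\le 20$.

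Claims (iii) and (iv) concern the special even-degree cases left over. For $\Cliff(C)=2$, (iii) is a direct application of Proposition \ref{p3.13} with $r=1$. For the trigonal case of (iii), the natural candidate is a generic positive elementary transformation
$$
0\to E_{K_C(-p)}\to E\to \mathbb{C}_q\to 0,
$$
which yields rank $4$, degree $10$, and $h^0(E)\ge 5$ automatically; stability reduces to checking that neither a line subbundle of degree $3$ nor a rank-$2$ subbundle of degree $5$ of $E$ can be realised, and for each potential destabiliser the underlying subbundle of $E_{K_C(-p)}$ has one smaller degree (by stability of $E_{K_C(-p)}$), so the condition that it lift imposes a codimension on the space of quotients $E_{K_C(-p)}\to\mathbb{C}_q$ that can be avoided for generic $q$. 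Claim (iv) is an immediate application of Proposition \ref{prop2.4} with $n=4$, $d=16$, $k=5$: indeed $5\le 2\lfloor 4\rfloor=8$ and $(4,16,5)$ is neither of the two excluded triples.

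For (i) on a general (so $\Cliff(C)=2$) curve, the odd $d$ and $d=10$ are already covered; the remaining cases $d=12, 16$ and even $d\ge 18$ follow either from the dual span of a generic $L\in B(2,d,6)$ (whose non-emptiness is guaranteed by the expected dimension $\beta(2,d,6)>0$, with genericity ensuring $L$ is generated and $D(L)\in B(4,d,6)\subset B(4,d,5)$ is stable) or from Proposition \ref{p2.11}. The reverse direction of the first equivalence for an arbitrary non-hyperelliptic $C$ is then complete apart from $d\in\{10,12,16\}$ on a smooth plane quintic; there, polystable direct sums such as $E_H\oplus F$ with $F\in\widetilde B(2,d/2,k_2)$ supply classes in $\widetilde B(4,d,5)$. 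I expect the main obstacle to be the trigonal case of (iii), as no earlier result directly delivers it and the dimension count verifying stability of the elementary transformation is the most delicate point of the proof.
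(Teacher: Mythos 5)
Your proposal contains two genuine gaps, both at points where the paper's actual argument is quite different. The first is in part (i): for a general curve you propose to cover $d=12,16$ and even $d\ge18$ using the dual span of a generic $L\in B(2,d,6)$, "whose non-emptiness is guaranteed by the expected dimension $\beta(2,d,6)>0$". Positivity of the Brill--Noether number does \emph{not} guarantee non-emptiness of higher-rank Brill--Noether loci -- that failure is the central theme of this whole subject, and indeed $\beta(2,12,6)=-3<0$ while the Clifford bound of Proposition \ref{p1.6}(i) makes $B(2,12,6)$ at best extremal and (by the discussion in Section \ref{secext}) unknown or empty on a general curve. So $d=12$ and $d=16$ in (i) are not proved by your route. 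The paper instead deduces (i) in one stroke from \cite[Theorem 7.3]{bbn} (non-emptiness of $B(4,d,5)$ for all $d\ge 9$ on a general curve), and then gets the forward implication of the first equivalence in \eqref{eq4d5}, and part (ii) for odd $d$, by semicontinuity. Your direct constructions for odd $d$ (via Lemma \ref{l3.4} and Corollary \ref{c4.11} with $r=2$, then tensoring) are correct and pleasantly explicit, but they cannot replace the citation for the even degrees of (i).

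The second gap is the trigonal case of (iii), which you correctly identify as the delicate point but do not close. Your dimension count is not obviously valid: the destabilising objects to be avoided are the degree-$2$ line subbundles of $E_{K_C(-p)}$ (whose saturations in $E$ would have degree $3>\frac{10}{4}$) and the degree-$4$ rank-$2$ subbundles; the former can form a family of dimension up to $4$ (a $2$-dimensional family of line bundles $M$, each with $h^0(M^*\otimes E_L)$ up to $3$), each imposing only a codimension-$3$ condition on the extension class in the $4$-dimensional fibre $(E_L)_q$, so genericity of $q$ and of the quotient does not follow from a naive count. The paper's argument is much simpler and you miss it entirely: $\widetilde B(4,10,5)\ne\emptyset$ (by semicontinuity from the general curve), and since $B(2,5,3)=\emptyset$ when $\Cliff(C)=2$ or $C$ is trigonal (Lemma \ref{l3.7}), a strictly semistable point of $\widetilde B(4,10,5)$ would have Jordan--H\"older factors of rank $2$, degree $5$ and total $h^0\le 4$; hence $B(4,10,5)=\widetilde B(4,10,5)\ne\emptyset$. (Your use of Proposition \ref{p3.13} with $r=1$ for the $\Cliff(C)=2$ half of (iii) is a correct alternative.) Two smaller problems: your construction of $B(4,14,5)$ needs four pairwise non-isomorphic pencils in $B(1,4,2)$, which a curve with $\Cliff(C)=2$ need not possess (compare Remark \ref{rbi} and the gcd hypotheses in Propositions \ref{p4.20} and \ref{p4.21}); the fix is to tensor a bundle from $B(4,10,5)$ by an effective line bundle of degree $1$, as the paper does. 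And Proposition \ref{p2.11} requires a generated line bundle of degree $4$, which some trigonal curves lack (Remark \ref{reme}), so $d=20$ on such curves must instead come from part (iv) (where your application of Proposition \ref{prop2.4} agrees with the paper) by tensoring.
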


\begin{proof} The second equivalence in \eqref{eq4d5} follows from \eqref{eqbeta}. Moreover,
for any $C$, $\widetilde 
B(4,d,5) = \emptyset$ for $d \le 8$ by Proposition \ref{pln2}. For general $C$, the fact that $B(4,d,5) \neq \emptyset$ for $d\ge9$ is contained in \cite[Theorem 7.3]{bbn}; this proves (i).  The first equivalence in \eqref{eq4d5} follows by semicontinuity, which also completes the proof of (ii) for $d$ odd.

If $\Cliff(C)=2$ or $C$ is trigonal, we have $B(2,5,3)=\emptyset$ by Lemma \ref{l3.7}; hence $B(4,10,5)=\widetilde{B}(4,10,5)\ne\emptyset$. If $\Cliff(C)=1$, then $B(4,14,5)\ne\emptyset$ by Proposition \ref{p4.21}. Proposition \ref{prop2.4} gives $B(4,16,5)\ne\emptyset$ for $C$ trigonal. If $\Cliff(C)=2$ or $C$ is a smooth plane quintic, then $B(4,20,5)\ne\emptyset$ by Proposition \ref{p2.11}. Tensoring by effective line bundles completes the proof.
\end{proof}

\begin{rem}\label{remb}
{\rm It is reasonable to conjecture that $B(4,d,5)\ne\emptyset$ in all the cases left open in Proposition \ref{p5.9}.}
\end{rem}

\begin{prop} \label{p6.9}
Let $C$ be a non-hyperelliptic curve of genus $6$. Then
\begin{equation}\label{eq5d6}
\widetilde B(5,d,6) \neq \emptyset \; \Leftrightarrow \; B(5,d,6) \neq \emptyset \; \Leftrightarrow \; d \ge 10\Leftrightarrow \beta(5,d,6)\ge0.
\end{equation}
\end{prop}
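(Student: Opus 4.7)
The Brill--Noether number is $\beta(5,d,6)=6d-60$, giving the third equivalence immediately. For the upper bound on $d$, suppose $d\le 9$ and $\widetilde B(5,d,6)\ne\emptyset$; Lemma \ref{lbb} then yields $(n',d',k')$ with $n'\le 5$, $d'/n'=d/5<2$, $k'/n'\ge 6/5$ and $B(n',d',k')\ne\emptyset$. Since $d'<2n'$, Proposition \ref{pln2}(i) forces $k'/n'\le 1+(d'-n')/(n'g)=1+(d-5)/30<6/5$, a contradiction.

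For existence, write any $d\ge 10$ as $d=d_0+5a$ with $d_0\in\{10,11,12,13,14\}$ and $a\ge 0$; tensoring with $\mathcal O_C(D)$ for an effective divisor $D$ of degree $a$ preserves stability and does not decrease $h^0$, so it suffices to treat the five base cases. For $d=10$, Proposition \ref{pln2}(iii) gives $D(K_C)\in B(5,10,6)$; for $d=11$, Proposition \ref{p3.11}(i) with $r=1,\,s=0$ produces an element of $B(5,11,6)$. For $d=12,13,14$ I build examples using Lemma \ref{llift} with extensions
$$
0\to F\to E\to G\to 0
$$
where $F,G$ are stable of ranks $2$ and $3$ satisfying $\mu(F)<d/5<\mu(G)$ and $h^0(F)+h^0(G)=6$. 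Concretely I would take $(F,G)\in B(2,4,2)\times B(3,8,4)$ for $d=12$, $B(2,5,2)\times B(3,8,4)$ for $d=13$, and $B(2,4,2)\times B(3,10,4)$ for $d=14$. Existence of $F$ follows from Proposition \ref{pln2}, and of $G$ from Propositions \ref{p5.8} and \ref{p4.18}.

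In each of these three cases, a direct Riemann--Roch computation yields $\chi(K_C\otimes F^*\otimes G)>h^0(G)\cdot h^0(K_C\otimes F^*)$ (for instance, $34>4\cdot 8=32$ when $d=12$), so the multiplication map in Lemma \ref{llift} is not surjective and non-trivial extensions in which every section of $G$ lifts exist; hence $h^0(E)\ge 6$. The main obstacle is verifying that the general such extension yields a \emph{stable} $E$. Since $\mu(F)<\mu(E)<\mu(G)$ with both $F$ and $G$ stable and $\dim\Ext^1(G,F)=h^1(F\otimes G^*)$ is sizeable (for example, $34$-dimensional when $d=12$), a hypothetical destabilizing subbundle $H\subset E$ must satisfy $H\cap F\subsetneq F$ together with $\bar\pi(H)\subsetneq G$, where $\bar\pi(H)$ denotes the saturation of the image of $H$ in $G$. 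The corresponding extension class is then forced to lie in the image of a natural map $\Ext^1(\bar\pi(H),H\cap F)\to\Ext^1(G,F)$; the countable collection of such proper subspaces, indexed by $\bar\pi(H)$ and $H\cap F$, is avoided by the general extension class, so the general $E$ is stable, completing the base cases and hence the proposition.
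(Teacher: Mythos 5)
Your handling of the equivalence $\beta(5,d,6)\ge0\Leftrightarrow d\ge10$, of the emptiness for $d\le 9$, and of the cases $d\equiv0,1\pmod 5$ (via $D(K_C)$, Proposition \ref{p3.11}(i) and tensoring by effective divisors) is correct and close to the paper's argument (though note that Proposition \ref{pln2}(i) applies to $\widetilde B$ directly, so the detour through Lemma \ref{lbb} is unnecessary, and for $B(2,5,2)$ you need Corollary \ref{c2.2} rather than Proposition \ref{pln2} itself since $5>2\cdot2$). The paper disposes of all $d\ge 11$ with $5\nmid d$ by quoting \cite[Corollary 6.3]{bbn2} for the general curve and then using semicontinuity: since $\gcd(5,d)=1$, semistable bundles of rank $5$ and degree $d$ are stable, so $\widetilde B(5,d,6)\ne\emptyset$ already yields $B(5,d,6)\ne\emptyset$ on every curve. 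You instead attempt a direct construction for $d=12,13,14$, and this is where there is a genuine gap.

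The numerology of your extensions is fine: the inequalities $\chi(K_C\otimes F^*\otimes G)>h^0(G)\,h^0(K_C\otimes F^*)$ (namely $34>32$, $31>28$, $38>32$) do force the multiplication map to be non-surjective, so non-trivial extensions with full lifting exist and have $h^0\ge6$. But the stability argument does not work as stated. First, the destabilizing configurations do not form a countable set: for $d=12$, for example, the dangerous subbundles are rank-$2$ degree-$5$ subsheaves $S\subset G$ that lift to $E$, and these are parametrized by a positive-dimensional family (degree-$3$ line bundle quotients of $G$), so "avoid countably many proper subspaces" is not available and an actual dimension count is required. Second, and more seriously, you are not free to take a general class in $\Ext^1(G,F)$: Lemma \ref{llift} confines you to $\mathbb{P}((\Coker\mu)^*)$, which in your situations may be as small as $\mathbb{P}^1$, and it can happen that \emph{every} class in this subspace admits a destabilizing lift. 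The paper contains exactly such a failure: in the proof of Proposition \ref{p4.18}(ii), the unique non-trivial extension of $H(q)$ by $H(-p)$ for which all sections lift turns out to be unstable, whence $B(2,10,5)=\emptyset$ on a smooth plane quintic even though the extension has the required $h^0$. So for $d=12,13,14$ the stability of your $E$ is precisely the hard point and is not established; either carry out the dimension count inside $(\Coker\mu)^*$, or follow the paper and invoke \cite[Corollary 6.3]{bbn2} together with semicontinuity and the coprimality of $5$ and $d$.
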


\begin{proof} The last equivalence in \eqref{eq5d6} follows from \eqref{eqbeta}. Moreover,
for $d \le 9$, $\widetilde B(5,d,6) = \emptyset$ by Proposition \ref{pln2}(i). 
For $d = 10$, we have $\widetilde B(5,10,6) = B(5,10,6)=\{D(K_C)\}$ by Proposition \ref{pln2}(iii). Tensoring by an effective line bundle shows that $B(5,d,6)\ne\emptyset$ whenever $d\ge10$ is divisible by $5$. 
For $d\ge11$, $B(5,d,6)\ne\emptyset$ for general $C$ by \cite[Corollary 6.3]{bbn2}. By semicontinuity, this holds for any $C$ except possibly when $d$ is divisible by $5$. This completes the proof.
\end{proof}

\begin{prop} \label{p6.8}
Let $C$ be a non-hyperelliptic curve of genus $6$ and suppose $n \ge 6$. Then 
\begin{enumerate}
 \item[(i)] $\widetilde B(n,d,n+1) \neq \emptyset \; \Leftrightarrow \; d \geq n+6\Leftrightarrow\beta(n,d,n+1)\ge0$;
 \item[(ii)] if $d \ge n+6$, then $B(n,d,n+1) \neq \emptyset$ except possibly when
 \begin{enumerate}
 \item[(a)] $n = 6, \; d = 14,15,16,20,21$ or $22$;
 \item[(b)] $n=8, \; d=18,20,26$ or $28$;
 \item[(c)] $n=9, \; d= 21$ or $30$;
 \item[(d)] $n=10, \; d= 22,24,25,32,34$ or $35$;
 \item[(e)] $n \ge 12,\; n$ even, $d = 2n+4$ or $ 3n+4$;
 \item[(f)] $n \ge 12, \; n$ divisible by $3, \; d = 2n+3$ or $3n+3$;
 \item[(g)] $n \ge 15, \; n$ divisible by $5,\; d = 2n+5$ or $3n+5$.
 \end{enumerate}
\end{enumerate}
\end{prop}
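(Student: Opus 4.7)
The three equivalences in (i) are tied together as follows. Substituting $g=6$ into \eqref{eqbeta} yields
\[
\beta(n,d,n+1)=(n+1)d-n(n+7),
\]
so $\beta\ge 0\Leftrightarrow d\ge n(n+7)/(n+1)$; since $(n+6)(n+1)=n(n+7)+6$ while $(n+5)(n+1)=n^{2}+6n+5<n(n+7)$ for $n\ge 6$, the least integer $d$ satisfying the inequality is $n+6$. The necessity of $d\ge n+6$ for $\widetilde{B}(n,d,n+1)\ne\emptyset$ then follows from Lemma \ref{lbb}: in the non-trivial case $d<2n$, that lemma supplies $(n',d',k')$ with $d'/n'=d/n$, $k'/n'\ge(n+1)/n$, and $B(n',d',k')\ne\emptyset$; since $k'>n'$ excludes the $(n',n',n')$ exception, Proposition \ref{pln2}(i) forces $k'\le n'+(d'-n')/6$, and combining with $k'\ge n'(n+1)/n$ gives $6(n+1)/n\le 5+d/n$, i.e.\ $d\ge n+6$.

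For sufficiency I would treat the range $n+6\le d\le 2n$ directly using Proposition \ref{pln2}(i) (for $d<2n$) and Proposition \ref{pln2}(iii) (for $d=2n$, since $n+1\le 7n/6$ when $n\ge 6$), both of which produce stable bundles and so settle (i) and (ii) together in that range. For $d>2n$ and the semistable case, iterated generic positive elementary transformations $0\to E_0\to E\to \CC_p\to 0$ of a stable $E_0\in B(n,2n,n+1)$ preserve semistability (for generic choice of transform point) and the section count (via $H^{0}(E_0)\hookrightarrow H^{0}(E)$), settling (i). For the stable case of (ii) beyond $d=2n$, Proposition \ref{p3.11}(iii) with $r=1$, $s=n-6$ supplies $B(n,2n+1,n+1)\ne\emptyset$ for every $n\ge 6$; Corollary \ref{c2.2} handles the ranges $sn+6\le d\le(s+1)n$ for every $s\ge 2$ (its exceptional triple $(5,5(s+1),6)$ being incompatible with $n\ge 6$); and tensoring by effective line bundles deduces the cases $d>(s+1)n$ from bundles already built at smaller degree.

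What remains is the family of residues $d=sn+j$ with $s\in\{2,3\}$ and $j\in\{2,3,4,5\}$; the residues for $s\in\{4,5\}$ follow by tensoring and by Propositions \ref{p2.11}, \ref{p2.17}. For $j=2$, Proposition \ref{p3.11}(ii) with $r=1$, $s=n-5$ handles all $n\ge 11$, leaving $n\in\{6,8,10\}$ genuinely exceptional and $n\in\{7,9\}$ to be dealt with by ad hoc constructions built from Proposition \ref{p2.5} applied to pairwise non-isomorphic line or rank-two bundles of appropriate degree, in the spirit of Corollaries \ref{c4.11}--\ref{c4.12} and Propositions \ref{p3.13}, \ref{p5.15}, with Lemma \ref{l2.3} and Proposition \ref{p2.0} supplying the sections. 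For $j\in\{3,4,5\}$ with $\gcd(n,j)=1$, analogous elementary-transform constructions succeed; when $j\mid n$ (or, for $j=4$, when $2\mid n$) and $n$ is large, any such construction is forced to use $\gcd(n,j)$ pairwise non-isomorphic subbundles of rank $n/\gcd(n,j)$ and cannot reconcile the required slope with the expected section count $n+1$, producing exactly the exceptional lists (e), (f), (g); the small-$n$ exceptions (a)--(d) arise similarly. The principal obstacle is this case analysis: verifying stability (not merely semistability) of the resulting elementary transforms by careful slope bookkeeping of all subbundles of $\oplus L_i$ and its elementary transforms, and correctly identifying the divisibility-induced exceptions by ruling out every competing construction.
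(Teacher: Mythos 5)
Your numerical reductions are correct and your treatment of $n+6\le d\le 2n$, of $d=2n+1$ (via Proposition \ref{p3.11}(iii)) and of $d=2n+2$ for $n\ge11$ (via Proposition \ref{p3.11}(ii)) coincides with the paper's. But the load-bearing step of (i) -- non-emptiness of $\widetilde B(n,d,n+1)$ for $d>2n$, in particular for $d=2n+2,\dots,2n+5$ where one cannot tensor up from a lower-degree case -- is not established by your argument. You assert that iterated generic positive elementary transformations of a stable $E_0\in B(n,2n,n+1)$ stay semistable, but the naive estimate breaks down as soon as the total torsion length $t$ exceeds $1$: for a subbundle $F\subset E$ of rank $r$ one only gets $d_F\le d_{F\cap E_0}+t\le 2r-1+t$, and $2r-1+t\le \frac{r}{n}(2n+t)$ is equivalent to $t(n-r)\le n$, which fails for small $r$ once $t\ge2$. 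Making ``generic'' carry the argument requires controlling the whole family of maximal subsheaves of $E_0$ (the paper only does this kind of bookkeeping for very special direct sums, in Lemma \ref{l3.12} and Propositions \ref{p3.13}, \ref{p5.15}); and a version strong enough to give stability would delete most of the exceptional list in (ii), which is a strong hint that no such general statement is available. The paper avoids all of this: for $C$ general it quotes \cite[Theorem 5.1]{bbn2} and then invokes semicontinuity of $\widetilde B$. That citation is the essential input your plan is missing.

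Part (ii) is also organised differently, and your version leaves real gaps. The paper's mechanism for every residue $d=sn+j$ with $\gcd(n,d)=1$ is simply that coprimality forces $\widetilde B(n,d,n+1)=B(n,d,n+1)$, so stability comes for free from (i); the list (a)--(g) is then \emph{exactly} the set of non-coprime values among $2n+2,\dots,2n+5,\,3n+2,\dots,3n+5$ (with $2n+2,3n+2$ only for $n<11$) not reached by Proposition \ref{p3.11}, and nothing whatever is proved about those cases -- they are ``except possibly'', not the outcome of ``ruling out every competing construction'' as your final paragraph suggests. Replacing the coprimality argument by unspecified ``ad hoc'' elementary-transformation constructions for $n\in\{7,9\}$ and for the coprime residues with $j\in\{3,4,5\}$ leaves each of those cases without a stability proof. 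Finally, for $4n<d<5n$ you appeal to Propositions \ref{p2.11} and \ref{p2.17}, whose hypotheses (a generated degree-$4$ pencil, respectively $\Cliff(C)=1$ or $C$ bielliptic) fail for some non-hyperelliptic curves of genus $6$ (see Remark \ref{reme}); the paper instead uses Proposition \ref{p2.16}, which needs only degree-$5$ pencils and is valid for every curve of genus $6$.
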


\begin{proof}
(i): The second equivalence follows from \eqref{eqbeta}. Moreover, if $d < n+6$, then $\widetilde B(n,d,n+1) = \emptyset$ by Proposition \ref{pln2}(i). If $d \geq n+6$, then $\widetilde B(n,d,n+1) \neq \emptyset$ for $C$ general by \cite[Theorem 5.1]{bbn2}.
By semicontinuity this holds for all $C$.

(ii): If $n+6 \le d \le 2n$, then $B(n,d,n+1) \neq \emptyset$ by Proposition \ref{pln2}(i) and (iii). If $d = 2n+1$, then $B(n,d,n+1) \neq \emptyset$ by Proposition \ref{p3.11}(iii). If $d = 2n+2$, the
same is true by Proposition \ref{p3.11}(ii) for $n \ge 11$. If $4n < d < 5n$, we can use Proposition \ref{p2.16}. The result follows by tensoring by effective line bundles,
except possibly for $d = 2n+3, 2n+4, 2n+5, 3n+3, 3n+4$ or $3n+5$ and, in addition, for $n < 11$, $d = 2n+2$ or $3n+2$. Since $B(n,d,n+1)$ is non-empty when $\gcd(n,d) = 1$ by (i), this leaves only the listed possibilities.
\end{proof}

\begin{rem}
{\rm 
If $\Cliff(C) = 1$ or $C$ is bielliptic, then, for $n\ge6$, 
$$
B(n,d,n+1) \neq \emptyset \quad \mbox{for} \quad d= 3n+3, 3n+4 \; \mbox{and} \; 3n+5
$$
by Proposition \ref{p2.17}(i). If $\Cliff(C)=2$ or $C$ is a smooth plane quintic, then $B(6,14,7)\ne\emptyset$ since $\widetilde{B}(6,14,7)\ne\emptyset$ and $B(3,7,4)=\emptyset$ by Lemma \ref{l4.8}; hence also $B(6,20,7)\ne\emptyset$. Note also that semicontinuity implies that $\dim\widetilde{B}(n,d,k)\ge\beta(n,d,k)$ whenever $B(n,d,k)\ne\emptyset$ on the general curve. It is therefore possible that one could show that $B(n,d,k)\ne\emptyset$ on a special curve by showing that the points corresponding to strictly semistable bundles form a subscheme of dimension $<\beta(n,d,k)$.
}
\end{rem}

\section{Bundles of ranks $2$ and $3$}\label{seclow}

In this section, we study the non-emptiness of Brill-Noether loci for $n=2$ and $n=3$. For $n=2$, we have a complete solution. For $n=3$ and $k=5,6$, we have a complete solution for $\widetilde{B}(3,d,k)$, but there are a few exceptions for $B(3,d,k)$ where we have not been able to obtain an answer.

\begin{prop}  \label{prk2}
Let $C$ be a non-hyperelliptic curve of genus $6$. Then
\begin{equation}\label{eq2d4}
\beta(2,d,4)\ge0\Leftrightarrow d\ge9.
\end{equation}
Moreover,
\begin{enumerate}
\item[(i)] $\widetilde B(2,d,4) = \emptyset$ for $d \le 5$;
\item[(ii)] $B(2,6,4) = \emptyset$; $\widetilde B(2,6,4) = \emptyset$ unless $C$ is trigonal, in 
which case $\widetilde B(2,6,4) \neq \emptyset$;
\item[(iii)] $B(2,7,4) = \emptyset$;
\item[(iv)] $\widetilde B(2,8,4) \neq \emptyset$ and $B(2,8,4) = \emptyset$;
\item[(v)] $B(2,d,4) \neq \emptyset$ for $d \ge 9$. 
\end{enumerate}
\end{prop}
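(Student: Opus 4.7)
The plan is to prove all five parts by combining the general bounds in Section \ref{back} with case analysis according to the type of $C$. First, \eqref{eqbeta} yields $\beta(2,d,4) = 4d - 35$, giving the equivalence $\beta \ge 0 \Leftrightarrow d \ge 9$. Part (i) follows from Proposition \ref{pln2} (for $d \le 4$) and Proposition \ref{p2.2}(i) (for $d = 5$, which gives $h^0(E) \le (d+n)/2 < 4$). For (ii), $B(2,6,4) = \emptyset$ is Proposition \ref{p3.10}; for the $\widetilde B$ statement, a strictly semistable $E$ has $\operatorname{gr} E = L_1 \oplus L_2$ with $d_{L_i} = 3$ and $h^0(L_1) + h^0(L_2) \ge 4$, forcing both $h^0(L_i) \ge 2$, which happens only on trigonal $C$ with $L_1 = L_2 = T$; conversely $[T \oplus T] \in \widetilde B(2,6,4)$.

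For (iii) and (iv), Lemma \ref{l2.2} with $n = 2$ and $s = 2$ shows that any $E \in B(2,d,4)$ with $d \le 8 < d_4 = 9$ must admit a line subbundle $N$ with $h^0(N) \ge 2$; stability forces $d_N < d/2$, and for non-trigonal $C$ this contradicts $d_1 \ge 4$. For trigonal $C$, $N = T$ and $L := E/T$ lies in $B(1, d-3, 2)$. For $d = 7$, $L$ is one of the forms identified in Remark \ref{reme}. If $L = K_C \otimes T^{*2}$ is generated, the base-point-free pencil trick on $L$ makes the multiplication map in Lemma \ref{llift} surjective (kernel $H^0(T)$ of dimension $2$), forbidding the extension. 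If $L = T(p)$, I tensor $0 \to T \to E \to L \to 0$ by $T^*$ to get $0 \to \mathcal{O} \to E \otimes T^* \to \mathcal{O}(p) \to 0$; a computation via the bpf pencil trick on $T$ and residue duality shows the condition ``all sections of $L$ lift to $E$'' coincides with ``the section of $\mathcal{O}(p)$ lifts to $E \otimes T^*$'', so $h^0(E \otimes T^*) \ge 2$, giving $E \otimes T^* \in B(2,1,2) = \emptyset$ by the $(n,n,n)$ exception in Proposition \ref{pln2}(i). Part (iv) for trigonal $C$ runs along the same lines with $L \in B(1,5,2)$: either $L = T(p+q)$ (tensor reduction to $B(2,2,2) = \emptyset$) or $L$ is of the form $K_C \otimes T^{*2}(p)$ (where the lifting condition forces the degree-$4$ line bundle $K_C \otimes T^{*2}$ to sit inside $E$ as a line subbundle of slope $4 = \mu(E)$, destabilizing $E$). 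The non-emptiness $\widetilde B(2,8,4) \ne \emptyset$ follows from $[L_1 \oplus L_2]$ with $L_1, L_2 \in B(1,4,2)$, allowing $L_1 = L_2$ if necessary.

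Finally, (v) is handled by the base cases $d = 9, 10$ and tensoring by effective line bundles. For $d = 9$, Proposition \ref{p4.20} with $(n,s,k) = (2,1,4)$ covers every case: (i) for general curves, (ii) for any $\Cliff(C) = 2$ curve since $\gcd(2,1) = 1$, and (iii) for $\Cliff(C) = 1$ or bielliptic $C$. For $d = 10$, Proposition \ref{p4.18}(i) gives $B(2,10,5) \subset B(2,10,4) \ne \emptyset$ on trigonal and $\Cliff(C) = 2$ curves, while Proposition \ref{p4.20}(iii) with $(n,s,k) = (2,2,4)$ handles smooth plane quintics. For $d \ge 11$, tensoring an element of $B(2,9,4)$ or $B(2,10,4)$ by an effective line bundle of degree $(d-9)/2$ or $(d-10)/2$ produces the desired element of $B(2,d,4)$.

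The main obstacle is the trigonal case of (iii) and (iv), where $T$ is a legitimate subbundle of any putative $E$ compatible with stability and one must rule out the corresponding extensions by $L$. The key technique is the tensor-by-$T^*$ reduction to the empty boundary loci $B(2,1,2) = B(2,2,2) = \emptyset$ arising from the $(n,n,n)$ exception in Proposition \ref{pln2}(i), supplemented by the base-point-free pencil trick in the sub-cases where $L$ itself is a generated pencil.
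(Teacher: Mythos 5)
Most of your proposal is sound: the computation of $\beta$, parts (i), (ii) and (v) all go through essentially as in the paper, and your treatment of (iii) is correct (your detour for $M\simeq T(p)$ through $E\otimes T^*$ and $\widetilde B(2,1,2)=\emptyset$ is more roundabout than the paper's direct surjectivity computation of $\mu:H^0(M)\otimes H^0(K_C\otimes T^*)\to H^0(K_C\otimes M\otimes T^*)$, but it works, since the two lifting conditions are both governed by the image of $H^0(T)\otimes H^0(K_C\otimes T^*)\to H^0(K_C)$, which the base-point-free pencil trick shows is everything). The genuine gap is in your treatment of (iv) for trigonal $C$: your enumeration of $B(1,5,2)$ is incomplete. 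On a trigonal curve of genus $6$ one has $W^1_5=(T+W_2)\cup(K_C-T-W_2)$; indeed, if $h^0(T^*\otimes L)=0$ the pencil trick gives $h^0(T\otimes L)\ge4$, hence $h^0(K_C\otimes T^*\otimes L^*)\ge1$ and $L\simeq K_C\otimes T^*(-p-q)$. For general $p,q$ this line bundle is a base-point-free pencil which is neither of the form $T(p+q)$ nor $K_C\otimes T^{*2}(p)$ (the latter is only the codimension-one locus where $p+q$ lies in a divisor of $|T|$). So your case analysis omits a two-dimensional family of possible quotients $E/T$. (These missed cases are in fact the easy ones --- the multiplication map is surjective for a generated $g^1_5$ --- but as written the proof does not cover them.)

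A second, smaller issue in (iv): in the sub-case $L=K_C\otimes T^{*2}(p)$ you assert that the lifting condition ``forces'' $K_C\otimes T^{*2}$ to embed in $E$. This is true but needs an argument, because here $\Coker\mu$ is one-dimensional, so a non-trivial extension with all sections of $L$ lifting genuinely exists and cannot be excluded by surjectivity; one must show (as in the paper's proof of Proposition \ref{p4.18}(ii)) that the extension whose pullback along $K_C\otimes T^{*2}\hookrightarrow L$ splits has all sections lifting, and then invoke uniqueness to conclude that \emph{the} extension contains $K_C\otimes T^{*2}$ as a destabilizing line subbundle. For comparison, the paper avoids all of this extension analysis for $d=8$: from Proposition \ref{pln2}(i) one gets $h^0(T^*\otimes E)\le1$, Lemma \ref{l2.3} then gives $h^0(T\otimes E)\ge7$, and Riemann--Roch/Serre duality places $K_C\otimes T^*\otimes E^*$ in $B(2,6,3)$, which is empty for trigonal curves by Proposition \ref{p3.10}(ii). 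You may want to adopt that route, or else complete your case analysis of $B(1,5,2)$ and supply the uniqueness argument.
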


\begin{proof}
\eqref{eq2d4} follows from \eqref{eqbeta}.

(i) follows from Proposition \ref{p2.2}(i). Note next that, if $E$ is a bundle of rank 2 and degree $d$
with $h^0 \ge 4$ and $E$ does not admit a line subbundle with $h^0 \ge 2$, then, by Lemma \ref{l2.2},
$d_E \ge d_4 = 9$. Except when $C$ is trigonal, this implies that $B(2,d,4) = \emptyset$ 
for $d \le 8$ and
that $\widetilde B(2,6,4) = \emptyset$. On the other hand, $\widetilde B(2,8,4) \neq \emptyset$ 
always, since $B(1,4,2) \neq \emptyset$.

So for (ii), (iii) and (iv) it remains to consider the case $C$ trigonal. In this case, $B(2,6,4) =
\emptyset$ by Remark \ref{r4.10}, while $ \widetilde B(2,6,4) \neq \emptyset$, since $B(1,3,2)
\neq \emptyset$. 

Now suppose $C$ is trigonal and $d = 7$. For any $E\in B(2,7,4)$, we must have an exact sequence
\begin{equation} \label{e7.1}
0 \ra T \ra E \ra M \ra 0
\end{equation} 
with $M\in B(1,4,2)$. Consider the multiplication map
$$
\mu:H^0(M) \otimes H^0(K_C \otimes T^*) \ra 
H^0(K_C \otimes M \otimes T^*).
$$
Now $h^0(M) = 2$, 
$h^0(K_C \otimes T^*) =4$ and $h^0(K_C \otimes M \otimes T^*) = 6$ by Riemann-Roch.
Moreover, by Remark \ref{reme}, either $M\simeq T(p)$ for some $p\in C$ or $M$ is generated and $M\simeq K_C\otimes T^{*2}$. Now consider the evaluation sequence for $M$,
$$0\ra L\ra H^0(M)\otimes \mathcal{O}_C\ra M'\ra 0.$$
Tensoring by $K_C\otimes T^*$, we see that $\operatorname{Ker}\mu\simeq H^0(L\otimes K_C\otimes T^*)$. If $M$ is generated, then $M'=M$ and $L\simeq M^*$; otherwise $M\simeq T(p)$, $M'\simeq T$ and $L\simeq T^*$. In the first case, $M\simeq K_C\otimes T^{*2}$ and $\operatorname{Ker}\mu\simeq H^0(T)$, in the second $\operatorname{Ker}\mu\simeq H^0(K_C\otimes T^{*2})$. In both cases $\dim(\operatorname{Ker}\mu)=2$, so $\mu$ is surjective. So, by Lemma \ref{llift}, there exists no non-trivial extension \eqref{e7.1} for which all 
sections of $M$ lift. This completes the proof of (iii).

For $C$ trigonal and $d = 8$, suppose $E\in B(2,8,4)$. By Proposition \ref{pln2}(i), $h^0(T^*\otimes E)\le1$. Hence, by Lemma \ref{l2.3}, 
$$
h^0(T\otimes E)\ge2h^0(E)-h^0(T^*\otimes E)\ge7.
$$ By Riemann-Roch,
$$h^0(K_C\otimes T^*\otimes E^*)\ge7+\chi(K_C\otimes T^*\otimes E^*)=3.$$
Thus $K_C\otimes T^*\otimes E^*\in B(2,6,3)$, which is empty by Proposition \ref{p3.10}(ii). This completes the proof of (iv).

For (v), $B(2,9,4) \neq \emptyset$ by Proposition \ref{p4.20}. If $\Cliff(C) = 2$ or $C$ is trigonal, then 
$B(2,10,4) \neq \emptyset$ by Proposition \ref{p4.18}. If $C$ is a smooth plane quintic, the same 
follows from Proposition \ref{p4.20}. So (v) follows by tensoring with an effective line bundle.
\end{proof}

\begin{prop}  \label{p7.2}
Let $C$ be a non-hyperelliptic curve of genus $6$. Then
\begin{equation}\label{eq2d5}
\beta(2,d,5)\ge0\Leftrightarrow d\ge11.
\end{equation}
Moreover,
\begin{enumerate}  
\item[(i)] $\widetilde B(2,d,5) = \emptyset$ for $d \le 8$;
\item[(ii)] $B(2,9,5)=\emptyset$;
\item[(iii)] $\widetilde{B}(2,10,5)\ne\emptyset$; $B(2,10,5)\ne\emptyset$ if ond only if $C$ is not a smooth plane quintic;
\item[(iv)] $B(2,d,5)\ne\emptyset$ for $d\ge11$.
\end{enumerate}
\end{prop}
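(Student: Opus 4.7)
The equivalence in \eqref{eq2d5} is immediate from \eqref{eqbeta}: $\beta(2,d,5) = 5d - 54$, which is non-negative precisely when $d \geq 11$. For (i), I would dispatch $d \leq 4$ by Proposition \ref{pln2}(i) and (ii), both of which force $k \leq 2$. For $5 \leq d \leq 7$, Proposition \ref{p2.2}(i) gives $h^0(E) \leq \frac{d+2}{2} \leq 4$ for every semistable $E$ of rank $2$. The case $d = 8$ is slightly more delicate, since Proposition \ref{p2.2}(i) only yields $h^0 \leq 5$: I would rule out stable bundles via $B(2,8,5) \subseteq B(2,8,4) = \emptyset$ (Proposition \ref{prk2}(iv)), and strictly semistable classes by noting that $\gr E = L_1 \oplus L_2$ with $d_{L_i} = 4$ and $h^0(L_i) \leq 2$ (since $d_2 \geq 5$), giving $h^0(\gr E) \leq 4 < 5$.

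For (ii), take $E \in B(2,9,5)$ and apply Lemma \ref{l2.2} with $n=2$, $s=3$: either $E$ has a line subbundle $N$ with $h^0(N) \geq 2$, or $d_E \geq d_6$. The latter is impossible, since Riemann--Roch and Clifford's theorem together give $d_6 \geq 12$. Stability together with $d_1 = 4$ and $d_2 \geq 5$ then forces $d_N = 4$ and $h^0(N) = 2$. The quotient $E/N$ is a line bundle of degree $5$ with $h^0 \geq 3$, which requires $d_2 = 5$; by \eqref{e1}--\eqref{e3} this occurs only on a smooth plane quintic, and in that case Remark \ref{reme} and \eqref{e2} give $N \simeq H(-p)$ and $E/N \simeq H$. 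For this remaining subcase I would apply Lemma \ref{llift} to the extension $0 \to H(-p) \to E \to H \to 0$. Using $K_C \simeq H^2$, the multiplication map of that lemma becomes $\mu: H^0(H) \otimes H^0(H(p)) \to H^0(K_C(p))$. Since $p$ is the unique base-point of $H(p)$, every section of $H(p)$ has the form $\sigma_p \cdot s$ with $\sigma_p$ a generator of $H^0(\cO(p))$ and $s \in H^0(H)$; combined with projective normality of the plane quintic, which yields $H^0(H) \cdot H^0(H) = H^0(K_C)$, one obtains $\operatorname{Im}\mu = \sigma_p \cdot H^0(K_C)$, of dimension $6 = h^0(K_C(p))$. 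Hence $\mu$ is surjective; by Lemma \ref{llift}, the only extension in which all three sections of $H$ lift is the trivial one, which is not semistable. Therefore $B(2,9,5) = \emptyset$.

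For (iii), $\widetilde{B}(2,10,5) \neq \emptyset$ for every $C$ is the first assertion of Proposition \ref{p4.18}, while the dichotomy for $B(2,10,5)$ is precisely parts (i) and (ii) of the same proposition. For (iv), I would combine Serre duality with tensoring: $B(2,11,5) \cong B(2,9,4) \neq \emptyset$ by Proposition \ref{prk2}(v) and $B(2,12,5) \cong B(2,8,3) \neq \emptyset$ by Proposition \ref{p5.6} (in every Clifford-index case). Tensoring these bundles with $\cO(D)$ for an effective divisor $D$ preserves stability and the inequality $h^0 \geq 5$ while raising the degree by $2\deg D$, so we cover respectively all odd $d \geq 11$ and all even $d \geq 12$. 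The principal obstacle is the plane-quintic subcase in (ii), specifically the identification $\operatorname{Im}\mu = \sigma_p \cdot H^0(K_C)$, which hinges on simultaneously exploiting the base-point structure of $H(p)$ and the projective normality of the plane embedding.
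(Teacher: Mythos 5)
Your treatment of \eqref{eq2d5}, (i), (iii) and (iv) is correct; (iii) and (iv) coincide with the paper's proof, and your route through (i) (Proposition \ref{p2.2}(i) for $5\le d\le7$ plus $B(2,8,4)=\emptyset$ and a direct analysis of $\gr E$ for $d=8$) is a legitimate alternative to the paper's uniform use of Lemma \ref{l2.2}. Your plane-quintic argument in (ii) is also essentially the paper's.

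However, part (ii) has a genuine gap: you assert that ``$d_1=4$'' forces $d_N=4$, but by \eqref{e1} a trigonal curve has $d_1=3$, so on a trigonal curve the line subbundle $N$ produced by Lemma \ref{l2.2} may be the trigonal bundle $T$ itself. This case is not vacuous: the quotient $E/T$ then lies in $B(1,6,3)$, which is non-empty on trigonal curves since $d_2=6$ (e.g.\ $T^{\otimes 2}$), so you are left with a possible stable extension $0\to T\to E\to M\to 0$ with $M\in B(1,6,3)$ and all sections of $M$ lifting. Ruling this out requires a separate multiplication-map argument, which is exactly what the paper supplies: for $\mu:H^0(M)\otimes H^0(K_C\otimes T^*)\to H^0(M\otimes K_C\otimes T^*)$ one has $\dim$ source $=12$, target $=8$, and $\Ker\mu=H^0(E_M^*\otimes K_C\otimes T^*)$ because $M$ is generated; since $E_M$ is semistable (though not stable) on a trigonal curve, $E_M^*\otimes K_C\otimes T^*$ is semistable of rank $2$ and degree $8$, so part (i) gives $h^0\le4$, whence $\mu$ is surjective and Lemma \ref{llift} yields the contradiction. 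Without this step your proof of $B(2,9,5)=\emptyset$ covers only the $\Cliff(C)=2$ and plane-quintic cases.
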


\begin{proof}
\eqref{eq2d5} follows from \eqref{eqbeta}. 

By Lemma \ref{l2.2}, any bundle $E$ of rank 2 with $h^0(E) \ge 5$ and degree $d <d_6 = 12$
must admit a line subbundle $L$ with $h^0(L) \ge 2$. 

(i): If $d_L \ge 5$, this 
contradicts semistability for $d \le 8$. If $d_L = 4$, then $h^0(L) \le 2$ and $h^0(E/L) \le 2$. 
So $h^0(E) \le 4$, a contradiction. 
If $C$ is trigonal and $d_L = 3$, then $d_{E/L} \le 5$. So again $h^0(L) \le 2$ and $h^0(E/L) \le 2$, a contradiction.

(ii) For $\Cliff(C)=2$, the result follows from Proposition \ref{p1.6}(i). Next suppose $C$ is trigonal. If $d_L=4$, then $h^0(L)=2$ and $h^0(E/L)\le2$, so $h^0(E)\le4$. The only remaining possibility is that there is a exact sequence
$$
0\ra T\ra E\ra M\ra0,
$$
where $M\in B(1,6,3)$ and all sections of $M$ lift. We consider the multiplication map
$$
\mu:H^0(M)\otimes H^0(K_C\otimes T^*)\ra H^0(M\otimes K_C\otimes T^*).
$$
Here $h^0(M)=3$, $h^0(K_C\otimes T^*)=4$ and $h^0(M\otimes K_C\otimes T^*)=8$ by Riemann-Roch. Note that $M$ is generated, so $\operatorname{Ker}\mu=H^0(E_M^*\otimes K_C\otimes T^*)$. However, $E_M$ is not stable, but it is semistable by \cite[Proposition 4.9(d)]{ln}. So $E_M^*\otimes K_C\otimes T^*$ is a semistable bundle of rank $2$ and degree $8$. Hence, by (i), $h^0(E_M^*\otimes K_C\otimes T^*)\le4$. This implies that $\mu$ is surjective. Hence $B(2,9,5)=\emptyset$ by Lemma \ref{llift}.

If $C$ is a smooth plane quintic, the only possibility is that there is an exact sequence
$$
0\ra H(-p)\ra E\ra H\ra0,
$$
for which all sections of $H$ lift. The multiplication map
$$
\mu:H^0(H)\otimes H^0(K_C\otimes H^*(p))\ra H^0(K_C(p))
$$
is in fact surjective, so this is impossible by Lemma \ref{llift}. To see this, note that $K_C\simeq H^2$, so $\mu$ becomes $$\mu:H^0(H)\otimes H^0(H(p))\ra H^0(H^2(p)).$$ Since $H^0(H)=H^0(H(p))$ and $H^0(H^2)= H^0(H^2(p))$, it follows from the surjectivity of $H^0(H)\otimes H^0(H)\ra H^0(H^2)$ that $\mu$ is surjective.

(iii) is Proposition \ref{p4.18}.

(iv) $B(2,11,5)\simeq B(2,9,4)$ by Serre duality; similarly $B(2,12,5)\simeq B(2,8,3)$. The result follows from Propositions \ref{prk2}(v) and \ref{p5.6}.
 
\end{proof}

\begin{prop}\label{p7.3}
Let $C$ be a non-hyperelliptic curve of genus $6$ and $d\le10$. Then $\beta(2,d,k)<0$ for $k\ge6$ and
\begin{enumerate}
\item[(i)] $B(2,d,6)=\emptyset$; $\widetilde{B}(2,d,6)=\emptyset$ except that $\widetilde{B}(2,10,6)\ne\emptyset$ if $C$ is a smooth plane quintic;
\item[(ii)] $B(2,d,7)=\emptyset$.
\end{enumerate}
\end{prop}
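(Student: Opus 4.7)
The plan is to assemble the rank-$2$ bounds from earlier sections; no genuinely new ingredient is needed. First I dispose of the Brill--Noether number claim: formula \eqref{eqbeta} with $n=2$, $g=6$ gives $\beta(2,d,k)=21-k(k-d+10)$, and for $k\ge6$, $d\le10$ one has $k(k-d+10)\ge36$, so $\beta\le-15<0$.

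For (ii), I would apply Proposition \ref{p2.2}(i) to a semistable bundle $E$ of rank $2$ and degree $2\le d\le 10$: the slope lies in $[1,5]\subset[1,9]$, yielding $h^0(E)\le (d+2)/2\le 6<7$. The residual cases $d\le 1$ are handled by a direct stability argument (a non-zero section of a stable $E$ of rank $2$ and degree $\le 1$ produces a line subsheaf of non-negative degree, already giving $h^0(E)\le 2$). This forces $B(2,d,7)=\emptyset$ immediately.

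For (i), the same application of Proposition \ref{p2.2}(i) to $\gr E$ yields $h^0(\gr E)\le d/2+1\le 6$, with strict inequality as soon as $d\le 9$; so only $d=10$ requires further analysis. Here the argument splits on the Clifford geometry. When $\Cliff(C)=2$, the slope $\mu=5$ lies in $[3,7]$, and Proposition \ref{p1.6}(i) sharpens the bound to $h^0\le d/2=5<6$. When $C$ is trigonal, $(n,d)=(2,10)$ falls in the range $\frac{19}{4}n\le d\le 5n$ of Proposition \ref{p4.11}, giving $h^0\le \frac{4}{15}\cdot 2+\frac{29}{60}\cdot 10=\frac{322}{60}<6$. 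Hence $\widetilde{B}(2,10,6)=\emptyset$ in both of these cases.

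The only place where the semistable bound fails to be strict is the smooth plane quintic, where Proposition \ref{p4.13} at $d=5n$ merely gives $h^0(\gr E)\le 3n=6$. There I would exhibit $H\oplus H\in\widetilde{M}(2,10)$, which has $h^0=6$, to prove $\widetilde{B}(2,10,6)\ne\emptyset$; and to rule out stable bundles I would invoke Remark \ref{rem4.19}, noting that $E\not\simeq H$ is automatic since the ranks differ, so the remark yields $h^0(E)\le \tfrac{8}{3}n<6$ and hence $B(2,10,6)=\emptyset$. The only genuinely delicate step is this last separation between the strictly semistable and stable locus on a smooth plane quintic; everywhere else the relevant bound is already strict, so the proof reduces to bookkeeping with the estimates already available.
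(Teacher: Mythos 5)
Your proposal is correct and follows essentially the same route as the paper, which likewise disposes of (ii) via Proposition \ref{p2.2}, and of (i) via Propositions \ref{p1.6} and \ref{p4.11}, Remark \ref{rem4.19}, and the semistable bundle $H\oplus H$ on a smooth plane quintic. You merely supply the bookkeeping (including the reduction of (i) to $d=10$ via the Clifford bound and the low-degree cases) that the paper leaves implicit.
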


\begin{proof}
The statement about $\beta(2,d,k)$ follows from \eqref{eqbeta}.

(i) follows from Propositions \ref{p1.6} and \ref{p4.11} and Remark \ref{rem4.19}, together with the fact that, on a smooth plane quintic, $H\oplus H$ is a semistable bundle of type $(2,10,6)$.

(ii) follows at once from Proposition \ref{p2.2}.
\end{proof}

The non-emptiness of all other Brill-Noether loci $B(2,d,k)$ can be determined from the above using Serre duality.\\

We turn now to the case $n=3$.

\begin{prop} \label{prk3}
Let $C$ be a non-hyperelliptic curve of genus $6$. Then
\begin{equation}\label{eq3d5}
\beta(3,d,5)\ge0\Leftrightarrow d\ge11.
\end{equation}
Moreover,
\begin{enumerate}
\item[(i)] $\widetilde B(3,d,5) = \emptyset$ for $d \le 8$;
\item[(ii)] $B(3,9,5) = \emptyset$;    $\widetilde B(3,9,5) \neq \emptyset \Leftrightarrow C$ trigonal;
\item[(iii)] $\widetilde{B}(3,d,5)\ne\emptyset$ for $d\ge10$; $B(3,d,5) \neq \emptyset$  for $d \ge10$, $d\ne12,15$.
\end{enumerate}
\end{prop}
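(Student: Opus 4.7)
The Brill--Noether number is $\beta(3,d,5)=9\cdot5+1-5(5-d+15)=5d-54$, so $\beta(3,d,5)\ge0$ iff $d\ge11$. For (i), Proposition \ref{pln2} handles $d\le6$; for $d=7,8$, Lemma \ref{lbb} reduces the question to $B(3,d,5)$, which is empty for $d=7$ by Lemma \ref{l3.5}(i). For $d=8$, a stable $E\in B(3,8,5)$ would, by Lemma \ref{l2.2} together with the bound $d_6\ge12$ (from Riemann--Roch for non-special $L$ and Clifford for special $L$ with $h^0(L)\ge7$), possess a proper subbundle $N$ with $h^0(N)>n_N$. Stability rules out $n_N=1$ (this would require $d_N\le2<d_1$), and a destabilization argument forces $N$ to be semistable of rank $2$ with $d_N\le5$; Proposition \ref{pln2}(ii) rules out $d_N\le4$, and Lemma \ref{l3.7} reduces $d_N=5$ to $N=E_H$ on a smooth plane quintic, at which point $E/N$ is a line bundle of degree $3<d_1=4$ with $h^0\ge2$, impossible.

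The same analysis in (ii) for $E\in B(3,9,5)$ leaves only $C$ a smooth plane quintic with $N=E_H$ and $E/N\cong H(-p)$ (by Remark \ref{reme}); since $h^0(E)=5=h^0(E_H)+h^0(H(-p))$, all sections of $H(-p)$ must lift, so Lemma \ref{llift} would require non-surjectivity of
\begin{equation*}
\mu\colon H^0(H(-p))\otimes H^0(H\otimes E_H)\lra H^0(H^2\otimes E_H(-p))
\end{equation*}
(after using $K_C=H^2$ and $E_H^*\cong E_H\otimes H^{-1}$). The plan is to prove $\mu$ surjective via two steps: (a) the multiplication $H^0(H(-p))\otimes H^0(H)\to H^0(H^2(-p))$ is surjective, deduced from the classical $H^0(H)\otimes H^0(H)\twoheadrightarrow H^0(H^2)$ by choosing a basis $s_1,s_2,s_3$ of $H^0(H)$ with $s_1,s_2\in H^0(H(-p))$, using the vanishing $u(p)=0$ to eliminate the $s_3s_3$-coefficient, and swapping the remaining factors by symmetry; (b) the multiplication $H^0(H)^*\otimes H^0(H^2(-p))\to H^0(H^2\otimes E_H(-p))$ is surjective by a dimension count in the cohomology of the dualized defining sequence $0\to H^{-1}\to H^0(H)^*\otimes\cO_C\to E_H\to0$ tensored by $H^2(-p)$, as the image has dimension at least $3\cdot5-h^0(H(-p))=13=h^0(H^2\otimes E_H(-p))$. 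Combining (a) and (b), any section of the target of $\mu$ can be written as $\sum_{i,j}\alpha_{ij}(\tilde\phi_i\beta_{ij})$ with $\alpha_{ij}\in H^0(H(-p))$ and $\tilde\phi_i\beta_{ij}\in H^0(H\otimes E_H)$, giving the desired surjectivity and hence $B(3,9,5)=\emptyset$. For $\widetilde B(3,9,5)$: the polystable bundle $T^{\oplus3}$ handles the trigonal case with $h^0=6$; conversely, on non-trigonal $C$, the Jordan--H\"older factors of a semistable rank-$3$ bundle of degree $9$ have $h^0\le1$ for line bundles of degree $3$ (as $d_1\ge4$), $h^0=3$ for rank-$2$ factors in $B(2,6,3)$ (by Proposition \ref{p3.10} in both the $\Cliff(C)=2$ and smooth-plane-quintic subcases, using Lemma \ref{llift} to show that extensions of $\CC_p$ by $E_H$ have $h^0=3$), and $h^0\le4$ for stable rank-$3$ factors (from the just-proved $B(3,9,5)=\emptyset$), so the sum is at most $4<5$.

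Part (iii) proceeds by explicit construction. Proposition \ref{p4.18} gives $B(3,10,5)\ne\emptyset$. Proposition \ref{p2.5}(ii) applied to three pairwise non-isomorphic elements of $B(1,4,2)$ with $t=1$ gives $B(3,11,5)\ne\emptyset$ whenever $|B(1,4,2)|\ge3$ (covering general $C$, the cases $\Cliff(C)=1$, and bielliptic $C$); the remaining $\Cliff(C)=2$ non-bielliptic special curves follow from the coprimality $\gcd(3,11)=1$ (so $B=\widetilde B$) together with semicontinuity of $\widetilde B(3,11,5)$ over the irreducible moduli $M_6$. Proposition \ref{p2.16} provides $B(3,14,5)\ne\emptyset$. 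Tensoring by effective line bundles of degree $1$ then propagates these to all $d\ge10$ with $d\not\equiv0\pmod3$. For $d=18$, Serre duality identifies $B(3,18,5)$ with $B(3,12,2)$, which is non-empty on any $C$ by applying Proposition \ref{p2.5}(i) to three distinct line bundles in $B(1,3,1)$ with $t=3$; further tensoring handles $d=21,24,\ldots$, and for $d\ge19$ the Serre-dual $B(3,30-d,20-d)$ has $k\le1$, so non-emptiness is immediate. The main obstacle throughout is verifying the surjectivity of $\mu$ in (ii), which relies on the special geometry of smooth plane quintics and in particular on $K_C=H^2$; the remaining steps are careful applications of the earlier results.
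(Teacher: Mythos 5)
Your argument is essentially sound and follows the same skeleton as the paper's proof: Lemma \ref{l2.2} with $d_6=12$ forces a subpencil or a rank-$2$ subbundle with $h^0\ge3$; Lemma \ref{l3.7} reduces the critical cases to a smooth plane quintic with subbundle $E_H$; and Lemma \ref{llift} kills the remaining extension for $d=9$. The one place where you genuinely diverge is the proof that the multiplication map $\mu$ in (ii) is surjective. The paper identifies $\Ker\mu$ with $H^0(K_C\otimes H^*(p)\otimes E_H^*)$ via the evaluation sequence of the generated pencil $H(-p)$, observes that this is a stable bundle of rank $2$ and degree $7$, and quotes $B(2,7,4)=\emptyset$ (Proposition \ref{prk2}(iii)) to get $\dim\Ker\mu\le3$, hence $\dim\operatorname{Im}\mu\ge16-3=13=h^0(K_C\otimes H\otimes E_H^*(-p))$. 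Your two-step factorization through $H^0(H)^*\otimes H^0(H^2(-p))$, resting on the classical surjectivity of $H^0(H)\otimes H^0(H)\to H^0(H^2)$ together with two cohomology dimension counts, is correct (the kernel in your step (b) is $H^0(H^{-1}\otimes H^2(-p))=H^0(H(-p))$ of dimension $2$, so the image has dimension $15-2=13$, as you say) and is more self-contained; the paper's route is shorter because it can lean on the already-established emptiness of $B(2,7,4)$. Your treatment of (i), of the strictly semistable case in (ii), and of the constructions in (iii) (where you inline the proofs of Propositions \ref{p4.21} and \ref{p2.16} and use Serre duality for $d=18$) matches the paper up to which auxiliary results are cited.

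There is one genuine, though easily repaired, gap: part (iii) asserts $\widetilde B(3,d,5)\ne\emptyset$ for \emph{all} $d\ge10$, including $d=12$ and $d=15$, whereas your constructions only produce bundles for $d\ne12,15$. A separate strictly semistable witness is needed for those two degrees; the paper's is built from $B(1,4,2)\ne\emptyset$ (for instance $L^{\oplus 3}$ with $L\in B(1,4,2)$ has $h^0(\gr)=6\ge5$ in degree $12$), and tensoring by $\cO_C(p)$ then handles $d=15$. Adding a sentence to this effect completes the proof.
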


\begin{proof}
\eqref{eq3d5} follows from \eqref{eqbeta}.

By Lemma \ref{l2.2}, any bundle $E$ of rank 3  with $h^0(E) \ge 5$ and degree $d_E < d_6 = 12$ 
has either a subpencil or a rank-2 subbundle $F$ with $h^0(F) \ge 3$.

For $d \le 7$, (i) follows from Propositions \ref{pln2} and \ref{p3.6}. Suppose $d=8$ and let 
$E \in B(3,8,5)$. Clearly $E$ has no subpencil. Let $F$ be a rank-2 subbundle with $h^0(F) \ge 3$.
It is easy to see that $F$ is stable with $d_F =5$. By Lemma \ref{l3.7}, $C$ is a smooth 
plane quintic and $F \simeq E_H$. But then $E/F \in B(1,3,2)$ which is empty.

(ii): if $E$ is a strictly semistable bundle of rank 3 and degree 9 with $h^0(E) = 5$, at least one factor in a 
Jordan-H\"older filtration must belong to $B(1,3,2)$, since $B(2,6,4) = \emptyset$ by Proposition 
\ref{prk2}(ii). This can only happen if $C$ is trigonal and clearly $\widetilde B(3,9,5) \neq 
\emptyset$ if $C$ is trigonal.

It remains to consider $E \in B(3,9,5)$. Certainly $E$ has no subpencil, so it must possess a 
subbundle $F$ of rank 2 with $h^0(F) \ge 3$. The only possibility is that $C$ is a smooth plane quintic 
and $F \simeq E_H$.  Moreover, $E/F \in B(1,4,2)$. Hence $E/F \simeq H(-p)$ for some $p \in C$ 
(see Remark \ref{reme}). So we have an exact sequence
$$
0 \ra E_H \ra E \ra H(-p) \ra 0
$$
and all sections of $H(-p)$ lift. Now consider the multiplication map
$$
\mu: H^0(H(-p)) \otimes H^0(K_C \otimes E_H^*) \ra H^0(K_C \otimes H \otimes E_H^*(-p)).
$$
Here $h^0(H(-p)) = 2, \; h^0(K_C \otimes E_H^*) = 8$ and $h^0(K_C \otimes H \otimes E_H^*(-p)) = 13$ by Riemann-Roch. Since $H(-p)$ is generated, $\Ker \mu \simeq H^0(K_C \otimes 
H^*(p) \otimes E_H^*)$. The bundle $K_C\otimes H^*(p)\otimes E_H^*$ is stable of rank 2 and degree 7. By Proposition \ref{prk2}(iii),
$\dim \Ker \mu \le 3$. Hence $\mu$ is surjective, a contradiction by Lemma \ref{llift}.

(iii): $B(3,10,5)$ and $B(3,11,5)$ are non-empty by Propositions \ref{p4.18} and \ref{p4.21}. It is clear that $\widetilde{B}(3,12,5)\ne\emptyset$ since $B(1,4,2)\ne\emptyset$. Moreover, by Serre duality, $B(3,18,5)\simeq B(3,12,2)\ne\emptyset$. The result follows by tensoring with effective line bundles.
\end{proof}

\begin{rem}\label{rbi}
{\rm If $C$ is bielliptic, $B(3,12,5)$ and $B(3,15,5)$ are non-empty by Proposition \ref{p1.7}. If $\Cliff(C)=1$ or  $C$ is general, $B(3,15,5)\ne\emptyset$ by Proposition \ref{p4.20}. There could be curves with $\Cliff(C)=2$ possessing fewer than 3 bundles in $B(1,4,2)$ and having $B(3,15,5)=\emptyset$.
}
\end{rem}

\begin{prop}
Let $C$ be a non-hyperelliptic curve of genus $6$. Then
\begin{equation}\label{eq3d6}
\beta(3,d,6)\ge0\Leftrightarrow d\ge14.
\end{equation}
Moreover,
\begin{enumerate}
\item[(i)] $\widetilde B(3,d,6) = \emptyset$ for $d \le 8$;
\item[(ii)] $B(3,9,6) = \emptyset; \quad \widetilde B(3,9,6) \neq \emptyset \; \Leftrightarrow C$ trigonal;
\item[(iii)] $B(3,d,6) = \emptyset$ for $d = 10, 11$;
\item[(iv)] $\widetilde{B}(3,d,6)\ne\emptyset$ for $d\ge12$; $B(3,d,6)\ne\emptyset$ for $d\ge13$, $d\ne15$.   
\end{enumerate}
\end{prop}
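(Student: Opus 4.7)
My plan is to handle the four parts in sequence. The first equivalence reduces to the calculation $\beta(3,d,6)=6d-80$. For (i), every semistable bundle $E$ of rank $3$ with $d\le 8$ satisfies $h^0(E)\le 5<6$ by Propositions \ref{pln2} and \ref{p2.2}(i); the same bound applies to $\gr E$ by summing the bound over Jordan--H\"older quotients.

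For (ii), Proposition \ref{p2.2}(ii) forces $h^0(E)=6$. For strictly semistable $E$, the Jordan--H\"older quotients have slope $3$; since $B(2,6,4)=\emptyset$ by Proposition \ref{prk2}(ii) and $h^0(L)=2$ with $d_L=3$ forces $L\simeq T$ on a trigonal curve, the only option is $\gr E\simeq T^{\oplus 3}$, which requires $C$ trigonal. To show $B(3,9,6)=\emptyset$, I would apply Lemma \ref{l2.2} with $s=3$: since $d_9>9$, $E$ must contain a proper subbundle $N$ with $h^0(N)>n_N$, and stability of $E$ forces $d_N/n_N<3$. The case $n_N=1$ forces $N\simeq T$ on a trigonal curve, but then $E/T$ is rank $2$ of degree $6$ with $h^0\ge 4$, which is impossible since $B(2,6,4)=\emptyset$ and any strictly semistable or unstable such $E/T$ would produce a proper subbundle of $E$ of slope $\ge 3=\mu(E)$, contradicting stability. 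The case $n_N=2$ with $d_N\le 5$ is ruled out using Lemma \ref{l3.7} and the line-bundle bound $d_2>d_N$ in every Clifford class.

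For (iii) with $d\in\{10,11\}$, the strategy is analogous but more involved. Lemma \ref{l2.2} again produces a proper subbundle $N$ with $h^0(N)>n_N$; slope bounds together with the emptiness results of Propositions \ref{p5.6}, \ref{prk2}, \ref{p3.10} and Lemma \ref{l3.7} reduce the problem to a finite list of extension configurations $0\to F\to E\to G\to 0$ in which all sections of $G$ must lift to $E$. I would rule each configuration out via Lemma \ref{llift} by verifying surjectivity of
\[
\mu\colon H^0(G)\otimes H^0(K_C\otimes F^*)\to H^0(G\otimes K_C\otimes F^*);
\]
the required surjectivity follows from Riemann--Roch and, in the smooth plane quintic subcases (such as $0\to E_H\to E\to H\to 0$ arising for $d=10$), the identity $K_C=H^2$ together with the surjectivity of $H^0(H)\otimes H^0(H)\to H^0(H^2)$. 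This case analysis will be the main technical obstacle.

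For (iv), I would take $M^{\oplus 3}$ with $M\in B(1,4,2)$ to see $\widetilde B(3,12,6)\ne\emptyset$. Proposition \ref{p4.20}(ii) applied with $\gcd(3,1)=\gcd(3,2)=1$ covers $\Cliff(C)=2$, and Proposition \ref{p4.20}(iii) covers $\Cliff(C)=1$ and bielliptic $C$, so $B(3,13,6)$ and $B(3,14,6)$ are non-empty in every Clifford class. Tensoring by $\cO_C(p)$ yields $B(3,d,6)\ne\emptyset$ for $d\in\{16,17,19,20,22,23,\ldots\}$. For $d=18$, Serre duality gives $B(3,18,6)\simeq B(3,12,3)$, which is non-empty by Corollary \ref{c2.2}. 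Finally, for $d\ge 21$ we have $\chi(E)\ge 6$, so $h^0(E)\ge 6$ for every $E\in M(3,d)$, whence $B(3,d,6)=M(3,d)\ne\emptyset$, completing the list.
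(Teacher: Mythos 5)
Your computation of $\beta(3,d,6)$, part (i), the strictly semistable half of (ii), and part (iv) are essentially in order, with two small repairs needed: in (iv) you still owe $\widetilde B(3,15,6)\ne\emptyset$ (immediate by tensoring your $d=12$ example $M^{\oplus 3}$ by $\cO_C(p)$, which is how the paper phrases it), and in the stable case of (ii) a proper line subbundle $N$ of a stable $E$ of slope $3$ has $d_N\le 2$, so $h^0(N)\ge 2$ already contradicts non-hyperellipticity; your detour through $N\simeq T$ and $E/T$ is based on allowing $d_N=3$, which stability forbids. The serious problem is part (iii), which is the technical heart of the proposition and which you have not proved: you reduce to ``a finite list of extension configurations'' to be eliminated by Lemma \ref{llift} and then declare this ``the main technical obstacle'' without carrying it out. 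That unexecuted case analysis \emph{is} the content of (iii).

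Moreover, the paper's proof of (iii) does not use Lemma \ref{llift} at all, and the route you propose is likely harder than you anticipate. The paper dispatches $\Cliff(C)=2$ in one line from the bound $h^0\le\frac d2$ of Proposition \ref{p1.6}(i) (so no subbundle analysis is needed there); for a smooth plane quintic and $d=10$ it applies a negative elementary transformation to land in $\widetilde B(3,9,5)$, empty by Proposition \ref{prk3}(ii); for $d=11$ it tensors by a generated $Q\in B(1,4,2)$ and plays $h^0(Q\otimes E)\ge 2h^0(E)\ge 12$ (Lemma \ref{l2.3}) against $h^0(Q\otimes E)\le \chi(Q\otimes E)+h^0(K_C\otimes Q^*\otimes E^*)\le 8+3=11$ (Lemma \ref{l4.8}); and for trigonal curves it runs a subpencil/rank-$2$ subbundle analysis resting on $B(2,7,4)=B(2,8,4)=\emptyset$ from Proposition \ref{prk2}. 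By contrast, your scheme must eliminate, for example, the configuration $0\to N\to E\to H\to 0$ on a plane quintic with $N\in B(2,6,3)$ and $h^0(N)=3$ (which genuinely arises for $d=11$, since $B(2,6,3)\ne\emptyset$ there by Proposition \ref{p3.10}(iii)); the relevant multiplication map is $H^0(H)\otimes H^0(K_C\otimes N^*)\to H^0(H\otimes K_C\otimes N^*)$, a map from a $21$-dimensional space onto a $14$-dimensional one, and its surjectivity does not follow from Riemann--Roch together with $H^0(H)\otimes H^0(H)\twoheadrightarrow H^0(H^2)$. Until each such configuration is explicitly listed and killed, (iii) remains open in your write-up; I would recommend adopting the paper's mixture of arguments rather than forcing everything through Lemma \ref{llift}.
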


\begin{proof}
\eqref{eq3d6} follows from \eqref{eqbeta}.

By Lemma \ref{l2.2}, any bundle $E$ of rank 3  with $h^0(E) \ge 6$ and degree $d_E < d_9 = 15$ 
has either a subpencil or a rank-2 subbundle $F$ with $h^0(F) \ge 3$.

Note that $\widetilde B(3,9,6) \neq \emptyset$ if $C$ is trigonal. Part (i) and (ii) then follow from 
Proposition \ref{prk3}(i) and (ii). 

(iii): For $\Cliff(C) = 2$, the assertion follows from Proposition \ref{p1.6}(i). If $C$ is a smooth plane 
quintic and $E \in B(3,10,6)$, then a negative elementary transformation is semistable with 
$h^0 \ge 5$, contradicting Proposition \ref{prk3}(ii). Now suppose $E \in B(3,11,6)$.
Let $Q \in B(1,4,2)$. Then $K_C \otimes Q^* \otimes E^*$ is stable of rank 3 and degree 7.
By Lemma \ref{l4.8}, $h^0(K_C \otimes Q^* \otimes E^*) \le 3$. Hence 
$h^0(Q \otimes E) \leq 11$. On the other hand, since $Q$ is generated, we have 
$h^0(Q \otimes E) \ge 2 h^0(E) \ge 12$ by Lemma \ref{l2.3}, a contradiction.

Suppose now that $C$ is trigonal and $E \in B(3,d,6)$ for $d = 10$ or 11. 
If $E$ admits a subpencil, then $T \subset E$ and $E/T$ is semistable of degree 7 or 8 with
$h^0(E/T) \geq 4$. For $d_{E/T} = 7$, this contradicts Proposition \ref{prk2}(iii). For $d_{E/T} = 8,  E/T$ is 
strictly semistable, so has a line subbundle of degree 4. Pulling this back gives a stable rank-2 
subbundle $F$ of $E$ of degree 7. Now $E/F$ is a line bundle of degree 4, so $h^0(E/F)\le2$ and $F\in B(2,7,4)$. This contradicts
Proposition \ref{prk2}(iii). If $E$ does not admit a subpencil, then $E$ has a rank-$2$ subbundle $F$ with $h^0(F)\ge3$. By stability of $E$, $d_F\le7$. If $d_F=6$ or $7$, $F$ is necessarily semistable and $h^0(E/F)\le2$. So $h^0(F)\ge4$ and, by Proposition \ref{prk2}, the only possibility is $F\simeq T\oplus T$. If $d_F\le5$, then $F$ cannot be semistable by Proposition \ref{p5.6}(ii). It follows that $F$ must admit a subpencil. In either case, $E$ admits a subpencil, contradicting our assumption. This completes the proof of (iii).

(iv): $\widetilde{B}(3,12,6)\ne\emptyset$ since $B(1,4,2)\ne\emptyset$. $B(3,13,6)$ and $B(3,14,6)$ are non-empty by Proposition \ref{p4.20}. Moreover, $B(3,18,6)\simeq B(3,12,3)\ne\emptyset$. The result follows by tensoring with effective line bundles.
\end{proof}

\begin{rem}\label{r36}
{\rm If $C$ is general or bielliptic or $\Cliff(C)=1$, $B(3,15,6)\ne\emptyset$ by Proposition \ref{p4.20}.}
\end{rem}

\section{Extremal bundles}\label{secext}

In this section we investigate extremal bundles, by which we mean semistable bundles which maximize $\lambda= \frac{k}{n}$ for a given value of $\mu = \frac{d}{n}$. Among these bundles are those that lie on the upper bound lines that we have constructed, and it is these that we shall consider.

In the range $2 \le \mu \le \frac{9}{4}$, we know that the only stable bundles, for which the bound 
$\lambda =1 + \frac{1}{5}( \mu - 1)$ is attained, are $D(K_C)$ and the bundles $E_L$ with 
$L = K_C(-p)$ for some point $p \in C$ (Proposition \ref{p3.4}).
For $\mu > \frac{9}{4}$ we have less information.\\

For $C$ trigonal, we know that $B(3,7,4) \neq \emptyset$ by Lemma \ref{l4.8}.
We know of no other bundles attaining the upper bounds of subsection \ref{sub4.2} in the range $\mu > \frac{9}{4}$, except for $\mu = 3$.
In this case, the bound $\lambda=2$ is attained by $\oplus_{i=1}^n T$ for any $n$ and this is the only bundle that computes the Clifford index $\Cliff_n(C)$ \cite[Corollary 4.8]{ln3}.

\vspace{0.3cm}
For $C$ a smooth plane quintic, we know that $B(2,5,3) \neq \emptyset$ (Lemma \ref{l3.7}).
For $\frac52<\mu<3$, we know of no bundles which attain the bound $\lambda=\mu-1$. For $\mu \ge 3$, we have the following proposition (note that the conditions on $E$ say precisely that $E$ computes $\Cliff_n(C)$).

\begin{prop}
Let $C$ a smooth plane quintic and $E$ a semistable bundle of rank $n$ and degree $d$ such that $3n\le d\le5n$ and $h^0(E) 
= \frac{1}{2}(d + n)$. Then
$$
E \simeq \oplus_{i=1}^n H.
$$ 
\end{prop}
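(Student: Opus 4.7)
The plan is to use the Clifford-type upper bounds of Subsection~\ref{sub4.3} to force $d=5n$, and then to identify $E$ with $\oplus_{i=1}^n H$ by analysing its Jordan--H\"older filtration, crucially using the identity $K_C\cong H^{\otimes 2}$ on a smooth plane quintic.

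In the first step I verify that the equality $h^0(E)=\tfrac12(d+n)$ combined with the bounds of Propositions~\ref{prop4.12}, \ref{pr4.12} and \ref{p4.13} is numerically compatible only with $d=3n$ or $d=5n$, and then I rule out $d=3n$. For $n\le 3$ the non-existence of semistable $E$ of rank $n$, degree $3n$ with $h^0=2n$ follows from Propositions~\ref{prk2}(ii) and \ref{prk3}(ii) (and from $d_1=4$ for $n=1$). For $n\ge 4$ and $E$ stable I argue as follows: the evaluation sequence $0\to E_H^*\to H^0(H)\otimes\mathcal{O}_C\to H\to 0$ tensored by $E$, together with $h^1(H\otimes E)=h^0(H\otimes E^*)\le\lfloor 7n/6\rfloor$ (Proposition~\ref{pln2}(iii) applied to the stable bundle $H\otimes E^*$ of slope $2$), gives
\[
h^0(E_H^*\otimes E)\ge 3h^0(E)-h^0(H\otimes E)\ge 6n-\bigl(3n+\tfrac{7n}{6}\bigr)=\tfrac{11n}{6}>0.
\]
A non-zero morphism $E_H\to E$ cannot have rank-$1$ image, since its saturation would be a line subbundle of $E$ of degree $\ge 3$, contradicting stability; hence $E_H\hookrightarrow E$ as a subbundle (the saturation being forced to have degree $5$ by stability). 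The quotient $E/E_H$ then has rank $n-2$, degree $3n-5$ and $h^0\ge 2n-3$, which exceeds the bound $2(n-2)$ of Proposition~\ref{pr4.12} for semistable bundles of that rank and slope; moreover any destabilising subbundle of $E/E_H$ would pull back to a subbundle of $E$ of slope $\ge 3$, again contradicting stability. The strictly semistable case reduces to the stable one by applying the Clifford bound to the Jordan--H\"older factors.

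In the second step, with $d=5n$ and $h^0(E)=3n$, I look at a Jordan--H\"older filtration of $E$. Each stable factor $F$ has slope $5$, and Proposition~\ref{p4.13} yields $h^0(F)\le 3n_F$. The inequality $3n=h^0(E)\le\sum h^0(F)\le\sum 3n_F=3n$ forces $h^0(F)=3n_F$ for every factor. For $n_F=1$, the uniqueness of the hyperplane bundle among degree-$5$ line bundles with $h^0=3$ (from $d_2=5$, as in Lemma~\ref{l3.7}) gives $F\cong H$, and for $n_F\ge 2$, Remark~\ref{rem4.19} gives $h^0(F)\le\tfrac{8}{3}n_F<3n_F$, a contradiction. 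Hence every Jordan--H\"older factor of $E$ is isomorphic to $H$.

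I finish by induction on $n$: for $n=1$ the conclusion is clear from $d_2=5$, and for $n\ge 2$ I pick a subbundle $H\subset E$. The quotient $E/H$ is semistable of rank $n-1$, degree $5(n-1)$ with $h^0(E/H)\ge 3(n-1)$, equal to the Clifford upper bound, so by the inductive hypothesis $E/H\cong H^{\oplus(n-1)}$. The extension $0\to H\to E\to H^{\oplus(n-1)}\to 0$ has all sections of $H^{\oplus(n-1)}$ lifting to $E$, so by Lemma~\ref{llift} it is classified (up to scalars) by $\mathbb{P}((\Coker\mu)^*)$ for
\[
\mu\colon H^0(H^{\oplus(n-1)})\otimes H^0(K_C\otimes H^*)\longrightarrow H^0(K_C\otimes H^*\otimes H^{\oplus(n-1)}).
\]
Using $K_C\cong H^{\otimes 2}$, the map $\mu$ identifies with $n-1$ copies of the multiplication $H^0(H)\otimes H^0(H)\to H^0(H^{\otimes 2})$, which is surjective because $C$ is not contained in any conic in $\mathbb{P}^2$ (so the restriction $H^0(\mathbb{P}^2,\mathcal{O}(2))\twoheadrightarrow H^0(C,H^{\otimes 2})$ is onto). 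Hence $\mu$ is surjective, $\Coker\mu=0$, and the extension is trivial, giving $E\cong H^{\oplus n}$. The hardest part of this plan is the exclusion of $d=3n$ in the first step, which requires the delicate slope-by-slope analysis of subbundles sketched above.
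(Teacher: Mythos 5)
Your proposal is correct and follows essentially the same route as the paper: the bounds of Propositions \ref{pr4.12} and \ref{p4.13} force $d=3n$ or $d=5n$, the case $d=3n$ is excluded by producing a nonzero map $E_H\to E$ from the evaluation sequence of $H$ and analysing the quotient, and the case $d=5n$ is settled by showing all Jordan--H\"older factors are $H$ and that the relevant extensions split because $H^0(H)\otimes H^0(H)\to H^0(H^2)$ is surjective. The only (harmless) slip is the bound $h^0(H\otimes E^*)\le\lfloor 7n/6\rfloor$ from Proposition \ref{pln2}(iii), which omits the exceptional case $H\otimes E^*\simeq D(K_C)$ with $(n,h^0)=(5,6)$; the inequality $h^0(E_H^*\otimes E)>0$ still holds there, so the argument is unaffected.
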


 \begin{proof}
It follows from Propositions \ref{pr4.12}  and  \ref{p4.13} that either $d = 3n$ or $d = 
5n$.

Suppose first that $d = 3n$. By Proposition \ref{prk2}(ii), we can assume that $n\ge3$. We have an exact sequence
$$
0 \ra E_H^* \otimes E \ra H^0(H) \otimes E \ra H \otimes E \ra 0,
$$
which gives
$$
h^0(E_H^* \otimes E) \ge 3h^0(E) - h^0(H \otimes E).
$$ 
Moreover, 
$$
h^0(K_C \otimes H^* \otimes E^*) \le \frac{6}{5}n
$$
by Proposition \ref{pln2} and, using this,
$$
h^0(H \otimes E) \le \frac{6}{5}n + \chi(H \otimes E) = \frac{21}{5}n.
$$
Since $h^0(E) = 2n$, this gives 
$$
h^0(E_H^* \otimes E) > 0.
$$
We therefore have a nonzero homomorphism $E_H \ra E$.
If $E$ is stable, this is an inclusion as a subbundle and we have an exact sequence
$$
0 \ra E_H \ra E \ra G \ra 0.
$$
Note that $3< \mu(G) \le 4$. If $G$ is not semistable, then a subbundle of $G$ contradicting 
semistability pulls back to a subbundle of $E$ contradicting stability. If $G$ is semistable, then 
$h^0(G) > \frac{1}{2}(d_G + n_G)$, which contradicts Proposition \ref{p2.2}(i).
Using Lemma \ref{lbb}, this shows that there is no semistable bundle $E$ of slope 3 with
$h^0(E) = \frac{1}{2}(d_E + n_E)$. 

It remains to consider the case $d = 5n$. By Remark \ref{rem4.19}, $E$ is a multiple extension of copies of $H$ with the property that at every step all sections of $H$ lift. It suffices to 
show that every such multiple extension splits. By induction, it suffices to show that, if 
$$
0 \ra H \ra F \ra H \ra 0
$$
is a non-split extension, then not all sections lift. This follows because
the canonical map 
$$
H^0(H) \otimes  H^0(H) = H^0(H)\otimes H^0(K_C \otimes H^*)  \ra  H^0(K_C) = H^0(H^2)
$$ 
is well known to be surjective.
\end{proof}
\

For $\Cliff(C)=2$, the situation is much clearer. We know that the bound $k= d-n$ is never attained for $\mu>\frac94$ (Proposition \ref{p3.7}). 
The bound for $3\le\mu\le5$ is now $\lambda\le\frac12\mu$ and this bound is attained for the values $3$, $\frac{10}3$, $4$ and $5$ of $\mu$ (in fact, 
$B(2,6,3)$, $B(3,10,5)$, $B(1,4,2)$ and $B(2,10,5)$ are non-empty). When $C$ is bielliptic, the bound is attained for all values of $\mu$. In fact, 
if we express the rational number $\lambda$ in its lowest terms as $\frac{k}{n}$, then $B(n,2k,k)\ne\emptyset$ by Proposition \ref{p1.7}.

For $C$ not bielliptic, we know no further bundles for which the bounds we have established can be attained, although there are many examples of bundles lying above the regions given by 
Corollary \ref{c2.2} and Proposition \ref{p2.11} (see the BN-map in Section \ref{secBN}).

\section{The BN-map} \label{secBN}

The following figures are the most significant part of the BN-map for non-hyperelliptic curves of genus 
6.
The map plots $\lambda = \frac{k}{n}$ against $\mu = \frac{d}{n}$.

We begin with the trigonal case (Figures 1 and 2). The solid lines indicate 
the upper bounds for non-emptiness of $B(n,d,k)$. In Figure 2, the upper solid line is the Clifford bound given by Proposition \ref{p2.2}(i), the lower ones the bounds given by Theorem \ref{t201}; note that these are only proved to be valid over part of the interval if $C$ does not admit a generated bundle in $B(1,4,2)$.

The shaded areas consist of 
points $(\mu, \lambda)$ for which there exist $(n,d,k)$ with 
$$
\frac{d}{n} = \mu,\quad  \frac{k}{n} =
 \lambda \quad \mbox{and} \quad B(n,d,k) \neq \emptyset.
 $$
 The black areas are given by Corollary \ref{c2.2} and Proposition  \ref{p2.17} 
and, for all $(n,d,k)$ corresponding to points in these areas, $B(n,d,k)$ is non-empty. 
The areas do not include the vertical line at $\mu = 4$ in Figure 2. 
In the large grey area, which 
corresponds to Proposition \ref{prop2.4} and does include the vertical line at $\mu=4$, there are some $(n,d,k)$ for which possibly 
$B(n,d,k) = \emptyset$. 
However, for any $(\mu,\lambda)$ in this area, there exist $(n,d,k)$ with $\mu = \frac{d}{n},\; \lambda = \frac{k}{n}$ such that $B(n,d,k) \neq \emptyset$.

The dots represent points for which some $B(n,d,k) \neq \emptyset$. The series of dots (and the small grey area close to $(\mu,\lambda)=(2,1)$)  
arise from Proposition \ref{p3.11} and Corollary \ref{c4.11}.
There are also isolated dots corresponding to Propositions \ref{p4.18}, \ref{p5.8} and \ref{p5.9}(iii) and Lemma \ref{l4.8}.

\begin{tikzpicture}
\path[draw][->] (2,-0.06) -- (2,8.1) node[pos=0.99,above] {$\lambda$} node[pos=0.99,left] {$2$};
\path[draw][->] (0,0 ) -- (10.2,0) node[pos=1.02,right] {$\mu$} node[pos=0.21,below] {$2$} node[pos=0.98,below] {$3$} node[pos=0,left] {$1$}; 
               
\path[draw] (10, 0) -- (10,8)  node[pos=0.16,right] {$7/6$} 

node[pos=0.4,right] {}; 
\path[draw][dashed] (2,8) -- (10,8)  node[pos=1,right] {$T$};
\path[draw,line width=1.2pt] (4,2) -- (6.31,4.31); 
\path[draw,line width=1.2pt] (2,1.6) -- (4,2); 
\path[draw,line width=1.2pt] (6.31,4.31) -- (10,5.33) node[pos=1,right] {$5/3$} ;
\path[fill=black] (2,0)--(10,0)--(10,1.33)--cycle;
\path[fill=black] (0,0)--(2,0)--(2,1.33)--(0,1.05)-- cycle;
\fill (4,2) circle (2pt);
\fill (10,1.6) circle (2pt);
\fill (2,1.6) circle (2pt);
\fill (6,2) circle (2pt);
\path[draw][dashed] (2,4) -- (10,4) node[pos=1,right] {$3/2$};
\path[draw][dashed] (4.67,0) -- (4.67,8) node[pos=0,below]{$7/3$};
\path[draw][dashed] (4,0) -- (4,8)  node[pos=0.33,below]{$E_L$} node[pos=0,below]{$9/4$};
\path[draw][dashed] (6.31,0) -- (6.31,8) node[pos=0,below]{$33/13$};
\path[draw][dashed] (2,1.33) -- (10,1.33);
\path[draw][dashed] (2,1.6) -- (10,1.6) node[pos=0,left]{$D(K_C)$} node[pos=1,right]{$6/5$};
\path[draw][dashed] (2,2) -- (10,2) node[pos=1,right]{$5/4$};
\fill (10,8) circle (2pt);

\fill (4.67,2.67) circle (2pt);

\fill (3.6,1.6) circle (1pt);
\fill (3.33,1.33) circle (1pt);
\fill (3.14,1.14) circle (1pt);
\fill (3,1) circle (1pt);
\fill (2.89,0.89) circle (1pt);
\fill (2.8,0.8) circle (1pt);
\fill (2.73,0.73) circle (1pt);
\fill (2.67,0.67) circle (1pt);
\fill (2.62,0.62) circle (1pt);
\fill (2.57,0.57) circle (1pt);
\fill (2.53,0.53) circle (1pt);
\fill (2.5,0.5) circle (1pt);
\path[draw,line width=1.3pt] (2,0) -- (2.5,0.5);

\fill (2.8,1.6) circle (1pt);
\fill (2.73,1.45) circle (1pt);
\fill (2.67,1.33) circle (1pt);
\fill (2.62,1.23) circle (1pt);
\fill (2.57,1.14) circle (1pt);
\fill (2.53,1.07) circle (1pt);
\fill (2.5,1) circle (1pt);
\fill (2.47,0.94) circle (1pt);
\fill (2.44,0.89) circle (1pt);
\fill (2.42,0.84) circle (1pt);
\fill (2.4,0.8) circle (1pt);
\path[draw,line width=1.3pt] (2,0) -- (2.4,0.8);

\fill (2.53,1.6) circle (1pt);
\fill (2.5,1.5) circle (1pt);
\fill (2.47,1.41) circle (1pt);
\fill (2.44,1.33) circle (1pt);
\fill (2.42,1.26) circle (1pt);
\fill (2.4,1.2) circle (1pt);
\fill (2.38,1.14) circle (1pt);
\fill (2.36,1.1) circle (1pt);
\fill (2.35,1.04) circle (1pt);
\fill (2.33,1) circle (1pt);
\fill (2.32,0.96) circle (1pt);
\path[draw,line width=1.3pt] (2,0) -- (2.32,0.96);

\fill (2.4,1.6) circle (1pt);
\fill (2.38,1.52) circle (1pt);
\fill (2.36,1.45) circle (1pt);
\fill (2.35,1.39) circle (1pt);
\fill (2.33,1.33) circle (1pt);
\fill (2.32,1.28) circle (1pt);
\path[draw,line width=1.3pt] (2,0) -- (2.32,1.28);

\fill (2.32,1.6) circle (1pt);
\fill (2.31,1.54) circle (1pt);
\fill (2.30,1.48) circle (1pt);
\fill (2.29,1.43) circle (1pt);
\fill (2.28,1.38) circle (1pt);
\path[draw,line width=1.3pt] (2,0) -- (2.28,1.38);

\fill (3.45,0.73) circle (1pt);
\fill (3.33,0.67) circle (1pt);
\fill (3.23,0.62) circle (1pt);
\fill (3.14,0.57) circle (1pt);
\fill (3.07,0.53) circle (1pt);
\fill (3,0.5) circle (1pt);
\fill (2.94,0.47) circle (1pt);
\fill (2.89,0.44) circle (1pt);
\fill (2.84,0.42) circle (1pt);
\fill (2.8,0.4) circle (1pt);
\fill (2.76,0.38) circle (1pt);
\path[draw,line width=1.3pt] (2,0) -- (2.76,0.38);

\fill (2.36,0.76) circle (1pt);
\fill (2.35,0.73) circle (1pt);
\path[draw,line width=1.3pt] (2,0) -- (2.35,0.73);

\fill (2.52,0.77) circle (1pt);
\fill (2.5,0.75) circle (1pt);
\path[draw,line width=1.6pt] (2,0) -- (2.48,0.73);

\fill (2.31,0.78) circle (1pt);
\path[draw,line width=1.3pt] (2,0) -- (2.31,0.78);

\fill (2.216,1.3) circle (1pt);
\fill (2.21,1.26) circle (1pt);
\path[draw,line width=1.3pt] (2,0) -- (2.205,1.23);
\fill (2.186,1.3) circle (1pt);
\fill (2.18,1.27) circle (1pt);
\path[shade,draw] (2,0)--(2.22,1.33)--(2,1.33)-- cycle;

\fill (8,2) circle (1pt);
\fill (8.67,2.67) circle (1pt);
\fill (9,3) circle (1pt);
\fill (9.2,3.2) circle (1pt);
\fill (9.33,3.33) circle (1pt);
\fill (9.43,3.43) circle (1pt);
\fill (9.5,3.5) circle (1pt);
\fill (9.56,3.56) circle (1pt);
\fill (9.6,3.6) circle (1pt);
\fill (9.63,3.63) circle (1pt);
\path[draw,line width=1.3pt] (9.63,3.63) -- (10,4);

\fill (7.33,2.67) circle (1pt);

\fill (5.2,1.6) circle (1pt);

\fill (8.4,1.6) circle (1pt);
\fill (6.8,1.6) circle (1pt);

\fill (10,2.66) circle (2pt);

\fill (8.67,1.33) circle (1pt);

\path[draw,line width=0pt] (2.1,-1.3) -- (7.9,-1.3) node[pos=0.5,sloped,above] {Figure 1: $C$ trigonal, $2 \leq \mu \leq 3$};

\end{tikzpicture}

\begin{tikzpicture}
\path[draw][->] (0,-0.06) -- (0,10.1) node[pos=0.99,above] {$\lambda$} node[pos=0.99,left] {$3$} node[pos=0.33,left] {$5/3$};
\path[draw][->] (0,0 ) -- (10.2,0) node[pos=1.02,right] {$\mu$} node[pos=0,below] {$3$} node[pos=0.98,below] {$5$} node[pos=0,left] {$1$}; 
\draw[line width=1pt] (0,2.24) .. controls(5,3.9) .. (10,6.18);              
\path[draw] (10, 0) -- (10,10); 

node[pos=0.4,right] {}; 
\path[draw][dashed] (0,7.5) -- (10,7.5);
\path[draw,line width=1.2pt] (0,5) -- (10,10); 
\path[fill=black] (5,0)--(10,0)--(10,5)--(5,5) -- cycle;
\path[fill=black] (0,0)--(5,0)--(5,5)-- cycle;
\path[shade,draw] (0,0) --(0,3.33)--(5,5) -- cycle;
\path[shade,draw] (5,5) --(10,6.67)--(10,5) -- cycle;
\path[draw,line width=1.2pt] (0,4.17) -- (2.31,5) node[pos=0,left] {$11/6$};
\path[draw][dashed] (2.31,0) -- (2.31,10) node[pos=0,below] {$45/13$}  ;
\path[draw][dashed] (0,6.67) -- (10,6.67) node[pos=1,right]{$7/3$};
\path[draw,line width=1.2pt] (2.31,5) -- (3.75,5);
\path[draw,line width=1.2pt] (3.75,5) -- (5,5.5);
\path[draw,line width=1.2pt] (5,6.67) -- (7.31,7.69);
\path[draw,line width=1.2pt] (7.31,7.69) -- (8.75,7.81);
\path[draw,line width=1.2pt] (8.75,7.81) -- (10,8.41);
\path[draw][dashed] (8.75,0) -- (8.75,10) node[pos=0,below]{$19/4$};
\path[draw][dashed] (7.31,0) -- (7.31,10) node[pos=0,below]{$58/13$};
\path[draw][dashed] (3.75,0) -- (3.75,10) node[pos=0,below] {$15/4$};
\path[draw] (5,0) -- (5,10)  node[pos=0,below]{$4$};
\path[draw][dashed] (0,5) -- (10,5) node[pos=1,right] {$2$}  ;
\path[draw][dashed] (0,10) -- (10,10) node[pos=0,left]{$3$};
\path[draw][dashed] (0,5.5) -- (10,5.5) node[pos=0,left]{$21/10$};
\path[draw][dashed] (0,7.69) -- (10,7.69) node[pos=0,left]{$33/13$};
\path[draw][dashed] (0,7.81) -- (10,7.81) node[pos=1,right]{$41/16$};
\path[draw][dashed] (0,8.42) -- (10,8.42) node[pos=1,right]{$161/60$};
\fill (10,7.5) circle (2pt) node[right]{$5/2$};
draw[line width=1pt] (0,2.24) .. controls(5,3.9) .. (10,6.18);

\path[draw,line width=0pt] (2.1,-1.3) -- (7.9,-1.3) node[pos=0.5,sloped,above] {Figure 2: $C$ trigonal, $3 \leq \mu \leq 5$};

\end{tikzpicture}

\vspace{1cm}

When $C$ is a smooth plane quintic (see Figures 3 and 4), the solid black lines represent the upper bounds established in Theorem \ref{t202}. For $3\le\mu\le5$, the upper solid line is again the Clifford bound. The black areas are exactly as in the trigonal case, but the large grey area is not present (in general, we do not know about stability for points in this area). In its place, there is a new series of dots arising from Corollary \ref{c4.12} and a new isolated point at $(\frac{10}3,\frac53)$ (Proposition \ref{p4.18}). There are also further isolated dots at $(\frac52,\frac32)$ (Lemma \ref{l3.7}) and $(3,\frac32)$ (Proposition \ref{p3.10}).

\begin{tikzpicture}
\path[draw][->] (2,-0.06) -- (2,8.1) node[pos=0.99,above] {$\lambda$} node[pos=0.99,left] {$2$};
\path[draw][->] (0,0 ) -- (10.2,0) node[pos=1.02,right] {$\mu$} node[pos=0.21,below] {$2$} node[pos=0.98,below] {$3$} node[pos=0,left] {$1$}; 
               
\path[draw] (10, 0) -- (10,8)  node[pos=0.16,right] {$7/6$} 

node[pos=0.4,right] {}; 
\path[draw][dashed] (2,8) -- (10,8);
\path[draw,line width=1.2pt] (4,2) -- (4.67,2.67); 
\path[draw,line width=1.2pt] (2,1.6) -- (4,2); 
\path[draw,line width=1.2pt] (4.67,2.67) -- (6,2.67) ;
\path[draw,line width=1.2pt] (6,4) -- (10,8) node[pos=0,below]{$E_H$};
\path[fill=black] (2,0)--(10,0)--(10,1.33)--cycle;
\path[fill=black] (0,0)--(2,0)--(2,1.33)--(0,1.05)-- cycle;
\fill (4,2) circle (2pt);
\fill (10,1.6) circle (2pt);
\fill (10,4) circle (2pt);
\fill (2,1.6) circle (2pt);
\path[draw][dashed] (2,4) -- (10,4) node[pos=1,right] {$3/2$};
\path[draw][dashed] (4.67,0) -- (4.67,8) node[pos=0,below]{$7/3$};
\path[draw][dashed] (4,0) -- (4,8)  node[pos=0.33,below]{$E_L$} node[pos=0,below]{$9/4$};
\path[draw][dashed] (2,1.33) -- (10,1.33);
\path[draw][dashed] (2,1.6) -- (10,1.6) node[pos=0,left]{$D(K_C)$} node[pos=1,right]{$6/5$};
\path[draw][dashed] (2,2) -- (10,2) node[pos=1,right]{$5/4$};
\fill (6,4) circle (2pt);

\fill (3.6,1.6) circle (1pt);
\fill (3.33,1.33) circle (1pt);
\fill (3.14,1.14) circle (1pt);
\fill (3,1) circle (1pt);
\fill (2.89,0.89) circle (1pt);
\fill (2.8,0.8) circle (1pt);
\fill (2.73,0.73) circle (1pt);
\fill (2.67,0.67) circle (1pt);
\fill (2.62,0.62) circle (1pt);
\fill (2.57,0.57) circle (1pt);
\fill (2.53,0.53) circle (1pt);
\fill (2.5,0.5) circle (1pt);
\path[draw,line width=1.3pt] (2,0) -- (2.5,0.5);

\fill (2.8,1.6) circle (1pt);
\fill (2.73,1.45) circle (1pt);
\fill (2.67,1.33) circle (1pt);
\fill (2.62,1.23) circle (1pt);
\fill (2.57,1.14) circle (1pt);
\fill (2.53,1.07) circle (1pt);
\fill (2.5,1) circle (1pt);
\fill (2.47,0.94) circle (1pt);
\fill (2.44,0.89) circle (1pt);
\fill (2.42,0.84) circle (1pt);
\fill (2.4,0.8) circle (1pt);
\path[draw,line width=1.3pt] (2,0) -- (2.4,0.8);

\fill (2.53,1.6) circle (1pt);
\fill (2.5,1.5) circle (1pt);
\fill (2.47,1.41) circle (1pt);
\fill (2.44,1.33) circle (1pt);
\fill (2.42,1.26) circle (1pt);
\fill (2.4,1.2) circle (1pt);
\fill (2.38,1.14) circle (1pt);
\fill (2.36,1.1) circle (1pt);
\fill (2.35,1.04) circle (1pt);
\fill (2.33,1) circle (1pt);
\fill (2.32,0.96) circle (1pt);
\path[draw,line width=1.3pt] (2,0) -- (2.32,0.96);

\fill (2.4,1.6) circle (1pt);
\fill (2.38,1.52) circle (1pt);
\fill (2.36,1.45) circle (1pt);
\fill (2.35,1.39) circle (1pt);
\fill (2.33,1.33) circle (1pt);
\fill (2.32,1.28) circle (1pt);
\path[draw,line width=1.3pt] (2,0) -- (2.32,1.28);

\fill (2.32,1.6) circle (1pt);
\fill (2.31,1.54) circle (1pt);
\fill (2.30,1.48) circle (1pt);
\fill (2.29,1.43) circle (1pt);
\fill (2.28,1.38) circle (1pt);
\path[draw,line width=1.3pt] (2,0) -- (2.28,1.38);

\fill (3.45,0.73) circle (1pt);
\fill (3.33,0.67) circle (1pt);
\fill (3.23,0.62) circle (1pt);
\fill (3.14,0.57) circle (1pt);
\fill (3.07,0.53) circle (1pt);
\fill (3,0.5) circle (1pt);
\fill (2.94,0.47) circle (1pt);
\fill (2.89,0.44) circle (1pt);
\fill (2.84,0.42) circle (1pt);
\fill (2.8,0.4) circle (1pt);
\fill (2.76,0.38) circle (1pt);
\path[draw,line width=1.3pt] (2,0) -- (2.76,0.38);

\fill (2.36,0.76) circle (1pt);
\fill (2.35,0.73) circle (1pt);
\path[draw,line width=1.3pt] (2,0) -- (2.35,0.73);

\fill (2.52,0.77) circle (1pt);
\fill (2.5,0.75) circle (1pt);
\path[draw,line width=1.6pt] (2,0) -- (2.48,0.73);

\fill (2.31,0.78) circle (1pt);
\path[draw,line width=1.3pt] (2,0) -- (2.31,0.78);

\fill (2.216,1.3) circle (1pt);
\fill (2.21,1.26) circle (1pt);
\path[draw,line width=1.3pt] (2,0) -- (2.205,1.23);

\fill (2.186,1.3) circle (1pt);
\fill (2.18,1.27) circle (1pt);
\path[shade,draw] (2,0)--(2.22,1.33)--(2,1.33)-- cycle;

\fill (8,2) circle (1pt);
\fill (8.67,2.67) circle (1pt);
\fill (9,3) circle (1pt);
\fill (9.2,3.2) circle (1pt);
\fill (9.33,3.33) circle (1pt);
\fill (9.43,3.43) circle (1pt);
\fill (9.5,3.5) circle (1pt);
\fill (9.56,3.56) circle (1pt);
\fill (9.6,3.6) circle (1pt);
\fill (9.63,3.63) circle (1pt);
\path[draw,line width=1.3pt] (9.63,3.63) -- (10,4);

\fill (7.33,2.67) circle (1pt);

\fill (5.2,1.6) circle (1pt);

\fill (8.4,1.6) circle (1pt);
\fill (6.8,1.6) circle (1pt);

\fill (10,2.66) circle (2pt);

\fill (8.67,1.33) circle (1pt);

\path[draw,line width=0.2pt] (2.1,-1.3) -- (7.9,-1.3) node[pos=0.5,sloped,above] {Figure 3: $C$ smooth plane quintic, $2 \leq \mu \leq 3$};
\end{tikzpicture} 

\begin{tikzpicture}
\path[draw][->] (0,-0.06) -- (0,10.1) node[pos=0.99,above] {$\lambda$} node[pos=0.99,left] {$3$};
\path[draw][->] (0,0 ) -- (10.2,0) node[pos=1.02,right] {$\mu$} node[pos=0,below] {$3$} node[pos=0.98,below] {$5$} node[pos=0,left] {$1$}; 
\path[draw] (10, 0) -- (10,10); 

node[pos=0.4,right] {}; 
\path[draw,line width=1.2pt] (0,5) -- (10,10); 
\path[fill=black] (5,0)--(10,0)--(10,5)--(5,5) -- cycle;
\path[fill=black] (0,0)--(5,0)--(5,5)-- cycle;
\path[draw,line width=1.2pt] (0,5) -- (2.5,5); 
\path[draw,line width=1.2pt] (2.5,4.58) -- (3.33,5);
\path[draw,line width=1.2pt] (3.75,5) -- (5,5.5);
\path[draw,line width=1.2pt] (5,6.67) -- (7.5,8.33);
\path[draw,line width=1.2pt] (7.5,8.33) -- (10,8.33);
\path[draw,line width=1.2pt] (3.33,5) -- (3.75,5);
\path[draw][dashed] (1.67,0) -- (1.67,10) node[pos=0,below] {$10/3$}  ;
\path[draw][dashed] (0,6.67) -- (10,6.67) node[pos=0,left] {$7/3$} ;
\path[draw][dashed] (7.5,0) -- (7.5,10) node[pos=0,below]{$9/2$};
\path[draw][dashed] (2.5,0) -- (2.5,10) node[pos=0,below]{$7/2$};

\path[draw][dashed] (3.33,0) -- (3.33,10) node[pos=1,above]{$11/3$};
\path[draw][dashed] (3.75,0) -- (3.75,10) node[pos=0,below] {$15/4$};
\path[draw] (5,0) -- (5,10)  node[pos=0,below]{$4$};
\path[draw][dashed] (0,5) -- (10,5) node[pos=1,right] {$2$}  ;
\path[draw][dashed] (0,10) -- (10,10) node[pos=0,left]{$3$};
\path[draw][dashed] (0,5.5) -- (10,5.5) node[pos=0,left]{$21/10$};
\path[draw][dashed] (0,4.58) -- (10,4.58) node[pos=0,left]{$23/12$};
\path[draw][dashed] (0,8.33) -- (10,8.33) node[pos=1,right]{$8/3$};

\fill (1.67,3.33) circle (2pt) node[right]{$(\frac{10}{3},\frac{5}{3})$};
draw[line width=1pt] (0,2.24) .. controls(5,3.9) .. (10,6.18);

\fill (0.5,1.0) circle (1.2pt);
\fill (0.45,0.9) circle (1.2pt);
\fill (0.42,0.84) circle (1.2pt);
\fill (0.38,0.76) circle (1.2pt);
\fill (0.36,0.72) circle (1.2pt);
\fill (0.33,0.67) circle (1.2pt);
\fill (0.31,0.63) circle (1.2pt);
\fill (0.29,0.58) circle (1.2pt);  
\path[draw,line width=1.4pt] (0,0) -- (0.28,0.56);

\fill (0.33,1) circle (1.2pt);
\fill (0.31,0.94) circle (1.2pt);
\fill (0.29,0.88) circle (1.2pt);
\fill (0.28,0.83) circle (1.2pt);
\fill (0.26,0.79) circle (1.2pt); 
\path[draw,line width=1.4pt] (0,0) -- (0.26,0.79);

\fill (0.25,1) circle (1.2pt);
\fill (0.24,0.95) circle (1.2pt);
\fill (0.23,0.91) circle (1.2pt);
\fill (0.22,0.87) circle (1.2pt);
\path[draw,line width=1.4pt] (0,0) -- (0.22,0.87);

\fill (0.2,1) circle (1.2pt);
\fill (0.19,0.96) circle (1.2pt);
\fill (0.18,0.93) circle (1.2pt);
\fill (0.18,0.89) circle (1.2pt);
\path[draw,line width=1.4pt] (0,0) -- (0.18,0.89);

\fill (0.32,0.48) circle (1.2pt);
\fill (0.31,0.47) circle (1.2pt);
\path[draw,line width=1.6pt] (0,0) -- (0.30,0.45);

\path[shade,draw] (0,0)--(0.13,0.83)--(0,0.83)-- cycle;

\fill (1.25,2.5) circle (1.2pt);
\fill (0.83,2.5) circle (1.2pt);
\fill (0.63,2.5) circle (1.2pt);
\fill (0.5,2.5) circle (1.2pt);
\fill (0.42,2.5) circle (1.2pt);
\fill (0.36,2.5) circle (1.2pt);
\fill (0.31,2.5) circle (1.2pt);
\fill (0.28,2.5) circle (1.2pt);
\fill (0.25,2.5) circle (1.2pt);
\path[draw,dashed] (0,2.5) -- (5,2.5) node[pos=0,left] {$3/2$};
\path[draw,line width=1.6pt] (0,2.5) -- (0.25,2.5);

\fill (0,1.66) circle (2pt);

\fill(10,10) circle(2pt);

\path[draw,line width=0pt] (2.1,-1.3) -- (7.9,-1.3) node[pos=0.5,sloped,above] {Figure 4: $C$ smooth plane quintic, $3 \leq \mu \leq 5$};

\end{tikzpicture}

We turn finally to the case where $\Cliff(C)=2$ (Figures 5 and 6). Again the solid black lines represent the bounds established in Theorem \ref{th3.18}. For $3\le\mu\le5$, this is the Clifford bound given by Proposition \ref{p1.6}(i) and is the precise bound when $C$ is bielliptic. The BN-curve (the thin curve in the figures) is given by $\lambda(\lambda - \mu  + 5) = 5$ (or $\beta(n,d,k) = 1$). In the interval $3\le\mu\le5$, even in the non-bielliptic case, there are several points which lie above this curve and some of these correspond to bundles for which $\beta(n,d,k)<0$.

In the interval $2\le\mu\le3$, the black areas are given by Corollary \ref{c2.2} as usual. The series of dots are given by Proposition \ref{p3.11}, Corollaries \ref{c4.11} and \ref{c4.12} and Propositions \ref{p3.13} and \ref{p5.15} (the last two are  established only for $\Cliff(C)=2$). In the interval $3\le\mu\le5$, existence results are only relevant for non-bielliptic curves. The black areas are given by Corollary \ref{c2.2} and Proposition \ref{p2.16} and the large grey area by Proposition \ref{p2.11}. There are some additional series of dots given by Propositions \ref{p4.20} and \ref{p4.21}.

\begin{tikzpicture}
\path[draw][->] (2,-0.06) -- (2,8.1) node[pos=0.99,above] {$\lambda$} node[pos=0.99,left] {$2$};
\path[draw][->] (0,0 ) -- (10.2,0) node[pos=1.02,right] {$\mu$} node[pos=0.21,below] {$2$} node[pos=0.98,below] {$3$} node[pos=0,left] {$1$}; 
               
\path[draw] (10, 0) -- (10,8)  node[pos=0.16,right] {$7/6$} 

node[pos=0.4,right] {}; 
\path[draw][dashed] (2,8) -- (10,8);
\path[draw,line width=1.2pt] (4.67,2.67) -- (10,4); 
\path[draw,line width=1.2pt] (2,1.6) -- (4,2); 
\path[draw,line width=1.2pt] (4,2) -- (4.67,2.67);
\path[fill=black] (2,0)--(10,0)--(10,1.33)--cycle;
\path[fill=black] (0,0)--(2,0)--(2,1.33)--(0,1.05)-- cycle;
\fill (4,2) circle (2pt);
\fill (10,1.6) circle (2pt);
\fill (10,4) circle (2pt);
\fill (2,1.6) circle (2pt);
\path[draw][dashed] (2,4) -- (10,4);
\path[draw][dashed] (4.67,0) -- (4.67,8) node[pos=0,below]{$7/3$};
\path[draw][dashed] (4,0) -- (4,8)  node[pos=0.33,below]{$E_L$} node[pos=0,below]{$9/4$};
\path[draw][dashed] (2,1.33) -- (10,1.33);
\path[draw][dashed] (2,1.6) -- (10,1.6) node[pos=0,left]{$D(K_C)$} node[pos=1,right]{$6/5$};
\path[draw][dashed] (2,2) -- (10,2) node[pos=1,right]{$5/4$};
\fill (3.6,1.6) circle (1pt);
\fill (3.33,1.33) circle (1pt);
\fill (3.14,1.14) circle (1pt);
\fill (3,1) circle (1pt);
\fill (2.89,0.89) circle (1pt);
\fill (2.8,0.8) circle (1pt);
\fill (2.73,0.73) circle (1pt);
\fill (2.67,0.67) circle (1pt);
\fill (2.62,0.62) circle (1pt);
\fill (2.57,0.57) circle (1pt);
\fill (2.53,0.53) circle (1pt);
\fill (2.5,0.5) circle (1pt);
\path[draw,line width=1.3pt] (2,0) -- (2.5,0.5);

\fill (2.8,1.6) circle (1pt);
\fill (2.73,1.45) circle (1pt);
\fill (2.67,1.33) circle (1pt);
\fill (2.62,1.23) circle (1pt);
\fill (2.57,1.14) circle (1pt);
\fill (2.53,1.07) circle (1pt);
\fill (2.5,1) circle (1pt);
\fill (2.47,0.94) circle (1pt);
\fill (2.44,0.89) circle (1pt);
\fill (2.42,0.84) circle (1pt);
\fill (2.4,0.8) circle (1pt);
\path[draw,line width=1.3pt] (2,0) -- (2.4,0.8);

\fill (2.53,1.6) circle (1pt);
\fill (2.5,1.5) circle (1pt);
\fill (2.47,1.41) circle (1pt);
\fill (2.44,1.33) circle (1pt);
\fill (2.42,1.26) circle (1pt);
\fill (2.4,1.2) circle (1pt);
\fill (2.38,1.14) circle (1pt);
\fill (2.36,1.1) circle (1pt);
\fill (2.35,1.04) circle (1pt);
\fill (2.33,1) circle (1pt);
\fill (2.32,0.96) circle (1pt);
\path[draw,line width=1.3pt] (2,0) -- (2.32,0.96);

\fill (2.4,1.6) circle (1pt);
\fill (2.38,1.52) circle (1pt);
\fill (2.36,1.45) circle (1pt);
\fill (2.35,1.39) circle (1pt);
\fill (2.33,1.33) circle (1pt);
\fill (2.32,1.28) circle (1pt);
\path[draw,line width=1.3pt] (2,0) -- (2.32,1.28);

\fill (2.32,1.6) circle (1pt);
\fill (2.31,1.54) circle (1pt);
\fill (2.30,1.48) circle (1pt);
\fill (2.29,1.43) circle (1pt);
\fill (2.28,1.38) circle (1pt);
\path[draw,line width=1.3pt] (2,0) -- (2.28,1.38);

\fill (5,2) circle(1pt);
\fill (4.67,2) circle(1pt);
\fill (4.5,2) circle(1pt);
\fill (4.4,2) circle(1pt);
\fill (4.33,2) circle(1pt);
\fill (4.29,2) circle(1pt);
\fill (4.25,2) circle(1pt);
\fill (4.22,2) circle(1pt);
\fill (4.2,2) circle(1pt);
\path[draw,line width=1.3pt] (4,2) -- (4.2,2);

\fill (3.5,1.5) circle(1pt);
\fill (3.67,1.67) circle(1pt);
\fill (3.71,1.71) circle(1pt);
\fill (3.75,1.75) circle(1pt);
\fill (3.78,1.78) circle(1pt);
\fill (3.8,1.8) circle(1pt);
\path[draw,line width=1.3pt] (3.8,1.8) -- (4,2);

\fill (3.45,0.73) circle (1pt);
\fill (3.33,0.67) circle (1pt);
\fill (3.23,0.62) circle (1pt);
\fill (3.14,0.57) circle (1pt);
\fill (3.07,0.53) circle (1pt);
\fill (3,0.5) circle (1pt);
\fill (2.94,0.47) circle (1pt);
\fill (2.89,0.44) circle (1pt);
\fill (2.84,0.42) circle (1pt);
\fill (2.8,0.4) circle (1pt);
\fill (2.76,0.38) circle (1pt);
\path[draw,line width=1.3pt] (2,0) -- (2.76,0.38);

\fill (2.36,0.76) circle (1pt);
\fill (2.35,0.73) circle (1pt);
\path[draw,line width=1.3pt] (2,0) -- (2.35,0.73);

\fill (2.52,0.77) circle (1pt);
\fill (2.5,0.75) circle (1pt);
\path[draw,line width=1.6pt] (2,0) -- (2.48,0.73);

\fill (2.31,0.78) circle (1pt);
\path[draw,line width=1.3pt] (2,0) -- (2.31,0.78);

\fill (2.216,1.3) circle (1pt);
\fill (2.21,1.26) circle (1pt);
\path[draw,line width=1.3pt] (2,0) -- (2.205,1.23);
\fill (2.186,1.3) circle (1pt);
\fill (2.18,1.27) circle (1pt);
\path[shade,draw] (2,0)--(2.22,1.33)--(2,1.33)-- cycle;

\fill (8,2) circle (1pt);
\fill (8.67,2.67) circle (1pt);
\fill (9,3) circle (1pt);
\fill (9.2,3.2) circle (1pt);

\fill (7.33,2.67) circle (1pt);

\fill (5.2,1.6) circle (1pt);

\fill (8.4,1.6) circle (1pt);
\fill (6.8,1.6) circle (1pt);

\fill (10,2.66) circle (2pt);

\fill (6,2) circle (1pt);
\fill (8,2) circle (1pt);

\fill (8.67,1.33) circle (1pt);

\draw[line width=0.5pt] (2,1.54) .. controls(6,2.48) .. (10,3.6);  

\path[draw,line width=0pt] (2.1,-1.3) -- (7.9,-1.3) node[pos=0.5,sloped,above] {Figure 5: $\Cliff(C) =2$, $2 \leq \mu \leq 3$}; 

\end{tikzpicture}

\begin{tikzpicture}
\path[draw][->] (0,-0.06) -- (0,8.1) node[pos=0.99,above] {$\lambda$};
\path[draw][->] (0,0 ) -- (10.2,0) node[pos=1.02,right] {$\mu$} node[pos=0,below] {$3$} node[pos=0.98,below] {$5$} node[pos=0,left] {$1$}; 
\draw[line width=0.5pt] (0,2.25) .. controls(5,3.9) .. (10,6.08);              
\path[draw] (10, 0) -- (10,8)  

node[pos=0.4,right] {}; 
\path[draw,line width=1.2pt] (0,2.5) -- (10,7.5) node[pos=0,left] {$3/2$} ; 
\path[fill=black] (5,0)--(10,0)--(10,5)--cycle;
\path[fill=black] (0,0)--(5,0)--(5,0.83)-- cycle;
\path[shade,draw] (5,0)--(10,5)--(5,3.33)-- cycle;
\path[draw] (5,0) -- (5,8)  node[pos=0,below]{$4$};
\path[draw][dashed] (0,5) -- (10,5) node[pos=0,left] {$2$}  ;
\path[draw][dashed] (0,7.5) -- (10,7.5) node[pos=0,left]{$5/2$};
\path[draw][dashed] (0,0.83) -- (5,0.83) node[pos=0,left]{$7/6$};

\path[shade,draw] (0,0)--(0.13,0.83)--(0,0.83)-- cycle;

\fill (1.67,3.33) circle (2pt);

\fill (7.5,5) circle (1.2pt);
\fill (6.67,5) circle (1.2pt);
\fill (6.25,5) circle (1.2pt);
\fill (6,5) circle (1.2pt);

\fill (2.5,2.5) circle (1.2pt);
\fill (3.33,3.36) circle (1.2pt);
\fill (3.75,3.75) circle (1.2pt);
\fill (4,4) circle (1.2pt);

\fill (0,1.67) circle (1.2pt);
\fill (1.67,1.67) circle (1.2pt);
\fill (3.33,1.67) circle (1.2pt);
\fill (5,1.67) circle (1.2pt);
\fill (2.5,2.5) circle (1.2pt);
\fill (1.25,2.5) circle (1.2pt);
\fill (0.83,2.5) circle (1.2pt);
\fill (0.63,2.5) circle (1.2pt);
\fill (0.5,2.5) circle (1.2pt);
\fill (5,2.5) circle (1.2pt);
\path[draw,dashed] (0,2.5) -- (5,2.5);

\fill (0,2.5) circle (2pt);
\fill (10,7.5) circle (2pt);
\fill (5,5) circle (2pt);
\fill (10,7.5) circle (2pt);

\fill (10,5) circle (1.2pt);
\fill (6.25,5) circle (1.2pt);
\fill (7.5,5) circle (1.2pt);
\fill (6,5) circle (1.2pt);
\fill (8.33,5) circle (1.2pt);
\fill (8.75,5) circle (1.2pt);
\fill (7,5) circle (1.2pt);
\fill (8,5) circle (1.2pt);
\fill (9,5) circle (1.2pt);

\fill (3.75,3.75) circle (1.2pt);
\fill (4,4) circle (1.2pt);

\fill (1,1) circle (1.2pt);
\fill (0.83,0.83) circle (1.2pt);
\fill (0.71,0.71) circle (1.2pt);
\fill (0.63,0.63) circle (1.2pt);
\fill (0.56,0.56) circle (1.2pt);
\fill (0.5,0.5) circle (1.2pt);
\fill (0.45,0.45) circle (1.2pt);
\fill (0.42,0.42) circle (1.2pt);  
\fill (0.38,0.38) circle (1.2pt); 
\fill (0.36,0.36) circle (1.2pt);
\fill (0.33,0.33) circle (1.2pt);
\path[draw,line width=1.4pt] (0,0) -- (0.33,0.33);

\fill (0.5,1.0) circle (1.2pt);
\fill (0.45,0.9) circle (1.2pt);
\fill (0.42,0.84) circle (1.2pt);
\fill (0.38,0.76) circle (1.2pt);
\fill (0.36,0.72) circle (1.2pt);
\fill (0.33,0.67) circle (1.2pt);
\fill (0.31,0.63) circle (1.2pt);
\fill (0.29,0.58) circle (1.2pt);  
\path[draw,line width=1.4pt] (0,0) -- (0.28,0.56);

\fill (0.33,1) circle (1.2pt);
\fill (0.31,0.94) circle (1.2pt);
\fill (0.29,0.88) circle (1.2pt);
\fill (0.28,0.83) circle (1.2pt);
\fill (0.26,0.79) circle (1.2pt); 
\path[draw,line width=1.4pt] (0,0) -- (0.26,0.79);

\fill (0.25,1) circle (1.2pt);
\fill (0.24,0.95) circle (1.2pt);
\fill (0.23,0.91) circle (1.2pt);
\fill (0.22,0.87) circle (1.2pt);
\path[draw,line width=1.4pt] (0,0) -- (0.22,0.87);

\fill (0.2,1) circle (1.2pt);
\fill (0.19,0.96) circle (1.2pt);
\fill (0.18,0.93) circle (1.2pt);
\fill (0.18,0.89) circle (1.2pt);
\path[draw,line width=1.6pt] (0,0) -- (0.18,0.89);

\fill (1,1) circle (1.2pt);
\fill (0.91,0.45) circle (1.2pt);
\fill (0.83,0.42) circle (1.2pt);
\fill (0.77,0.38) circle (1.2pt);
\fill (0.71,0.36) circle (1.2pt);
\fill (0.67,0.33) circle (1.2pt);
\fill (0.63,0.31) circle (1.2pt);
\path[draw,line width=1.6pt] (0,0) -- (0.59,0.29);

\fill (0.32,0.48) circle (1.2pt);
\fill (0.31,0.47) circle (1.2pt);
\path[draw,line width=1.6pt] (0,0) -- (0.30,0.45);

\fill (1,1) circle (1.2pt);
\fill (2,1) circle (1.2pt);
\fill (3,1) circle (1.2pt);
\fill (4,1) circle (1.2pt);

\fill (2,1) circle (1.2pt);
\path[draw,dashed] (0,1) -- (5,1)  node[pos=1,right]{$6/5$};;

\path[draw,dashed] (0,1.67) -- (5,1.67)  node[pos=0,left]{$4/3$};;

\fill (0,1.66) circle (2pt);

\fill (1.25,1.25) circle (1pt);
\fill (2.5,1.25) circle (1pt);
\fill (3.75,1.25) circle (1pt);

\fill (4.17,0.833) circle (1pt);

\path[draw,dashed] (0,1.25) -- (5,1.25) node[pos=0,left] {$5/4$};

\path[draw,line width=0pt] (2.1,-1.3) -- (7.9,-1.3) node[pos=0.5,sloped,above] {Figure 6: $\Cliff(C) =2$, $3 \leq \mu \leq 5$}; 

\end{tikzpicture}

\end{document}